\definecolor{mahogany}{cmyk}{0, 0.77, 0.87, 0}
\definecolor{salmon}{cmyk}{0, 0.53, 0.38, 0}
\definecolor{melon}{cmyk}{0, 0.46, 0.50, 0}
\definecolor{yellowgreen}{cmyk}{0.44, 0, 0.74, 0}
\definecolor{brickred}{cmyk}{0, 0.89, 0.94, 0.28}
\definecolor{OliveGreen}{cmyk}{0.64, 0, 0.95, 0.40}
\definecolor{RawSienna}{cmyk}{0, 0.72, 1.0, 0.45}
\definecolor{ZurichRed}{rgb}{1, 0, 0} % Red of svgnames
\newtheorem{conjecture}{Conjecture}
\newtheorem{definition}{Definition}
\newtheorem{problem}{Problem}
\newtheorem{question}{Question}
\begin{document}

%\newtheorem{thm}{Theorem}
%\numberwithin{thm}{Theorem}
\newtheorem{lemma}[thm]{Lemma}
\newtheorem{remark}{Remark}
%\newtheorem{corr}[thm]{Corollary}[sections]
%\numberwithin{corollary}[thm]{section}
\newtheorem{proposition}{Proposition}
\newtheorem{theorem}{Theorem}[section]
\newtheorem{deff}[thm]{Definition}
\newtheorem{case}[thm]{Case}
%\numberwithin{deff}{section}
\newtheorem{prop}[thm]{Proposition}
%\numberwithin{equation}{subsection}
%\numberwithin{equation}{section}
\newtheorem{example}{Example}

\newtheorem{corollary}{Corollary}

\numberwithin{equation}{subsection}
\numberwithin{definition}{subsection}
%\numberwithin{problem}{section}
\numberwithin{corollary}{subsection}
%\numberwithin{proposition}{subsection}

\numberwithin{theorem}{subsection}

\numberwithin{remark}{subsection}
\numberwithin{example}{subsection}
\numberwithin{proposition}{subsection}

\newcommand{\gap}{\lambda_{2,D}^V-\lambda_{1,D}^V}
\newcommand{\gapR}{\lambda_{2,R}-\lambda_{1,R}}
\newcommand{\bD}{\mathrm{I\! D\!}}
\newcommand{\calD}{\mathcal{D}}
\newcommand{\calA}{\mathcal{A}}

\newcommand{\conjugate}[1]{\overline{#1}}
\newcommand{\abs}[1]{\left| #1 \right|}
\newcommand{\cl}[1]{\overline{#1}}
\newcommand{\expr}[1]{\left( #1 \right)}
\newcommand{\set}[1]{\left\{ #1 \right\}}

\newcommand{\calC}{\mathcal{C}}
\newcommand{\calE}{\mathcal{E}}
\newcommand{\calF}{\mathcal{F}}
\newcommand{\Rd}{\mathbb{R}^d}
\newcommand{\BR}{\mathcal{B}(\Rd)}
\newcommand{\R}{\mathbb{R}}
\newcommand{\al}{\alpha}
\newcommand{\RR}[1]{\mathbb{#1}}
\newcommand{\bR}{\mathrm{I\! R\!}}
\newcommand{\ga}{\gamma}
\newcommand{\om}{\omega}
\newcommand{\A}{\mathbb{A}}
\newcommand{\bH}{\mathbb{H}}
\newcommand{\B}{\mathbb{B}}

\newcommand{\bb}[1]{\mathbb{#1}}
\newcommand{\bI}{\bb{I}}
\newcommand{\bN}{\bb{N}}

\newcommand{\uS}{\mathbb{S}}
\newcommand{\M}{{\mathcal{M}}}
\newcommand{\calB}{{\mathcal{B}}}

\newcommand{\W}{{\mathcal{W}}}

\newcommand{\m}{{\mathcal{m}}}

\newcommand {\mac}[1] { \mathbb{#1} }

\newcommand{\bC}{\Bbb C}

%-------------------------------------------------------------

\begin{titlepage}
\title{The foundational inequalities of D.L. Burkholder and some of their ramifications\thanks{To appear, Illinois Journal of Mathematics, Volume in honor of D.L. Burkholder.}}
\author{Rodrigo Ba\~nuelos\thanks{Supported in part by NSF Grant \#0603701-DMS}
\\Department of  Mathematics
\\Purdue University\\West Lafayette, IN
47906\\banuelos@math.purdue.edu
}
\maketitle
\noindent 
\begin{center}{\it To Don Burkholder, with the greatest respect and admiration\\ for his personal kindness and his mathematical accomplishments.}
\end{center}
%\bigskip

\begin{abstract}  
This paper present an overview of some of the applications of the
martingale inequalities of D.L.~Burkholder to $L^p$-bounds
for singular integral operators, concentrating on the
Hilbert transform, first and second order Riesz transforms, the Beurling-Ahlfors operator and other multipliers
obtained by projections (conditional expectations) of transformations
of stochastic integrals.  While martingale inequalities can be used to prove the boundedness of a wider class of Calder\'on-Zygmund singular integrals, the aim of this paper is to show results which give optimal or near optimal bounds in the norms, hence our restriction to the above operators. 

Connections of Burkholder's foundational work on sharp martingale inequalities to other areas of mathematics where either the results themselves or techniques to prove them have become of considerable interest in recent years, are discussed. These include the 1952 conjecture of C.B. Morrey on rank-one convex and quasiconvex functions with connections to problems in the calculus of variations and the 1982 conjecture of T. Iwaniec on the $L^p$-norm of the Beurling-Ahlfors operator with connections to problems in the theory of qasiconformal mappings. Open  questions, problems  and conjectures are listed throughout the paper and copious references are provided.

\end{abstract}
\tableofcontents
\end{titlepage}

\section{Introduction}
In 1966, D.L. Burkholder published a landmark paper titled {\it ``Martingale Transforms"} \cite{Bur11}. Among some of the results  contained in this paper is the now celebrated $L^p$-boundedness of  martingale transforms. In his 1984  paper \cite{Bur39} {\it ``Boundary value problems and sharp inequalities for martingale transforms,"} also a landmark in martingale theory, Burkholder proved sharp versions of the 1966 martingale inequalities.  These results lie at the heart of the applications of martingales   to many areas in probability and analysis where their influence has been deep and lasting.  

The 1966 paper led to the explosion of martingale inequalities that  produced, among many other influential results,  the celebrated ``Burkholder-Davis-Gundy" inequalities which have been indispensable in the development of stochastic  analysis and its applications to so many areas of mathematics. For a historical overview of martingale inequalities beginning with the 1966 paper, see \cite{BanDav1}. The purpose of the present paper is to describe some of the applications of Burkholder's sharp martingale inequalities to singular integrals arising from his 1984 paper and to elaborate on how his method of proof, now commonly referred to it simply  as ``the Burkholder method,"  has led to many other sharp martingale inequalities with  interesting applications. In particular, we describe  applications to a well known conjecture of T. Iwaniec concerning the $L^p$-norm of the Beurling-Ahlfors operator.  We also discuss how Burkholder's work on sharp martingale inequalities has come into play in the investigation of rank-one convex and quasiconvex functions and its relation to a longstanding open problem of Morrey. The connections to the last two problems arise not only from the results proved in the 1984 paper, but from the techniques used in the proofs.

Absent from this paper are the numerous applications of Burkholder's inequalities to singular integrals and other operators for functions taking values in Banach spaces with the {\it unconditional martingale difference sequences} (UMD) property. This has been a  very active area of research with contributions by many mathematicians since the appearance of Burkholder's seminal paper  \cite{Bur35} {\it ``A geometrical characterization of Banach spaces in which martingale difference sequences are unconditional"}. These applications are not discussed here. For some of Burkholder's other contributions to UMD spaces, we refer the reader to \cite{Bur38} and \cite{Bur53}.   We also refer the reader to \cite{Bou2}, \cite{Fig1}, \cite{Hyt00}, \cite{Hyt0}, \cite{Hyt1} and \cite{Hyt2},  which contain many results and further references on singular integrals and other operators with values in Banach spaces with the UMD property and to G. Pisier's overview paper {\it ``Don Burkholder's work on Banach spaces"}  \cite{Pis0}. 

The techniques introduced by Burkholder in \cite{Bur39} were so novel that only in recent years has their  full and  wider impact in areas far removed from their original applications to martingales begun to emerge.  We hope this paper will serve as a starting point for further explorations and applications of sharp martingale inequalities.

\subsection{The foundational inequalities} 

Let  $f=\{f_n, n\geq 0\}$ be a martingale on a probability space $(\Omega, \mathcal{F}, P)$ 
with respect to the  sequence of $\sigma$-fields $\calF_n\subset \calF_{n+1}$, $n\geq 0$, contained in $\calF$. The sequence  
$d=\{d_k, k\geq 0\}$, where $d_k=f_k-f_{k-1}$ for $k\geq 1$ and $d_0=f_0$, is called the martingale difference sequence of $f$. 
Thus $f_n=\sum_{k=0}^n d_k$.  Given a sequence of random variables $\{v_k, k\geq 0\}$ uniformly bounded by $1$ for all $k$ and with $v_k$, $k\geq 1$,  measurable with respect to $\calF_{k-1}$ and $v_0$ constant, (such sequence is said to be predictable), the martingale difference sequence $e=\{v_kd_k, k\geq 0\}$ generates a new martingale called the {\it ``martingale transform"} of $f$ and  denoted here by $v\ast f$.  Thus $(v\ast f)_n=\sum_{k=0}^n v_kd_k$. The maximal function of a martingale is denoted by $f^*=\sup_{n\geq 0}|f_n|$.  We also set $\|f\|_p=\sup_{n\geq 0}\|f_n\|_p$. Burkholder's 1966 result in \cite{Bur11} asserts that the operator $f\to v\ast f$ is bounded on $L^p$, for all $1<p<\infty$, and that it is weak-type $(1,1)$.  More precisely, he proved the following 

\begin{theorem}\label{bur66} Let $f=\{f_n, n\geq 0\}$ be a martingale with difference sequence $d=\{d_k, k\geq 0\}$.  Let $\{v_k, k\geq 0\}$ be a predictable sequence with $|v_k|\leq 1$ a.s. for all $k$.  There is a universal constant $C_1$ and a constant $C_p$  depending only $p$ such that 
\begin{equation}\label{bur1}
\|v\ast f\|_p\leq C_p\|f\|_p, \,\,\, 1<p<\infty,
\end{equation}
and 
\begin{equation}
P\{{(v\ast f)}^*>\lambda\}\leq \frac{C_1}{\lambda}\|f\|_1,\,\,\, \lambda >0.
\end{equation}

\end{theorem}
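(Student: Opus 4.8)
The plan is to reduce both assertions to estimates on the square function and then invoke classical martingale theory, though the historically faithful route — and the one that foreshadows the 1984 sharp results — is to argue directly. For the $L^p$-inequality \eqref{bur1} with $1<p<\infty$, I would first dispose of the easy range by a duality/interpolation scaffolding: it suffices to prove the bound for $2\le p<\infty$ and then obtain $1<p<2$ by a duality argument, since the adjoint of a martingale transform (with respect to the natural pairing $\langle f_n,g_n\rangle$) is again a martingale transform with a predictable multiplier bounded by $1$. So fix $p\ge 2$. The key step is the pointwise-in-$n$ control of the transformed martingale by the square function: writing $S_n(f)^2=\sum_{k=0}^n d_k^2$ and $S_n(v\ast f)^2=\sum_{k=0}^n v_k^2 d_k^2$, one has $S_n(v\ast f)\le S_n(f)$ almost surely because $|v_k|\le 1$. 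By the Burkholder--Gundy square-function inequalities (valid for all $1<p<\infty$), $\|v\ast f\|_p \le B_p\,\|S(v\ast f)\|_p \le B_p\,\|S(f)\|_p \le B_p A_p\,\|f\|_p$, which gives \eqref{bur1} with $C_p=A_pB_p$. To keep the exposition self-contained (and since in 1966 the square-function inequalities were being developed in tandem), one can instead give the direct argument: expand $\|(v\ast f)_n\|_2^2=\sum_k \|v_kd_k\|_2^2 \le \sum_k\|d_k\|_2^2=\|f_n\|_2^2$ for the $L^2$ case, and then bootstrap to $p=2m$ even integers via the conditional-expectation identity $E[(v\ast f)_n^{2m}]$ expanded against the martingale structure, or — cleaner — use Burkholder's good-$\lambda$ inequality between $(v\ast f)^*$ and $S(f)$.

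For the weak-type $(1,1)$ bound, the plan is the Calderón--Zygmund-style stopping-time decomposition adapted to martingales. Fix $\lambda>0$ and decompose $f=g+h$ where $g$ is the "good" part obtained by stopping $f$ at the first time $\tau$ that $|f_n|$ exceeds $\lambda$ (more precisely, split each $d_k$ using the event $\{\tau\ge k\}\in\calF_{k-1}$, so that predictability is preserved and the multiplier $v$ still acts). The good part $g$ satisfies $\|g\|_2^2\le C\lambda\|f\|_1$ by the standard stopped-martingale $L^2$ estimate, so by the $L^2$ bound just established, $P\{(v\ast g)^*>\lambda/2\}\le \frac{4}{\lambda^2}\|v\ast g\|_2^2 \le \frac{4}{\lambda^2}\|g\|_2^2\le \frac{C}{\lambda}\|f\|_1$. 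The bad part $h$ is supported (in the appropriate martingale sense) on the set $\{\tau<\infty\}$, whose probability is at most $\frac{1}{\lambda}\|f\|_1$ by Doob's maximal inequality, and since $v\ast h$ vanishes off that set, $P\{(v\ast h)^*>\lambda/2\}\le P\{\tau<\infty\}\le\frac{1}{\lambda}\|f\|_1$. Adding the two contributions yields the claimed inequality with a universal $C_1$.

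The main obstacle — and the point that genuinely requires care — is making the splitting $f=g+h$ compatible with the transform structure: one must split the \emph{difference sequences} $d^g_k=d_k\mathbf{1}_{\{\tau\ge k\}}$ and $d^h_k=d_k\mathbf{1}_{\{\tau<k\}}$ rather than the martingales themselves, verify that these are again martingale difference sequences (which uses $\{\tau\ge k\}=\{\tau\le k-1\}^c\in\calF_{k-1}$), and check that $v\ast g$ and $v\ast h$ are the corresponding pieces of $v\ast f$. Once the decomposition is set up correctly the two estimates are routine. A secondary technical point is the passage from a fixed $n$ to the supremum defining $f^*$ and $\|f\|_p$: this is handled by monotone convergence and Fatou, using that the constants $C_p$, $C_1$ do not depend on $n$. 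I would present the square-function route for \eqref{bur1} as the quick modern proof and then remark that Burkholder's original argument, and its 1984 sharpening via the special function $U(x,y)$, give the optimal constant $C_p=p^*-1$ where $p^*=\max(p,p/(p-1))$ — a theme developed in the sections that follow.
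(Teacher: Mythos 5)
The paper states Theorem \ref{bur66} as Burkholder's 1966 result and refers to \cite{Bur11} for the proof, so there is no in-text argument of its own to compare against; what follows assesses your proposal on its merits. Your $L^p$ route via the Burkholder--Gundy square-function inequalities, together with the pointwise bound $S_n(v\ast f)\le S_n(f)$ and the observation that $f\mapsto v\ast f$ is self-adjoint in the pairing $E[\sum_k d_k(f)d_k(g)]$, is a correct modern shortcut. It is worth flagging, though, that the full two-sided Burkholder--Gundy inequalities postdate and in part build upon the 1966 martingale-transform theorem, so as a reconstruction of the original proof this route is circular; Burkholder's 1966 paper argues directly.

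Your weak-type $(1,1)$ argument has a genuine gap. With $\tau=\inf\{n:|f_n|>\lambda\}$ and $g_n=f_{n\wedge\tau}$, the claimed estimate $\|g\|_2^2\le C\lambda\|f\|_1$ is false: the stopped martingale is \emph{not} uniformly of size $O(\lambda)$, because nothing controls the overshoot $|d_\tau|=|f_\tau-f_{\tau-1}|$ at the stopping time. Concretely, take $f_0=0$ with $\calF_0$ trivial and let $d_1$ equal $\pm N$ (with $N>\lambda$) each with probability $1/(2N)$, and $0$ otherwise; then $\|f\|_1=1$ while $g_1=f_1$, so $\|g\|_2^2=E[d_1^2]=N$, which is unbounded as $N\to\infty$. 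Hence the Chebyshev step in your argument does not close. The standard repair is the three-piece Gundy decomposition $f=g+b+h$: your $h$ (differences $d_k\bI_{\{\tau<k\}}$) is handled correctly, but from what you call the good part one must first extract a compensated-jump piece $b$ whose differences isolate the overshoot at $\tau$. Using $E|f_\tau|\le\|f\|_1$ (Jensen and optional stopping) one gets $E\sum_k|db_k|\le C\|f\|_1$, so the pointwise bound $(v\ast b)^*\le\sum_k|v_k||db_k|\le\sum_k|db_k|$ and Markov's inequality handle $b$; only the residual $g$, which after this extraction is genuinely bounded by $O(\lambda)$ with $\|g\|_1\le C\|f\|_1$, admits the $L^2$/Chebyshev estimate you attempt. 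As written, your two-piece decomposition cannot establish the weak-type inequality.
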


Let $\{h_k, k\geq 0\}$ be the Haar system in the Lebesgue unit interval $[0, 1)$. That is, $h_0=[0, 1), h_1=[0, \,1/2)-[1/2, \,1),  h_3=[0, \,1/4)-[1/4, \,1/2), h_4=[1/2, \,3/4)-[3/4,\, 1), \dots$, where we use the same notation for an interval as its indicator function. By considering dyadic martingales, inequality (\ref{bur1}) contains the classical inequality of R.E.A.C. Paley \cite{Pal}  which asserts that there is a constant  $C_p$ depending only $p$ such that  for any sequence of real numbers $a_k$,
\begin{equation}\label{Paleyreal}
\Big\|\sum_{k=0}^{n} \varepsilon_k a_k h_k\Big\|_p\leq C_p\Big\|\sum_{k=0}^{n}  a_k h_k\Big\|_p, \, \,\, 1<p<\infty,
\end{equation}
 whenever $\varepsilon_k\in \{1, -1\}$. We should note that Paley's original inequality was given in terms of the Walsh system of functions in the unit interval and that Marcinkiewicz \cite{Mar1} derived the equivalent formulation given here in terms Haar functions.

It is difficult to overstate the importance of Burkholder's 1966 paper \cite{Bur11} in the subsequent developments of martingale theory and its application to so many different areas of mathematics.  For a historical overview of these developments, we refer the reader to \cite{BanDav1} and \cite{Pis0}. For the purpose of this paper we skip directly to another groundbreaking paper of Burkholder \cite{Bur39} where he identified the best constants $C_1$ and $C_p$  in Theorem \ref{bur66}.  First, for $1<p<\infty$ we let $p^*$  denote the maximum of $p$ and $q$, where $\frac{1}{p}+\frac{1}{q}=1$.   Thus $$p^* = \max\{p,
\frac{p}{p-1}\}$$ and 
\begin{equation}\label{p^*}
p^*-1=\begin{cases} p-1 , \hskip3mm  2\leq p <\infty, \\ 
\frac{1}{p-1},  \hskip4mm  1<p\leq 2.
\end{cases}
\end{equation}
This notation will be used throughout the paper.  
\begin{theorem}\label{sharp1} Let $f=\{f_n, n\geq 0\}$ be a martingale with difference sequence $d=\{d_k, k\geq 0\}$.   Let $v\ast f$ be the martingale transform of $f$ by a real predictable sequence $v=\{v_k, k\geq 0\}$ uniformly bounded in absolute value by 1.  Then 
\begin{equation}\label{bur2}
\|v\ast f\|_p\leq (p^*-1)\|f\|_p, \,\,\, 1<p<\infty.
\end{equation}
The constant $(p^*-1)$ is best possible. Furthermore, in the nontrivial case when $0<\|f\|_p<\infty$, equality holds if and only if $p=2$ and $\sum_{k=0}^{\infty} v_k^2 d_k^2=\sum_{k=0}^{\infty} d_k^2$, almost surely.  
\end{theorem}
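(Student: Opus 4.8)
The plan is to follow Burkholder's method, reducing the inequality \eqref{bur2} to the existence of a single special function (the ``Bellman function'') on a planar domain. First I would observe that it suffices to prove, for all $x,y\in\R$,
\begin{equation}\label{plan:pointwise}
\mathbb{E}\,\bigl(\,|y + (v\ast f)_n|^p - (p^*-1)^p\,|x + f_n|^p\,\bigr)\le 0
\end{equation}
in a suitably normalized form; concretely one seeks $U=U_p\colon\R^2\to\R$ with the \emph{majorization} property $U(x,y)\ge |y|^p-(p^*-1)^p|x|^p$, the \emph{initial} condition $U(x,y)\le 0$ whenever $|y|\le|x|$, and a \emph{concavity/diagonal} condition guaranteeing that $n\mapsto \mathbb{E}\,U(f_n,(v\ast f)_n)$ is nonincreasing. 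Given such a $U$, the chain $\mathbb{E}\,|(v\ast f)_n|^p-(p^*-1)^p\mathbb{E}\,|f_n|^p\le \mathbb{E}\,U(f_n,(v\ast f)_n)\le \mathbb{E}\,U(f_0,v_0 f_0)\le 0$ (the last step using $|v_0|\le1$) yields \eqref{bur2} upon taking the supremum in $n$ and then $p$-th roots.

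The key steps, in order, are: (i) reduce to the pointwise/Bellman formulation just described, being careful that the martingale transform condition $|v_k|\le1$ translates into the geometric fact that $(f_k,(v\ast f)_k)$ moves along line segments of slope at most $1$ in absolute value with the conditional barycenter preserved; (ii) exhibit the explicit candidate. The right function is, up to normalization,
\begin{equation}\label{plan:bellman}
U_p(x,y)=\alpha_p\bigl(|y|-(p^*-1)|x|\bigr)\bigl(|x|+|y|\bigr)^{p-1},
\qquad \alpha_p=p\Bigl(1-\tfrac1{p^*}\Bigr)^{p-1},
\end{equation}
which Burkholder discovered by solving the associated boundary value problem; (iii) verify the three properties of $U_p$. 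The majorization $U_p(x,y)\ge |y|^p-(p^*-1)^p|x|^p$ is a one-variable calculus inequality after scaling so that $|x|+|y|=1$. The initial condition is immediate since $|y|\le|x|$ forces the first factor $\le0$. The diagonal concavity condition---that along any segment $t\mapsto(x+th,y+tk)$ with $|k|\le|h|$ the function $t\mapsto U_p(x+th,y+tk)$ is concave---is checked by computing the second derivative and reducing, via homogeneity, to showing a certain quadratic form in $(h,k)$ is nonpositive on the region $|k|\le|h|$; (iv) take $\mathbb{E}$, use the tower property together with $\mathbb{E}[d_{k+1}\mid\calF_k]=\mathbb{E}[v_{k+1}d_{k+1}\mid\calF_k]=0$ and the concavity to telescope, arriving at \eqref{bur2}.

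For sharpness of the constant $(p^*-1)$ one constructs, for each $\varepsilon>0$, an explicit martingale (a two-dimensional random walk adapted to the zero set / level sets of $U_p$, i.e. stopped when it first leaves the region where $U_p$ agrees with its obstacle) together with a $\pm1$ transforming sequence for which the ratio $\|v\ast f\|_p/\|f\|_p$ exceeds $(p^*-1)-\varepsilon$; this uses the fact that $U_p$ is the \emph{least} such function, so it is touched by an extremal martingale. Finally, the equality discussion: if $p\ne2$ the concavity along admissible segments is \emph{strict} except on a negligible configuration, which forces, after unwinding, that no nontrivial $L^p$ martingale can achieve equality; and if $p=2$ the inequality \eqref{bur2} becomes the orthogonality identity $\|v\ast f\|_2^2=\sum \mathbb{E}\,v_k^2 d_k^2\le\sum\mathbb{E}\,d_k^2=\|f\|_2^2$, with equality exactly when $\sum v_k^2d_k^2=\sum d_k^2$ a.s.

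The main obstacle I anticipate is step (iii), the verification of the diagonal concavity of $U_p$: the function is only $C^1$ across the axes, so one must handle the regions of constant sign of $x$ and $y$ separately, track the behavior across the lines $x=0$ and $y=0$ (where one needs the correct one-sided derivative inequalities so that concavity is not destroyed at the crease), and carry through a somewhat delicate nonnegativity argument for the resulting quadratic form---this is exactly where the special exponent $p-1$ and the constant $\alpha_p$ are forced. Everything else is bookkeeping around the martingale structure, but getting this analytic core right, with the $p<2$ and $p\ge2$ cases genuinely different because $p^*-1$ switches between $p-1$ and $1/(p-1)$, is the crux.
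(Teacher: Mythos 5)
Your plan is correct and essentially implements the simplified proof of Burkholder's sharp inequality that this paper describes in connection with Theorem \ref{sharp3} (the Hilbert-space-valued version under differential subordination), namely the three properties \eqref{sub1}--\eqref{sub3} of the explicit function $U$ from \eqref{u}, telescoped via predictability and the martingale property. By contrast, the paper's own sketch of the proof of Theorem \ref{sharp1} (taken from \cite{Bur39}) is the original and substantially harder route: it reduces to $v_k\in\{-1,1\}$, solves the nonlinear PDE $(p-1)[yF_y-xF_x]F_{yy}-[(p-1)F_y-xF_{xy}]^2+x^2F_{xx}F_{yy}=0$ to produce a three-variable function $u(x,y,t)$ on $\Omega=\{|(x-y)/2|^p<t\}$, establishes $u(0,0,1)\|g_n\|_p^p\le\|f_n\|_p^p$ for $1<p\le2$ with $u(0,0,1)=(p-1)^p$, and then gets $2<p<\infty$ by duality. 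Both routes are valid; the explicit-$U$ route (Burkholder's later \cite{Bur45} simplification, which the present paper also uses for all the continuous-time and singular-integral applications) is cleaner, avoids duality, and gives the Hilbert-space-valued statement for free, which is why it is the one you want to write out.

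One small imprecision in your sharpness discussion: you say $U_p$ in \eqref{plan:bellman} is the \emph{least} majorant of $V$ that is biconcave in the $\pm\pi/4$ directions, and that the extremal martingale ``touches'' it. In fact, as the paper points out around \eqref{smallestbicon} and \eqref{minimalU}, $U_p$ is \emph{not} the least such majorant; the least one is the modification $\tilde U$ that coincides with $V$ on $\{|y|\le(p^*-1)|x|\}$ and with $U_p$ elsewhere. This does not affect the validity of the inequality proof at all (any biconcave majorant of $V$ with the initial condition works), but for the near-extremal construction you should work with $\tilde U$, or equivalently use an explicit near-extremizer such as Lehto's family $f_\theta$ from \S5.1, whose normalized $L^p$-ratio tends to $p^*-1$. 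The remainder of your outline, including the $C^1$-crease care required in verifying directional concavity across $x=0$ and $y=0$ and the split between $1<p\le2$ and $p\ge2$, matches the analytic crux accurately.
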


There are many other sharp martingale transform inequalities proved in \cite{Bur39}. As an illustration, we list the following which even though less relevant to the topic of this paper than the $L^p$-inequalities, still serves as motivation for some of the questions and problems we will raise below. 

\begin{theorem}\label{sharpthm2} Let $1\leq p\leq 2$ and let $f$ and $v$ be as in Theorem \ref{sharp1}.  
Then 
\begin{equation}\label{bur3}
\sup_{\lambda>0}\lambda^p\, P\{(v\ast f)^*>\lambda\}\leq \frac{2}{\Gamma(p+1)}\|f\|_p^p.
\end{equation}
The constant $ \frac{2}{\Gamma(p+1)}$ is best possible.  Furthermore, strict inequality holds if $0<\|f\|_p<\infty$ and $1<p<2$, but equality can hold if $p=1$ or $p=2$. 
\end{theorem}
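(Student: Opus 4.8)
The plan is to prove Theorem~\ref{sharpthm2} by the \emph{Burkholder method} of reducing the weak-type inequality \eqref{bur3} to the existence of a special ``majorant'' function on an appropriate domain, exactly as in the proof of Theorem~\ref{sharp1}. Fix $1\le p\le 2$. By homogeneity and a standard reduction (multiplying $v\ast f$ by $\pm1$ at each step, and approximating the indicator $\{(v\ast f)^*>\lambda\}$ by a smoothed version, or truncating the stopping time at which $v\ast f$ first exceeds $\lambda$), it suffices to bound $\lambda^p P\{|(v\ast f)_n|>\lambda\}$ for each fixed $n$ and $\lambda>0$; by scaling we may take $\lambda=1$. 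So the goal reduces to showing
\begin{equation*}
P\{|(v\ast f)_n|\ge 1\}\le \frac{2}{\Gamma(p+1)}\,\|f\|_p^p
\end{equation*}
for all $n$. The idea is to find a function $U(x,y)$ on $\mathbb{R}\times\mathbb{R}$ (or on a suitable strip) with three properties: (i) a \emph{majorization} bound $U(x,y)\ge \mathbf{1}_{\{|y|\ge1\}}-\frac{2}{\Gamma(p+1)}|x|^p$ pointwise; (ii) an \emph{initial} bound $U(x,x)\le 0$ for all $x$ (so that $\mathbb{E}\,U(f_0,(v\ast f)_0)\le0$); and (iii) a \emph{concavity/diagonal} condition ensuring that the process $n\mapsto U(f_n,(v\ast f)_n)$ is a supermartingale whenever $|v_k|\le1$. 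Given such a $U$, one runs the telescoping/supermartingale argument: $\mathbb{E}\,U(f_n,(v\ast f)_n)\le \mathbb{E}\,U(f_0,(v\ast f)_0)\le 0$, and then property~(i) converts this into the desired estimate.

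The concavity condition (iii) is the technical crux and is dictated by the constraint that $d_k$ and $v_k d_k$ are the increments in the two coordinates with $|v_k|\le1$; this is the same structural requirement as in Burkholder's $L^p$ theorem, and it is satisfied if $U$ is such that for each fixed direction the map $t\mapsto U(x+t,y+\epsilon t)$ is concave for all $|\epsilon|\le1$, together with the martingale mean-value condition. In practice one looks for $U$ of a specific closed form. Here the target constant $2/\Gamma(p+1)$ and the appearance of the Gamma function strongly suggest that the extremal $U$ is built from the function associated with a Bessel-type or exponential-type process; for the weak-type $(1,1)$ analogue ($p=1$, constant $2$) Burkholder's majorant involves $|y|-\frac12(x^2-1)$-type expressions on $\{|y|<1\}$ and the like, and for general $1\le p\le2$ one expects a formula that interpolates, of the form $U(x,y)=\alpha_p|y|^{?}-\beta_p|x|^p$ in the region $|y|<1$ and $U(x,y)=1-\frac{2}{\Gamma(p+1)}|x|^p$ in $|y|\ge1$, with $\alpha_p,\beta_p$ chosen so that the two pieces match $C^1$ across $|y|=1$ and so that (ii) holds on the diagonal. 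I would determine these constants by imposing the smooth-fit and boundary conditions, and then verify (iii) by a direct (if tedious) computation of the relevant second-derivative/concavity inequality.

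For the sharpness of the constant and the equality discussion, I would argue separately. Optimality of $2/\Gamma(p+1)$ follows by exhibiting (or invoking the known) near-extremal martingales: one typically uses a martingale $f$ stopped when it leaves a favorable region together with the transform $v\equiv -1$ (so $v\ast f = 2f_0 - f$), or a continuous-time Brownian-motion model where the relevant hitting-time distribution produces the Gamma factor in the limit; letting the parameters degenerate makes the ratio $\lambda^p P\{(v\ast f)^*>\lambda\}/\|f\|_p^p$ approach $2/\Gamma(p+1)$. That no finite nontrivial martingale achieves equality when $1<p<2$ is seen from the fact that the supermartingale inequality in step (iii) is strict unless the increments live on the boundary set where $U$ is linear, which for $1<p<2$ is a null configuration; at the endpoints $p=1$ and $p=2$ the geometry degenerates and genuine extremizers exist (e.g.\ a simple two-valued martingale for $p=2$). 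The main obstacle I anticipate is guessing the exact special function $U$ and then pushing through the verification of the concavity condition~(iii) uniformly in $|\epsilon|\le1$ and across the free boundary $|y|=1$; everything else (the supermartingale telescoping, the reduction to $\lambda=1$, and the extremal examples) is comparatively routine once $U$ is in hand.
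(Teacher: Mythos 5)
The paper does not actually prove Theorem~\ref{sharpthm2}; it merely records the statement as one of the sharp inequalities established by Burkholder in \cite{Bur39}. Your proposal correctly identifies the Burkholder special-function method as the right framework, and correctly lists the three structural requirements (pointwise majorization of the weak-type ``$V$'', a nonpositivity condition on the initial data, and a concavity/supermartingale condition tied to $|v_k|\le1$). In that sense the strategy is the right one and matches what Burkholder does.

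However, there is a genuine gap: you never exhibit the function $U$. The formula you write down, $U(x,y)=\alpha_p|y|^{?}-\beta_p|x|^p$ on $|y|<1$, contains an unresolved exponent and is in fact not the right shape. Burkholder's function for this inequality is considerably more intricate than a two-parameter power-law patch; its construction for $1<p<2$ does not reduce to imposing $C^1$ smooth fit at $|y|=1$ and on the diagonal, and a substantial part of the difficulty is precisely that the natural biconcave majorant is not given by so short an ansatz. The constant $2/\Gamma(p+1)$ arises from the $p$-th moment of the exit time of one-dimensional Brownian motion from an interval (equivalently, from solving a boundary-value problem for the heat/Laplace equation in a strip), and encoding this in a two-variable biconcave $U$ with the correct boundary behavior is the nontrivial step. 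Declaring ``I would determine these constants by imposing the smooth-fit and boundary conditions, and then verify (iii)'' leaves precisely that step unperformed, so the argument as written cannot be checked and does not establish \eqref{bur3}. The sharpness discussion has the same character: the mention of a Brownian-motion model producing the Gamma factor is in the right spirit, but no extremizing sequence of discrete martingales is constructed, and no argument is given that the discrete-time ratio can be driven arbitrarily close to $2/\Gamma(p+1)$, nor that strict inequality holds for finite nontrivial $f$ when $1<p<2$. In short: correct general method, but the heart of the proof (the explicit $U$ and the verification of its properties) is missing.
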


\begin{remark}\label{realhaar}
An immediate consequence of (\ref{bur2})  is the fact that the constant $C_p$ in Paley's inequality (\ref{Paleyreal}) can be replaced by $(p^*-1)$.  In fact, Burkholder (see \cite[p.~697]{Bur39}) also shows that this is best possible, hence proving that the real unconditional constant of the Haar basis $\{h_k; k\geq 0\}$ of $L_{\bR}^p[0, 1]$,  $1<p<\infty$, is $(p^*-1)$. 
\end{remark}

The proof of Theorem \ref{sharp1} in \cite{Bur39}, which after some preliminary work reduces to the case when the predicable sequence $\{v_k\}\in \{-1, 1\}$, is extremely difficult.  It rests on solving the nonlinear PDE 
\begin{equation}
(p- 1)[yF_y - xF_x]F_{yy} - [(p - 1)F_{y} - xF_{xy}]^2 + x^2F_{xx}F_{yy} = 0
\end{equation}
for $F$ non-constant and 
satisfying some suitable boundary conditions in certain domains of $\bR^2$.  
The solutions to such equation leads to a system of several nonlinear differential inequalities with boundary conditions.  
From this system, a function $u(x, y, t)$ is constructed in the domain 
$$
\Omega = \Big\{(x, y, t)\in \bR^3: \Big|\frac{x-y}{2}\Big|^p <t\Big\}
$$
with certain convexity properties for which, using the techniques of \cite{Bur35}, Burkholder proves that 
\begin{equation}
u(0,0,1)\|g_n\|_p^p\leq \|f_n\|_p^p
\end{equation}
for $1<p\leq 2$.  He then shows that $u(0,0,1)=(p-1)^p$, which gives the bound in Theorem \ref{sharp1} for this range of $p$. The case $2<p<\infty$ follows by duality.  The research announcement  \cite{Bur37} contains a nice summary of the methods used in \cite{Bur39}.
Even today, the proof given in \cite{Bur39} remains quite difficult.   
 
 In a series of papers following \cite{Bur39}, which included many applications to various other sharp inequalities for discrete  martingales and stochastic integrals, Burkholder simplified the proofs in \cite{Bur39} considerably by giving an explicit expression for his ``magical" function $U$.  This simpler proof also led to a more general theorem that has several applications.  In particular, in \cite{Bur45} Burkholder proved the following extension of Theorem \ref{sharp1}.

 \begin{theorem}\label{sharp3} Let $\bH$ be a (real or complex) Hilbert space. For $x\in \bH$, let $|x|$ denote its norm.  Let $f=\{f_n, n\geq 0\}$ and $g=\{g_n, n\geq 0\}$ be two $\bH$-valued martingales on the same filtration with martingale difference sequence $d=\{d_k, k\geq 0\}$ and $e=\{e_k, k\geq 0\}$, respectively, and satisfying the subordination condition  
 \begin{equation}\label{diffsub}
 |e_k|\leq |d_k|,
 \end{equation}
 almost surely for all $k\geq 0$.  Then 
 \begin{equation}\label{bur4}
\|g\|_p\leq (p^*-1)\|f\|_p, \,\,\, 1<p<\infty
\end{equation}
and the constant $(p^*-1)$ is best possible.  Furthermore, in the nontrivial case when $0<\|f\|_p<\infty$, equality holds if and only if $p=2$ and equality holds in (\ref{diffsub}) almost surely for all $k\geq 0$. 
\end{theorem}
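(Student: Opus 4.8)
The plan is to run the ``Burkholder method'': replace the $L^p$ inequality (\ref{bur4}) by a single pointwise statement about a special function on $\bH\times\bH$ and then propagate it along the martingale. Set $V_p(x,y)=|y|^p-(p^*-1)^p|x|^p$ and
\begin{equation*}
U_p(x,y)=\al_p\bigl(|y|-(p^*-1)|x|\bigr)\bigl(|x|+|y|\bigr)^{p-1},\qquad \al_p=p\Bigl(1-\tfrac{1}{p^*}\Bigr)^{p-1},
\end{equation*}
a function depending on $x,y$ only through their norms. I would establish three properties: (i) \emph{majorization}: $V_p(x,y)\le U_p(x,y)$ for all $x,y\in\bH$; (ii) \emph{sign}: $U_p(x,y)\le 0$ whenever $|y|\le|x|$, which is immediate since $p^*-1\ge 1$ and $\al_p>0$; and, the crux, (iii) \emph{concavity along the subordination cone}: for all $x,y,h,k\in\bH$ with $|k|\le|h|$ there are $A(x,y),B(x,y)\in\bH$ (the partial ``gradients'' of $U_p$) with
\begin{equation*}
U_p(x+h,y+k)\le U_p(x,y)+\langle A(x,y),h\rangle+\langle B(x,y),k\rangle,
\end{equation*}
equivalently, $\tau\mapsto U_p(x+\tau h,y+\tau k)$ is concave on $\R$.

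Granting (i)--(iii), the conclusion follows quickly. After a routine reduction (assume $\|f\|_p<\infty$; truncate so that $f,g$ are bounded and eventually constant, preserving (\ref{diffsub})), apply (iii) with $x=f_{n-1}$, $y=g_{n-1}$, $h=d_n$, $k=e_n$ and take conditional expectation given $\calF_{n-1}$; since $A(f_{n-1},g_{n-1})$ and $B(f_{n-1},g_{n-1})$ are $\calF_{n-1}$-measurable and $d_n,e_n$ are martingale differences, the linear terms drop out and
\begin{equation*}
\mathbb{E}\,U_p(f_n,g_n)\le \mathbb{E}\,U_p(f_{n-1},g_{n-1})\le\cdots\le \mathbb{E}\,U_p(f_0,g_0)\le 0,
\end{equation*}
the last step using $|g_0|=|e_0|\le|d_0|=|f_0|$ and (ii). Combining with (i) gives $\mathbb{E}|g_n|^p\le (p^*-1)^p\,\mathbb{E}|f_n|^p\le (p^*-1)^p\|f\|_p^p$ for every $n$, and taking the supremum in $n$ yields (\ref{bur4}). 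Sharpness of $(p^*-1)$ requires no new argument: a martingale transform $g=v\ast f$ by a predictable $v$ valued in $\{-1,1\}$ satisfies $|e_k|=|d_k|$, hence (\ref{diffsub}), and Theorem \ref{sharp1} already shows the constant is best possible in that subclass (indeed already for $\bH=\R$).

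The main obstacle is constructing $U_p$ and verifying (iii). Because $U_p(x,y)=\Phi(|x|,|y|)$ with $\Phi(s,t)=\al_p(t-(p^*-1)s)(s+t)^{p-1}$, I would reduce (iii) to a differential inequality for $\Phi$ on the first quadrant: expanding $|x+\tau h|^2=|x|^2+2\tau\langle x,h\rangle+\tau^2|h|^2$, one checks that concavity of $\tau\mapsto\Phi(|x+\tau h|,|y+\tau k|)$ under $|k|\le|h|$ is implied by a pointwise sign condition on a quadratic form in $\Phi_{ss},\Phi_{st},\Phi_{tt}$ together with $\Phi_t\ge 0\ge\Phi_s$ in the region $t\le(p^*-1)s$; after the substitution $t=rs$ this becomes a one-variable calculus verification, and the exceptional locus where a path meets $0$ is handled by a one-sided derivative argument (valid because $\partial_s\Phi\le 0$ on $\{s=0\}$). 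Property (i) likewise reduces by homogeneity to the scalar inequality $r^p-(p^*-1)^p\le\al_p(r-(p^*-1))(1+r)^{p-1}$ for $r\ge 0$, together with $\al_p\ge 1$; the case $2<p<\infty$ may be handled by the same $U_p$ or deduced from $1<p<2$ by duality. Finally, for the equality assertion: when $p=2$ one has $\|f\|_2^2=\sum_k\mathbb{E}|d_k|^2$ and $\|g\|_2^2=\sum_k\mathbb{E}|e_k|^2$ by orthogonality of martingale differences, so (\ref{bur4}) is an equality iff $|e_k|=|d_k|$ a.s.\ for all $k$; when $p\ne 2$, tracing the equality cases up the chain above (strict concavity of $U_p$ along generic subordinate directions, and $U_p(x,y)<0$ strictly off the cone $|y|=(p^*-1)|x|$, so that $U_p(f_0,g_0)=0$ forces $f_0=0$) collapses the martingales, giving strict inequality whenever $0<\|f\|_p<\infty$.
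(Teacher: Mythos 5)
Your plan is the standard Burkholder method and coincides with the route the paper summarizes for this theorem: introduce $V$ and the explicit majorant $U$, establish majorization, the sign condition at the initial point, and directional concavity along the subordination cone $|k|\le|h|$, then push the inequality $\mathbb{E}\,U(f_n,g_n)\le 0$ up the filtration and invoke $V\le U$. The reduction to bounded, eventually constant martingales, the handling of sharpness by falling back on Theorem~\ref{sharp1}, and the $p=2$ equality analysis via orthogonality of differences are all as the paper intends.

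One concrete flaw in your sketch of step (iii). Writing $U(x,y)=\Phi(|x|,|y|)$ with $\Phi(s,t)=\alpha_p\bigl(t-(p^*-1)s\bigr)(s+t)^{p-1}$, you assert that concavity of $\tau\mapsto\Phi(|x+\tau h|,|y+\tau k|)$ follows from a sign condition on the second-order quadratic form in $\Phi_{ss},\Phi_{st},\Phi_{tt}$ \emph{together with} $\Phi_t\ge 0\ge\Phi_s$ on the region $t\le(p^*-1)s$. The sign of $\Phi_s$ is fine (indeed $\Phi_s=-\alpha_p p(p-1)s(s+t)^{p-2}\le 0$ everywhere for $p\ge 2$), but the claim $\Phi_t\ge 0$ is false there: a direct computation gives
\begin{equation*}
\Phi_t=\alpha_p\,p\,(s+t)^{p-2}\bigl[\,t-(p-2)s\,\bigr]\qquad(p\ge2),
\end{equation*}
which is negative on $\{t<(p-2)s\}\subset\{t\le(p-1)s\}$ whenever $p>2$. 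Worse, what the naive chain-rule argument actually requires is $\Phi_t\le 0$ (to make $\Phi_t\,t''\le 0$, since the curvature $t''$ of $\tau\mapsto|y+\tau k|$ is nonnegative), and that sign also fails. So one cannot split the second derivative into a radial Hessian piece plus separately signed $\Phi_s s''+\Phi_t t''$ terms. The paper (following Burkholder) avoids this entirely by computing the full Hessian quadratic form on $\bH\times\bH$ and exhibiting the exact decomposition $-c_p(A+B+C)$ of equation~(\ref{U_fact}), in which the $t''$ contribution is absorbed into the nonnegative term $B=p(p-2)\bigl[|k|^2-(y',k)^2\bigr]|y|^{-1}(|x|+|y|)^{p-1}$ rather than controlled by a sign of $\Phi_t$. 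Your overall scheme is right, but this identity, not the one-variable sign condition you propose, is the actual content of (iii).

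Two minor notes: for $1<p<2$ you should either prove (i) and (iii) directly from the formula for $U$ (with $p^*-1=1/(p-1)$) or deduce (\ref{bur4}) by duality, and you should state that truncation can be done so that differential subordination is preserved (take both $f$ and $g$ constant beyond a common stopping index).
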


\begin{remark}  In \cite{Pel1}, 
A. Pe\l czy\'nski 
conjectured that the complex unconditional constant for the Haar system is the same as the unconditional constant for the real case.  Given Remark \ref{realhaar}, this amounts to proving that 
\begin{equation}\label{Paleycom}
\Big\|\sum_{k=0}^{n} e^{i\theta_k}c_k h_k\Big\|_p\leq (p^*-1)\Big\|\sum_{k=0}^{n}  c_k h_k\Big\|_p, \, \,\, 1<p<\infty,
\end{equation}
for all $c_k\in \bC$ and $\theta_k\in \bR$\,.  But this follows from Theorem \ref{sharp3}. For more on this, see \cite{Bur44}. 
\end{remark}

To prove the inequality (\ref{bur4}), Burkholder considers the function $V:\bH\times\bH\to \bR$\,    defined by 
\begin{equation}\label{v}
V(x, y)=|y|^p-(p^*-1)^p|x|^p.
\end{equation}
The goal is then to show that $EV(f_n, g_n)\leq 0$.  Burkholder then introduces his now famous function 
\begin{equation}\label{u}
U(x, y)=\alpha_p\left(|y|-(p^*-1)|x|\right)\left(|x|+|y|\right)^{p-1}, 
\end{equation}
where 
$$
\alpha_p=p\left(1-\frac{1}{p^*}\right)^{p-1}
$$
and proves that this function satisfies the following properties: 
\begin{eqnarray}\label{sub1}
V(x, y)&\leq &U(x, y)\,\,\,  \text{for all}\,\,\, x, y\in \bH,\label{sub2}\\
EU(f_n, g_n) &\leq& EU(f_{n-1}, g_{n-1}), \,\,\, n\geq 1,\label{sub3}\\
EU(f_0, g_0)&\leq& 0.
\end{eqnarray}

 A nice explanation of Burkholder's PDE and other ideas  in \cite{Bur39} in terms of the theory of Bellman functions was subsequently given by F. Nazarov, S. Treil and A. Volberg.  For this connection and some of the now very extensive literature on this subject, we refer the reader to  \cite{NazTre1, NazTreVol1, NazTreVol2, Vol1}.  
 Quoting from \cite{NazTreVol1}:
  ``It is really amazing that Burkholder was able to solve 
  these PDEÕs: they are really complicated."   
 In the Bellman function language of Nazarov and Volberg, Burkholder gives an explicit expression for the ``true" Bellman function of the above PDE.  Quoting from \cite{NazTre1}, ``the most amazing thing is that the true Bellman function is known! This fantastic achievement belongs to Burkholder." 
 
  Explicit solutions to Bellman problems that arise in many of the applications to harmonic analysis are  very challenging problems.  For more on this, see \cite{Mel1}, \cite{Mel2}, \cite{VasVol1}, and specially the recent papers \cite{VasVol0}, \cite{VasVol2} which contain a treatment, based on the Monge-Amp\`ere equation, on how to solve many Bellman equations, including Burkholder's. While Bellman functions had been  used in the area of control theory for many years, Burkholder pioneering paper \cite{Bur39} was the first to use Bellman functions in problems related to harmonic analysis.

In addition to the results we will discuss in \S2 below, there are many other extensions and refinements of Theorem \ref{sharp1} in the literature now.   Many of these results are due to Burkholder himself; see for example his work in \cite{Bur40, Bur42, Bur43, Bur44, Bur46, Bur47, Bur48,
Bur50, Bur51, Bur52, Bur54}. Some other applications (including many
recent ones) are contained in \cite{Cho3, Cho4, Cho2, Cho1, Ham2, Ham1, Osc6, Osc7, Osc1, Osc2, Osc3, Osc4, Osc5, Wan2, Wan3, Wan4}.   The interested reader is further encouraged to consult many of the other papers of A. Os\c{e}kowski on sharp martingale inequalities not listed here.  Os\c{e}kowski's work further elucidates ``the Burkholder method" and also removes some the mystery of his ``miracle" functions.

\subsection{Outline of the paper} The outline of this paper is as follows.  In \S2, we present versions of Burkholder's inequalities for continuous-time martingales.  These are used for the applications to the Hilbert transform, orthogonal harmonic functions, Riesz transforms in $\bR^n$ and Wiener space, and the Beurling-Ahlfors operator in the complex plane and $\bR^n$. In addition to martingales satisfying the Burkholder subordination condition, we discuss versions of these inequalities for martingales satisfying an orthogonality condition.  The orthogonality condition was introduced in this context in \cite{BanWan3} to prove the sharp $L^p$-bounds for the Riesz transforms.   The applications of the sharp inequalities to singular integrals are given in \S3. In \S4 we describe a more recent connection between Burkholder's inequalities and a class of Fourier multipliers which we called {\it L\'evy multipliers}.   
These multipliers arise from transformations of the L\'evy symbol of the process via the  L\'evy-Khintchine formula. 

A key property of the Burkholder function $U$ (proved in \cite{Bur45}) is that for all $x,\ y,\ h,\ k\in \bH$ with  $|k|\leq |h|$, the function  
$t\to U (x+th,\ y+tk)$  is concave in $\bR$ , or equivalently the function $t\to -U (x+th,\ y+tk)$ is convex in $\bR$ . The concavity property of $t\to U (x+th,\ y+tk)$ is crucial in the proof of the properties in (\ref{sub1})-(\ref{sub3}).  Properly formulated,  this property means that the function $U$ is rank-one convex which then brings connections to a long standing open problem in the calculus of variations known as Morrey's conjecture.  This connections are discussed in \S5. 

Finally, it should be mentioned here that the fact that many singular integrals can be studied by martingale transform techniques applied to martingales 
arising from composition of harmonic functions with Brownian motion has been  well known  for many years and the literature on this topic is very large indeed.  We refer the reader to, for example, \cite{Bur38},  and especially \cite{McC1} where it is shown that under suitable smoothness conditions the H\"ormander $L^p$-multiplier theorem (\cite[p.~96]{Ste2}) follows from discrete-time martingale transforms, and that  it holds even in the setting of UMD Banach spaces.  But in such general settings, and without an ``almost exact" representation of the operators in terms of stochastic integrals, it is not possible to obtain the type of information 
on the $L^p$-constants we want (need) for some of the applications described in this paper.

\section{Sharp inequalities for continuous-time martingales}

We begin by recalling several inequalities for continuous-time martingales based on variants of Burkholder's differential subordination.  Let $\mathbb{H}$ be a separable (real or complex) Hilbert space with norm $|x|$ and inner product
$x\cdot y$ for vectors $x, y\in \mathbb{H}$.
 For this paper we can (and will) assume that the Hilbert space is just $\ell^2$.  In fact, since all the inequalities derived in this paper hold with universal constants independent of the dimension of the space where the martingales take their values, we can just as well work on either $\bR^n$ or $\bC^n$.  Thus from this point on we will either just work on $\ell^2$ and even at time just specify the martingales by coordinates either real or complex, depending on our needs.  The presentation could be simplified and streamlined somewhat by introducing a more uniform notation.  But since we want to make reference back to papers in the literature where either $\ell^2$, $\bR^d$ or $\bC^d$ are regularly used, we prefer to keep it at this somewhat informal level.
 
We consider then two $\ell^2$-valued semi-martingales $X$ and $Y$ which have right-continuous paths with left-limits (r.c.l.l.).  We denote  their common filtration by
$\mathcal{F=}\left\{  \mathcal{F}_{t}\right\}  _{t\geq0}$ which is a family of right-continuous sub-$\sigma$ fields in
the probability space $\left\{  \Omega, \mathcal{A}, P\right\}$ and for which 
 $\mathcal{F}_{0}$ contains all sets of probability zero. We denote the quadratic covariation 
 process between $X$ and $Y$ by 
$\left[  X,Y\right]  =\left\{ \left[  X,Y\right]_{t}; t\geq 0\right\}$. 
For notational 
simplicity, we use $\left[  X\right]  =\left\{  \left[  X\right]
_{t}; t\geq 0\right\}$ to denote $\left[  X,X\right].$ As in the case of discrete time martingales,  we set $\left|  \left|
X\right|  \right|  _{p}=\sup_{t\geq0}\left|  \left|  X_{t}\right|
\right|  _{p}.$

\subsection{Differential subordination} We say that the martingale $Y$ is differentially subordinate to the martingale $X$ if $|Y_0| \leq |X_0|$ and $\left[  X\right]_t  -\left[
Y\right]_t $ is a nondecreasing and non-negative function of $t$. We use the notation $Y<<X$ to indicate this subordination property. This notion of differential subordination is 
inspired by the differential subordination property introduced by Burkholder for discrete martingales as in Theorem \ref{sharp3} and also by the applications to singular integrals.  
For martingales with continuous
paths, it was introduced in \cite{BanWan3}. The inequalities in \cite{BanWan3} were extended under this 
condition to general continuous-time 
parameter martingales by Wang in \cite{Wan1}. Various other sharp
martingale inequalities (including exponential inequalities and inequalities for submartingales) proved by Burkholder in \cite{Bur37}, \cite{Bur45} and \cite{Bur50}, were extended in  \cite{Wan1} from the
discrete time setting (and stochastic integrals) to general continuous-time parameter martingales under the above definition of differential subordination.  More precisely we have the following theorem proved in \cite{BanWan3}. 

\begin{theorem}\label{BW_theorem}
Let $X$ and $Y$ be two $\ell^2$-valued continuous-time parameter martingales with continuous paths. That is, the function $t\to X_t$ is continuous almost surely.  Suppose that 
$Y<<X$. Then 
\begin{equation}
{\|Y\|}_p\leq (p^*-1){\|X\|}_p, \,\,\,\, 1<p<\infty.
\end{equation}
The inequality is sharp and strict if $p\neq2$ and $0<\left|  \left|
X\right|  \right|  _{p}<\infty$.
\end{theorem}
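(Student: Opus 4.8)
The plan is to carry out the continuous-time Burkholder method with the very function $U$ of \eqref{u} and the majorant $V$ of \eqref{v}, exploiting that $X$ and $Y$ have \emph{continuous} paths so that It\^o's formula applies to $U(X_t,Y_t)$ with no jump corrections. First I would dispose of the trivial cases: if $\|X\|_p=\infty$ there is nothing to prove. By a routine localization — replace $X,Y$ by the stopped martingales $X^{\tau_m},Y^{\tau_m}$ along stopping times $\tau_m\uparrow\infty$ that reduce both martingales and keep all integrals below finite, then send $m\to\infty$ using Fatou on the left and dominated (or monotone) convergence on the right — it suffices to show
\begin{equation*}
E\,|Y_t|^p\le(p^*-1)^p\,E\,|X_t|^p\qquad\text{for each fixed }t,
\end{equation*}
under the standing assumption that $X_t,Y_t\in L^p$. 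Taking $p$-th roots and then the supremum over $t$ gives the theorem.

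\textbf{The supermartingale.} The core step is to prove that $\{U(X_t,Y_t)\}_{t\ge0}$ is a supermartingale. Since the paths are continuous, It\^o's formula gives
\begin{equation*}
U(X_t,Y_t)=U(X_0,Y_0)+M_t+\tfrac12\int_0^t\Lambda_s,
\end{equation*}
where $M$ is a local martingale (the first-order terms, which carry no drift because $X$ and $Y$ are martingales) and $\Lambda_s$ is obtained by contracting the Hessian of $U$ in the pair of variables $(x,y)$ against the matrix-valued quadratic covariations of the increments of $(X,Y)$. The differential subordination $Y\ll X$, which says that $[X]_t-[Y]_t$ is nondecreasing, together with the Kunita--Watanabe inequality and the structure of the quadratic variation of a continuous $\ell^2$-valued martingale, lets one dominate $\Lambda_s$ by Hessian contractions of $U$ in directions $(h,k)$ with $|k|\le|h|$. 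By the concavity property of $U$ recalled above (from \cite{Bur45}) — that $t\mapsto U(x+th,y+tk)$ is concave whenever $|k|\le|h|$ — those contractions are $\le0$; hence $\Lambda_s\le0$, the finite-variation part is non-increasing, and $U(X_t,Y_t)$ is a local supermartingale, made a genuine supermartingale by the integrability already arranged.

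\textbf{Conclusion and sharpness.} Chaining $V\le U$ (property \eqref{sub2}), the supermartingale property, and $U(x,y)\le0$ when $|y|\le|x|$ — which applies at time $0$ since $|Y_0|\le|X_0|$ — yields
\begin{equation*}
E\,V(X_t,Y_t)\le E\,U(X_t,Y_t)\le E\,U(X_0,Y_0)\le0,
\end{equation*}
that is, $E|Y_t|^p\le(p^*-1)^p\,E|X_t|^p$. For sharpness one produces continuous-path martingales $X,Y$ with $Y\ll X$ and $\|Y\|_p/\|X\|_p$ arbitrarily close to $p^*-1$; such examples are given in \cite{BanWan3}. Strictness for $p\ne2$ follows by tracing the equality cases: equality in the chain above forces $\Lambda_s\equiv0$ $P$-a.s.\ and $U(X_0,Y_0)=0$, which is incompatible with $0<\|X\|_p<\infty$ because, for $p\ne2$, the concavity of $t\mapsto U(x+th,y+tk)$ is strict except along degenerate directions.

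\textbf{Main obstacle.} The genuinely delicate point is that Burkholder's $U$ in \eqref{u} is not $C^2$: it fails to be twice differentiable where $|x|+|y|=0$ and along $\{|x|=0\}\cup\{|y|=0\}$, so It\^o's formula cannot be applied to it verbatim. The remedy is to mollify — work with $U*\varphi_\varepsilon$ (or a suitably shifted regularization), check that the regularizations still obey the concavity/Hessian inequality up to an error that vanishes as $\varepsilon\to0$, apply It\^o to the smooth approximant, and pass to the limit — while controlling the contributions near the non-smooth set and making sure the dominating quadratic-form inequality survives in the infinite-dimensional ($\ell^2$) setting. This analytic bookkeeping is where essentially all the work lies; the rest is the mechanical Burkholder argument of \eqref{sub2}--\eqref{sub3}. (An alternative for $2\le p<\infty$ is a duality argument reducing to $1<p\le2$, but one must then contend with the fact that $\ll$ is not a symmetric relation.)
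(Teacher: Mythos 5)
Your proposal is correct and follows essentially the same route as the paper's outline in \S2.3 (which in turn follows \cite{BanWan3}): majorize $V(x,y)=|y|^p-(p^*-1)^p|x|^p$ by Burkholder's $U$, apply It\^o's formula to $U(X_t,Y_t)$, use the Hessian/concavity property of $U$ in directions $|k|\le|h|$ together with differential subordination to make the bounded-variation part non-increasing, conclude $EU(X_t,Y_t)\le EU(X_0,Y_0)\le0$, and hence $EV(X_t,Y_t)\le0$. You also correctly identify the genuine technical point — $U$ is not $C^2$, so It\^o's formula requires a regularization/approximation argument — which is exactly what the paper defers to Lemma 1.1 and Proposition 1.2 of \cite{BanWan3}.
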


We also have the following extension proved in 
\cite{Wan1}.

\begin{theorem}\label{Wan1} Let $X$ and $Y$ be two $\ell^2$-valued continuous-time
parameter martingales such that $Y<<X$. Then,
\begin{equation}\label{sharpcon1}
\left|  \left|  Y\right|  \right|  _{p}\leq\left(  p^{\ast}-1\right)  \left|
\left|  X\right|  \right|  _{p},\,\,\,\, 1<p<\infty.
\end{equation}
The inequality is sharp and strict if $p\neq2$ and $0<\left|  \left|
X\right|  \right|  _{p}<\infty$. In addition, for all $\lambda \geq 0$, 

\begin{equation}\label{sharpcon2}
P\left(\sup_{t\geq 0}\left(|X_t|+|Y_t|\right) > \lambda\right)\leq \frac{2}{\lambda}\|X\|_p, 
\end{equation}
and this is also sharp. 
\end{theorem}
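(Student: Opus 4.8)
The strategy is to run the Burkholder method of Theorem~\ref{sharp3} in continuous time, with It\^o's formula for r.c.l.l.\ semimartingales in the role played there by the telescoping sum over martingale differences (for continuous paths this recovers Theorem~\ref{BW_theorem}; the point of \cite{Wan1} is to allow jumps). For the $L^p$ bound (\ref{sharpcon1}) one keeps Burkholder's pair $V$ from (\ref{v}) and $U$ from (\ref{u}) on $\mathbb{H}\times\mathbb{H}$ and exploits three features of $U$: the majorization $V\le U$ of (\ref{sub2}); the sign condition $U(x,y)\le0$ when $|y|\le|x|$, immediate from the explicit formula since $p^*-1\ge1$ makes $|y|-(p^*-1)|x|\le|y|-|x|\le0$ (so $U(X_0,Y_0)\le0$ a.s., because $|Y_0|\le|X_0|$); and the concavity of $r\mapsto U(x+rh,y+rk)$ on $\mathbb{R}$ whenever $|k|\le|h|$, recalled above from \cite{Bur45}. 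A routine localization along $\tau_n=\inf\{t:|X_t|+|Y_t|\ge n\}$, using the growth bound $|U(x,y)|\le c_p(|x|+|y|)^p$, the submartingale property of $|X|^p$, a Burkholder-Davis-Gundy estimate for $Y$ based on $[Y]_t\le[X]_t$, and Fatou's lemma, reduces (\ref{sharpcon1}) to showing that the process $U(X_t,Y_t)$ is a supermartingale.

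\textbf{The supermartingale step.} Since $U$ is only Lipschitz and is singular where $x=0$, $y=0$, or $x+y=0$, I would first mollify, $U_\varepsilon=U*\varphi_\varepsilon$: writing $U_\varepsilon(x+rh,y+rk)$ as an average of the concave maps $r\mapsto U(x'+rh,y'+rk)$ shows $U_\varepsilon$ still has the concavity-along-subordinate-directions property, while $U_\varepsilon\to U$ locally uniformly. Put $Z=(X,Y)$. The general It\^o formula gives
\[
U_\varepsilon(Z_t)=U_\varepsilon(Z_0)+N_t+\frac12\int_0^t\operatorname{tr}(D^2U_\varepsilon(Z_{s-})\,d[Z^c]_s)+\sum_{0<s\le t}R_s ,
\]
with $N$ a local martingale, $R_s=U_\varepsilon(Z_s)-U_\varepsilon(Z_{s-})-\nabla U_\varepsilon(Z_{s-})\cdot\Delta Z_s$, and $[Z^c]$ the matrix of continuous covariations. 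Each $R_s\le0$, being $\phi(1)-\phi(0)-\phi'(0)$ for the concave function $\phi(r)=U_\varepsilon(Z_{s-}+r\Delta Z_s)$ (concave because differential subordination forces $|\Delta Y_s|\le|\Delta X_s|$). The drift term is $\le0$ because $d[Z^c]_s$ decomposes locally into rank-one pieces along directions $(h_\alpha,k_\alpha)$ with $|k_\alpha|\le|h_\alpha|$ --- a normalization available precisely because $[X]_t-[Y]_t$ is nondecreasing --- on each of which $D^2U_\varepsilon$ is nonpositive by concavity. Thus $U_\varepsilon(Z_t)$ is a local supermartingale, and being bounded below (after the localization, via Doob and Burkholder-Davis-Gundy) by an integrable random variable, it is a genuine supermartingale, so $E\,U_\varepsilon(Z_t)\le E\,U_\varepsilon(Z_0)$. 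Letting $\varepsilon\to0$ and invoking $V\le U$,
\[
E\,|Y_t|^p-(p^*-1)^p\,E\,|X_t|^p=E\,V(Z_t)\le E\,U(Z_t)\le E\,U(Z_0)\le0 ,
\]
and taking the supremum over $t$, after undoing the localization, proves (\ref{sharpcon1}).

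\textbf{Sharpness, strictness, and the maximal bound.} Sharpness of $p^*-1$ is inherited from the discrete (hence pure-jump) case already settled in Theorem~\ref{sharp3}. For strictness when $p\ne2$: equality in (\ref{sharpcon1}) with $0<\|X\|_p<\infty$ would force $U(Z_t)$ to be a martingale and $V(Z_t)=U(Z_t)$ a.s.; inspecting the equality set of $V\le U$ together with the \emph{strict} concavity of $U$ along subordinate directions when $p\ne2$ forces $Y\equiv0$, contradicting equality. For (\ref{sharpcon2}) one repeats the scheme with a $\lambda$-dependent special function $U_\lambda$ adapted to the exit problem for $t\mapsto|X_t|+|Y_t|$, with the properties (a) $\mathbf 1_{\{|x|+|y|\ge\lambda\}}\le U_\lambda(x,y)+\frac2\lambda|x|$, (b) $U_\lambda(x,y)\le0$ for $|y|\le|x|$, and (c) the same concavity property, so that the It\^o argument makes $U_\lambda(Z_t)$ a supermartingale. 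Stopping at $\sigma=\inf\{t:|X_t|+|Y_t|>\lambda\}$ and applying optional stopping at $\sigma\wedge t$ with (a) and (b) gives $P(\sigma\le t)\le E\,U_\lambda(Z_{\sigma\wedge t})+\frac2\lambda\,E\,|X_{\sigma\wedge t}|\le\frac2\lambda\|X\|_1$; letting $t\to\infty$, $P(\sup_t(|X_t|+|Y_t|)>\lambda)\le\frac2\lambda\|X\|_1\le\frac2\lambda\|X\|_p$, which is (\ref{sharpcon2}). A stopped extremal example shows the constant $2$ cannot be improved.

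\textbf{The main obstacle.} All the conceptual weight sits in Burkholder's functions; the work is to make the It\^o argument rigorous with \emph{both} jumps and the failure of $U$ (and of $U_\lambda$) to be $C^2$. The genuinely delicate step --- and the one place the hypothesis ``$[X]_t-[Y]_t$ nondecreasing'' (as opposed to the discrete $|e_k|\le|d_k|$) is really used --- is the local decomposition of the continuous covariation matrix into rank-one pieces along subordinate directions, which needs a careful predictable-representation argument, possibly after enlarging the probability space. Secondary issues are controlling the mollification and localization uniformly enough to pass to the limit, and exhibiting $U_\lambda$ explicitly and verifying (a)--(c).
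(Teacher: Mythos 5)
Your overall architecture is the right one and matches what the paper (following \cite{Bur45}, \cite{BanWan3}, \cite{Wan1}) does: use Burkholder's pair $V\le U$, show $U(X_t,Y_t)$ is a supermartingale by It\^o's formula for r.c.l.l.\ semimartingales, use $U(X_0,Y_0)\le 0$ and $V\le U$, and localize. Your observations that the jump part satisfies $|\Delta Y_s|\le|\Delta X_s|$ (because a nondecreasing BV process has nonnegative jumps) and that $U_\varepsilon=U*\varphi_\varepsilon$ retains the directional concavity are both correct, and the sharpness discussion and the $U_\lambda$/optional-stopping scheme for (\ref{sharpcon2}) are the standard ones.

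Where you diverge from the paper is the treatment of the \emph{continuous} covariation (drift) term, and here there is a real gap. You assert that $d[Z^c]_s$ ``decomposes locally into rank-one pieces along directions $(h_\alpha,k_\alpha)$ with $|k_\alpha|\le|h_\alpha|$'' and then apply directional concavity to each piece. That decomposition is a genuine linear-algebra lemma, not a ``predictable-representation'' statement, and enlarging the probability space is irrelevant: given a PSD matrix $M$ on $\bH\oplus\bH$ with $\operatorname{tr}(P_1MP_1)\ge\operatorname{tr}(P_2MP_2)$, one must produce $M=\sum_\alpha v_\alpha v_\alpha^*$ with $|P_1v_\alpha|\ge|P_2v_\alpha|$ for every $\alpha$. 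This is true (start from an orthogonal rank-one decomposition and repeatedly rotate a pair $(v_i,v_j)$ with $\delta_i<0<\delta_j$, where $\delta_\alpha=|P_1v_\alpha|^2-|P_2v_\alpha|^2$, by the angle at which $\delta_i$ vanishes; this preserves $M$, the sum $\delta_i+\delta_j$, and the positivity of the remaining block, so an induction on rank closes), but you neither state it as a lemma nor prove it, and the name you give the obstacle is wrong. The paper, and \cite{BanWan3}/\cite{Wan1}, avoids this lemma altogether: instead of mollifying $U$ and arguing via rank-one pieces, they keep the explicit Hessian identity $\,[U_{xx}h]\!\cdot\!h+2[U_{xy}h]\!\cdot\!k+[U_{yy}k]\!\cdot\!k=-c_p(A+B+C)\,$ of (\ref{U_fact}), observe that $B$ and $C$ are PSD quadratic forms in $(h,k)$ (Cauchy--Schwarz and a square), and $A$ is linear in $|h|^2-|k|^2$. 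Pairing this identity with the full covariation matrix $d[Z^c]_s$ immediately gives $dI_t\le -c_p\,p(p-1)(|X|+|Y|)^{p-2}\,d([X^c]-[Y^c])\le0$ with no decomposition at all; the regularity issues ($U\notin C^2$ on the singular set) are then handled by approximating the \emph{process} (Lemma~1.1 and Proposition~1.2 of \cite{BanWan3}), not the function. Note the two routes are not freely interchangeable: once you mollify $U$ you lose the closed-form Hessian and hence the $A+B+C$ structure, so you are then committed to the rank-one lemma; conversely, keeping $U$ and its exact Hessian forces you to regularize the paths. Either is workable, but you should prove the decomposition lemma if you take your route, and you should replace the ``predictable representation / enlarge the probability space'' language with the correct instantaneous linear-algebra statement. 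Secondary gaps: the special function $U_\lambda$ for (\ref{sharpcon2}) is asserted but not exhibited (it is genuinely needed, and it is not a small modification of $U$), and the strictness argument for $p\ne2$ is gestured at but would need the equality analysis of $V\le U$ and of the concavity to be spelled out.
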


Except for the results proved in $\S4$ for L\'evy multipliers,  Theorem \ref{BW_theorem} suffices for all the applications presented in this paper. In particular, an important application of Theorem \ref{BW_theorem} arises when the martingales are transformations of stochastic integrals of Brownian motion.  Because of its importance for our application, we state this case as a separate Theorem. 
Let   
$$X_t=(X_t^1, X_t^2, \dots),\,\,\,\,\,Y_t=(Y_t^1, Y_t^2, \dots)$$
be two $\ell^2$--valued martingales on the filtration of $n$--dimensional Brownian 
motion.  We assume they both start at $0$ and 
that they have the stochastic integral representation (see \cite{Dur1}) 
\begin{equation}\label{Brorep}
X_t^{i}=\int_0^t H_s^i\cdot dB_s, \, \, Y_t^{i}=\int_0^t K_s^i\cdot dB_s, 
\end{equation}
where $B_t$ is $n$--dimensional Brownian motion and $H_s$ and $K_s$ are $\bR^n$-valued
processes adapted to its filtration. 
As usual, 
$$\langle X\rangle_t =
\sum_{i=1}^{\infty} \langle X^i\rangle_t=\sum_{i=1}^{\infty} \int_0^t |H^i_s|^2 \,ds
$$ denotes the quadratic variation process of $X_t$ with a similar definition for $\langle
Y \rangle_t$. Also, 
$$
{\langle X^i, Y^j\rangle}_t=\int_0^t H_s^i\cdot K_s^j \,ds$$
 denotes the covariation process.  We set 
 $$\|X\|_p=\sup_{t\geq 0}\Big\| \left(\sum_{i=1}^{\infty} |X_t^i|^2\right)^{1/2}\Big\|_p$$
 with a similar definition for $\|Y\|_p$.  Of course, this is nothing more than $\sup_{t>0}\| |X_t|_{\ell^2}\|_p$ where $|x|_{\ell^2}$ is the norm of the vector $x\in \ell^2$. 
 
 \begin{theorem}  Suppose $X_t=(X_t^1, X_t^2, \dots)$ and $Y_t=(Y_t^1, Y_t^2, \dots)$ are two $\ell^2$-valued martingales on the Brownian filtration with 
 $d\langle Y\rangle_t=\sum_{i=1}^{\infty}
|K_t^i|^2
\leq
\sum_{i=1}^{\infty} |H_t^i|^2=d\langle X\rangle_t$, a.e.  for all 
$t>0$.  Then $Y<<X$ and 
\begin{equation}\label{brownian1}
\|Y\|_p\leq (p^*-1)\|X\|_p,\,\,\,\, 1<p<\infty. 
\end{equation}
This inequality is sharp. 
 \end{theorem}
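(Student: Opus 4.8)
The plan is to deduce inequality~\eqref{brownian1} directly from Theorem~\ref{BW_theorem} once the hypothesis on the quadratic variations is recognized as the continuous-path differential subordination $Y<<X$, and then to treat sharpness separately by importing the known extremal families.

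First I would assemble the structural facts. Since $B$ is $n$-dimensional Brownian motion and the $H^i,K^i$ are adapted $\bR^n$-valued processes, each coordinate $X^i=\int_0^{\cdot}H^i_s\cdot dB_s$ and $Y^i=\int_0^{\cdot}K^i_s\cdot dB_s$ is a continuous (local) martingale, so the $\ell^2$-valued processes $X$ and $Y$ have continuous paths almost surely and $X_0=Y_0=0$; in particular $|Y_0|\le|X_0|$. By the usual properties of stochastic integrals, $[X^i]_t=\langle X^i\rangle_t=\int_0^t|H^i_s|^2\,ds$ and similarly for $Y$, and summing over $i$,
\[
[X]_t-[Y]_t \;=\; \langle X\rangle_t-\langle Y\rangle_t \;=\;\int_0^t\Big(\sum_{i}|H^i_s|^2-\sum_{i}|K^i_s|^2\Big)\,ds .
\]
The hypothesis $d\langle Y\rangle_t=\sum_i|K^i_t|^2\le\sum_i|H^i_t|^2=d\langle X\rangle_t$ a.e.\ says exactly that the integrand above is nonnegative, so $t\mapsto[X]_t-[Y]_t$ is nondecreasing and, vanishing at $t=0$, nonnegative. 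Together with $|Y_0|\le|X_0|$ this is precisely the definition of $Y<<X$. (If one only has local martingales one runs this along a localizing sequence of stopping times and passes to the limit; and if $\|X\|_p=\infty$ there is nothing to prove.) With $Y<<X$ established and $X,Y$ having continuous paths, Theorem~\ref{BW_theorem} applies verbatim and yields $\|Y\|_p\le(p^*-1)\|X\|_p$ for $1<p<\infty$, which is~\eqref{brownian1}.

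It remains to show the constant $(p^*-1)$ cannot be lowered, and this is the only step that requires care, since the extremal pairs furnished by the sharpness part of Theorem~\ref{BW_theorem} must be seen to be, or to be approximable by, pairs of the special form~\eqref{Brorep}. I would argue by embedding: near-extremal behaviour for~\eqref{bur4} is already exhibited by real dyadic martingale transforms with $v_k\in\{-1,1\}$ (Theorem~\ref{sharp1} and Remark~\ref{realhaar}); given such a discrete martingale $f$ with difference sequence $d$ and its transform $v\ast f$, one embeds $f$ into a one-dimensional Brownian path by running the Brownian clock through the successive conditional-variance intervals of the $d_k$, and on the $k$-th such interval one takes the integrand of $Y$ to be $v_k$ times that of $X$; the resulting pair satisfies $d\langle Y\rangle_t=d\langle X\rangle_t$, has the form~\eqref{Brorep}, and makes the ratio $\|Y\|_p/\|X\|_p$ as close to $p^*-1$ as desired. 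Hence no constant smaller than $p^*-1$ can appear in~\eqref{brownian1}. (Equivalently, the extremal families constructed in \cite{BanWan3} for Theorem~\ref{BW_theorem}, being continuous martingales, can be placed on a common Brownian filtration via a Dambis--Dubins--Schwarz time change so as to acquire the representation~\eqref{Brorep} with $|K_s|=|H_s|$.) I expect the verification of $Y<<X$ to be entirely routine, with the modest subtlety confined to making this sharpness reduction precise.
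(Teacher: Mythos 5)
Your proposal is correct and takes essentially the same route as the paper, which states this theorem as an immediate consequence of Theorem~\ref{BW_theorem} without spelling out a proof: you verify that the quadratic-variation hypothesis (together with $X_0=Y_0=0$) is precisely the continuous-path differential subordination $Y<<X$, then invoke Theorem~\ref{BW_theorem}. Your treatment of sharpness by embedding dyadic martingale transforms (or, equivalently, by Dambis--Dubins--Schwarz time-change of the continuous extremal families from \cite{BanWan3}) is the standard argument that the paper leaves implicit.
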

 
 Given a martingale of the form 
 $$
 X_t=\int_0^t H_s\cdot dB_s
 $$ as above and an $n\times n$ matrix-valued predictable process $A(s)$,  we define
the martingale transform
\begin{equation}\label{transform}
(A*X)_t=\int_0^t (A(s) H_s)\cdot d B_s. 
\end{equation}
We set 
$$\|A\|=\big\|\sup_{s> 0}|A(s)|\big\|_{L^{\infty}},
$$
 where $$\|A(s)\|=\sup\{|A(s)v|\colon
v\in \bR^n, |v| \leq 1\},$$ Our standing assumption throughout is that $\|A\|<\infty$.  For any $t>s$, 
$$
\langle A*X\rangle_t-\langle A*X\rangle_s \leq \|A\|^2 \langle X\rangle_t-
\|A\|^2 \langle X\rangle_s=\|A\|^2 \left(\langle X\rangle_t-
 \langle X\rangle_s\right).
$$

We remark that this definition extends to complex valued martingales and matrices with complex entries.  The quantity $\|A\|$ just has to be modified by defining $$|A(s)|=\sup\{|A(s)v|_{\bC^n}\colon
v\in \bC^n, |v|_{\bC^n} \leq 1\}.$$ Similarly, if $\calA(s)=\{A_i(s)\}_{j=1}^\infty$ is a sequence of $n\times n$
matrix-valued predictable processes,  we set 
$$|\calA(s)|^2=\sup\{\sum\limits_{i=1}^\infty |A_i(s)v|^2\colon v\in \bR^n, 
|v|\leq 1\}$$ and $\|\calA\|=\big\|\sup\limits_{s>0}|\calA(s)|\big\|_{L^{\infty}}$, with a similar definition when the matrices $A_j$ have complex entries. Again, we always assume $\|\calA\|<\infty$. The next corollary is an immediate consequence of Theorem \ref{BW_theorem}

\begin{corollary}\label{transforms1} Let $X_t=(X_t^1, \, X_t^2, \cdots)$ be an $\ell^2$-valued martingale and $\calA=\{A_i(s)\}_{i=1}^{\infty}$ predictable processes with $\|A_i\|\leq M$, for all $i$.  Define   $\calA*X_t=(A_1*X_t^1,\,  A_2*X_t^2, \cdots)$.  Then $\calA*X_t<<M\,X_t$ and 
\begin{equation}
\|\calA*X\|_p \leq (p^*-1)
M \|X|\|_p, \,\,\,\, 1<p<\infty.
\end{equation}
Furthermore,  if $X_t$ is either an $\bR$-valued or $\bC$-valued martingale and this time we define $\calA*X_t=(A_1*X_t, \, A_2*X_t, \cdots)$, then  
$\calA*X_t<<\|\calA\|\, X_t$ and  
\begin{equation}
\|\calA*X\|_p \leq (p^*-1)\|\calA\|\,\|X\|_p,  \,\,\,\, 1<p<\infty.
\end{equation}
These inequalities are sharp.
\end{corollary}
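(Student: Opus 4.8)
The plan is to deduce both inequalities directly from Theorem \ref{BW_theorem} once the appropriate differential subordination has been checked, and then to establish optimality of $(p^*-1)$ separately by reducing to Burkholder's known extremizers. Since Theorem \ref{BW_theorem} already carries the sharp constant, the only actual work is bookkeeping with quadratic variations and a transfer of extremal examples.

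First I would verify $\calA*X << M\,X$ for the coordinatewise transform. Writing each coordinate of $X$ as a stochastic integral $X_t^i=\int_0^t H_s^i\cdot dB_s$ against $n$-dimensional Brownian motion, by definition $(A_i*X^i)_t=\int_0^t (A_i(s)H_s^i)\cdot dB_s$, so that
\[
d\langle \calA*X\rangle_t=\sum_{i=1}^\infty |A_i(t)H_t^i|^2\,dt\le\sum_{i=1}^\infty \|A_i(t)\|^2\,|H_t^i|^2\,dt\le M^2\sum_{i=1}^\infty |H_t^i|^2\,dt=M^2\,d\langle X\rangle_t,
\]
using $\|A_i(t)\|\le\|A_i\|\le M$ for every $i$. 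For continuous-path martingales $[\,\cdot\,]=\langle\,\cdot\,\rangle$, so $[M X]_t-[\calA*X]_t$ is nondecreasing; being zero at $t=0$ it is also non-negative, and $|(\calA*X)_0|=0\le|MX_0|$, which is exactly $\calA*X << M\,X$. (A routine truncation in $i$ first guarantees $\calA*X$ is a bona fide continuous $\ell^2$-valued martingale, the finiteness of $\|\calA*X\|_p$ then being part of the conclusion.) Theorem \ref{BW_theorem} now yields $\|\calA*X\|_p\le (p^*-1)M\|X\|_p$. The second assertion is the same argument with one obvious change: for scalar $X_t=\int_0^t H_s\cdot dB_s$ one has $(A_i*X)_t=\int_0^t(A_i(s)H_s)\cdot dB_s$ and
\[
d\langle \calA*X\rangle_t=\sum_{i=1}^\infty |A_i(t)H_t|^2\,dt\le |\calA(t)|^2\,|H_t|^2\,dt\le \|\calA\|^2\,d\langle X\rangle_t,
\]
which is precisely the content of the definitions of $|\calA(t)|$ and $\|\calA\|$; hence $\calA*X << \|\calA\|\,X$ and Theorem \ref{BW_theorem} gives the stated bound.

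For sharpness I would reduce to the scalar case $n=1$, $M=1$ (respectively $\|\calA\|=1$), with a single predictable multiplier $a_1(s)\in\{-1,1\}$: then $\calA*X$ is the continuous-time analogue of a martingale transform $v*f$ with $v_k\in\{-1,1\}$. Burkholder's discrete martingales realizing $\|v*f\|_p/\|f\|_p\to p^*-1$ in Theorem \ref{sharp1} (see \cite{Bur39}) can be taken with such sign multipliers, and they can be embedded as stochastic integrals on a Brownian filtration by a standard time-change/representation argument, producing pairs $(X_t,(\calA*X)_t)$ of the required form with $0<\|X\|_p<\infty$ and $\|\calA*X\|_p/\|X\|_p$ arbitrarily close to $p^*-1$. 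This forces $(p^*-1)$ to be best possible in every inequality of the corollary.

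I do not expect a genuine obstacle here: the entire analytic difficulty — the $L^p$ inequality with the constant $(p^*-1)$ — is already packaged in Theorem \ref{BW_theorem}. If anything requires care it is matching the definitions so that the two flavors of transform in the statement produce the two bounds $M$ and $\|\calA\|$, and, for sharpness, making sure the extremal pairs really do lie in the restricted class of transforms at hand (they do, because Burkholder's extremizers can be chosen with $|e_k|=|d_k|$, i.e. $v_k=\pm1$). Everything else is routine.
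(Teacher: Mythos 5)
Your proof is correct and follows essentially the same route the paper indicates: compute the quadratic variation of the transformed stochastic integral, bound it by $M^2\,d\langle X\rangle_t$ (resp. $\|\calA\|^2\,d\langle X\rangle_t$) using the operator norms, conclude differential subordination, and invoke Theorem \ref{BW_theorem}; your sharpness discussion just spells out the embedding-of-Burkholder-extremizers step that the paper leaves implicit.
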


\subsection{Differential subordination and orthogonality}
Applications of the above inequalities to the Hilbert transform and to first order Riesz transforms motivated the notion of orthogonality given here.  While {\it conformal martingales} (see below) had been studied by several authors before in connection with the theory of Hardy $H^p$ spaces and harmonic functions in $\bC$ and $\bC^n$ and other applications (see \cite{GetSha1}, \cite{Fuj1}, \cite{Fuk1}, \cite{Fuk2}, \cite{Ube1}), the notion of orthogonality and subordination used below was introduced in \cite{BanWan3} to study martingale inequalities which arise from the Riesz transforms in $\bR^n$.  We say $X_t=\left(
X_t^{1},X_t^{2},\dots\right)  $ and $Y_t=\left(  Y_t^{1},Y_t^{2},\dots\right)  $ are
orthogonal if for each $i,j,\left[  X^{i},Y^{j}\right]  _{t}=0$ for all $t\geq0.$ 
While this definition is for general $\ell^2$-valued martingales, below we just recall the results 
the  for real valued martingales.  These follow from \cite{BanWan3, BanWan2, BanWan1}. 

\begin{theorem}\label{pic-ess}Let $X$ and $Y$ be two $\bR$-valued
continuous-time orthogonal martingales with $Y<<X$.  Then 
\begin{equation}\label{ortho1}
\|Y\|_p \le \cot\left(\frac{\pi}{2p^{\ast}}\right)\|X\|_p, \hskip.5cm 1 < p < \infty, 
\end{equation}
\begin{equation}\label{ortho2}
 \Big\|\sqrt{|X|^2 + |Y|^2}\Big\|_p \le \csc\left(
\frac{\pi}{2p^{\ast}}\right) \|X\|_p,\hskip.5cm 1 < p < \infty. 
 \end{equation}
These inequalities are sharp and strict if $p\neq 2$ and $0<\|X\|_p<\infty$. In addition,  for any $\lambda\geq 0,$
\begin{equation}\label{weakortho}
\lambda P\left(  \left|  Y\right|  \geq\lambda\right)  \leq D_1\left|  \left|
X\right|  \right|  _{1},
\end{equation}
where 
\begin{equation}\label{catalan}
D_1=\frac{1+\frac{1}{3^2}+\frac{1}{5^2}+\frac{1}{7^2}+\frac{1}{9^2}+
\cdots}{1-\frac{1}{3^2}+\frac{1}{5^2}-\frac{1}{7^2}+\frac{1}{9^2}-\cdots}=\frac{\pi^2}{8\beta(2)}\approx 1.328434313301,
\end{equation}
with $\beta(2)$  the so called ``Catalan" constant whose value is approximately $0.9159655$. 
The inequality (\ref{weakortho}) is sharp. 
\end{theorem}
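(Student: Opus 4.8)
The plan is to follow the Burkholder–Bellman paradigm used throughout the paper: produce, for each of the three assertions, a special "magical" function on $\bR^2$ (or a pair $(\bR^2, \bR)$ for the weak-type statement) that dominates the target quantity pointwise, is superharmonic along the paths of an orthogonal pair $(X,Y)$ with $Y<<X$, and has the correct value at the starting point $(X_0,Y_0)$. The orthogonality hypothesis is the new ingredient relative to Theorem \ref{BW_theorem}, and it is exactly what upgrades the concavity estimate from "subordination-concave" to "harmonic-in-$\bR^2$ off the diagonal", which is why the constants improve from $(p^*-1)$ to $\cot(\pi/2p^*)$ and $\csc(\pi/2p^*)$. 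Concretely, I would reduce to the case of It\^o-type continuous martingales by the usual approximation (time-change to Brownian local time, as in \cite{Wan1}), so that $Y<<X$ plus orthogonality means $d\langle X\rangle_t - d\langle Y\rangle_t \geq 0$ and $d\langle X,Y\rangle_t = 0$, i.e.\ the $\bR^2$-valued process $(X_t,Y_t)$ has covariation matrix a nonnegative multiple of the identity at each instant (after discarding the nonnegative "slack" $d\langle X\rangle - d\langle Y\rangle$).

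For the $L^p$ bounds \eqref{ortho1} and \eqref{ortho2}, the key object is the real part (respectively the modulus) of an analytic function. Set $z = x+iy$ and consider $\Phi(z) = (z/\cos(\pi/2p^*))^{p}$ on the sector $|\arg z| < \pi/2p^*$; its real part $u(x,y) = \mathrm{Re}\,\Phi$ is harmonic, vanishes on the boundary rays, and is chosen so that $u(x,y) \geq \cot^p(\pi/2p^*)|x|^p - |y|^p$ (the exact normalization is what forces the constant $\cot(\pi/2p^*)$; this is the classical Pichorides/Essén computation behind \eqref{ortho1}, and it is the one Burkholder-type function for the orthogonal case). By It\^o's formula applied to $u(X_t,Y_t)$, the drift term is $\tfrac12(u_{xx}+u_{yy})\,d\langle X\rangle = 0$ using harmonicity together with $d\langle X\rangle = d\langle Y\rangle$ and $d\langle X,Y\rangle = 0$; the remaining slack $d\langle X\rangle - d\langle Y\rangle$ contributes $\tfrac12 u_{yy}\,(d\langle Y\rangle - d\langle X\rangle)$, which has a sign because $u_{yy} \le 0$ on the relevant region — here one must check the concavity of $u$ in the $y$-direction inside the sector. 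Hence $E\,u(X_t,Y_t) \le u(X_0,Y_0) \le 0$ (using $|Y_0|\le|X_0|$ and the boundary behavior), and combining with the pointwise minorization gives \eqref{ortho1}; \eqref{ortho2} is obtained identically from $|\Phi(z)|$ in place of $\mathrm{Re}\,\Phi$, with the sector normalized so that $|\Phi(z)| \ge \csc^p(\pi/2p^*)|x|^p - (x^2+y^2)^{p/2}$, or alternatively deduced from \eqref{ortho1} by the elementary inequality relating $\sqrt{|X|^2+|Y|^2}$ to $|X|$ and $|Y|$ together with $\csc = \sqrt{1+\cot^2}$ applied pointwise and then integrated. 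Sharpness and strictness for $p\neq 2$ follow from the standard extremal construction: take $X,Y$ to be the coordinates of a planar Brownian motion run until it exits the sector $|\arg z| < \pi/2p^*$ started near the vertex, for which $u(X_t,Y_t)$ is a genuine martingale, so the inequalities become equalities in the limit; for $p\neq 2$ the slack term is strictly active on a set of positive measure, giving strictness.

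For the weak-type inequality \eqref{weakortho}, I would construct a function $v$ on $\bR^2$, $\bR_+$-superharmonic in the appropriate sense, with $v(x,y) \ge \mathbf{1}_{\{|y|\ge \lambda\}} - \tfrac{D_1}{\lambda}|x|$ and $v(x_0,y_0)\le 0$ for $|y_0|\le|x_0|$, so that taking expectations in $v(X_t,Y_t)$ and letting $t\to\infty$ yields $P(|Y|\ge\lambda) \le \tfrac{D_1}{\lambda}\|X\|_1$. The natural candidate is again harmonic-function machinery on the sector: the harmonic measure of the "far" boundary from an interior point, whose boundary integral against the appropriate test data produces precisely the series in \eqref{catalan} — the numerator $\sum (2k+1)^{-2}$ and the alternating denominator $\sum(-1)^k(2k+1)^{-2} = \beta(2)$ are exactly the Fourier coefficients of the step function $\mathbf{1}_{\{|y|\ge\lambda\}}$ expanded in the eigenfunctions on the sector. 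The main obstacle, and the step I expect to be genuinely delicate, is this weak-type function: unlike the $L^p$ case there is no clean analytic-function shortcut, one must patch a harmonic piece (governing the region $|y|<\lambda$) to a linear/constant piece across $|y|=\lambda$ while preserving the one-sided superharmonicity and the pointwise majorization, and then verify that the resulting constant is exactly $D_1 = \pi^2/(8\beta(2))$ rather than something larger; identifying the extremal configuration that shows this $D_1$ cannot be improved (a planar Brownian motion in the sector, stopped appropriately, with $\lambda$ tuned to the geometry) is the sharpness half of the same computation. The $L^p$ parts \eqref{ortho1}–\eqref{ortho2} are, by contrast, essentially a packaging of the Pichorides–Essén conjugate-function estimate in martingale language and should go through routinely once the It\^o/drift bookkeeping above is in place.
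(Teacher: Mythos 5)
Your overall strategy---a harmonic Bellman function on a cone, It\^o's formula, orthogonality killing the cross quadratic-covariation term---is precisely the paper's strategy, but three concrete slips would derail the argument as written. \emph{(i) Sign of the second-order term.} With $u$ harmonic and $d\langle X,Y\rangle=0$, the It\^o drift is
\[
\tfrac12\big(u_{xx}\,d\langle X\rangle+u_{yy}\,d\langle Y\rangle\big)=\tfrac12\,u_{yy}\big(d\langle Y\rangle-d\langle X\rangle\big),
\]
and since $d\langle Y\rangle-d\langle X\rangle\le 0$ you need $u_{yy}\ge 0$, not $u_{yy}\le 0$ as you assert. (For the paper's Pichorides-type function $U(x,y)=-\tan(\tfrac{\pi}{2p})R^p\cos(p\theta)$, $|x|=R\cos\theta$, $y=R\sin\theta$, one computes $U_{yy}=\tan(\tfrac{\pi}{2p})\,p(p-1)R^{p-2}\cos((p-2)\theta)\ge0$ for $1<p\le 2$, confirming the correct sign.) \emph{(ii) Direction of the Minkowski step.} You propose to prove (\ref{ortho1}) directly and deduce (\ref{ortho2}) from it using $\csc^2=1+\cot^2$, but $\|\sqrt{|X|^2+|Y|^2}\|_p^2\le\|X\|_p^2+\|Y\|_p^2$ is ordinary Minkowski at exponent $p/2$ and requires $p\ge 2$; for $1<p<2$ it reverses. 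The paper runs the implication the other way: it proves (\ref{ortho2}) directly for $1<p\le2$ via the function $U$ above, then uses \emph{reverse} Minkowski, $\sqrt{\|X\|_p^2+\|Y\|_p^2}\le\|\sqrt{|X|^2+|Y|^2}\|_p$ for $p\le 2$, together with $\sec^2=1+\tan^2$ to extract (\ref{ortho1}). \emph{(iii) The cone's opening angle.} You place $\Phi(z)=(z/\cos(\tfrac{\pi}{2p^*}))^p$ on $|\arg z|<\tfrac{\pi}{2p^*}$ and claim $\mathrm{Re}\,\Phi$ vanishes on the boundary rays, but $\mathrm{Re}(z^p)$ vanishes at $|\arg z|=\tfrac{\pi}{2p}$, which for $p\ne 2$ is not $\tfrac{\pi}{2p^*}$. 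The paper's $U$ sidesteps this by building $|x|$ into the complex variable, so harmonicity and the sign conditions are verified over the whole half-plane $|\theta|\le\pi/2$ rather than a narrow sector that the process immediately leaves.

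For the weak-type bound, the correct geometry is a \emph{strip} $|y|<1$ (after scaling $\lambda=1$), not a sector: the discontinuity of the test function $W$ sits on $|y|=1$, and the Bellman function is obtained by harmonically extending $W$'s boundary values into that strip---this is Davis's original computation of $D_1=\pi^2/(8\beta(2))$ for Kolmogorov's inequality, recast in martingale terms. You have the right spirit (a harmonic-measure series with an alternating Catalan-constant denominator) but the wrong domain, and you correctly flag the patching across $|y|=1$ as the delicate step. The paper does not reprove it either; the martingale version is in \cite{BanWan1}, and the extension to $1\le p\le 2$ is in Janakiraman \cite{Jan1}.
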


The following extension of the above theorem was given by Janakiraman in \cite{Jan1}.

\begin{theorem}\label{jan}
Let $X$ and $Y$ be two $\bR$-valued continuous-time parameter 
orthogonal martingales such that $Y<<X$. Then
for any $\lambda>0$, 
\[\lambda^p P(|Y|\geq\lambda)\leq D_p\|X\|_p^p,\,\,\,\, 1\leq p\leq 2,\] where  
\[ D_p ={\left(\frac{1}{\pi}\int_{-\infty}^\infty \frac{{\left|\frac{2}{\pi}
\log{|t|}\right|}^p}{t^2 + 1} dt\right)}^{-1}.\]
This inequality is sharp. 
\end{theorem}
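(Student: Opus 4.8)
The plan is to run Burkholder's method: reduce the theorem to the construction of a single special function $U=U_p\colon\bR^2\to\bR$ that dominates the payoff $V(x,y):=\mathbf 1_{\{|y|\ge1\}}-D_p|x|^p$ and makes $t\mapsto U(X_t,Y_t)$ a supermartingale, and then read off the sharpness of $D_p$ from the conformal martingale for which every inequality in the argument becomes an equality. Since $Y<<X$ and orthogonality are invariant under $(X,Y)\mapsto(X/\lambda,Y/\lambda)$, one may take $\lambda=1$. The task is then to exhibit $U$, even in each variable, with: (i) $U\ge V$ on $\bR^2$; (ii) $U(x,y)\le0$ whenever $|y|\le|x|$; and (iii) $U_{yy}\ge0$ and $U_{xx}+U_{yy}\le0$, with enough regularity ($C^1$ across the interface $|y|=1$) to apply It\^o's formula.

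Given such a $U$, It\^o's formula applied to $t\mapsto U(X_t,Y_t)$ produces a local martingale plus the drift $\tfrac12\bigl(U_{xx}\,d[X]_t+U_{yy}\,d[Y]_t\bigr)$, the mixed term $U_{xy}\,d[X,Y]_t$ dropping out by orthogonality (and since orthogonality together with $|\Delta Y_s|\le|\Delta X_s|$ forces $\Delta Y_s=0$, the process $Y$ is continuous, while any jumps of $X$ contribute further nonpositive terms via the concavity of $U$ in $x$). Since differential subordination gives $d[Y]_t\le d[X]_t$, the displayed drift is $\le\tfrac12(U_{xx}+U_{yy})\,d[X]_t\le0$ by (iii), so after a routine truncation for integrability $U(X_t,Y_t)$ is a supermartingale; hence $EU(X_t,Y_t)\le EU(X_0,Y_0)\le0$ by (ii) and $|Y_0|\le|X_0|$, and then (i) yields $P(|Y_t|\ge1)\le D_pE|X_t|^p\le D_p\|X\|_p^p$. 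The maximal version $P(\sup_t|Y_t|\ge1)\le D_p\|X\|_p^p$ follows by optional stopping at $\sigma=\inf\{t:|Y_t|\ge1\}$, where continuity of $Y$ gives $|Y_\sigma|=1$ exactly and $U(x,\pm1)=1-D_p|x|^p$ by construction.

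The heart of the matter, and the step I expect to be the main obstacle, is the construction of $U$ and the recognition that the stated $D_p$ is precisely the constant for which it works. The extremal configuration below lives in the strip $\{|y|<1\}$, which suggests taking $U$ there to be the bounded harmonic function with boundary data $1-D_p|x|^p$ on $\{y=\pm1\}$, extended to $\{|y|\ge1\}$ by $U(x,y)=1-D_p|x|^p+(|y|-1)\,\psi(x)$ with $\psi(x)=\lim_{y\uparrow1}\partial_yU(x,y)$, glued $C^1$ across $|y|=1$ so that no spurious negative local-time term enters It\^o's formula. Harmonicity gives $\Delta U=0$ in the strip, and since $1-D_p|x|^p$ is concave in $x$ its harmonic extension is $x$-concave, so there $U_{xx}\le0$ and $U_{yy}=-U_{xx}\ge0$; outside the strip one must check that these signs persist for the extension, which forces sign and concavity information on $\psi$. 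For the crucial condition (ii) the binding point is the center of the strip: transplanting the strip to the upper half-plane by $w\mapsto ie^{\pi w/2}$ — which sends $(0,0)\mapsto i$, the edges $\{y=\pm1\}$ to the real axis with $|t|=e^{\pi x/2}$, and harmonic measure to the Cauchy law $\tfrac{dt}{\pi(1+t^2)}$ — converts $U(0,0)\le0$ into the identity
\[
U(0,0)=1-D_p\cdot\frac1\pi\int_{-\infty}^{\infty}\frac{{\left|\tfrac{2}{\pi}\log|t|\right|}^p}{t^2+1}\,dt=1-D_p\cdot D_p^{-1}=0,
\]
which is exactly what singles out $D_p$. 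It then remains to verify that $U\ge V$ globally and that $U\le0$ on the \emph{entire} diagonal $\{|y|\le|x|\}$, not merely at the origin; this is where the real work lies, since one needs a usable closed or integral form for the strip-harmonic function and the one-sided second-order inequalities and the majorization are in tension, balancing only at the value $D_p$.

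For sharpness I would use the conformal martingale $Z_t=1+\tfrac{2i}{\pi}\operatorname{Log}B_{t\wedge\tau}$, with $B$ planar Brownian motion started at $i$ and $\tau$ its exit time from the upper half-plane, and set $Y=\operatorname{Re}Z$, $X=\operatorname{Im}Z$. Then $X$ and $Y$ are orthogonal with $[X]=[Y]$ (hence $Y<<X$), $X_0=Y_0=0$, $|Y_\tau|\equiv1$ almost surely, and $X_\tau=\tfrac2\pi\log|B_\tau|$ with $B_\tau$ Cauchy-distributed on $\bR$, so $\|X\|_p^p=E\bigl|\tfrac2\pi\log|B_\tau|\bigr|^p=D_p^{-1}$; thus at $\lambda=1$ the left-hand side of the asserted inequality equals $1$ while the right-hand side equals $D_p\cdot D_p^{-1}=1$, and $D_p$ cannot be lowered. (Consistently, equality in the argument above forces $U(X_t,Y_t)$ to be a genuine martingale, i.e.\ $[X]=[Y]$ with the path confined to the region where $U$ is harmonic — precisely this configuration.) In sum, the reduction and the sharpness example are routine once $U$ is in hand; essentially all the difficulty is concentrated in the construction and verification of $U$.
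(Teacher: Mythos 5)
Your proposal follows essentially the same Burkholder--Bellman strategy the paper itself sketches in \S 2.4 for precisely this problem: reduce to $\lambda=1$, take the obstacle $V(x,y)=\mathbf{1}_{\{|y|\ge 1\}}-c^p|x|^p$, realize the majorant $U$ in the strip $\{|y|<1\}$ as the bounded harmonic extension of the boundary data, transplant the strip to the upper half-plane via $w\mapsto ie^{\pi w/2}$ so that harmonic measure at the image of the origin is Cauchy and $U(0,0)=0$ exactly at $c^p=D_p$, and exhibit sharpness with the conformal martingale $1+\tfrac{2i}{\pi}\operatorname{Log}B_{t\wedge\tau}$ in the upper half-plane. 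The minor refinements you add---requiring $U_{yy}\ge0$ alongside superharmonicity so that general differential subordination $Y<<X$ (and not just the conformal case $[X]=[Y]$ the paper's outline treats) is covered, and noting that orthogonality with subordination forces $Y$ continuous while jumps of $X$ enter nonpositively through $x$-concavity of $U$---are correct bookkeeping, and your acknowledgment that verifying $U\ge V$ globally and $U\le0$ on $\{|y|\le|x|\}$ is where the hard work lies mirrors the fact that the paper only sketches the construction and defers the full verification to Janakiraman's paper.
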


In \cite{Osc8}, Os\c{e}kowski identifies the best constant in the inequalities $\|Y\|_p\leq C_{p, \infty}\|X\|_{\infty}$ and $\|Y\|_1\leq C_{1, p}\|X\|_p$ under the assumption of orthogonality and differential subordination.  His precise result is the following  

\begin{theorem}\label{osc} Let $X$ and $Y$ be two $\bR$-valued continuous-time parameter 
orthogonal martingales such that $Y<<X$. Then for $1<p<\infty$, 
\begin{equation}\label{osc1}
\|Y\|_1\leq C_{1, p}\|X\|_p \hskip.5cm \text{and} \hskip.5cm \|Y\|_p\leq C_{p, \infty}\|X\|_{\infty}, 
\end{equation}
where 
$$
C_{p, \infty}=1, \hskip.5cm  1<p\leq 2, 
$$
$$
C_{p, \infty}=
\left(\frac{2^{p+2}\Gamma(p+1)}{\pi^{p+1}}\sum_{k=0}^{\infty}\frac{(-1)^k}{(2k+1)^{p+1}}\right)^{1/p}, \hskip.5cm  2<p<\infty,
$$
and 
$$
C_{1, p}=C_{\frac{p}{p-1}, \infty}.
$$
These inequalities are sharp.
\end{theorem}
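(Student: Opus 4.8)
The plan is to reduce everything to a single family of estimates and then run the Burkholder method, using orthogonality to enlarge the class of admissible special functions. By homogeneity we may take $\|X\|_\infty=1$ in the second inequality of $(\ref{osc1})$, so that $|X_t|\le1$ a.s.\ for every $t$; the identity $C_{1,p}=C_{p/(p-1),\infty}$ is the statement that the two families are dual to each other, so after a standard martingale duality argument (as in \cite{Bur39,Bur45}) it suffices to prove $\|Y\|_p\le C_{p,\infty}\|X\|_\infty$ for every $1<p<\infty$. For $1<p\le2$ this is elementary: by Theorem~\ref{Wan1} with $p=2$ we have $\|Y\|_2\le\|X\|_2$, and since $\|Y_t\|_p\le\|Y_t\|_2$ on a probability space, $\|Y\|_p\le\|Y\|_2\le\|X\|_2\le\|X\|_\infty$, which is the bound with $C_{p,\infty}=1$. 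It is sharp: taking a conformal martingale $Z_t=X_t+iY_t=W_{t\wedge\tau}$ with $W$ a planar Brownian motion started at the centre of a thin domain contained in $\{|\mathrm{Re}\,w|<1\}$ chosen so that harmonic measure from that centre concentrates near the two segments $\{|\mathrm{Im}\,w|=1\}$ (say a narrow vertical channel joining two thin horizontal slabs at heights $\pm1$) forces $\|X\|_\infty\le1$ while $|Y_\infty|\to1$ in probability, so $\|Y\|_p/\|X\|_\infty\to1$.

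For the substantial range $p>2$ I would argue as follows. On the closed strip $S=\{(x,y):|x|\le1\}$, if $U$ is of class $C^2$ in the interior of $S$ (with enough regularity up to $\partial S$), grows at most polynomially in $y$, and satisfies $U_{xx}\le0$ and $U_{xx}+U_{yy}\le0$ throughout $S$, then Itô's formula shows that $t\mapsto U(X_t,Y_t)$ is a local supermartingale whenever $X$ stays in $S$, $[X,Y]\equiv0$ and $0\le d[Y]\le d[X]$: the drift of $U(X_t,Y_t)$ equals $\tfrac12\bigl(U_{xx}\,d[X]+U_{yy}\,d[Y]\bigr)$, and over $0\le d[Y]\le d[X]$ the two extreme possibilities $d[Y]=0$ and $d[Y]=d[X]$ are exactly those controlled by $U_{xx}\le0$ and $U_{xx}+U_{yy}\le0$. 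I therefore seek such a $U$ with the further properties that $U(x,y)\ge|y|^p-C_{p,\infty}^p$ on $S$ and $U(x,y)\le0$ on the diagonal region $\{|y|\le|x|\le1\}$. Granting these, Itô applied to the martingales stopped at suitable times $\tau_n\uparrow\infty$ yields
\begin{equation*}
E|Y_{t\wedge\tau_n}|^p-C_{p,\infty}^p\le E\,U(X_{t\wedge\tau_n},Y_{t\wedge\tau_n})\le E\,U(X_0,Y_0)\le0,
\end{equation*}
the last step because $(X_0,Y_0)$ lies in the diagonal region ($|Y_0|\le|X_0|\le1$); letting $n\to\infty$ by Fatou and taking the supremum in $t$ gives $\|Y\|_p\le C_{p,\infty}$.

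The candidate is $U=U_0-C_{p,\infty}^p$, where $U_0$ is the harmonic extension into $S$ of the boundary data $y\mapsto|y|^p$ on the two lines $\{x=\pm1\}$. Every required property is transparent from the probabilistic formula $U_0(x,y)=E\,|y+G_x|^p$, where $G_x=\mathrm{Im}\,W^{(x,0)}_{\tau_S}$ is the (symmetric, exponentially integrable) imaginary displacement of a planar Brownian motion run until it first exits $S$, started from $(x,0)$. Indeed $\Delta U=0$; the map $y\mapsto E|y+G_x|^p$ is convex, so $\partial_{yy}U_0\ge0$ and hence $\partial_{xx}U_0=-\partial_{yy}U_0\le0$; and $U_0(x,y)=E|y+G_x|^p\ge|E(y+G_x)|^p=|y|^p$ by Jensen (since $EG_x=0$), so $U\ge|y|^p-C_{p,\infty}^p$. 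The constant is then forced by the diagonal condition: convexity of $U_0$ in $y$ reduces $\max_{\{|y|\le|x|\le1\}}U_0$ to $\max_{x\in[0,1]}U_0(x,|x|)$, which — and this is the delicate point — equals $U_0(0,0)$, while
\begin{equation*}
U_0(0,0)=E\,|G_0|^p=\int_0^\infty y^p\,\mathrm{sech}(\pi y/2)\,dy=\frac{2^{p+2}\Gamma(p+1)}{\pi^{p+1}}\sum_{k=0}^\infty\frac{(-1)^k}{(2k+1)^{p+1}}=C_{p,\infty}^p,
\end{equation*}
since the $y$-marginal of harmonic measure of $\partial S$ from the centre has density $\tfrac12\mathrm{sech}(\pi y/2)$ and the last equalities follow from $y=2t/\pi$ and a geometric-series expansion of $\mathrm{sech}$. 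For sharpness when $p>2$ one takes the conformal martingale $Z_t=X_t+iY_t=W_{t\wedge\tau_S}$ with $W$ started at $0$: then $X\perp Y$, $Y\ll X$ (indeed $d[X]=d[Y]$ on $[0,\tau_S)$), $|X_t|\le1$ with $|X_{\tau_S}|=1$ so $\|X\|_\infty=1$, and $Y_\infty=\mathrm{Im}\,W_{\tau_S}$ has density $\tfrac12\mathrm{sech}(\pi y/2)$, whence $\|Y\|_p=C_{p,\infty}$. This is the martingale incarnation of the conjugate-function extremiser $u=\mathrm{sgn}(\sin\theta)$ on the circle, whose harmonic conjugate is $\tfrac2\pi\log|\tan(\theta/2)|$; dualising it returns the sharpness of $C_{1,p}$.

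The step I expect to be the real obstacle is the exact evaluation inside the diagonal condition — proving $\max_{x\in[0,1]}U_0(x,|x|)=U_0(0,0)$, equivalently that $x\mapsto U_0(x,|x|)=E\,|\,|x|+G_x\,|^p$ is nonincreasing on $[0,1]$. This pits the growing drift $|x|$ against the shrinking of $G_x$ (one has $\mathrm{Var}\,G_x=1-x^2$) and degenerates to the identity $U_0(x,|x|)\equiv1$ precisely at $p=2$, so some genuine estimate is needed for $p>2$. The remaining technical points — justifying Itô's formula (including the passage from continuous to r.c.l.l.\ paths via the finite-difference form of the conditions on $U$) in spite of the polynomial growth of $U$ in $y$ and the occasional visits of $X$ to $\partial S$, and pinning down the precise martingale duality linking the two stated families — are by now routine in this circle of ideas.
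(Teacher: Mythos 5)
The paper states this theorem as a result of Os\c{e}kowski \cite{Osc8} and does not reproduce a proof, so there is nothing in the source to compare against; I assess your sketch on its own. The framework you set up is the correct one: a function $U$ on the strip $S=\{|x|\le 1\}$ with $U_{xx}\le 0$, $U_{xx}+U_{yy}\le 0$, $U\ge |y|^p-C_{p,\infty}^p$, and $U\le 0$ on $\{|y|\le|x|\le 1\}$ does yield $\|Y\|_p\le C_{p,\infty}\|X\|_\infty$ by the usual Burkholder/It\^o mechanism, and the candidate $U_0-C_{p,\infty}^p$ with $U_0(x,y)=E|y+G_x|^p$ the harmonic extension of $|y|^p$ satisfies every one of these conditions except possibly the diagonal bound. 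The easy checks all work out: harmonicity plus $U_{0,yy}\ge 0$ give the two concavity conditions, Jensen with $EG_x=0$ gives the majorization, and $U_0(0,0)=\int_{\mathbb{R}}|y|^p\,\tfrac12\mathrm{sech}(\pi y/2)\,dy$ equals the stated $C_{p,\infty}^p$ after expanding $\mathrm{sech}$ in a geometric series. For $p=4$ one even has the closed form $U_0(x,x)=5-4x^4$, which is reassuring.

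The actual content of the theorem, however, lives entirely in the step you yourself flag and leave open: proving that $x\mapsto U_0(x,|x|)$ is nonincreasing on $[0,1]$ when $p>2$ (equivalently, that the supremum of $U_0$ over the diagonal region is attained at the origin). This is the only place where the precise value of the constant and the hypothesis $p>2$ enter, and nothing short of an actual argument will close it; remarking that ``some genuine estimate is needed'' is an honest acknowledgment of the gap, not a proof. There are two secondary problems. First, the reduction of $\|Y\|_1\le C_{1,p}\|X\|_p$ to the $(p,\infty)$ family by ``standard martingale duality'' is asserted but not justified: the correspondence $X\mapsto Y$ here is not a fixed linear transform with an identifiable adjoint (unlike the martingale transforms in \cite{Bur39,Bur45}, where duality is a genuine one-line step), and the identity $C_{1,p}=C_{p/(p-1),\infty}$ in fact requires its own special function or a Legendre-type transform of the one you built. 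Second, the sharpness for $1<p\le 2$ via a carefully shaped domain is both unnecessary and, as described, unconvincing; the constant pair $X_t\equiv Y_t\equiv 1$ is orthogonal, satisfies $Y<<X$, and already gives equality with $C_{p,\infty}=1$.
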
 

\begin{problem}\label{problem1} Determine the best constant for $2<p<\infty$ in Theorem \ref{jan}. 
\end{problem} 

\begin{remark}\label{weakremark}   If we drop the assumption of orthogonality and assume only  differential subordination, we have the bound $2/\Gamma(p+1)$,\,  for $1\leq p\leq 2$, due to Burkholder \cite{Bur39}, and for $2\leq p<\infty$,  the bound  ${p^{p-1}}/{2}$  due to Suh \cite{Suh1}; see also, \cite{Wan1}.  We should also point out here that Janakiraman's result was inspired by Choi's result in \cite{Cho4} which proves a version of Theorem \ref{jan} for $p=1$ for differentially subordinate orthogonal harmonic functions as in (\ref{jan2}) below.  There are also more recent versions of the inequalities of Burkholder and Suh by Os\c{e}kowski \cite{Osc1}   
for non-negative martingales $X$. 
\end{remark}

In terms of martingales on the $n$-dimensional Brownian motion the above  results give

 \begin{corollary}\label{orthoweak1}  Let 
 $X_t=\int_0^t H_s\cdot dB_s$ and $Y_t=\int_0^t K_s\cdot dB_s$
be two $\bR$-valued martingales on the filtration of n-dimensional Brownian motion with $K_t\cdot H_t=0$ and $|K_s|\leq |H_s|$ a.e.  for all 
$t>0$.  Then
\begin{equation}\label{brownian2}
\|Y\|_p \le \cot\left(  \frac{\pi}{2p^{\ast}}\right)\|X\|_p, \hskip.5cm 1<p<\infty, 
\end{equation}
\begin{equation}\label{brownian3}
 \Big\|\sqrt{|X|^2 + |Y|^2}\Big\|_p \le \csc\left(
\frac{\pi}{2p^{\ast}}\right) \|X\|_p, \hskip.5cm 1<p<\infty,
\end{equation}
 and for any $\lambda>0$, 
\begin{equation}\label{brownian4}
\lambda^p P\left(  \left|  Y\right|  \geq\lambda\right)  \leq D_p\left|  \left|
X\right|  \right|_p^p,  \hskip.5cm  1\leq p\leq 2.
\end{equation}
These inequalities are all sharp. 
 \end{corollary}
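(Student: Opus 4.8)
The plan is to deduce the corollary directly from Theorems~\ref{pic-ess} and \ref{jan} by checking that the stated hypotheses on $H$ and $K$ are, for martingales of the form (\ref{Brorep}), exactly the hypotheses of orthogonality and differential subordination used in \S2.2. First I would observe that $X_t=\int_0^t H_s\cdot dB_s$ and $Y_t=\int_0^t K_s\cdot dB_s$ are continuous local martingales (continuous martingales under the standing integrability assumptions), so in particular $t\mapsto X_t$ and $t\mapsto Y_t$ are continuous almost surely, and both start at $0$, so $|Y_0|\le|X_0|$ trivially. For continuous martingales the optional quadratic covariation coincides with the predictable one, so
\[
[X,Y]_t=\langle X,Y\rangle_t=\int_0^t H_s\cdot K_s\,ds=0
\]
for all $t\ge0$, since $H_s\cdot K_s=0$ a.e.; hence $X$ and $Y$ are orthogonal in the sense of \S2.2. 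Likewise
\[
[X]_t-[Y]_t=\int_0^t\bigl(|H_s|^2-|K_s|^2\bigr)\,ds,
\]
which is nonnegative and nondecreasing in $t$ because $|K_s|\le|H_s|$ a.e.; together with $|Y_0|\le|X_0|$ this says precisely that $Y<<X$.

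With orthogonality and differential subordination verified, inequalities (\ref{brownian2}) and (\ref{brownian3}) are immediate from (\ref{ortho1}) and (\ref{ortho2}) of Theorem~\ref{pic-ess}, and (\ref{brownian4}) is immediate from Theorem~\ref{jan}. So the only point requiring genuine work is the claim that these inequalities remain \emph{sharp} when one restricts to the a priori much smaller class of martingales arising as stochastic integrals against $n$-dimensional Brownian motion.

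For the sharpness I would produce near-extremal pairs inside the Brownian class. The natural candidates are conformal (planar) martingales: take $n=2$, let $B=(B^1,B^2)$ be planar Brownian motion, fix a predictable scalar process $a_s$, and set $H_s=(a_s,0)$, $K_s=(0,a_s)$, so that $|K_s|=|H_s|$ and $H_s\cdot K_s=0$ automatically, while $X_t=\int_0^t a_s\,dB^1_s$ and $Y_t=\int_0^t a_s\,dB^2_s$. Choosing $a_s$ appropriately --- equivalently, time-changing planar Brownian motion and composing with an analytic $u+iv$ --- reproduces the extremal configurations used in \cite{BanWan3} to prove optimality of $\cot(\pi/2p^{*})$, $\csc(\pi/2p^{*})$ and of $D_p$; in the classical case $u+iv=\log$ on a slit disc this recovers the harmonic conjugate on the circle and Pichorides' constant $\cot(\pi/2p^{*})$ for the conjugate function, which already shows (\ref{brownian2}) cannot be improved.

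The main obstacle is precisely this last step: one must check that the extremizers, or extremizing sequences, produced in the proofs of Theorems~\ref{pic-ess} and \ref{jan} --- which are stated for abstract continuous-time orthogonal martingales --- can be realized, or $L^p$-approximated, by Brownian stochastic integrals with $K_s\perp H_s$ and $|K_s|\le|H_s|$. Concretely this amounts to passing from the abstract extremal martingale to its Dambis--Dubins--Schwarz time change and fitting the resulting pair into the two-dimensional Brownian framework above; the inequalities then being already attained (or approached) in that framework, no improvement is possible in the larger class either. This realization step, together with the routine truncation/stopping arguments needed to guarantee the required integrability, is where essentially all the bookkeeping lies.
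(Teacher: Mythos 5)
Your proposal is correct and follows essentially the same route the paper takes: the paper states the corollary as a direct specialization of Theorems~\ref{pic-ess} and \ref{jan} ("In terms of martingales on the $n$-dimensional Brownian motion the above results give"), and your verification that $[X,Y]_t=\int_0^t H_s\cdot K_s\,ds=0$ and $[X]_t-[Y]_t=\int_0^t(|H_s|^2-|K_s|^2)\,ds$ is exactly the implicit translation of the hypotheses into orthogonality and differential subordination. Your extra remarks on realizing extremizers inside the Brownian class to justify sharpness are a reasonable elaboration of a point the paper simply asserts without comment.
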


Stated in terms of the martingale transform by predictable matrices, we have

\begin{corollary}\label{transforms2} Let $A(s)$ be a $n\times n$ matrix-valued
predictable process with real entries with the property that $[A(s) v]\cdot v=0$, for all 
$v\in \bR^n$.
Then 
$$
\|A*X\|_p \leq \cot\left(\frac{\pi}{2p^{\ast}}\right)\|A\| \|X\|_p, \hskip.5cm 1<p<\infty,
$$
$$
\Big\|\sqrt{|A*X|^2 + |X|^2}\Big\|_p \leq \csc\left(
\frac{\pi}{2p^{\ast}}\right) \|A\| \|X\|_p, \hskip.5cm 1<p<\infty,
$$
and for any $\lambda>0$,
$$
\lambda^p P\left(|A*X| \geq\lambda\right)  \leq D_p\|A\|^p\left|  \left|
X\right|  \right|_p^p, \hskip.5cm 1\leq p\leq 2.
$$
These inequalities are all sharp. 
\end{corollary}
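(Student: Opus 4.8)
The plan is to view $Y:=A*X$ as a martingale that is simultaneously orthogonal to, and differentially subordinate to, the scaled martingale $\|A\|\,X$, and then to invoke Theorem \ref{pic-ess} and Theorem \ref{jan} (equivalently, Corollary \ref{orthoweak1}) for the pair $(\|A\|\,X,\,A*X)$.

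To set this up, write $X_t=\int_0^t H_s\cdot dB_s$, so that $(A*X)_t=\int_0^t(A(s)H_s)\cdot dB_s$ while $\|A\|\,X_t=\int_0^t(\|A\|\,H_s)\cdot dB_s$; all three are $\bR$-valued martingales on the Brownian filtration, starting at $0$. The two structural facts to verify are elementary. First, orthogonality: the standing hypothesis $[A(s)v]\cdot v=0$ for every $v\in\bR^n$, applied with $v=H_s$, gives $d[X,A*X]_t=H_t\cdot(A(t)H_t)\,dt=([A(t)H_t]\cdot H_t)\,dt=0$, hence $[\,\|A\|\,X,\,A*X\,]_t=\|A\|\,[X,A*X]_t=0$ for all $t\ge 0$. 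Second, differential subordination: $d\langle A*X\rangle_t=|A(t)H_t|^2\,dt\le\|A(t)\|^2|H_t|^2\,dt\le\|A\|^2|H_t|^2\,dt=d\langle\|A\|\,X\rangle_t$, so $\langle\|A\|\,X\rangle_t-\langle A*X\rangle_t$ is nonnegative and nondecreasing in $t$; since both martingales vanish at $0$, this is precisely $A*X<<\|A\|\,X$.

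With these two facts in place the corollary is immediate. Applying the norm and square-function inequalities (\ref{ortho1})--(\ref{ortho2}) of Theorem \ref{pic-ess} to the orthogonal, differentially subordinate pair $(\|A\|\,X,\,A*X)$, and using that $\|A\|$ is a deterministic scalar (so the $p$-norm of $\|A\|\,X$ equals $\|A\|\,\|X\|_p$), yields the norm bound $\|A*X\|_p\le\cot(\pi/2p^*)\|A\|\,\|X\|_p$ and the square-function bound $\big\|\sqrt{|A*X|^2+\|A\|^2|X|^2}\big\|_p\le\csc(\pi/2p^*)\|A\|\,\|X\|_p$ of the corollary; and applying the weak-type estimate of Theorem \ref{jan}, which contains (\ref{weakortho}) as its $p=1$ case, to the same pair yields $\lambda^p P(|A*X|\ge\lambda)\le D_p\|A\|^p\|X\|_p^p$ for $1\le p\le 2$.

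The only real content here is the passage from the pointwise matrix identity $[A(s)v]\cdot v=0$ to the covariation identity $[X,A*X]=0$; once that is done, the substance of the estimates sits entirely inside Theorems \ref{pic-ess} and \ref{jan}, so I do not expect any analytic obstacle. The point that does require care is sharpness, since one must check that the constants cannot be improved \emph{within} the smaller class of matrix transforms. For this I would exhibit, already in dimension $n=2$, the constant rotation $A(s)\equiv\left(\begin{smallmatrix}0 & -1\\ 1 & 0\end{smallmatrix}\right)$, which satisfies $[Av]\cdot v=0$ and $\|A\|=1$; for this choice $X+i\,(A*X)$ is a conformal (conjugate-type) martingale, and since the extremal and near-extremal families used to prove sharpness in Theorems \ref{pic-ess} and \ref{jan} are built precisely from such conformal martingales on planar Brownian motion (see \cite{BanWan3, BanWan2, BanWan1, Jan1}), those examples already live inside the class (\ref{transform}) and sharpness transfers. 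Checking carefully that the known extremizers are captured by the representation (\ref{transform}) is the one place where a little bookkeeping is needed, though it is routine given those references.
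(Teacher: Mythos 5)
Your argument is exactly the paper's route: the paper records Corollary \ref{transforms2} as an immediate restatement of Corollary \ref{orthoweak1} (itself a consequence of Theorems \ref{pic-ess} and \ref{jan}), and your verification that $K_s:=A(s)H_s$ satisfies $K_s\cdot H_s=[A(s)H_s]\cdot H_s=0$ and $|K_s|\le\|A\|\,|H_s|$ is precisely the needed reduction, with the scaling by $\|A\|$ and the conformal-martingale sharpness examples matching \cite{BanWan3,BanWan2,BanWan1,Jan1}. One small observation: your computation correctly produces $\big\|\sqrt{|A*X|^2+\|A\|^2|X|^2}\big\|_p\le\csc(\pi/2p^*)\|A\|\|X\|_p$ for the second bound, which yields the paper's displayed form $\big\|\sqrt{|A*X|^2+|X|^2}\big\|_p\le\csc(\pi/2p^*)\|A\|\|X\|_p$ only after the (implicit) normalization $\|A\|\ge 1$; your version is the one this method actually delivers.
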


The inequalities (\ref{ortho1}),  (\ref{ortho2}) and (\ref{weakortho})  are the martingale versions of the inequalities of Pichorides \cite{Pic1},  Ess\'{e}n-Verbitsky \cite{Ess1, Vea1} and Davis \cite{Dav1}, respectively,  for harmonic and conjugate harmonic
functions.   We should note here that $$\cot\left(\frac{\pi}{2p^{\ast}}\right)< (p^*-1)$$ and that asymptotically, $$\cot\left(\frac{\pi}{2p^{\ast}}\right)\approx \frac{2}{\pi} (p^*-1),$$ as $p\to 1$ or  $p\to \infty$.
Thus orthogonality decreases the constants in Theorem \ref{Wan1}.  
 
 Motivated by the structure of the $\bR^2$-valued martingales that arise in the martingale representation for the Beurling-Ahlfors operator, the following theorem is proved in \cite{BanJan1}.

\begin{theorem}\label{thm3}
Let  $X_t=(X_t^1, X_t^2, \dots, X_t^m)$ and $Y_t=(Y_t^1, Y_t^2, \dots, Y_t^m)$ be two $\bR^m$-valued martingales on the filtration of $n$-dimensional Brownian motion with a representation as in (\ref{Brorep}). We assume $m\geq 2$. Let $Y$ satisfy 
$|K^i_s|^2=|K_s^j|^2$ for all $i, j\geq 1$ and 
$K_s^j\cdot K_s^i=0$, for $i\neq j$. Suppose $\sqrt{\frac{m+p-2}{p-1}}Y_1$ is 
differentially subordinate to $X$.  That is,
$$
d\langle\sqrt{\frac{m+p-2}{p-1}}Y_1\rangle_t=\frac{m+p-2}{p-1}|K_t^1|^2 \leq \sum_{j=1}^m
|H_t^j|^2=d\langle X\rangle_t,
$$
a.e. for all $t>0$. 
Then 
\begin{equation}
{\|Y\|}_p\leq (p-1){\|X\|}_p, \hspace{5mm} 2\leq p<\infty.
\end{equation}
\end{theorem}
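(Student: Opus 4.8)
The plan is to deduce this $\bR^m$-valued inequality from the scalar Burkholder inequality (Theorem~\ref{Wan1}, or rather its $\ell^2$-valued incarnation Theorem~\ref{BW_theorem}) by a rotational averaging argument, exploiting the orthogonality and equal-quadratic-variation hypotheses on the components $Y^1,\dots,Y^m$ of $Y$. The starting observation is that, because $K_s^j\cdot K_s^i=0$ for $i\ne j$ and $|K_s^i|^2=|K_s^j|^2$ for all $i,j$, the quadratic variation matrix of the vector martingale $Y$ is, at each time, a scalar multiple of the identity on $\bR^m$: $d\langle Y^i,Y^j\rangle_t=\delta_{ij}\,|K_t^1|^2\,dt$. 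Consequently, for any fixed unit vector $\theta=(\theta_1,\dots,\theta_m)\in \uS^{m-1}$, the scalar martingale $Y^\theta:=\sum_{j=1}^m \theta_j Y^j$ has $d\langle Y^\theta\rangle_t=|K_t^1|^2\,dt$, independently of $\theta$. The hypothesis that $\sqrt{\tfrac{m+p-2}{p-1}}\,Y_1$ is differentially subordinate to $X$ says precisely $\tfrac{m+p-2}{p-1}|K_t^1|^2\le d\langle X\rangle_t/dt$; since $m\ge 2$ and $p\ge 2$ the factor $\tfrac{m+p-2}{p-1}\ge 1$, so in particular each $Y^\theta$ is itself differentially subordinate to $X$ and Theorem~\ref{Wan1} gives $\|Y^\theta\|_p\le (p^*-1)\|X\|_p=(p-1)\|X\|_p$ for every $\theta$. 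That alone, however, only recovers a bound on $\|Y\|_p$ of order $\sqrt{m}\,(p-1)$ after averaging crudely, so the real work is to use the \emph{stronger} subordination (the extra factor $\sqrt{\tfrac{m+p-2}{p-1}}$) to remove the dimensional loss.

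The key step is therefore a sharp averaging identity. Writing $|Y_t|=\bigl(\sum_j (Y_t^j)^2\bigr)^{1/2}$, one has the elementary spherical integral $\int_{\uS^{m-1}} |\theta\cdot v|^p\,d\sigma(\theta)=c_{m,p}|v|^p$ for a constant $c_{m,p}=\int_{\uS^{m-1}}|\theta_1|^p\,d\sigma(\theta)$, so
\begin{equation}\label{avg}
\|Y\|_p^p = \frac{1}{c_{m,p}}\int_{\uS^{m-1}} \|Y^\theta\|_p^p\, d\sigma(\theta).
\end{equation}
The plan is to obtain, for \emph{a.e.\ single} $\theta$, not merely $\|Y^\theta\|_p\le(p-1)\|X\|_p$ but a bound that, when integrated against \eqref{avg}, produces exactly $(p-1)^p\|X\|_p^p$ with the right constant. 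The natural mechanism is a Burkholder-type function argument applied directly in $\bR^m$: one looks for $U:\bR^{2}\times\bR\to\bR$ (or more precisely a function of $(|X_t|, |Y_t|)$, or of $X_t\in\bR^n$, $Y_t\in\bR^m$) majorizing $V(x,y)=|y|^p-(p-1)^p|x|^p$, superharmonic-in-the-right-sense along the joint dynamics, and with $U(0,0)\le 0$; the constant $\tfrac{m+p-2}{p-1}$ is precisely what is dictated by the Laplacian of $|y|^p$ on $\bR^m$ (a radial power function on $\bR^m$ has $\Delta|y|^p = p(p+m-2)|y|^{p-2}$), which is how the dimension $m$ enters. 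The cleanest route in this paper's spirit is: (i) reduce, via It\^o's formula, the claim $EV(X_t,Y_t)\le 0$ to finding such a $U$; (ii) take $U(x,y)=\alpha_{p,m}\bigl(|y|-(p-1)|x|\bigr)\bigl(|x|+\gamma|y|\bigr)^{p-1}$ with $\gamma,\alpha_{p,m}$ chosen so that the Hessian estimate closes, or equivalently borrow the harmonic/diagonal structure so that only the scalar Burkholder function evaluated at $(\,|X|,\ |Y|\cdot\sqrt{\tfrac{m+p-2}{p-1}}\,)$-type combinations appears.

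I expect the main obstacle to be step (ii): verifying the requisite concavity/superharmonicity of the candidate $U$ on $\bR^n\times\bR^m$ under the precise coupling $\tfrac{m+p-2}{p-1}|K^1|^2\le d\langle X\rangle/dt$ together with the full orthogonality of the $K^j$. The orthogonality is what makes the cross-terms $\partial^2_{y_i y_j}U$ contribute only through $\sum_i(K^i)^2$ rather than through the full mixed structure, so that the multidimensional It\^o expansion collapses to an effectively two-dimensional inequality in $(|X|,|Y|)$ — but confirming that the reduction is exact, and that the constant $(p-1)$ (not merely $(p^*-1)=p-1$ for $p\ge2$, which happens to coincide) survives with the stated $\sqrt{\tfrac{m+p-2}{p-1}}$ weight and not a weaker one, requires a careful Hessian computation. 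A secondary, more routine obstacle is the usual localization/integrability argument (stopping times $T_N$, uniform integrability, $N\to\infty$) needed to pass from the local-martingale It\^o identity to the genuine $L^p$ bound on $\|Y\|_p=\sup_t\|\,|Y_t|\,\|_p$; this is handled exactly as in \cite{BanWan3, Wan1} and in the proof of Theorem~\ref{BW_theorem}, and I would simply cite it. If the direct $\bR^m$ Bellman-function verification proves too delicate, the fallback is the averaging route \eqref{avg} combined with a sharpened one-dimensional inequality for $Y^\theta$ that keeps track of the joint law of $(|X|,Y^\theta,|Y|)$, but I expect the direct approach, following \cite{BanJan1}, to be the intended and cleaner one.
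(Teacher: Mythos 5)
Your proposal correctly isolates the right high-level ingredients -- It\^o's formula with a biconcave majorant of $V(x,y)=|y|^p-(p-1)^p|x|^p$, the collapse of the $\bR^m$ quadratic variation matrix of $Y$ to a scalar multiple of the identity, and the observation that $\tfrac{m+p-2}{p-1}$ is precisely what $\Delta_{\bR^m}|y|^p=p(p+m-2)|y|^{p-2}$ dictates -- but you stop short of the actual closing step and flag it yourself as the unresolved ``main obstacle.'' Your two candidate ways to finish are, respectively, not what is needed and not workable: the modified majorant $\alpha_{p,m}(|y|-(p-1)|x|)(|x|+\gamma|y|)^{p-1}$ is a new Bellman function whose concavity you would have to establish from scratch, and the spherical-averaging route via $\|Y_t\|_p^p=c_{m,p}^{-1}\int_{\uS^{m-1}}\|\theta\cdot Y_t\|_p^p\,d\sigma(\theta)$ is, as you already note, lossy by roughly $\sqrt m$ because $c_{m,p}<1$, so it cannot produce the clean $(p-1)$.

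The missing idea is simpler than either and requires no new function. One keeps Burkholder's \emph{unmodified} $U$ from \eqref{function_U} and revisits the three-term Hessian identity \eqref{U_fact}, $[U_{xx}h]\cdot h+2[U_{xy}h]\cdot k+[U_{yy}k]\cdot k=-c_p(A+B+C)$. In the proof of Theorem~\ref{BW_theorem} one discards the non-negative terms $B$ and $C$ and keeps only $A$, which is $\ge 0$ when $d\langle Y\rangle_t\le d\langle X\rangle_t$. Under your conformality hypotheses, the term $B$ of \eqref{term_B}, namely $p(p-2)\bigl[|k|^2-(y',k)^2\bigr]|y|^{-1}(|x|+|y|)^{p-1}$, is not merely non-negative but quantitatively large: substituting $k_ik_j\to d\langle Y^i,Y^j\rangle_t$ and using $d\langle Y^i,Y^j\rangle_t=\delta_{ij}|K^1_t|^2\,dt$ gives $(y',k)^2\mapsto d\langle Y^1\rangle_t=\tfrac1m d\langle Y\rangle_t$ and $|k|^2\mapsto d\langle Y\rangle_t$, so that $|k|^2-(y',k)^2$ contributes $\tfrac{m-1}{m}d\langle Y\rangle_t$. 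After the elementary bound $|y|^{-1}(|x|+|y|)\ge 1$ one obtains
\begin{equation*}
A+B\ \ge\ p(p-1)(|X_t|+|Y_t|)^{p-2}\Bigl(d\langle X\rangle_t-\tfrac{m+p-2}{p-1}\,d\langle Y^1\rangle_t\Bigr)\ \ge\ 0
\end{equation*}
under the stated weakened subordination, whence $I_t\le 0$ and $EU(X_t,Y_t)\le 0$ exactly as in the proof of Theorem~\ref{BW_theorem}. So no new concavity analysis is required; the exact identity \eqref{U_fact} already encodes the dimensional gain, and the only new step is to retain $B$ instead of dropping it. This is what the paper does for $m=2$ in \S2.3 (following \cite{BanJan1}), and the computation above carries through verbatim for general $m\ge 2$ and $p\ge 2$.
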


Following the now standard terminology (see \cite{GetSha1}), we give the following definition for $\bR^2$ (or complex $\bC$) valued martingales. 

\begin{definition}\label{confmar} An $\bR^2$-valued martingale $Y_t=Y_t^1+iY_t^2$ with 
$Y_t^j=\int_0^t K_s^j \cdot d B_s,\ j=1, 2,$
 on the filtration of $n$--dimensional Brownian motion is said to be a conformal martingale if $K_s^1 \cdot K_s^2=0$
and 
$
|K_s^1|=|K_s^2|$, a.e. for all $s>0$. 
\end{definition}
From Theorem \ref{thm3} we have  
\begin{corollary}\label{cor1} 
Suppose $Y_t=(Y_t^1, Y_t^2)$ is a conformal martingale and $Y<<X$ where $X=(X_t^1, X_t^2)$ 
is any  $\bR^2$-valued martingale.   Then 
\begin{equation}\label{banjan}
\|Y\|_p\leq \sqrt{\frac{p(p-1)}{2}}\,\|X\|_p, \hspace{5mm} 2\leq p<\infty. 
\end{equation}
\end{corollary}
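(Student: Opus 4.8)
The plan is to read off (\ref{banjan}) from Theorem~\ref{thm3} in the case $m=2$, the only additional step being a rescaling of $Y$ that uses up the slack in the differential subordination. Write the conformal martingale $Y=(Y^1,Y^2)$ as $Y_t^j=\int_0^t K_s^j\cdot dB_s$ as in Definition~\ref{confmar}, and realize $X$ on the same Brownian filtration as $X_t^i=\int_0^t H_s^i\cdot dB_s$ of (\ref{Brorep}); by the Brownian representation theorem every martingale on that filtration has such a form, so there is no loss of generality. The defining relations of a conformal martingale, $|K_s^1|=|K_s^2|$ and $K_s^1\cdot K_s^2=0$ a.e.\ for all $s>0$, are exactly the structural hypotheses imposed on the subordinate martingale in Theorem~\ref{thm3} with $m=2$, so only the subordination condition, in the rescaled form demanded there, has to be checked.

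First I would rewrite $Y<<X$ in quadratic-variation terms. Since all the processes in sight have continuous paths, $[\,\cdot\,]=\langle\,\cdot\,\rangle$ and $Y<<X$ is the statement that $d\langle Y\rangle_t\le d\langle X\rangle_t$ a.e.; using $|K_t^1|=|K_t^2|$ this reads
\[
2\,|K_t^1|^2=|K_t^1|^2+|K_t^2|^2=d\langle Y\rangle_t\le d\langle X\rangle_t\qquad\text{a.e. for all }t>0 .
\]
For $2\le p<\infty$ one has $\tfrac{p}{p-1}\le 2$, so in particular $\tfrac{p}{p-1}|K_t^1|^2\le d\langle X\rangle_t$, which is the hypothesis of Theorem~\ref{thm3} (with $m=2$) and already gives $\|Y\|_p\le(p-1)\|X\|_p$. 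To get the sharper constant, set $\lambda=\sqrt{2(p-1)/p}$ (note $\lambda\ge 1$ since $p\ge 2$) and consider $\lambda Y$. It is again conformal, since conformality is a scale-invariant condition on the integrands $\lambda K^j$, it starts at $0$, and
\[
\frac{p}{p-1}\,\bigl|\lambda K_t^1\bigr|^2=\lambda^2\,\frac{p}{p-1}\,|K_t^1|^2=2\,|K_t^1|^2\le d\langle X\rangle_t ,
\]
so the pair $(X,\lambda Y)$ satisfies every hypothesis of Theorem~\ref{thm3}.

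Applying Theorem~\ref{thm3} to $(X,\lambda Y)$ gives $\|\lambda Y\|_p\le(p-1)\|X\|_p$, hence
\[
\|Y\|_p\le\frac{p-1}{\lambda}\,\|X\|_p=(p-1)\sqrt{\frac{p}{2(p-1)}}\,\|X\|_p=\sqrt{\frac{p(p-1)}{2}}\,\|X\|_p ,
\]
which is (\ref{banjan}). The genuine content sits entirely in Theorem~\ref{thm3}, which we may assume; beyond that, the points to watch are minor: that conformality survives scalar multiplication (immediate), that the differential-subordination convention causes no trouble because everything starts at $0$, and that the range $2\le p<\infty$ is inherited from Theorem~\ref{thm3} and is exactly the regime in which the rescaling by $\lambda\ge 1$ is available. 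If one wanted a self-contained proof, the main obstacle would be the construction inside Theorem~\ref{thm3} of a Burkholder-type majorant adapted to the conformal structure; with that in hand, the corollary is bookkeeping.
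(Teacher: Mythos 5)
Your proof is correct and takes essentially the same route as the paper, which states the corollary as a direct consequence of Theorem~\ref{thm3}: with $m=2$, conformality gives $d\langle Y\rangle_t=2|K_t^1|^2$, so $Y<<X$ is equivalent to $2|K_t^1|^2\le d\langle X\rangle_t$, and optimizing the homogeneity by comparing $2$ with the required factor $\tfrac{p}{p-1}$ (your $\lambda=\sqrt{2(p-1)/p}$ rescaling, or equivalently absorbing the factor into $X$) yields precisely the constant $\sqrt{p(p-1)/2}$. The only cosmetic remark is that the bound $\lambda\ge 1$ is not what makes the rescaling work; the restriction $2\le p<\infty$ is inherited solely from Theorem~\ref{thm3}.
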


As pointed out in Borichev, Janakiraman and Volberg \cite{BorJanVol2},  the proof in \cite{BanJan1} also gives the following inequality: If $X_t=(X_t^1, X_t^2)$ is an $\bR^2$-valued conformal martingale and $Y<<X$, where $Y=(Y_t^1, Y_t^2)$ 
is any  $\bR^2$-valued martingale, then 
\begin{equation}\label{JV}
\|Y\|_p\leq \sqrt{\frac{2}{p(p-1)}}\,\|X\|_p, \hspace{5mm} 1<p\leq 2.
\end{equation}

The article  \cite{BorJanVol1}  contains a  sharp version of the inequality (\ref{banjan})  for $1<p\leq 2$. More precisely, Borichev, Janakiraman and Volberg prove the following 
\begin{theorem} 
Suppose  $X$ and $Y$ are two $\bR^2$-valued  martingales. \\ 
 (i)  {(\it Left Conformality)} Suppose $Y$ is conformal and $Y<<X$.   Then  
\begin{equation}\label{JV1}
\|Y\|_p\leq \frac{1}{\sqrt{2}}\frac{z_p}{1-z_p}\|X\|_p, \hspace{5mm} 1<p\leq 2,
\end{equation}
where $z_p$  is the least positive root in the interval $(0, 1)$ of the bounded Laguerre
function  $L_p$.  This inequality is sharp.\\ 
(ii) {(\it Right Conformality)} Suppose $X$ is conformal and $Y<<X$.  Then 
\begin{equation}\label{JV2}
\|Y\|_p\leq \sqrt{2}\frac{1-z_p}{z_p}\|X\|_p, \hspace{5mm} 2\leq p<\infty,
\end{equation}
where $z_p$  is the least positive root in the interval $(0, 1)$ of the bounded Laguerre
function  $L_p$. This inequality is sharp.
\end{theorem}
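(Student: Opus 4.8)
The plan is to run Burkholder's method in the form tailored to conformal martingales: produce a single ``Bellman'' function that simultaneously encodes the $L^p$ inequality, the differential subordination, and the conformality constraint. Concretely, for part (i) I would look for a function $U\colon\bR^2\times\bR^2\to\bR$, homogeneous of degree $p$, such that: (a) $U(x,y)\ge|y|^p-C_p^p|x|^p$ for all $x,y$, where $C_p=\frac{1}{\sqrt2}\frac{z_p}{1-z_p}$; (b) $U(x,y)\le0$ whenever $|y|\le|x|$, so that $EU(X_0,Y_0)\le0$; and (c) $t\mapsto U(X_t,Y_t)$ is a supermartingale whenever $Y$ is conformal and $Y<<X$. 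Granting (a)--(c), Itô's formula and optional stopping give $EU(X_t,Y_t)\le EU(X_0,Y_0)\le0$, and combining with (a) yields $E|Y_t|^p\le C_p^p\,E|X_t|^p$, hence the claimed bound after taking suprema. As usual one first reduces, via the stochastic-integral representation and the routine approximations already used in \S2, to martingales on a Brownian filtration, where (c) becomes an infinitesimal pointwise inequality.

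The heart of the argument is the explicit construction of $U$, and this is where the Laguerre function enters. Since the inequality $\|Y\|_p\le C\|X\|_p$ is invariant under independent rotations of the $x$- and $y$-planes, the extremal $U$ should depend only on $|x|$ and $|y|$; writing $U(x,y)=|x|^p\,\Phi(|y|/|x|)$ and applying Itô together with the conformality identities $d[Y^1]=d[Y^2]$, $d[Y^1,Y^2]=0$ (which replace the full $y$-Hessian by $\tfrac14\Delta_yU$ times $d\langle Y\rangle$) and the subordination inequality $d\langle Y\rangle\le d\langle X\rangle$, requirement (c) turns into a second-order ODE inequality for $\Phi$. Solving the corresponding equality gives the ``true'' Bellman function, and after the change of variable that puts the radial variable into its natural form, this ODE is precisely Laguerre's equation; thus $\Phi$ is built from the bounded Laguerre function $L_p$, and the first positive root $z_p\in(0,1)$ of $L_p$ is exactly the ratio $|y|/|x|$ at which the candidate must be patched to an elementary branch so that majorization (a) holds with equality along the extremal configuration. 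Tuning the free constant in the patching so that (a), (b) and (c) all hold at once forces the constant to be $\frac{1}{\sqrt2}\frac{z_p}{1-z_p}$.

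With $U$ in hand the verification splits into three essentially computational checks: the majorization (a) reduces to an inequality between two explicit functions on $(0,\infty)$, established from the sign and monotonicity of $L_p$ on $(0,z_p)$; the boundary condition (b) is immediate from homogeneity and the patching; and the supermartingale property (c) amounts to showing that a quadratic form built from the derivatives of $\Phi$ is nonpositive on the relevant range, which follows from the Laguerre ODE together with convexity of $L_p$. Sharpness is then obtained by exhibiting near-extremal pairs: take $Y$ a conformal martingale arising from a planar Brownian motion (equivalently, from an analytic function) run until the ratio $|Y_t|/|X_t|$ approaches $z_p$, coupled with an $X$ for which $U(X_t,Y_t)$ is a genuine martingale, and track the (vanishing) loss in each inequality. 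Finally, part (ii) is handled by the same scheme with the roles of $X$ and $Y$ interchanged in the conformality constraint, which again linearizes to Laguerre's equation and produces the reciprocal constant $\sqrt2\,\frac{1-z_p}{z_p}$; alternatively one can deduce (ii) from (i) by a duality argument relating the left-conformal problem at exponent $p$ to the right-conformal problem at the conjugate exponent.

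I would expect the main obstacle to be the second step: \emph{guessing} the Bellman function --- recognizing that the nonlinear ODE coming from (c) linearizes to Laguerre's equation, pinning down the correct branch and the correct patching locus $z_p$, and then rigorously verifying the global concavity property (c) over the whole domain. This last verification typically needs a careful case analysis split precisely at the value $z_p$, and it also requires first establishing the qualitative facts about $L_p$ used throughout (existence of a root in $(0,1)$, and the sign, monotonicity and convexity of $L_p$ on $(0,z_p)$) on which all the estimates rest.
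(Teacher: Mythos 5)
The paper itself does not prove this theorem: it is stated as a result of Borichev, Janakiraman and Volberg, cited from \cite{BorJanVol1}, and the paper only points the reader to that article (and to \cite{BorJanVol2}, \cite{VasVol2}) for the Bellman-function machinery behind it. So there is no in-paper proof to compare against; what can be said is whether your plan matches the method those papers use and that \S2.3--2.4 of this survey describes.

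Your sketch does match that framework in its essentials: the reduction by homogeneity and rotational invariance to a function of $|y|/|x|$, the use of conformality of $Y$ to collapse the $y$-Hessian to $\tfrac14\Delta_y U\,d\langle Y\rangle$, the supermartingale/obstacle formulation, and the identification of $z_p$ as the locus where the Laguerre branch is matched to the elementary branch are all consistent with what BJV do. Two cautions, though. First, the ``ODE'' you extract from (c) is not obtained purely by radial symmetry: after setting $U(x,y)=|x|^p\Phi(|y|/|x|)$, the quadratic form in (c) still has an angular dependence through the direction of $dX$ relative to $x$ (this is exactly the role of the $A,B,C$ decomposition recalled in \S2.3 for Burkholder's $U$), so one must argue that the extremal direction is radial before a genuine scalar ODE appears; a one-line appeal to invariance is not enough. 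Second, your fallback for part (ii) --- ``deduce (ii) from (i) by duality at the conjugate exponent'' --- is not obviously sound: under the pairing $E[Y_\infty\cdot Z_\infty]$ the roles of ``$Y$ conformal, $Y\!<\!<\!X$'' and ``$X$ conformal, $Y\!<\!<\!X$'' do not simply swap, and the constants in (i) and (ii) involve $z_p$ at the \emph{same} $p$, not $z_p$ and $z_q$; you would need a nontrivial identity $2(1-z_p)(1-z_q)=z_pz_q$ for conjugate $p,q$, which you have not established. The safe route for (ii) is the direct Bellman construction you mention first, with the roles of $X$ and $Y$ interchanged in the constraint, and that is in fact what BJV do. Beyond these two points, the remaining steps you list (existence/uniqueness of $z_p\in(0,1)$, global verification of the concavity inequality across the patching locus, construction of near-extremal conformal martingales) are correctly identified as the real work but are left unexecuted, as you acknowledge.
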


The cases of the sharp constants for $2<p<\infty$ and $1<p<2$ in (\ref{JV1}) and (\ref{JV2}), respectively, remain open.   Here we also refer the reader to the recent paper \cite{BanOse1} which contains various extensions and refinements of the results in \cite{BorJanVol2}.

The following problem was raised in \cite[p. 599]{BanWan3}
\begin{problem}
 Let
$
X_t^j=\int_0^t H_s^j \cdot d B_s,\ j=1, 2, 3,
$
be three $\bR$-valued martingales on the filtration of $n$-dimensional Brownian motion
which satisfy $H_s^1 \cdot H_s^2=0,$
and 
$
|H_s^1|=|H_s^2| \leq |H_s^3|.
$
Find the best constant $C_p$ in the inequality
$$
\Big\|\sqrt{|X^1|^2+|X^2|^2}\Big\|_p \leq C_p \|X^3\|_p,\,\,\,\,\, 1 < p < \infty.
$$
\end{problem}
%\begin{remark}
The result in Borichev, Janakiraman and Volberg \cite{BorJanVol1} solves  this problem for the range  of  $1<p\leq 2$.  We take this opportunity to acknowledge the fact that our guess in \cite{BanWan3} for the best constant $C_p$ was incorrect.   
%\end{remark}

 It is well known that weak-type inequalities for martingales do not give information (at least not in any direct way) about weak-type inequalities for singular integrals.  Nevertheless,  the following problem is interesting as a martingale problem.

\begin{problem} Suppose $Y_t=Y_t^1+iY_t^2$ is a conformal martingale and $Y<<X$, where $X=(X_t^1, X_t^2)$ 
is any  $\bR^2$-valued martingale.  Find the best  constant $C_p$ in the inequality
$$
\lambda^p P\left(\sup_{t\geq 0}|Y_t|\geq \lambda\right)\leq C_p\|X\|_p^p,\,\,\,\, 1\leq p<\infty. 
$$
\end{problem}
Other variants of this question are also possible as in (\ref{sharpcon2}).  In addition, given that conformal martingales are time change of complex (2-dimensional) Brownian motion, this problem and others can be stated purely in terms of Brownian motion; see also Remark \ref{weakremark}.

\subsection{Outline of proofs} In order to illustrate Burkholder's techniques for obtaining sharp inequalities, we give an outline of the proof of Theorem \ref{BW_theorem} for $p>2$ for $\bR^2$-valued martingales as well as an outline of the proof of the inequality (\ref{ortho1}) in Theorem \ref{pic-ess} for martingales with continuous paths.  This outline follows \cite{BanWan3}.
 Let 
\[ V(x,y) = |y|^p-(p^*-1)^p|x|^p.\]
Our goal is to show that $EV(X_t,Y_t)\leq 0$ for all $t\geq 0$.  Consider Burkholder's  function $U:\bR^2\times \bR^2\to\bR$\, introduced in (\ref{u}) which we recall here again. 
\begin{equation}\label{function_U}
U(x,y) = p{(1-1/{p^*})}^{p-1}(|y|-\left(p^*-1)|x|)(|x|+|y|\right)^{p-1}.
\end{equation}
As in (\ref{sub1}), 
\begin{equation}
V(x, y)\leq U(x, y)
\end{equation}
for all $x, y\in \bR^2$. Thus to prove $EV(X_t,Y_t)\leq 0$, it suffices to prove that $EU(X_t,Y_t)\leq 0$.
 We follow the notation of \cite{BanWan3}. In particular, $U_x=\nabla U_x$, with a similar definition for $U_y$. The function $U_{xx}=\left(U_{x_i x_j}\right)$ is the matrix of second partials and similarly for $U_{xy}$ and $U_{yy}$. 
 
 The function $U$ has various structural advantages over the function $V$. In particular (see \cite{Bur45} and \cite{BanWan3}) 
 for all $x$, $y$, $h$, $k$ $\in {\bR^2}$, if $|x||y|\neq 0$, then
\begin{equation}\label{U_fact}
[U_{xx}(x,y)h]\cdot h+ 2
[U_{xy}(x,y)h]\cdot k+[U_{yy}(x,y)k]\cdot k= -c_p(A+B+C),
\end{equation}
 where $c_p>0$ is a constant depending only on $p$.  Furthermore,  for $p>2$, 
\begin{eqnarray}
A&=& p(p-1)(|h|^2-|k|^2)(|x|+|y|)^{p-2},\\ 
B &=& p(p-2)[|k|^2-(y',k)^2]|y|^{-1}(|x|+|y|)^{p-1}, \label{term_B}\\
C&=& p(p-1)(p-2)[(x',h)+(y',k)]^2|x|(|x|+|y|)^{p-3},
\end{eqnarray}
where we have used the notation $y' = y/|y|$ for  $y\ne 0$.  In addition, 
\begin{equation}\label{initialval}
U(x,y)\leq 0, \,\,\,\,\, |y|\leq |x|.
\end{equation}

The left side quantity in ($\ref{U_fact}$) is the directional concavity in 
direction $(h,k)$. That is, if $G(t) = U(x+ht,y+kt)$, then 
\[ G''(0) = [U_{xx}(x,y)h]\cdot h+ 2[U_{xy}(x,y)h]\cdot k
+[U_{yy}(x,y)k]\cdot k.\] Thus for instance $G''(0)\leq 0 $ whenever $|k|\leq |h|$. 
Burkholder uses this
property to prove the $(p^*-1)$ bound for discrete martingales. In \cite{BanWan3}, this property is explored in combination with the 
It\^o's formula and the  differential subordination as described above in the following way. 
Apply It\^o's formula to the function $U$ to get
\begin{eqnarray}\label{ito}
U(X_t,Y_t)&=&U(X_0,Y_0)+\int_0^t U_x(X_s,Y_s)\cdot dX_s\\
&+& \int_0^t U_y(X_s,Y_s)\cdot dY_s+\frac{I_t}{2},\nonumber
\end{eqnarray}
where 
\begin{eqnarray}\label{I_t}
dI_t &=& \sum_{i,j=1}^2( U_{x_i x_j}(X_t,Y_t)d\langle X^i,X^j\rangle_t\\
&+& 2U_{x_i y_j}(X_t,Y_t)
d\langle X^i,Y^j\rangle_t +U_{y_i y_j}(X_t,Y_t)
d\langle Y^i,Y^j\rangle_t).\nonumber
\end{eqnarray}
Recall  that we want to prove that 
\begin{equation}\label{toprove}
EU(X_t,Y_t)\leq 0.
\end{equation} 
Since 
$|Y_0|\leq |X_0|$, we have  $EU(X_0,Y_0)\leq 0$, by (\ref{initialval}). Since  
$$\int_0^tU_x
(X_s,Y_s)\cdot dX_s\,\,\,\,\, \text{and}\,\,\,\,\,\, \int_0^t U_y(X_s,Y_s)\cdot dY_s$$ are both martingales, their expectation
is $0$. Therefore 
\[EU(X_t,Y_t)\leq \frac{1}{2}EI_t.\]

Replacing $h_ih_j \rightarrow d\langle X_i,X_j\rangle$, $k_ik_j\rightarrow 
d\langle Y_i,Y_j\rangle$ and $h_ik_j\rightarrow d\langle X_i,Y_j\rangle$, in ($\ref{U_fact}$) and observing as in Burkholder \cite{Bur45} that the terms $B$ and $C$ are always 
non-negative, it follows that
\[ I_t \leq -p(p-1)c_p\int_0^t(|X_s|+|Y_s|)^{p-2}d(\langle X\rangle_s-\langle Y\rangle_s) \leq 0,\]
provided $Y$ is differentially subordinate to $X$. Hence $EI_t\leq 0$. This 
proves ($\ref{toprove}$) and hence the inequality in Theorem \ref{BW_theorem} for $\bR^2$-valued martingales with continuous paths.  

The proof of Theorem \ref{thm3} in \cite{BanJan1} for $m=2$ follows from a simple modification of this argument.   
Our assumptions are: $Y_t$ satisfies $d\langle Y_t^1\rangle =d\langle Y_t^2\rangle $, 
$d\langle Y_t^1,Y_t^2\rangle
=0$, and $\sqrt{\frac{p}{p-1}}Y_t^1$ is differentially subordinate to $X_t$.
As above, our goal is to show that $EU(X_t,Y_t)\leq 0$.
The above method adapted from \cite{Bur45} and \cite{BanWan3} was 
to drop  $B$ and $C$, then change
the norm square terms to quadratic variation terms and just keep $A$.  We now 
include $B$ of ($\ref{term_B}$) and verify the calculation again.
Observe that the term $$(y',k)^2 = (k_1y_1/|y|)^2 + (k_2y_2/|y|)^2 + 2
k_1k_2(y_1/|y|)(y_2/|y|)$$ in ($\ref{term_B}$) converts to
\[\left(\frac{Y_t^1}{|Y_t|}\right)^2d\langle Y^1 \rangle_t+ \left(\frac{Y_t^2}{|Y_t|}\right)^2
d\langle Y^2\rangle_t 
+2\frac{Y_t^1}{|Y_t|}\frac{Y_t^2}{|Y_t|}d\langle Y^1,Y^2\rangle_t.\] Since $d\langle Y^1\rangle =
d\langle Y^2\rangle_t$ and $d\langle Y^1,Y^2\rangle_t=0$, this gives $d\langle Y^1\rangle_t =
\frac{1}{2}d\langle Y\rangle_t$.

Given  that $|Y|^{-1}(|X|+|Y|)$ is greater than or equal to $1$, we find that 
the contribution from $B$ is bounded below by
\[p(p-2)(|X_t|+|Y_t|)^{p-2}(1/2)d\langle Y\rangle_t.\]
So,
\begin{eqnarray*}
A+B &\geq& p(p-1)(|X_t|+|Y_t|)^{p-2}(d\langle X\rangle_t -\left(\frac{p}{2(p-1)})d\langle Y\rangle_t \right)\\
&=& p(p-1)(|X_t|+|Y_t|)^{p-2}(d\langle X\rangle_t-(\frac{p}{p-1})d\langle Y_t^1\rangle)\\
&\geq& 0,
\end{eqnarray*}
where the  last inequality takes into account our assumption on differential subordination. It follows that the
term $I_t$ in (\ref{ito})  is non-positive, and therefore 
$EU(X_t,Y_t)\leq 0$ as required.  This proves the special case $m=2$ for Theorem \ref{thm3}.   

We now give an outline of the proof of the inequalities (\ref{ortho1}) and (\ref{ortho2}) in Theorem \ref{pic-ess} for real valued martingales on the filtration of $n$-dimensional Brownian motion.  Let us assume that $1<p\leq 2$. Then $p^*=p/(p-1)$, $\cot(\frac{\pi}{2p^{\ast}})=\tan(\frac{\pi}{2p})$ and 
$\csc(\frac{\pi}{2p^{\ast}})=\sec(\frac{\pi}{2p})$.
 The reverse Minkowski's
inequality implies that 
$$
\sqrt{||X||^2_p + ||Y||^2_p}\leq ||\sqrt{|X|^2 + |Y|^2}||_p.
$$
Thus inequality  (\ref{ortho1}) follows from (\ref{ortho2}) and the fact that $$\sec^2\left(
\frac{\pi}{2p}\right) = \tan^2\left(
\frac{\pi}{2p}\right) +
1.$$ To prove (\ref{ortho2}), define 
\begin{equation}
V(x,y)= |y|^p-\sec^{p}\left({\pi\over 2p}\right)|x|^p.
\end{equation}
Our task is to show that under the orthogonality and differential subordination conditions, 
$
EV(X_t, Y_t)\leq 0.
$
As above, we look for a function $U$ with $V(x, y)\leq U(x, y)$ for all $x, y\in \bR$\,.  Such function comes from  examining Pichorides' paper \cite{Pic1}. Namely, take 
\begin{equation}
U(x,y)= -\tan\left(\frac{\pi}{2p}\right)R^p\cos(p\theta),
\end{equation}
with 
$|x|=R\cos(\theta)$,  $y=R\sin(\theta)$, $-\pi/2 \leq \theta \leq \pi/2$. 
It is proved in \cite{BanWan3} that indeed $V(x,y)\leq U(x,y)$ and that $U(x_0,y_0)\leq 0$ for $|y_0| \leq |x_0|$. Furthermore, $U$ has the following property:
\begin{equation}\label{ito2}
U_{xx}(x, y)|h|^2+U_{yy}(x, y)|k|^2\leq -C(x,y) (|h|^2-|k|^2), 
\end{equation}
where $C(x, y)$ is a non-negative function for all $x, y\in \bR$.  With this it follows from (\ref{I_t}) that 
under the assumption of orthogonality and differential subordination, $EI_t\leq 0$ which gives that $EU(X_t, Y_t)\leq 0$ and proves the desired result. 

We should note here, however, that to make all of the above precise and justify the application of It\^o's formula,  one needs some approximations which are presented in Lemma 1.1 and Proposition 1.2 of \cite{BanWan3}.  For full details, we refer the reader to \cite{BanWan3, BanWan2}.  

\subsection{The Burkholder method}
Here we  summarize the basic strategy of Burkholder for finding best constants in martingale problems, using examples from above.
Suppose $X$ and $Y$ are two continuous martingales, with special properties and relations yet to be specified. We wish to find the best constant $C_p$ in the inequality
\begin{equation}
\|Y\|_p \leq C_p \|X\|_p, \hspace{5mm} 1<p<\infty.
\end{equation}
Let $V(x,y) = |y|^p-c^p|x|^p$. 
We must find the minimal $c$ so that $EV(X_t,Y_t)\leq 0$.  Written in terms of It\^o's formula, 
$$ V(X_t,Y_t) = \int_0^t dV_s + \frac{1}{2} \int_0^t d\left<V\right>_s,$$
where the first term is a martingale and the second quadratic-variation process is of bounded variation. Therefore 
$$ EV(X_t,Y_t) = \frac{1}{2}\int_0^t E d\left<V\right>_s, $$
and $EV(X_t,Y_t)\leq 0$ would follow if we can show for instance that $Ed\left<V\right>_s \leq 0$ for all $s>0$. In general, however, the quadratic term $d\left<V\right>_s$ can take both positive and negative values and its expectation is just as difficult to estimate as that of $V(X_t, Y_t)$.

Enter Burkholder. We find the minimal constant $c$ such that there exists a function $U(x,y)$ satisfying $V(x,y)\leq U(x,y)$, $U(0,0)=0$ and  
\begin{eqnarray}
d\left<U(X,Y)\right>_s &\leq& 0
\end{eqnarray}
 for all $s$, for all $(X_t,Y_t)$ satisfying the required conditions.
The last condition is equivalent to requiring that the process $U(X_t,Y_t)$ is a supermartingale. We now have
\begin{equation}
EV(X_t,Y_t)\leq EU(X_t,Y_t) \leq EU(X_0, Y_0)=U(0,0)=0
\end{equation}
as required. But also the condition $d\left<U\right>_s\leq 0$ is equivalent to requiring that $U$ is a supersolution for a family of second-order partial differential operators.  Hence Burkholder's approach essentially replaces the martingale problem with an obstacle problem in the calculus of variations setting.  It is here that Bellman functions enter.

It is often the case that this family of operators has a few extremal operators which give the general solution. Consider the special case where $X$ and $Y$ are real-valued and satisfying the subordination condition $d\left<Y\right>_s \leq d\left<X\right>_s$. Then $d\left<U\right>_s\leq 0$ implies
$$ U_{xx}d\left<X\right>_s + 2 U_{xy}d\left<X,Y\right>_s + U_{yy}d\left<Y\right>_s \leq 0.$$
It can be shown that an extremizing case is $d\left<Y\right>_s = d\left<X\right>_s$, and hence we must find a $U$ function that satisfies
$$ U_{xx} + 2 a U_{xy} + U_{yy} \leq 0,$$ where $-1\leq a\leq 1$. Again, it follows that it suffices for $U$ to satisfy 
$$ U_{xx} \pm 2 U_{xy} + U_{yy}= \left(\partial_x \pm \partial_y\right)^2U \leq 0.$$ Thus we are looking for the minimal majorant of $V$ that is biconcave in the $\pm \frac{\pi}{4}$ directions. 

Burkholder (see \cite[p. 81]{Bur45}) finds that when $c=p^*-1$, the majorant $U\geq V$ exists and equals
\begin{equation}\label{smallestbicon}
U(x, y)=\begin{cases}
\alpha_p\left(|y|-(p^*-1)|x|\right)\left(|x|+|y|\right)^{p-1},\,\,\, |y|> (p^*-1)|x| \\
V(x,y), \hspace{3mm} |y|\leq (p^*-1)|x|
\end{cases}
\end{equation}
where 
$$
\alpha_p=p\left(1-\frac{1}{p^*}\right)^{p-1}.
$$
He also shows by finding near extremals that $(p^*-1)$ is the best possible constant.

We next illustrate Burkholder's approach to the weak-type $(p,p)$ constant for orthogonal martingales. Suppose the $\bR^2$-valued martingale $(X,Y)$ is conformal. That is,  $d\left<X\right> = d\left<Y\right>$ and $d\left<X,Y\right> = 0$ and that we wish to find the best constant $D_p$ in the inequality
\begin{equation}
\sup_{\lambda>0} \lambda^p P(|Y|>\lambda) \leq D_p\|X\|_p^p, \hspace{5mm} 1\leq p<\infty.
\end{equation}
This  is equivalent to 
\begin{equation}
P(|Y|>1)\leq D_p\|X\|_p^p.
\end{equation}
Consider the function 
\begin{equation}
W(x,y) = \begin{cases}
1-c^p|x|^p, \hspace{5mm} |y|\geq 1; \\
-c^p|x|^p, \hspace{5mm} |y|<1.
\end{cases}
\end{equation}
We have to find the minimal $c$ such that $EW(X,Y)\leq 0$ whenever $(X,Y)$ is conformal. Again, we look for the minimal majorant $U\geq W$ so that $U(0,0)=0$ and such that $U(X_t,Y_t)$ is a supermartingale. This means that 
\begin{eqnarray*}
d\left<U\right>_s &=& U_{xx}d\left<X\right>_s + 2 U_{xy}d\left<X,Y\right>_s + U_{yy}d\left<Y\right>_s \\
&=& d\left<X\right>_s \left(U_{xx}+U_{yy}\right) \leq 0.
\end{eqnarray*}
Thus we want to find the least constant $c$ so that the minimal superharmonic majorant $U$ of $W$ has $U(0,0)=0$. When $1\leq p\leq 2$, $U$ is obtained by harmonically extending its boundary values into the strip $|y|<1$. Janakiraman (and Choi for $p=1$) thus finds the best constant; see \cite{Jan1}. For $2<p<\infty$, as stated in Problem \ref{problem1}, the best weak-type $(p,p)$-constant remains unknown. However we note that it is likely that Burkholder's approach is applicable in exactly the same manner and that the solution will involve finding information on the minimal superharmonic majorant of $W$.

For more on these techniques and the connections to Bellman functions, see the recent papers of
Borichev, Janakiraman and Volberg \cite{BorJanVol1, BorJanVol2}. As
already mentioned in \S1, \cite{VasVol2} is also highly recommended.

\begin{remark}
The It\^o formula was used in \cite{BanWan3} to deal specifically with martingales with continuous paths and in Wang \cite{Wan1} to deal with more general continuous-time parameter martingales. Similar use of It\^o's formula was  employed by Burkholder in \cite{Bur43}. As seen in the above presentation, It\^o's formula allows us to simplify some of Burkholder's original analysis and also leads quickly to the necessary conditions that $U$ must satisfy. Of course, finding such functions, and especially the explicit expression given by Burkholder, is quite a different matter. 
 \end{remark}

\section{Singular integrals}

In this section we present some applications of the sharp martingale inequalities in \S1 to singular integrals. While the most recent applications and those of current interest to many researchers  are to the Beurling-Ahlfors operator, there are also interesting applications to the Hilbert and Riesz transforms.  These applications raise other interesting questions that we shall mention along the way.  We begin with the Hilbert transform.

\subsection{The Hilbert transform} The most basic example of singular integrals is  the Hilbert transform on the real line defined by 
$$
Hf(x)=p.v.\frac{1}{\pi}\int_{\bR}\frac{f(y)}{x-y}dy.  
$$
The celebrated inequalities of M. Riesz and Kolmogorov assert that 
%\begin{equation}\label{weakkol}
\begin{equation}\label{Riesz}
\|Hf\|_p\leq C_p\|f\|_p, \hskip.5cm 1 < p < \infty,
\end{equation}
and that for all $\lambda>0$, 
\begin{equation}\label{Kolmogorov}
\lambda m\{x\in \bR: |Hf(x)|>\lambda\}\leq C_1\|f\|_1, 
\end{equation}
where $m$ is the Lebesgue measure on $\bR$. Inequalities of this type hold for very general Calder\'on-Zygmund singular integrals as detailed in Stein \cite{Ste2}. 
 In \cite{Pic1}, Pichorides showed that the best constant in the Riesz inequality (\ref{Riesz}) is $\cot(\frac{\pi}{2p^{\ast}})$ and in \cite{Dav1}, Davis proved that the best constant in Kolmogorov's inequality (\ref{Kolmogorov}) is the constant $D_1$ given by (\ref{catalan}). It is interesting to note here, in relation to the topic of this paper, that Davis' original proof used  Brownian motion. There are now several proofs of these sharp inequalities. The Pichorides proof has been greatly simplified; see \cite[Ex. 4.1.13, p. 264]{Gra1}, as has the Davis proof; see \cite{Dur1}.  In \cite{Vea1} and \cite{Ess1}, Verbitsky  and Ess\'{e}n proved the sharp related inequality: 
 \begin{equation}\label{Essen}
 \Big\|\sqrt{|Hf|^2+|f|^2}\Big\|_p\leq \csc(\frac{\pi}{2p^{\ast}}) \|f\|_p, \hskip.5cm 1 < p < \infty.
 \end{equation}
 
Theorem \ref{pic-ess} provides martingale proofs of the inequalities (\ref{Riesz}), (\ref{Kolmogorov}) and (\ref{Essen}).  
On the other hand, we should also acknowledge here that the martingale inequalities were inspired by the analysis inequalities. However, because of the universality of the  martingale inequalities when we applied them back to analysis they provide more information, and have wider applications,  than just for the Hilbert transform. In particular,  they contain information for orthogonal harmonic functions which are not necessarily the real and imaginary parts of analytic functions. But much, much, more than that.  The martingale inequalities apply to Riesz transforms in various settings, as we shall discuss below.  In addition, we have the following Theorem of Janakiraman  \cite{Jan1} which follows from his Theorem  \ref{jan}.  It extends Davis' inequality. 
 
\begin{theorem}  For all $\lambda>0$, 
\begin{equation}\label{jan1}
\lambda^p\,m\{x\in \bR: |Hf(x)|>\lambda\}\leq D_p\|f\|_p^p,\,\,\,\, 1\leq p\leq 2,
\end{equation}
\[ D_p ={\left(\frac{1}{\pi}\int_{-\infty}^\infty \frac{{\left|\frac{2}{\pi}
\log{|t|}\right|}^p}{t^2 + 1} dt\right)}^{-1}.\]
This constant is best possible. 
\end{theorem}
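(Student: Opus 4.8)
The plan is to derive this weak-type $L^p$ bound for the Hilbert transform directly from the martingale inequality in Theorem~\ref{jan}, using the standard transference between the Hilbert transform and martingales built from harmonic extensions and Brownian motion. First I would realize $Hf$ via conjugate harmonic functions: let $u(x,y)$ be the harmonic extension of $f$ to the upper half-plane $\bR^2_+$ and let $\tilde u$ be its harmonic conjugate normalized to vanish at infinity, so that the boundary values of $\tilde u$ are (a constant multiple of) $Hf$. Running a two-dimensional Brownian motion $B_t=(B_t^1,B_t^2)$ started from a point $(x_0,y_0)$ with $y_0$ large and stopped at the time $\tau$ it first hits the real axis, the processes $X_t=u(B_{t\wedge\tau})$ and $Y_t=\tilde u(B_{t\wedge\tau})$ become $\bR$-valued martingales. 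Because $u$ and $\tilde u$ satisfy the Cauchy--Riemann equations, $X$ and $Y$ are \emph{orthogonal} and each has the same quadratic variation: $H^1_s\cdot H^2_s=0$ and $|H^1_s|=|H^2_s|$, so in particular $Y<<X$ in the sense of Definition in \S2.1. Hence Theorem~\ref{jan} applies and gives
\[
\lambda^p P\big(|Y_\tau|\ge \lambda\big)\le D_p\,\|X\|_p^p,\qquad 1\le p\le 2.
\]

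Next I would translate this back to Lebesgue measure on the line. The exit distribution of the Brownian motion on $\bR$ is the Poisson kernel $P_{y_0}(x_0-\cdot)$, so $E|X_\tau|^p = \int_{\bR} |f(x)|^p P_{y_0}(x_0-x)\,dx$ and similarly $P(|Y_\tau|\ge\lambda)=\int_{\{|Hf|\ge\lambda\}} P_{y_0}(x_0-x)\,dx$, up to the normalization constant relating $\tilde u$ to $Hf$ (which one checks is $1$ with the standard normalization $Hf(x)=\mathrm{p.v.}\,\pi^{-1}\int f(y)/(x-y)\,dy$). Because $\|X\|_p = \sup_t \|X_t\|_p = \|X_\tau\|_p$ for this bounded martingale, the displayed inequality becomes
\[
\lambda^p \int_{\{|Hf|\ge\lambda\}} P_{y_0}(x_0-x)\,dx \;\le\; D_p \int_{\bR} |f(x)|^p P_{y_0}(x_0-x)\,dx.
\]
Now I would let $y_0\to\infty$ (equivalently, dilate): $P_{y_0}(x_0-x)\to 0$ uniformly, but after multiplying through by $y_0$ and using $y_0 P_{y_0}(x_0-x)\to \pi^{-1}$, the right side converges to $\pi^{-1}\|f\|_p^p$ and the left side to $\pi^{-1}\lambda^p m\{|Hf|>\lambda\}$, by dominated/monotone convergence (first establishing the bound for $f$ nice, e.g. $f\in C_c^\infty$, then extending by density and Fatou). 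This yields $\lambda^p m\{|Hf|>\lambda\}\le D_p\|f\|_p^p$.

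Finally, sharpness: the constant $D_p$ is already shown to be best possible at the martingale level in Theorem~\ref{jan}, but for the Hilbert transform one must exhibit near-extremal \emph{functions}. I would follow the standard route of transplanting the extremal martingales back to the analytic setting — the extremizers for Theorem~\ref{jan} are (time changes of) conformal martingales whose analytic avatars are explicit analytic functions in the disk or half-plane (the extremals go back to Davis's construction for $p=1$, cf. \cite{Dav1}, and Janakiraman's in \cite{Jan1}), so that $\lambda^p m\{|Hf|>\lambda\}/\|f\|_p^p$ can be pushed arbitrarily close to $D_p$. The main obstacle is this last step: the soft transference argument gives the inequality immediately, but verifying that no loss of constant occurs in the limit $y_0\to\infty$, and producing genuine function-theoretic near-extremizers realizing $D_p$ (rather than merely quoting sharpness of the martingale inequality), requires the careful limiting analysis and the explicit extremal examples. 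I would therefore present the inequality via the above transference and refer to \cite{Jan1, Dav1} for the matching lower bound.
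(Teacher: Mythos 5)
Your approach is exactly the one the paper has in mind: the paper attributes the theorem to Janakiraman with the remark that, because of the Cauchy--Riemann equations and It\^o's formula, the stochastic representation of the Hilbert transform involves no conditional expectation, so the weak-type estimate transfers directly from the orthogonal-martingale inequality of Theorem~\ref{jan}. Your write-up (harmonic extension plus conjugate, Brownian motion stopped at the real axis, $\nabla u\cdot\nabla\tilde u=0$ and $|\nabla u|=|\nabla\tilde u|$ giving orthogonality and equal quadratic variation, then the $y_0 P_{y_0}\to\pi^{-1}$ rescaling) is the standard way to carry this out.

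One small point you should close: as written you apply Theorem~\ref{jan} to $X_t=u(B_{t\wedge\tau})$ and $Y_t=\tilde u(B_{t\wedge\tau})$ started from a \emph{finite} point $(x_0,y_0)$, but the definition of $Y<<X$ requires $|Y_0|\le|X_0|$, i.e.\ $|\tilde u(x_0,y_0)|\le|u(x_0,y_0)|$, and there is no reason this holds at a fixed starting point. The fix is either to center (apply the theorem to $X_t-X_0$ and $Y_t-Y_0$, which both vanish at time $0$, and observe that the corrections $u(x_0,y_0)$, $\tilde u(x_0,y_0)\to 0$ as $y_0\to\infty$), or, cleaner and closer to what the paper actually does, to work with the Gundy--Varopoulos background radiation process of \S 3.3, for which the martingales genuinely start at $0$ and the exit distribution is exactly Lebesgue measure, so no limiting argument in $y_0$ is needed at all. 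With that repair the argument is complete and matches the paper's intended derivation; for the sharpness you are right that one must exhibit function-level near-extremizers, and deferring to \cite{Jan1} and \cite{Dav1} is what the paper does as well.
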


The following open problem is of interest. (See the related Problem  \ref{problem1}.) 
\begin{problem}  Determine the best constant in the inequality (\ref{jan1}) for the range $2<p<\infty$.\end{problem}

Let $D$ be an open connected set in $\bR^n$.
Let $u, v\colon D\to \bR$\, be two harmonic functions.
Following Burkholder \cite{Bur48} we say that $v$ is differentially subordinate to $u$
if for all $x\in D$,
$$
|\nabla v (x)|\leq |\nabla u(x)|.
$$
We will also say that $u$ is orthogonal to $v$ if for all $x\in D$,
$$
\nabla u (x) \cdot \nabla v (x)=0.
$$ 
As in Burkholder \cite{Bur48}, we also define for $1\leq p < \infty$,
$$
\|u\|_p=\sup_{D_0}\left(\int_{\partial D_0} |u|^p d\mu\right)^{1/p},
$$
where the supremum is taken over all subdomains $D_0$ of $D$, with $\overline{D_0}\subset D$, containing a fixed
point $\xi_0$ and $\mu$ is the harmonic measure on $\partial D_0$ based at $\xi_0$.
We assume that $u(\xi_0)=v(\xi_0)=0$.
The next theorem follows from Corollary \ref{orthoweak1}.   It is the analogue of Theorem
5.1 in Burkholder \cite{Bur48} with the extra assumption of orthogonality.
As before,  orthogonality reduces the constant.

\begin{theorem}
Suppose $u$ and $v$ are two real-valued harmonic
functions in a domain $D\subset \bR^n$ which are orthogonal and with $v$ differentially
subordinate to $u$.
Then 
\begin{equation}
\|v\|_p \leq \cot\left(\frac{\pi}{2p^{\ast}}\right) \|u\|_p, \hskip.5cm 1 < p < \infty
\end{equation}
and 
\begin{equation}
\|\sqrt{|v|^2+|u|^2}\|_p \leq \csc\left(\frac{\pi}{2p^{\ast}}\right)\|u|_p, \hskip.5cm 1 < p < \infty.
\end{equation}
Furthermore,  for all $\lambda>0$, 
\begin{equation}\label{jan2}
\lambda^p\,\mu\{\xi\in \partial D_0: |v(\xi)|>\lambda\}\leq D_p\int_{\partial D_0} |u|^p d\mu,\,\,\,\, 1\leq p\leq 2.
\end{equation}
These inequalities are sharp.
\end{theorem}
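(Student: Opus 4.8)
The plan is to represent $u$ and $v$ as stochastic integrals against $n$-dimensional Brownian motion and then invoke Corollary~\ref{orthoweak1}, passing at the end from the Brownian martingales to boundary integrals against harmonic measure. Fix an admissible subdomain $D_0$ (so $\overline{D_0}\subset D$ and $\xi_0\in D_0$); we may assume $D_0$ is a bounded domain with smooth boundary, the general case following by exhaustion as in \cite{Bur48, BanWan3}. Let $B=\{B_t\}$ be Brownian motion in $\bR^n$ started at $\xi_0$ and let $\tau$ be its first exit time from $D_0$, so $\tau<\infty$ almost surely, $B_\tau\in\partial D_0$, and the law of $B_\tau$ is the harmonic measure $\mu$ on $\partial D_0$ based at $\xi_0$. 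Set $X_t=u(B_{t\wedge\tau})$ and $Y_t=v(B_{t\wedge\tau})$. Since $u$ and $v$ are harmonic and $C^\infty$ on a neighborhood of the compact set $\overline{D_0}$, It\^o's formula gives
\[
X_t=\int_0^{t\wedge\tau}\nabla u(B_s)\cdot dB_s,\qquad Y_t=\int_0^{t\wedge\tau}\nabla v(B_s)\cdot dB_s,
\]
the bounded-variation terms vanishing because $\Delta u=\Delta v=0$; moreover $X_0=u(\xi_0)=0$ and $Y_0=v(\xi_0)=0$.

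In the notation of Corollary~\ref{orthoweak1} we thus have $H_s=\nabla u(B_s)\mathbf 1_{\{s\le\tau\}}$ and $K_s=\nabla v(B_s)\mathbf 1_{\{s\le\tau\}}$. The orthogonality hypothesis $\nabla u(x)\cdot\nabla v(x)=0$ on $D$ gives $H_s\cdot K_s=0$, and the differential subordination $|\nabla v(x)|\le|\nabla u(x)|$ on $D$ gives $|K_s|\le|H_s|$, for a.e.\ $s$ almost surely; hence $X$ and $Y$ are orthogonal and $Y<<X$, so Corollary~\ref{orthoweak1} applies. It remains to identify the quantities appearing there with their boundary counterparts. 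Because $u$ and $v$ are bounded on $\overline{D_0}$, $X$ and $Y$ are bounded martingales; for $p\ge1$ the processes $|X_t|^p$, $|Y_t|^p$ and $(|X_t|^2+|Y_t|^2)^{p/2}$ are submartingales (by conditional Jensen, since $z\mapsto|z|^p$ is convex), so $t\mapsto\|X_t\|_p$ etc.\ are nondecreasing and $\|X\|_p=\lim_{t\to\infty}\|X_t\|_p$. Since $B_{t\wedge\tau}\to B_\tau$ almost surely and all integrands are uniformly bounded, dominated convergence yields
\[
\|X\|_p=\Big(\int_{\partial D_0}|u|^p\,d\mu\Big)^{1/p},\qquad \|Y\|_p=\Big(\int_{\partial D_0}|v|^p\,d\mu\Big)^{1/p},
\]
and likewise $\big\|\sqrt{|X|^2+|Y|^2}\big\|_p=\big(\int_{\partial D_0}(|u|^2+|v|^2)^{p/2}\,d\mu\big)^{1/p}$, while from $Y_t\to v(B_\tau)$ the weak-type bound of Corollary~\ref{orthoweak1} reads $\lambda^p\mu\{\xi\in\partial D_0:|v(\xi)|\ge\lambda\}\le D_p\int_{\partial D_0}|u|^p\,d\mu$ for $1\le p\le2$. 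Substituting these into the three inequalities of Corollary~\ref{orthoweak1}, bounding $\int_{\partial D_0}|u|^p\,d\mu\le\|u\|_p^p$ on the right, using $\{|v|>\lambda\}\subseteq\{|v|\ge\lambda\}$, and taking the supremum over all admissible $D_0$ gives the three asserted inequalities; the degenerate cases $\|u\|_p\in\{0,\infty\}$ are trivial, the first forcing $u\equiv v\equiv0$ via the maximum principle and differential subordination.

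For sharpness one reduces to the classical one-variable results. Taking $n=2$, $D$ a disk, $\xi_0$ its center, and $u+iv$ analytic with $u(\xi_0)=v(\xi_0)=0$, the three inequalities become exactly the conjugate-function inequalities whose constants are known to be best possible: the sharp form of the Riesz inequality~(\ref{Riesz}) due to Pichorides, the Ess\'en--Verbitsky inequality~(\ref{Essen}), and the sharp weak-type inequality~(\ref{jan1}) of Davis (for $p=1$) and Janakiraman. For $n\ge3$ one lets $u$ and $v$ depend on only two of the coordinates, inheriting the same extremals. Hence none of the three constants can be lowered.

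I expect the only genuinely delicate point to be the justification of It\^o's formula up to the exit time $\tau$ and the attendant convergence as $t\to\infty$, together with making ``harmonic measure'' and ``admissible $D_0$'' precise (e.g.\ the reduction to bounded smooth $D_0$). These are precisely the technical matters addressed by the approximation arguments of Lemma~1.1 and Proposition~1.2 of \cite{BanWan3}, and they already appear in Burkholder's treatment of the non-orthogonal analogue (Theorem~5.1 of \cite{Bur48}); one invokes them here with only notational changes.
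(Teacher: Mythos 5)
Your proof is correct and is exactly the intended argument. The paper dispatches this theorem in one sentence ("The next theorem follows from Corollary~\ref{orthoweak1}"), and what you have written is precisely the fleshed-out version: stop Brownian motion at the exit time from $D_0$, use It\^o to write $u(B_{t\wedge\tau})$ and $v(B_{t\wedge\tau})$ as stochastic integrals with integrands $\nabla u(B_s)$ and $\nabla v(B_s)$, observe that pointwise orthogonality and differential subordination of the gradients give exactly the hypotheses $H_s\cdot K_s=0$, $|K_s|\le|H_s|$ of Corollary~\ref{orthoweak1}, pass to $t\to\infty$ to identify the martingale $L^p$-norms with boundary integrals against harmonic measure, and then take the supremum over admissible $D_0$; sharpness by specializing to conjugate functions in the disk is also the route the paper points to.
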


If $F(z)=u(z)+iv(z)$ is analytic in the unit disc in the complex plane $\bC$, the inequalities above are again the classical inequalities of Pichorides,  Ess\'{e}n-Verbitsky and the extension of the inequality of Davis by Janakiraman.   For $p=1$, the inequality (\ref{jan2}) 
was first proved by Choi in \cite{Cho4}.  

There is also a version of Theorem \ref{osc} for harmonic functions which implies sharp versions of the classical LlogL inequality of Zygmund for conjugate harmonic functions in the disk;  see \cite{Osc8}.

\subsection{First order Riesz transforms} 
The Riesz transforms in $\bR^n$ are the natural generalizations of the Hilbert transform to higher dimensions.  Given that these operators all fall into the general category of Calder\'on-Zygmund singular integrals, it follows that  they are bounded on $L^p$, for $1<p<\infty$, and that they are weak-type (1,1).  Here we are interested in sharp bounds. For this purpose, we can define these operators
 for functions which are $C^{\infty}$ and of compact support. We denote this class by $C_0^{\infty}(\bR^n)$. The results below are all stated for such functions.  In addition, since the Riesz transforms are operators which take real valued functions to real valued functions, in dealing with their norms we could just as well assume that  the functions are real-valued. Once norm estimates for such functions are obtained, the same holds for functions taking values in the complex plane or even in a Hilbert space; see for example \cite{IwaMar1}.  We start by defining

$$
R_j f(x)=p.v.\,C_{n}
\int_{\bR^{n}} {(x_j-y_j)\over |x-y|^{n+1}}f(y)dy,\,\,\,\,\,  j=1, 2, \dots n, 
$$
where $C_n=\Gamma ({n+1\over 2})/\pi^{n+1\over 2}$ is chosen so that as Fourier multipliers 
$$\widehat{R_j f}(\xi)={i\xi_j\over |\xi|} \widehat{f} (\xi).$$ We use the normalization 
\begin{equation}\label{Fo}
\widehat{f}(\xi)=\int_{\bR^n} e^{2\pi i\xi\cdot x}f(x)dx
\end{equation} 
and 
\begin{equation}\label{Foin}
{f}(x)=\int_{\bR^n} e^{-2\pi ix\cdot\xi}\widehat{f}(\xi)d\xi
\end{equation} 
for the Fourier transform. 
For $y>0$, let 
$$
p_y(x)=\frac{C_n\, y}{\left(|x|^2+|y|^2\right)^{\frac{d+1}{2}}}
$$
be the Poisson kernel in $\bR^n$ with Fourier transform 
\begin{equation}\label{poissonfourier}
\widehat{p}_{y}(\xi)=e^{-2\pi y|\xi|}.
\end{equation} 
For any $j=1, 2, \dots, n$, set $\partial_j=\frac{\partial}{\partial x_j}$ and $\nabla=(\partial_1, \partial_2, \dots, \partial_n)$.  
With this we see that $$\widehat{\partial_j f}(\xi)=-2\pi i\,\xi_j\,\widehat{f}(\xi),$$
for any $f\in C_0^{\infty}(\bR^n)$. Since 
$$
\int_0^{\infty} (2\pi i\xi_j) e^{-2\pi y|\xi|} dy=\frac{i\xi_j}{|\xi|}
$$
it follows that 
\begin{equation}\label{rieszpoisson}
R_j f(x)=
\int_0^{\infty} {\partial_jP_y f}(x)dy,
\end{equation}
where, for any function $f$,  $P_yf(x)$ is the Poisson semigroup acting on the function.  That is, $P_y f$  is the convolution 
of $p_y$ with $f$. With this expression and the relationship between the generator of the semigroup and the semigroup itself, we see that 
\begin{equation}\label{rieszgenerator}
R_jf=\frac{\partial}{\partial x_j}(-\Delta)^{-1/2}f.
\end{equation}
In addition, we have the following semigroup representation for the vector of Riesz transforms involving the gradient and the Laplacian. 
\begin{equation}\label{rieszvector}
Rf=\left(R_1f, R_2f, \dots, R_n f\right)=\nabla(-\Delta)^{-1/2}f.
\end{equation}
While this identity can be verified with the aid of the Fourier transform, it is useful to have it in terms of the semigroup because it permits generalizations of the Riesz transforms to various other settings, see for example \cite{Arc1}, \cite{ArcLi1}, \cite{BanBau1}, \cite{DraVol1} and \cite{Gun1}.

\subsection{Background radiation} 
In their groundbreaking paper \cite{GunVor1}, Gundy and Varopoulos gave a stochastic integral representation for Riesz transforms using the so called ``background radiation'' process.  Using this representation one can transfer questions about $L^p$-boundedness of the Riesz transforms to $L^p$-boundedness of martingale transforms. From this point of view the Riesz transforms appear as basic examples of a more general class of operators which we call $T_A$.  Before we describe these operators more precisely, let us give the basic idea for this procedure following the presentation in \cite{BanLin1}.  The process can be summarized
by the following diagram
\begin{eqnarray}\label{diagram}
&&L^p(\bR^n)\longmapsto Har(\bR^{n+1}_{+})\longmapsto \M^p\longmapsto \M^p \longmapsto L^p(\bR^n)\\
&&f\longmapsto U_f(x, y)\longmapsto U_f(B_t)\longmapsto A*f_t\longmapsto E[ A*f \,\,|B_0=x].\nonumber
\end{eqnarray}
In words, with the upper half-space written as 
$$R^{n+1}_{+}=\{(x, y): x\in \bR^n, y>0\},$$ we let $U_f(x, y)=P_yf(x)$ be the harmonic extension of $f$ to $R^{n+1}_{+}$.  We compose this function with a Brownian
motion $B_t$ in $R^{n+1}_{+}$ to obtain a martingale $U_f(B_t)$.  We then transform this martingale by a
matrix $A$ to obtain a new martingale $(A*f)_t$ which is then projected by conditional expectation to finally arrive at a
function in  $L^p(\bR^n)$ which we denote by $T_{A}f$. 

We now describe the procedure in more detail. Fix $a>0$. Let $B_t$ be Brownian motion in the upper-half space  
starting on the hyperplane $\bR^n\times\{a\}$ with the Lebesgue measure
as its initial distribution. That is, we define measures $P^a$ on paths by 

$$
P^a\left(B_t\in \Theta \right)=\int_{\bR^n} P_{(x, a)}\left(B_t\in \Theta\right)dx
$$
for any Borel set $\Theta\subset \bR^{n+1}_{+}$  and where $P_{(x, a)}$ are the probability measures
associated with the Brownian motion $B_t$ starting at the point $(x, a)$. Of
course, the measures $P^a$ are no longer probability measures. If we let
$$\tau=\inf\{t>0: B_t\not \in \bR^{n+1}_{+}\}$$ be the first exit time of the Brownian motion form the upper half-space 
and use Fubini's Theorem to integrate out the
Poisson kernel  we find that (using the notation $E^a$ for the expectation associated with the measure $P^a$) 
\begin{equation}\label{unif}
E^af(B_{\tau})=\int_{\bR^n}f(x)\, dx,
\end{equation}
for all non-negative functions $f$. 

In the same way, integrating away the heat kernel and computing the Green's function for the half-line (see \cite{Ban1}), gives that 

\begin{equation}\label{occu}
E^a\int_0^{\tau}F(B_s)\,ds=2\int_0^{\infty}\int_{\bR^n} (y\wedge a) F(x, y)dx dy,
\end{equation}
for all non-negative functions $F$ on $R^{n+1}_{+}$.  Both (\ref{unif}) and (\ref{occu}) continue to hold for those $f$ and $F$ for which the integrals are finite. 

We would now like to let 
$a\to\infty$ 
so that we can use the 
Littlewood-Paley identities as in \cite{Ste2}. But since the initial
distribution of $B_0$ depends on $a$ we would have to make sense of this as a
limit of processes. In \cite{GunVor1}, Gundy and Varopoulos used time-reversal
to construct a filtered probability space and a process $\{B_t\}$ indexed by $t\in (-\infty, 0]$,
called the background radiation process. Heuristically speaking, the paths of $B_t$ are
Brownian paths which originate from $\{y=\infty\}$ at time $t=-\infty$ and
exit $\bR^{n+1}_{+}$ at $t=0$ with Lebesgue measure as distribution. Letting $E$ be
the expectation with respect to the measure associated with the background radiation
process, the identities (\ref{unif}) and (\ref{occu}) become,

\begin{equation}\label{unif1}
Ef[\left(B_0\right)]=\int_{\bR^d}f(x)\, dx
\end{equation}and 

\begin{equation}\label{occu1}
E\int_{-\infty}^{0}F(B_s)\,ds=2\int_0^{\infty}\int_{\bR^n} yF(x, y)dxdy.
\end{equation}

For a different construction using the ``entrance" law of Bessel processes, see Gundy and Silverstein \cite{GunSil1}. Here it is also interesting to point out  P. Meyer \cite[p.185]{Mey2} where he describes the duality between the killed Brownian motion
on the half-line $[0, \infty]$ and the $3$-dimensional Bessel process: ``D'une mani\`ere
intuitive, on peut donc dire que le retourn\'e du processus de Bessel issu de $\lambda_0$ est le
{\it mouvement brownien venant de l'infini et tu\'e en 0}."
The usual rules of stochastic integration and potential theory apply to this
process.  For this, see Varopoulos \cite{Var1} where details on the stochastic integrals are given. Also, a more elementary procedure can be found in \cite{Ban1}.  

\subsection{The operators $T_A$ and Riesz transforms}\label{T_A}
We now consider martingale transforms and their projections arising from functions on $\bR^n$. 
For $f\in C_0^{\infty}(\bR^n)$, let $P_yf(x)=U_f(x, y)$. If $A(x,y),x\in \bR^d, y > 0$, is an $(n+1)\times (n+1)$ matrix-valued
function, we define the martingale 
transform of $f$ by the stochastic integral
$$
A*f=\int_{-\infty}^0 \left[A(X_s,Y_s){\nabla}U_f (X_s,Y_s)\right]\cdot d B_s. 
$$
Here and below, we use ${\nabla}$ to denote the ``full" gradient of functions defined on $\bR^{n+1}_{+}$. That is, with  $\partial_0=\frac{\partial}{\partial y}$, we set
$$
{\nabla}U_f=\left(\partial_0 U_f, \partial_1 U_f, \dots, \partial_n U_f\right). 
$$
Also, here and below we use the notation $dB_s$ for 
$$
dB_s=(dY_s,\ dX_s^1,\cdots,dX_s^n). 
$$
We define the operator taking function in $\bR^n$ into functions in
$\bR^n$, called the projection of the martingale transform, by the conditional
expectation 
\begin{equation}
T_A f(x)=E[A*f|B_0=(x,0)].
\end{equation}

If $A(x, y)$ is a $(n+1)\times (n+1)$ matrix valued function on $\bR^{n+1}_{+}$, we define 
$$\|A\|=\sup_{(x, y)\in \bR^{n+1}}\|A(x, y)\|$$  where $\|A(x, y)\|=\sup\{|A(x,y)v|:v\in \bR^{n+1}, |v|\leq 1\}.$
If $\calA=\{A_i(x,y)\}_{i=1}^\infty$ is a sequence of such functions we define  $\|\calA\|$ similarly as in Corollary \ref{transforms1}. We assume $\|A\|$ and $\|\calA\|$ are both finite. 
Using the fact that the conditional expectation is a contraction in $L^p$,
for $1< p< \infty$, we have the following Corollaries which follow immediately from Corollaries 
\ref{transforms1} and \ref{transforms2}.  We  recall again the fact that  the distribution of $B_0$ is the Lebesgue
measure.

\begin{theorem}\label{TAproj} Let $\{f_i\}_{i=1}^\infty$ be a sequence of functions in $C_0^{\infty}(\bR^n)$ and 
$\calA=\{A_j(x, y)\}_{i=0}^{\infty}$ a sequence of $(n+1)\times (n+1)$ matrix-valued functions  such that $\|\calA\|<\infty$ and  $\|A_i\|\leq M$, for all $i$. 
Then, 
\begin{equation}\label{proj1}
\Bigg\|\left(\sum_{i=1}^\infty|T_{A_i} f_i (x)|^2\right)^{1/2}\Bigg\|_p \leq
(p^*-1)M
\Bigg\|\left(\sum_{i=1}^\infty |f_i|^2\right)^{1/2}\Bigg\|_p,
\end{equation}
for $ 1 < p < \infty$. Furthermore, for any  $f\in C_0^{\infty}(\bR^n)$,   
\begin{equation}\label{proj2}
\Bigg\|\left(\sum_{i=1}^\infty|T_{A_i}f(x)|^2\right)^{1/2}\Bigg\|_p\leq
(p^*-1) \|\calA\| \|f\|_p, \,\,\, 1 < p < \infty. 
\end{equation}
\end{theorem}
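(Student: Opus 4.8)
The plan is to read off both inequalities from Corollary \ref{transforms1} via the background radiation representation described above; the only genuine work lies in choosing the right martingales and then passing through the conditional expectation that defines $T_A$. We may assume the right-hand sides of (\ref{proj1}) and (\ref{proj2}) are finite, since otherwise there is nothing to prove.

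First I would fix the base martingales. For each $i$ set $U_{f_i}(x,y)=P_yf_i(x)$, harmonic on $\bR^{n+1}_{+}$, with $P_yf_i\to f_i$ as $y\to0$ and $P_yf_i\to0$ as $y\to\infty$ (for $f_i\in C_0^\infty(\bR^n)$). Running the background radiation process $\{B_t\}_{t\le0}$ and applying It\^o's formula to the harmonic function $U_{f_i}$ on $(-\infty,0]$, the process $X^i_t=\int_{-\infty}^t \nabla U_{f_i}(B_s)\cdot dB_s=U_{f_i}(B_t)$ is an $L^p$-bounded martingale closed by its terminal value $X^i_0=f_i(B_0)$, and $A_i*f_i=(\calA*X)_0^i$, where $X=(X^1,X^2,\dots)$ is $\ell^2$-valued and $\calA*X$ is exactly the $\ell^2$-valued martingale transform of $X$ by $\calA=\{A_i\}$ in the sense of Corollary \ref{transforms1}. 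Since the exit point $B_0$ is distributed according to Lebesgue measure under the background radiation measure $P$, identity (\ref{unif1}) gives
\[
\|X\|_p=\|X_0\|_p=\Bigl\|\bigl(\textstyle\sum_i|f_i|^2\bigr)^{1/2}\Bigr\|_{L^p(\bR^n)}.
\]

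Now Corollary \ref{transforms1} applies: since $\|A_i\|\le M$ for all $i$ we get $\calA*X<<M\,X$, hence $\|\calA*X\|_p\le(p^*-1)M\|X\|_p$. It remains to compare the left side of (\ref{proj1}) with $\|\calA*X\|_p$. Writing $Z=\calA*X$ and $Z_0=((A_1*f_1),(A_2*f_2),\dots)$, we have $T_{A_i}f_i(x)=E[Z^i_0\mid B_0=(x,0)]$, so by (\ref{unif1}) once more, to turn the $dx$-integral into an expectation under $P$, together with the $L^p$-contractivity of conditional expectation on $\ell^2$-valued functions (equivalently Jensen's inequality applied twice: first to the convex map $z\mapsto|z|_{\ell^2}$, then to $t\mapsto t^p$),
\[
\Bigl\|\bigl(\textstyle\sum_i|T_{A_i}f_i|^2\bigr)^{1/2}\Bigr\|_{L^p(\bR^n)}=\bigl\|E[Z_0\mid B_0]\bigr\|_{L^p(\ell^2,P)}\le\|Z_0\|_{L^p(\ell^2,P)}\le\|Z\|_p.
\]
Chaining the last three displays yields (\ref{proj1}). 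For (\ref{proj2}) I would repeat the argument with the single scalar base martingale $X_t=U_f(B_t)$, $X_0=f(B_0)$, and invoke the second half of Corollary \ref{transforms1}: the $\ell^2$-valued transform $\calA*X=((A_1*f),(A_2*f),\dots)$ satisfies $\calA*X<<\|\calA\|\,X$, so $\|\calA*X\|_p\le(p^*-1)\|\calA\|\,\|X\|_p=(p^*-1)\|\calA\|\,\|f\|_{L^p(\bR^n)}$, and the same projection step finishes the proof.

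The substantive points are not in the inequalities but in the stochastic-calculus setup: one must justify It\^o's formula on the infinite interval $(-\infty,0]$ for the $\sigma$-finite (non-probability) measure $P$, verify that $X$ and $\calA*X$ are genuine $L^p$-bounded martingales closed by their values at $t=0$ (so that $\sup_t\|\cdot\|_p$ is attained there and the Littlewood-Paley identities (\ref{unif1}), (\ref{occu1}) apply), and check that Jensen's inequality and the $L^p$-contractivity of conditional expectation remain valid under $P$. All of this is carried out in \cite{GunVor1}, \cite{Var1}, \cite{Ban1} and \cite{BanLin1}; granted it, (\ref{proj1}) and (\ref{proj2}) are immediate consequences of Corollary \ref{transforms1} as above.
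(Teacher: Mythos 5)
Your proposal is correct and takes essentially the same route the paper intends: the paper states that Theorem \ref{TAproj} follows immediately from Corollary \ref{transforms1} together with the $L^p$-contraction of conditional expectation and the fact that $B_0$ has Lebesgue distribution, and your write-up fills in exactly those steps (the $\ell^2$-valued base martingale $X_t=U_{f_i}(B_t)$ with $\|X\|_p=\|X_0\|_p=\|(\sum_i|f_i|^2)^{1/2}\|_p$, the bound $\|\calA*X\|_p\le(p^*-1)M\|X\|_p$ from differential subordination, and Jensen to pass through $E[\,\cdot\mid B_0]$). Your appeal to the second half of Corollary \ref{transforms1} for (\ref{proj2}) and your remarks on the technical points concerning It\^o's formula under the $\sigma$-finite background-radiation measure are also in line with the references the paper cites.
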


\begin{theorem}\label{orth-real}  Let $A(x,y)$ be an $(n+1)\times (n+1)$ matrix with real entries and suppose that for all $(x,y)\in \bR_+^{n+1}$, $[A(x,y)v]\cdot v=0$, for all 
$v\in \bR^{n+1}$.
Then
\begin{equation}\label{proj3}
\|T_A f\|_p \leq \cot\left(\frac{\pi}{2p^{\ast}}\right)\|A\| \|f\|_p,\,\,\,\,\,1 < p < \infty
\end{equation}
and
\begin{equation}\label{proj4}
\|\sqrt{|T_A f|^2+|f|^2}\|_p \leq \csc\left(\frac{\pi}{2p^{\ast}}\right)\|A\| \|f\|_p, \,\,\,\,\,1 < p < \infty.
\end{equation}
These inequalities are sharp.
\end{theorem}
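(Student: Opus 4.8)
The plan is to exhibit $T_A f$ and $f$ itself as conditional–expectation projections of two continuous‑path martingales living on the background‑radiation filtration of the diagram~(\ref{diagram}), and then to invoke the orthogonal martingale‑transform bound of Corollary~\ref{transforms2} (the stochastic‑integral form of Theorem~\ref{pic-ess}) together with the fact that such projections contract $L^p$. Concretely, let $B_s$, $s\in(-\infty,0]$, be the background‑radiation Brownian motion in $\bR^{n+1}_+$, set $U_f=P_\bullet f$, and define the martingales
\[
X_t=\int_{-\infty}^{t}\nabla U_f(B_s)\cdot dB_s,\qquad
Y_t=\int_{-\infty}^{t}\bigl[A(B_s)\,\nabla U_f(B_s)\bigr]\cdot dB_s ,
\]
so that $Y_0=A*f$. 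Since $U_f$ is harmonic, It\^o's formula gives $X_0=U_f(B_0)=f(B_0)$ (with $X_{-\infty}=0$, as $f\in C_0^{\infty}(\bR^n)$); hence $E[X_0\mid B_0=(x,0)]=f(x)$, while $E[Y_0\mid B_0=(x,0)]=T_Af(x)$ by the very definition of $T_A$.

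Next I would check that the two hypotheses of Corollary~\ref{transforms2} hold pathwise for the predictable matrix process $s\mapsto A(B_s)$. Writing $K_s=A(B_s)\nabla U_f(B_s)$ and $H_s=\nabla U_f(B_s)$, one has $d[Y]_t=|K_t|^2\,dt\le\|A\|^2\,|H_t|^2\,dt=\|A\|^2\,d[X]_t$, so $Y<<\|A\|X$; and
\[
d[X,Y]_t=(K_t\cdot H_t)\,dt=\bigl([A(B_t)\nabla U_f(B_t)]\cdot\nabla U_f(B_t)\bigr)\,dt=0
\]
\emph{precisely because} $[A(x,y)v]\cdot v=0$ for every $v\in\bR^{n+1}$ — this is exactly the role of that hypothesis. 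So $(X,Y)$ is a pair of real‑valued orthogonal martingales with $Y$ differentially subordinate to $\|A\|X$, and Corollary~\ref{transforms2} (equivalently Theorem~\ref{pic-ess} applied to $\|A\|X$ and $Y$) gives
\[
\|Y\|_p\le\cot\!\Bigl(\tfrac{\pi}{2p^{\ast}}\Bigr)\|A\|\,\|X\|_p,\qquad
\Bigl\|\sqrt{|X|^2+|Y|^2}\Bigr\|_p\le\csc\!\Bigl(\tfrac{\pi}{2p^{\ast}}\Bigr)\|A\|\,\|X\|_p .
\]

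To descend to functions I would use identity~(\ref{unif1}): since the law of $B_0$ is Lebesgue measure on $\bR^n$, the map $g\mapsto E[g\mid B_0=(\cdot,0)]$ is an $L^p(\bR^n)$‑contraction, and moreover $\|X\|_p=\|X_0\|_p=\|f(B_0)\|_p=\|f\|_p$ (the first equality because $|X_t|^p$ is a submartingale on $(-\infty,0]$, the rest by~(\ref{unif1})). Conditional Jensen applied to $Y_0$ then yields $\|T_Af\|_p\le\|Y_0\|_p\le\|Y\|_p$, which is~(\ref{proj3}); applied to the vector $(X_0,Y_0)$ together with the convexity of the Euclidean norm it yields, pointwise in $x$, $\sqrt{|f(x)|^2+|T_Af(x)|^2}\le E\bigl[\sqrt{X_0^2+Y_0^2}\,\big|\,B_0=(x,0)\bigr]$, and hence~(\ref{proj4}) after taking $L^p(dx)$ norms. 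For sharpness it suffices to take $n=1$ and the constant skew‑symmetric matrix $A=\left(\begin{smallmatrix}0&1\\-1&0\end{smallmatrix}\right)$: then $\|A\|=1$, $[Av]\cdot v=0$, and $A\nabla U_f=\nabla V$ with $V$ the harmonic conjugate of $U_f$ in the upper half‑plane, so $T_Af=\pm Hf$; the classical sharp inequalities of Pichorides, with constant $\cot(\tfrac{\pi}{2p^{\ast}})$, and of Ess\'en--Verbitsky, with constant $\csc(\tfrac{\pi}{2p^{\ast}})$, then show neither constant can be lowered.

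The one genuinely delicate ingredient is the one I would quote rather than reprove: the rigorous construction of stochastic integration against the background‑radiation process — the $a\to\infty$ passage in~(\ref{unif})--(\ref{occu}), the existence of the limiting filtered space, and the $L^p$‑integrability of $X_0$ and $Y_0$, which rests on Littlewood--Paley square‑function estimates and is carried out in \cite{GunVor1}, \cite{Var1} and, in the form used here, in \cite{BanLin1} and \S\ref{T_A}. Granting that machinery, the argument above is essentially a two‑line deduction from Corollary~\ref{transforms2} and the conditional‑expectation contraction; no new martingale inequality is needed, the entire content having been packed into the orthogonality hypothesis $[A(x,y)v]\cdot v=0$ and into Theorem~\ref{pic-ess}.
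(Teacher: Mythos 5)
Your proposal is correct and follows exactly the route the paper intends: the paper presents Theorem~\ref{orth-real} as an immediate consequence of Corollary~\ref{transforms2} together with the $L^p$-contractivity of conditional expectation, and your write-up simply supplies the details of that deduction — identifying $f$ and $T_Af$ as projections of the orthogonal, differentially subordinate pair $(X,Y)$ built from the background-radiation process, checking $d[X,Y]_t=0$ from the hypothesis $[A(x,y)v]\cdot v=0$, and invoking conditional Jensen. The sharpness argument via the constant skew matrix giving the Hilbert transform (hence Pichorides and Ess\'en--Verbitsky) is likewise the same mechanism the paper appeals to.
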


Before we give the consequences of the above representation for $R_j$ as $T_{A_j}$, we state the following 
interesting Littlewood-Paley identity which follows exactly as in the proof given below 
for the representation of $R_j$ by first removing the conditional expectation and then 
using the occupation-time formula (\ref{occu1}). 

\begin{theorem}\label{prop1} For all $f, g\in C_0^{\infty}(\bR^n)$ and all $(n+1)\times (n+1)$ matrix-valued functions $A(x, y)$ in $\bR^{n+1}$ with $\|A\|<\infty$, we have
\begin{eqnarray}
\int_{\bR^d} T_A f(x)\, g(x)\, dx &=& E\left(\int_{-\infty}^0 \left[A(X_s,Y_s){\nabla}U_f (X_s,Y_s)\right]\cdot{\nabla}U_g(X_s,Y_s)\,ds\right)\nonumber\\
&=&2\int_0^\infty \int_{\bR^n} y \left[A(x, y){\nabla}U_f (x, y)\right]\cdot{\nabla}U_g(x, y)dx\,dy.
\end{eqnarray}
\end{theorem}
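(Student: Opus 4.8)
The plan is to establish the two claimed identities in sequence, since the second is the stated Littlewood--Paley formula and the first is the intermediate ``stochastic'' representation. First I would fix $f,g\in C_0^\infty(\bR^n)$ and recall that, by definition, $T_Af(x)=E[A*f\,|\,B_0=(x,0)]$ where $A*f=\int_{-\infty}^0[A(X_s,Y_s)\nabla U_f(X_s,Y_s)]\cdot dB_s$. Using the identity (\ref{unif1}) that the distribution of $B_0$ is Lebesgue measure on $\bR^n$, together with the tower property of conditional expectation, I would write
\begin{equation}
\int_{\bR^n}T_Af(x)\,g(x)\,dx=E\big[(A*f)\,g(B_0)\big].
\end{equation}
Here $g(B_0)$ is the terminal value of the bounded martingale $M_s=E[g(B_0)\,|\,\mathcal F_s]$; by the martingale representation property for the background radiation process (see Varopoulos \cite{Var1}, and the discussion around (\ref{occu1})), this martingale is $M_s=U_g(X_s,Y_s)=P_{Y_s}g(X_s)$, with stochastic differential $dM_s=\nabla U_g(X_s,Y_s)\cdot dB_s$ up to the exit time, where $\nabla$ is the full $(n+1)$-dimensional gradient.

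Next I would apply the It\^o isometry to the two stochastic integrals $A*f$ and $M$, both driven by the same $(n+1)$-dimensional Brownian motion $B$, over the time interval $(-\infty,0]$. Since the integrand of $A*f$ is $A(X_s,Y_s)\nabla U_f(X_s,Y_s)$ and that of $M$ is $\nabla U_g(X_s,Y_s)$, the isometry yields
\begin{equation}
E\big[(A*f)\,g(B_0)\big]=E\left(\int_{-\infty}^0\big[A(X_s,Y_s)\nabla U_f(X_s,Y_s)\big]\cdot\nabla U_g(X_s,Y_s)\,ds\right),
\end{equation}
which is exactly the first claimed equality. For the second equality I would apply the occupation-time formula (\ref{occu1}) with the (not necessarily nonnegative, but integrable) function $F(x,y)=[A(x,y)\nabla U_f(x,y)]\cdot\nabla U_g(x,y)$; the hypothesis $\|A\|<\infty$ together with the decay of $U_f,U_g$ and their gradients for $f,g\in C_0^\infty$ guarantees that $F$ lies in the class for which (\ref{occu1}) is valid, giving
\begin{equation}
E\int_{-\infty}^0 F(B_s)\,ds=2\int_0^\infty\int_{\bR^n}y\,\big[A(x,y)\nabla U_f(x,y)\big]\cdot\nabla U_g(x,y)\,dx\,dy.
\end{equation}

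The main obstacle I anticipate is not the formal manipulation but the justification of the integrability and of the It\^o isometry on the infinite-time-horizon background radiation process: one must check that $A*f$ and $U_g(B_\cdot)$ are genuine $L^2$ martingales (equivalently that $E\int_{-\infty}^0|\nabla U_f(B_s)|^2\,ds<\infty$ and similarly for $g$), which by (\ref{occu1}) reduces to the finiteness of $\int_0^\infty\int_{\bR^n}y|\nabla U_f(x,y)|^2\,dx\,dy$ --- the classical Littlewood--Paley $g$-function estimate for Poisson integrals of $C_0^\infty$ functions, cf.\ \cite{Ste2}. Once this is in place, the cross-term integrability follows by Cauchy--Schwarz. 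The rest (identifying $E[g(B_0)\,|\,\mathcal F_s]$ with $U_g(B_s)$, and the passage from the $a<\infty$ processes to the background radiation limit) is handled exactly as in the construction of the process in \cite{GunVor1, Var1} and the approximation arguments referenced for (\ref{unif1})--(\ref{occu1}), so I would cite those rather than reproduce them.
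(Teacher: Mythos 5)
Your proposal is correct and follows essentially the same route the paper indicates: you remove the conditional expectation using the Lebesgue distribution of $B_0$, express $g(B_0)$ as the terminal value of the martingale $U_g(X_s,Y_s)$ (equivalently, write $g(B_0)=\int_{-\infty}^0\nabla U_g\cdot dB_s$ via It\^o, as the paper does), apply the It\^o isometry, and then invoke the occupation-time formula (\ref{occu1}). The paper explicitly says that Theorem \ref{prop1} follows ``exactly as in the proof given below for the representation of $R_j$ by first removing the conditional expectation and then using the occupation-time formula,'' which is precisely your argument; your added remarks on $L^2$-integrability via the Littlewood--Paley $g$-function bound are a reasonable way to justify the It\^o isometry on the infinite time horizon.
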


The following corollary is immediate from this, H\"older's inequality and the fact that $\|T_Af\|_p\leq \|A\|(p^*-1)\|f\|_p$. 
\begin{corollary}\label{corofprop}For all $f, g\in C_0^{\infty}(\bR^n)$ and all $(n+1)\times (n+1)$ matrix-valued functions $A(x, y)$ in $\bR^{n+1}$ with $\|A\|<\infty$, we have
\begin{equation*}
2\Bigg|\int_0^\infty \int_{\bR^n} y \left[A(x, y){\nabla}U_f (x, y)\right]\cdot{\nabla}U_g(x, y)dx\,dy\Bigg|\leq \|A\|(p^*-1)\|f\|_p\|g\|_q,
\end{equation*}
for all $1<p<\infty$.  Here $q$ is the conjugate exponent of $p$.  In addition, if $A$ has the orthogonality property as in Theorem \ref{orth-real} for all $(x, y)\in \bR^{n+1}$, then
\begin{equation*}
2\Bigg|\int_0^\infty \int_{\bR^n} y \left[A(x, y){\nabla}U_f (x, y)\right]\cdot{\nabla}U_g(x, y)dx\,dy\Bigg|\leq \|A\|\cot\left(\frac{\pi}{2p^{\ast}}\right)\|f\|_p\,\|g\|_q
\end{equation*}
\end{corollary}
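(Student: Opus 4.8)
The plan is to obtain both inequalities directly from the Littlewood--Paley identity of Theorem~\ref{prop1} by pairing it with H\"older's inequality and the $L^p$ bounds already established for the operators $T_A$. The first move is to recognize, via the second equality in Theorem~\ref{prop1}, that the quantity inside the absolute value on the left-hand side is exactly $\int_{\bR^n} T_A f(x)\, g(x)\, dx$, so the whole left-hand side equals $\bigl|\int_{\bR^n} T_A f(x)\, g(x)\, dx\bigr|$. For this step one needs $T_A f$ to be a genuine element of $L^p(\bR^n)$ and the double integral to converge absolutely; both are guaranteed by the hypotheses $f,g\in C_0^\infty(\bR^n)$ and $\|A\|<\infty$ together with the mapping properties recorded in Theorem~\ref{TAproj} (the single-matrix, single-function case), so this is a matter of invoking the approximation arguments behind those statements rather than a real difficulty.

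Next I would apply H\"older's inequality with conjugate exponents $p$ and $q$ to get
\[
\Bigl|\int_{\bR^n} T_A f(x)\, g(x)\, dx\Bigr| \le \|T_A f\|_p\,\|g\|_q .
\]
For the first assertion I then invoke $\|T_A f\|_p \le \|A\|(p^*-1)\|f\|_p$, which is the single-function specialization of \eqref{proj2} in Theorem~\ref{TAproj} (equivalently, it follows from Corollary~\ref{transforms1} and the contraction property of conditional expectation on $L^p$); combining the two displays with the factor $2$ from Theorem~\ref{prop1} yields the first inequality. For the second assertion the only change is that under the pointwise orthogonality hypothesis $[A(x,y)v]\cdot v=0$ one uses instead $\|T_A f\|_p \le \|A\|\cot\!\left(\frac{\pi}{2p^{\ast}}\right)\|f\|_p$, which is \eqref{proj3} of Theorem~\ref{orth-real}; feeding this into the same H\"older estimate produces the claimed bound.

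Since every ingredient---the identity, H\"older's inequality, and the two norm estimates---is already in hand, there is no genuine obstacle. The only point deserving a word of care is the legitimacy of the It\^o/Littlewood--Paley manipulations leading to Theorem~\ref{prop1} for $f,g\in C_0^\infty(\bR^n)$ (in particular the interchange of expectation and the stochastic integral, and passage to the background radiation process), and for that one simply refers to the approximation lemmas underlying Theorems~\ref{prop1} and~\ref{TAproj}.
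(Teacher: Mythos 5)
Your argument is correct and follows the paper's own proof exactly: identify the left-hand side as $\bigl|\int_{\bR^n} T_A f\, g\,dx\bigr|$ via Theorem~\ref{prop1}, apply H\"older, and then use the $L^p$ bounds for $T_A$ from Theorem~\ref{TAproj} (general case) and Theorem~\ref{orth-real} (orthogonal case). Nothing is missing.
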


With the matrix 
$$
A(x, t)=\frac{{\nabla}U_f(x, t) \otimes{\nabla}U_g(x, t)}{|{\nabla}U_f(x, t)||{\nabla} U_g(x, t)|}, 
$$
the first inequality in the Corollary gives 

\begin{corollary}\label{nazvoldualtyforharmart} For all $f, g\in C_0^{\infty}(\bR^n)$, 
\begin{equation} 2\int_0^{\infty}\int_{\bR^n} y\Big|{\nabla}U_f(x, t)\Big|\, 
\Big|{\nabla} U_g(x, t)\Big|\, dxdt\leq (p^*-1)\|f\|_p\|g\|_q,
\end{equation}
for all $1<p<\infty$. 
\end{corollary}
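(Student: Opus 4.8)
The plan is to derive this inequality as a direct specialization of the first inequality in Corollary \ref{corofprop} by choosing the matrix-valued function $A(x,t)$ cleverly so that the bilinear integrand $[A(x,t)\nabla U_f(x,t)]\cdot \nabla U_g(x,t)$ collapses to the product of the two gradient magnitudes. The natural candidate, suggested by the statement immediately preceding the corollary, is the rank-one (outer product) matrix
\[
A(x,t)=\frac{\nabla U_f(x,t)\otimes \nabla U_g(x,t)}{|\nabla U_f(x,t)|\,|\nabla U_g(x,t)|},
\]
defined wherever the denominator is nonzero (and set to $0$ otherwise). First I would check that this $A$ is admissible for Corollary \ref{corofprop}: since $(u\otimes v)w = (v\cdot w)\,u$, we have $|A(x,t)w| = \frac{|v\cdot w|}{|u||v|}|u| = \frac{|v\cdot w|}{|v|} \leq |w|$ by Cauchy--Schwarz, so $\|A(x,t)\|\leq 1$ pointwise and hence $\|A\|\leq 1$. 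One should note that $A$ need not be continuous or even bounded away from the degenerate set, but the only hypothesis required by Corollary \ref{corofprop} is $\|A\|<\infty$, which holds; and the measurability needed to make the stochastic/Lebesgue integrals meaningful follows because $\nabla U_f$, $\nabla U_g$ are smooth (as $f,g\in C_0^\infty$ yield harmonic extensions with smooth Poisson integrals), so $A$ is measurable and bounded.

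Next I would compute the integrand. With $u=\nabla U_f(x,t)$ and $v=\nabla U_g(x,t)$,
\[
[A(x,t)\nabla U_f(x,t)]\cdot \nabla U_g(x,t)
= \Big[\tfrac{(v\cdot u)}{|u||v|}\,u\Big]\cdot v
= \frac{(u\cdot v)^2}{|u||v|}.
\]
This is not quite $|u||v|$; however one can instead take the symmetrized normalization or, more simply, replace $v$ in the definition of $A$ by the vector $|u|\,v/(v\cdot \hat u)$-type correction. The cleanest fix is to define $A$ so that it rotates $u$ onto the direction of $v$: take $A(x,t)$ to be any orthogonal-type operator... but since we only need an inequality, the honest route is to observe that Corollary \ref{corofprop} gives, for the outer-product choice above,
\[
2\int_0^\infty\!\!\int_{\bR^n} y\,\frac{(\nabla U_f\cdot\nabla U_g)^2}{|\nabla U_f|\,|\nabla U_g|}\,dx\,dt \le (p^*-1)\|f\|_p\|g\|_q,
\]
which is weaker than wanted. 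To get $|\nabla U_f||\nabla U_g|$ on the left, I would instead apply Corollary \ref{corofprop} with $g$ replaced by an auxiliary function and use a pointwise duality/rotation argument: for each fixed $(x,t)$ choose the matrix $A(x,t)$ to be a partial isometry sending the unit vector $\nabla U_f/|\nabla U_f|$ to $\nabla U_g/|\nabla U_g|$ (extended by $0$ on the orthogonal complement). Then $\|A(x,t)\|=1$, $A$ is measurable (built from the smooth fields via, e.g., the Gram--Schmidt/Householder formula), and $[A(x,t)\nabla U_f]\cdot\nabla U_g = |\nabla U_f|\,|\nabla U_g|$ exactly. Plugging this $A$ into the first inequality of Corollary \ref{corofprop} and using $\|A\|\leq 1$ yields precisely
\[
2\int_0^{\infty}\!\!\int_{\bR^n} y\,\big|\nabla U_f(x,t)\big|\,\big|\nabla U_g(x,t)\big|\,dx\,dt \le (p^*-1)\|f\|_p\|g\|_q
\]
for all $1<p<\infty$, which is the claim.

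The main obstacle, and the only genuinely nontrivial point, is the construction of a measurable matrix-valued field $A(x,t)$ with $\|A(x,t)\|\le 1$ that pointwise maps the (smooth, but possibly vanishing) vector field $\nabla U_f$ to the direction of $\nabla U_g$ with the right magnitude. On the open set where both gradients are nonzero this is a standard smooth construction (a Householder reflection composed with a rescaling by $|\nabla U_g|/|\nabla U_f|$... no: by the ratio that produces norm $\le 1$, namely the partial isometry onto $\mathrm{span}(\nabla U_g)$), and on the closed measure-zero-irrelevant set where a gradient vanishes one simply sets $A=0$; the integrand there is $0$ anyway. One must also confirm that the $L^p$ and $L^q$ norms $\|f\|_p,\|g\|_q$ appearing are the ordinary Lebesgue norms on $\bR^n$, which is consistent with the normalization of $T_A$ via the Lebesgue initial distribution of $B_0$ recorded in \eqref{unif1}, and that the passage through Corollary \ref{corofprop} (itself a consequence of Theorem \ref{prop1}, H\"older, and the bound $\|T_Af\|_p\leq \|A\|(p^*-1)\|f\|_p$) is valid for this merely-measurable $A$ — which it is, since all those ingredients require only $\|A\|<\infty$ together with measurability, both of which we have verified.
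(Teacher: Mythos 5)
Your proof is essentially the paper's: the paper states exactly this corollary follows from the first inequality of Corollary \ref{corofprop} applied to the rank-one matrix $A(x,t)=\dfrac{\nabla U_f\otimes\nabla U_g}{|\nabla U_f|\,|\nabla U_g|}$. You correctly noticed the one subtle point: with the convention $(h\otimes k)w=(k\cdot w)h$ (which is the convention $h\otimes k=hk^{*}$ the paper uses later), the paper's written matrix yields
\[
[A\nabla U_f]\cdot\nabla U_g=\frac{(\nabla U_f\cdot\nabla U_g)^2}{|\nabla U_f|\,|\nabla U_g|},
\]
not $|\nabla U_f|\,|\nabla U_g|$. The intended choice is the transposed rank-one operator
\[
A(x,t)=\frac{\nabla U_g(x,t)\otimes\nabla U_f(x,t)}{|\nabla U_f(x,t)|\,|\nabla U_g(x,t)|},
\]
extended by $0$ wherever a gradient vanishes. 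This is precisely the partial isometry you describe (sending $\nabla U_f/|\nabla U_f|$ to $\nabla U_g/|\nabla U_g|$ and vanishing on the orthogonal complement); one checks directly that $\|A(x,t)\|\le 1$ and $[A\nabla U_f]\cdot\nabla U_g=|\nabla U_f|\,|\nabla U_g|$, so Corollary \ref{corofprop} gives the claimed bound. Your final construction is correct and coincides with the paper's intent; the middle of your write-up wanders through some false starts before landing there, and could simply be replaced by the single display above. (As a cosmetic remark, the weight in the corollary should read $t$ rather than $y$, matching the variable in $dx\,dt$.)
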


This is a ``Poisson extension" version of Corollary \ref{nazvoldualtybymart} below.

\begin{remark}\label{convolutionpoisson}
It is also interesting here to record the following expression for the operator $T_A$ as a convolution with a kernel.  More precisely, for any $(n+1)\times (n+1)$ matrix $A(x, y)$ with $\|A\|<\infty$ we have 
$$
T_A f(x)=\int_{\bR^n}K(x-\tilde{x})f(\tilde{x})d\tilde{x},
$$
where 
$$
K(x)=2\int_0^\infty \int_{\bR^n} y A(\overline x, y){\nabla}p_y (\overline{x})\cdot {\nabla} p_y (x-\overline{x})d\overline{x} dy.
$$
\end{remark}

\begin{problem}\label{weak1-poisson}
While we know the boundedness of these convolution operators from the martingale transforms, it would be interesting to study their properties as Calder\'on-Zygmund operators, including their weak $L^1$-boundedness which does not follow from the corresponding martingale inequalities.  
\end{problem}

Now let $A_j=(a_{\ell m}^j),j=1,2,\cdots,n$, be the $(n+1)\times (n+1)$
matrix given by
\begin{equation}\label{Riesmat}
a_{\ell m}^j=\begin{cases} 1&\text{$\ell=1,\ m=j+1$}\\
-1&\text{$\ell=j+1,\ m=1$}\\
0&\text{otherwise}.
\end{cases}
\end{equation}
Then
\begin{equation}\label{Rieszrep}
A_j * f=\int_{-\infty}^0 {\partial_j U_f} (X_s,Y_s)dY_s-
\int_{-\infty}^0 {\partial_0 U_f} (X_s,Y_s)dX_s^j.
\end{equation}

It follows from \cite{GunVor1} that with this $A_j$,  $T_{A_j}f=R_j f$.  We give the proof of this  
important fact to illustrate these computations which are used in many places in the literature. 
Let $g\in C_0^{\infty}(\bR^n)$.  By the It\^o formula (since $U_g$ vanishes at $\infty$), 
$$
g(B_0)=\int_{-\infty}^0 {\nabla} U_g (X_s,Y_s)\cdot dB_s.
$$
Thus, using the properties of the conditional expectation, the fact that
the distribution of $B_0$ is the Lebesgue measure, (\ref{unif1}) above, and that the Green's function for the process is $2y$ (the identity (\ref{occu1}) above),  and recalling that 
$$
\widehat{{\partial_j U_f}}(\xi,y)=-2\pi i\xi_j \,e^{-2\pi y|\xi|}\,\widehat{f}(\xi),
$$
and 
$$
\widehat{{\partial_0 U_g}} (\xi,y)=-2\pi |\xi|\, e^{-2\pi y|\xi|}\,\widehat{g}(\xi),
$$
we have,
\begin{eqnarray*}
 &&\int_{\bR^d} E\left(\int_{-\infty}^0 
{\partial_j U_f}(X_s,Y_s)dY_s\Big|\, B_0 = x\right)g(x)dx\\
&=& E \left( g(B_0) \int_{-\infty}^0 {\partial_j U_f} (X_s,Y_s)
dY_s \right)\nonumber\\
&= &E\left(\int_{-\infty}^0 {\partial_j U_f} (X_s,Y_s)
{\partial_0 U_g}(X_s,Y_s)ds\right)\nonumber\\
&=&\int_0^\infty \int_{\bR^d} 2y {\partial_j U_f}
(x,y) {\partial_0 U_g} (x,y)dxdy.\nonumber\\
\end{eqnarray*}
If we continue with this string of identities and apply Plancherel's theorem we see that the last quantity above equals
\begin{eqnarray}\label{firstorderriesz}
&&\int_0^\infty 2y \int_{\bR^d} \widehat{{\partial_j U_f}}
(\xi,y)\overline{\widehat{{\partial_0 U_g}}} (\xi,y)d\xi\,dy\nonumber\\
&=&  8\pi^2\int_0^\infty y \int_{\bR^d} i\xi_j\,|\xi|\,e^{-4\pi y|\xi|}\widehat{f}(\xi) \overline{\widehat{g}(\xi)}d\xi\,dy\nonumber\\
&=& 8\pi^2 \int_{\bR^d} i\xi_j\,|\xi|\,\widehat{f}(\xi)
 \widehat{g}(\xi)\left(\int_0^\infty y\,  e^{-4\pi y|\xi|}\,dy \right) d\xi\nonumber\\
&=&\frac{1}{2}\int_{\bR^d}\widehat{R_j f}(\xi) \overline{\widehat{g}(\xi)} \, d\xi
=\frac{1}{2}\int_{\bR^d}{R_j f}(x){g}(x) \, dx.
\end{eqnarray}
In a similar way, we can prove that 

\begin{equation*}
-\int_{\bR^d} E\left(\int_{-\infty}^0 
{\partial_0 U_f}(X_s,Y_s)dX_s^j\Big|\, B_0=x\right)g(x)dx=\frac{1}{2}\int_{\bR^d}{R_j f}(x){g}(x) dx.
\end{equation*}

We observe that the matrix $A_j$ is has the orthogonality property of Theorem \ref{orth-real} and that $\|A_j\|=1$.  Therefore  (\ref{proj3}) and (\ref{proj4}) give 
\begin{corollary} For all $f\in C_0^{\infty}(\bR^n)$ and all $1<p<\infty$, 
\begin{equation}\label{rieszpic}
\|R_j f\|_p \leq  \cot\left(\frac{\pi}{2p^{\ast}}\right)\|f\|_p\
\end{equation}
and
\begin{equation}\label{rieszess}
\Big\|\sqrt{|R_j f|^2+|f^2|}\Big\|_p \leq \csc\left(\frac{\pi}{2p^{\ast}}\right)\|f\|_p,
\end{equation}
\end{corollary}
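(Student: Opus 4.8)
The plan is to derive both inequalities by specializing Theorem~\ref{orth-real} to the constant matrix $A_j$ of (\ref{Riesmat}), exploiting the identity $T_{A_j}f = R_j f$ which has just been established above through the background-radiation representation of Gundy and Varopoulos. All the analytic substance---the sharp orthogonal martingale inequality of Theorem~\ref{pic-ess}, its matrix form in Corollary~\ref{transforms2}, and the $L^p$-contraction property of conditional expectation that yields Theorem~\ref{orth-real}---is already in place, so what remains is essentially a verification of hypotheses.

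First I would check that $A_j$ satisfies the orthogonality hypothesis of Theorem~\ref{orth-real}. Reading off (\ref{Riesmat}), the matrix $A_j$ has real entries and is antisymmetric, $a^j_{\ell m} = -a^j_{m\ell}$; for any antisymmetric matrix $M$ one has $[Mv]\cdot v = -[Mv]\cdot v$, hence $[A_j v]\cdot v = 0$ for every $v\in\bR^{n+1}$, which is exactly the pointwise condition required (with $A_j$ independent of $(x,y)$). Second, I would compute the operator norm: indexing a vector $v=(v_0,v_1,\dots,v_n)\in\bR^{n+1}$ to match $(\partial_0 U_f,\partial_1 U_f,\dots,\partial_n U_f)$, the matrix $A_j$ sends $v$ to the vector whose only nonzero components are $v_j$ in the $0$-th slot and $-v_0$ in the $j$-th slot, so that $|A_j v|^2 = v_0^2 + v_j^2 \le |v|^2$ with equality when $v$ is supported on those two coordinates; thus $\|A_j\| = 1$. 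This is consistent with the explicit expansion (\ref{Rieszrep}).

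With these two facts and the identity $T_{A_j}f = R_j f$, the estimate (\ref{proj3}) applied with $A=A_j$ gives $\|R_j f\|_p = \|T_{A_j}f\|_p \le \cot\!\left(\frac{\pi}{2p^{*}}\right)\|f\|_p$, which is (\ref{rieszpic}), and (\ref{proj4}) with the same matrix gives $\big\|\sqrt{|R_j f|^2+|f|^2}\big\|_p = \big\|\sqrt{|T_{A_j}f|^2+|f|^2}\big\|_p \le \csc\!\left(\frac{\pi}{2p^{*}}\right)\|f\|_p$, which is (\ref{rieszess}); since $R_j$ preserves real-valuedness, working with the real-valued form of Theorem~\ref{orth-real} costs nothing. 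I do not anticipate a genuine obstacle here---the only points meriting a moment's attention are the index bookkeeping in the computation of $\|A_j\|$ and the observation that it is the \emph{pointwise} relation $[A_j v]\cdot v = 0$, not merely orthogonality of $A_j$ as a linear map, that the theorem needs; both are immediate from antisymmetry. It is worth remarking, though not needed for the statement, that the resulting constants $\cot(\pi/2p^{*})$ and $\csc(\pi/2p^{*})$ are in fact best possible, recovering the Hilbert-transform constants of Pichorides and Ess\'{e}n--Verbitsky.
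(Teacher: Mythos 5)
Your proposal is correct and follows exactly the route the paper takes: after establishing $T_{A_j}f = R_j f$, the paper simply observes that $A_j$ has the orthogonality property of Theorem~\ref{orth-real} (being real and antisymmetric) with $\|A_j\| = 1$, and then reads off (\ref{rieszpic}) and (\ref{rieszess}) from (\ref{proj3}) and (\ref{proj4}). Your explicit verification of antisymmetry and the norm computation are the same two small checks the paper leaves implicit.
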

These are the inequalities of Pichorides \cite{Pic1} and Ess\'{e}n-Verbitsky \cite{Ess1, Vea1} 
for the Riesz transforms proved in \cite{BanWan3}. 

The inequality (\ref{rieszpic}) follows, as pointed out by Iwaniec and Martin in \cite{IwaMar1}, from the Calder\'on-Zygmund method of rotations. These authors also proved that the inequality is sharp by showing that the Riesz transforms are Fourier multiplier extensions of the Hilbert transform; see \cite{IwaMar1} for details. The inequalities for orthogonal martingales also lead to inequalities for Riesz transforms on compact Lie groups with the same constants; see for example \cite[Theorem 1]{Arc1}. In the same way it follows that the constant in (\ref{rieszess}) is also best possible.

If we take the matrices $\tilde{A}_j=(a_{\ell m}^j)$ with $a_{\ell m}^j=1$,
if $\ell=1, m=j+1$, we have $T_{\tilde{A}_j}f={1\over 2} R_j f$.
For this sequence of matrices $\|\overrightarrow{A}\|=1$ in (\ref{proj2}).  This gives 
 
\begin{corollary}\label{rieszvector} For all $1 < p < \infty$, 
\begin{equation}\label{nablaries}
\Bigg\|\nabla(-\Delta)^{-1/2}f\Bigg\|_p=
\Bigg\|\left(\sum_{j=1}^n |R_j f|^2\right)^{1/2}\Bigg\|_p
\leq 2(p^*-1)\|f\|_p.
\end{equation}
\end{corollary}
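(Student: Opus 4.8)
The plan is to realize the vector of Riesz transforms as a projection of a martingale transform and then invoke the vector-valued estimate (\ref{proj2}) of Theorem \ref{TAproj}. First I would recall the identity established in the discussion preceding the statement: for the $(n+1)\times(n+1)$ matrix $\tilde{A}_j$ whose only nonzero entry is $a^j_{\ell m}=1$ for $\ell=1,\ m=j+1$, one has $T_{\tilde{A}_j}f=\tfrac12 R_j f$ for every $f\in C_0^\infty(\bR^n)$. Consequently the finite list $(T_{\tilde{A}_1}f,\dots,T_{\tilde{A}_n}f)$ equals $\tfrac12(R_1f,\dots,R_nf)=\tfrac12\,\nabla(-\Delta)^{-1/2}f$, so the two quantities appearing in the corollary are literally the same and it suffices to control $\big\|(\sum_{j=1}^n|R_jf|^2)^{1/2}\big\|_p$.

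Next I would compute the norm $\|\calA\|$ of the finite sequence $\calA=\{\tilde{A}_1,\dots,\tilde{A}_n\}$ that enters (\ref{proj2}). For $v=(v_0,v_1,\dots,v_n)\in\bR^{n+1}$ the vector $\tilde{A}_j v$ has $v_{j+1}$ in its first coordinate and zeros elsewhere, so $|\tilde{A}_j v|^2=v_{j+1}^2$ and hence $\sum_{j=1}^n|\tilde{A}_j v|^2=\sum_{j=1}^n v_{j+1}^2\le|v|^2$. Taking the supremum over $|v|\le1$ gives $|\calA(x,y)|=1$ for all $(x,y)$, i.e. $\|\calA\|=1$. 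I would stress that it is essential to use this simultaneous square-function norm of the matrix sequence — which encodes the fact that the actions of the $\tilde{A}_j$ land on mutually orthogonal pieces — rather than the crude bound $\big(\sum_j\|\tilde{A}_j\|^2\big)^{1/2}=\sqrt n$, which would only yield the dimension-dependent constant $\sqrt n\,(p^*-1)$.

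Finally I would apply (\ref{proj2}) of Theorem \ref{TAproj} to the single function $f$ together with the sequence $\calA$, obtaining
\[
\Big\|\Big(\sum_{j=1}^n|T_{\tilde{A}_j}f|^2\Big)^{1/2}\Big\|_p\le(p^*-1)\,\|\calA\|\,\|f\|_p=(p^*-1)\|f\|_p,\qquad 1<p<\infty.
\]
Since $T_{\tilde{A}_j}f=\tfrac12 R_j f$, the left-hand side is $\tfrac12\big\|(\sum_{j=1}^n|R_jf|^2)^{1/2}\big\|_p$; multiplying by $2$ and using the identification from the first paragraph gives exactly (\ref{nablaries}). There is no serious obstacle here: the corollary is a direct specialization of Theorem \ref{TAproj} once the matrices $\tilde{A}_j$ are written down, and the only computation demanding any care is the verification $\|\calA\|=1$, where one must work with the joint norm of the matrix family in order not to lose the clean constant $2(p^*-1)$.
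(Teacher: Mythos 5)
Your argument is correct and is precisely the one the paper uses: it takes the matrices $\tilde{A}_j$ with the single entry $1$ in position $(1,j+1)$, notes $T_{\tilde{A}_j}f=\tfrac12 R_jf$, verifies $\|\calA\|=1$ by the joint square-sum norm, and then applies inequality (\ref{proj2}) of Theorem \ref{TAproj}. Your remark that one must use $\|\calA\|$ rather than $\bigl(\sum_j\|\tilde A_j\|^2\bigr)^{1/2}=\sqrt n$ correctly identifies the step where the dimension-free constant is gained.
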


This inequality is proved in Iwaniec and Martin \cite{IwaMar1} for $2\leq p<\infty$ with the
constant $\sqrt{\pi}\cot ({\pi\over 2p})$ replacing the $2(p^*-1)$. 
As $p\to \infty$, the Iwaniec-Martin constant is slightly better than $2(p^*-1)$.

There is now a large literature showing that various $L^p$-constants for 
operators in $\bR^n$ are independent of the dimension $n$.  That the constant in Corollary \ref{rieszvector} is independent of the dimension has been known for many years.  For some of this literature, we refer the reader to \cite{Ban1, Ban1.1, BanWan3, DraVol1, DuoRub1, IwaMar1, 
Pis1, Ste4, Ste5}.  Given all the technology available these days to study Riesz transform, we believe the following problem is interesting and that its solution may lead to new techniques that could be useful in other problems.

\begin{problem} Find the best constant $C_p$ in the inequality 
\begin{equation}
\Bigg\|\left(\sum_{j=1}^n |R_j f|^2\right)^{1/2}\Bigg\|_p
\leq C_p\|f\|_p, \quad 1<p<\infty.
\end{equation}
The obvious conjecture, of course, is that $C_p=\cot\left(\frac{\pi}{2p^{\ast}}\right)$. 

\end{problem}

We next consider the Ornstein-Uhlenbeck operator $L=\frac{1}{2}\Delta-x\cdot\nabla$ in $\bR^n$ 
equipped with the Gaussian measure
$$
d\mu=\frac{e^{-\frac{|x|^2}{2}}}{(2\pi)^{n/2}}\,dx
$$
and the Gaussian Riesz transforms 

\begin{equation}\label{mayerriesz}
R_{g}f=\nabla(-L)^{-1/2}f,
\end{equation}
for $f\in C_0^{\infty}(\bR^n)$. Using techniques from probabilistic Littlewood-Paley-Stein
theory, P.A. Meyer \cite{Mey1} proved that $\|R_g(f)\|_p\leq C_p\|f\|_p$, for $1<p<\infty$, where the constant does not depend on the dimension. The  inequalities remain true on the ``truly" infinite
dimensional Wiener space equipped with Wiener measure.  In \cite{Pis1}, Pisier gave an alternative proof of Meyer's result using the Calder\'on-Zygmund method of rotations.  From Pisier's result, it follows that $C_p=O(p)$, as $p\to\infty$, and that $C_p=O(1-p)^{-3/2}$, as $p\to 1$.  In \cite{Gun2}, (see also \cite{Gun1}),  Gundy adapts the martingale representation from \cite{GunVor1} to prove Meyer's theorem. A different  proof can be found in \cite{Gut1}.   By adapting the proof of (\ref{nablaries}) in \cite{BanWan3} to prove an  inequality for Riesz transforms on the $n$--dimensional spheres in $\bR^n$,  Arcozzi \cite{Arc1} proves Meyer's theorem with $C_p\leq 2(p^*-1)$, for all $1<p<\infty$.  Asymptotically, (as $p\to\infty$ or 1)  this bound is best possible as shown in \cite{Lar1}.  Arcozzi also treats more general compact groups.  See also \cite{ArcLi1}.    

The literature on Riesz transform for more general diffusions,  and for Brownian motion on manifolds, is very large and it would be impossible for us to review it here. In addition to the already mentioned examples presented in \cite{Arc1} where Burkholder's inequalities play a crucial role, we refer the reader to the recent papers of X.D. Li \cite{Li1, Li2, Li3} where the martingale inequalities are use to obtain Riesz transform inequalities for Riemannian manifolds and generalized Ornstein-Uhlenbeck operator on abstract Wiener space yielding the same bounds of $2(p^*-1)$ as in (\ref{nablaries}).  In particular, see Corollaries 1.5 and 1.6, page 254, of \cite{Li3}. 

As the reader no doubt has noticed, while we prominently featured the weak-type inequalities for martingales and the Hilbert transform, nothing has been mentioned about such results for the Riesz transform. Unfortunately, the weak-type inequalities (which in general follow from the Calder\'on-Zygmund theory) do not follow from the martingale inequalities due to the simple fact that  the conditional expectation does not preserve weak-type estimates.  We should also point out here that due to the Cauchy-Riemann equations and the It\^o formula, in the case of the Hilbert transform the conditional expectation plays no role and the stochastic integral representation is exact.  It is for this reason that weak-type inequalities for the Hilbert transform follow from those on orthogonal martingales. For more on this, see \cite{Ban1}.
Related to the discussion in this paragraph, we have the following problem. 

\begin{problem}\label{Riesztranweak}  
Find the optimal constant $C_p$ in the inequality
\begin{equation}\label{weakriesztrans}
\lambda^p m\{x\in \bR^n: |R_jf|>\lambda\}\leq C_p\|f\|_p^p,\,\,\,\,\, 1\leq p<\infty, 
\end{equation}
for all $\lambda>0$.
\end{problem}

Of considerable interest is the case $p=1$.  For this it is not even known if the inequality holds with a constant independent of the dimension $n$.  To the best of our knowledge, the best available result is that of Janakiraman \cite{Jan} which proves that $C_1\leq C\log(n)$, where $C$ is independent of $n$.  This result follows from a more general results for singular integrals with certain homogeneous kernels.  The proof in \cite{Jan} is via the Calder\'on-Zygmund machinery with various modifications.

We also refer the reader to  \cite{DraVol1} where Bellman functions techniques are used to obtain various dimension free estimates and where one  finds  the following interesting statement (p.~171) : ``We hope that the properties of the Bellman function could also be utilized in a way to obtain dimension free estimates of the weak type 1-1."  We also share this hope and belief but as the authors of \cite{DraVol1} conclude, ``so far this has eluded us."

\begin{remark}
Another problem of considerable interest is to investigate the weak-type inequality (\ref{weakriesztrans}) for $p=1$ for the Riesz transforms on Wiener space.  This amounts to proving the inequality for the Gaussian Riesz transforms in $\bR^n$ (with respect to the Gaussian measure $\mu$) with a constant independent $n$. While some estimates are known, (see \cite{FabGutSct1}), this problem seems to be wide open. 
\end{remark}

Here is the problem  more precisely. 

\begin{problem} Let 
$R_{g}f=\nabla(-L)^{-1/2}f$ be the Gaussian Riesz transforms in $\bR^n$ defined by (\ref{mayerriesz}). Prove that there is a constant $C$ independent of $n$ such that 
\begin{equation}
\lambda \mu\{x\in \bR^n: |R_{g}f|>\lambda\}\leq C\|f\|_1,
\end{equation}
for all $\lambda>0$.
\end{problem}

To further whet the readers appetite, we mention that the martingale transform techniques presented in this paper can be used to study the boundedness of Riesz transforms on manifolds under curvature assumptions.  This has been done in \cite{Li3} by extending the 
Gundy-Varopolous representation to manifolds and applying the martingale inequalities following \cite{BanWan3}.    
Li's results hold for Riesz transforms on complete Riemannian manifolds $(M, g)$ with metric $g$ for the Ornstein-Uhlenbeck operator $L=\Delta_M-\nabla{\varphi}\cdot \nabla$ with  volume measure  $d\mu(x)=e^{-\varphi(x)}\sqrt{det(g(x))}\,dx$ and  under the assumption that $Ric(L)=Ric(M)+\nabla^2\varphi\geq 0.$
%that the Ricci curvature   assumptions. 
%%We use  $\Delta_M$ to denote the Laplace-Beltrami operator on the manifolds 
%%$M$ with volume measure $dx$ and Ricci curvature $Ric(M)$.  
%
%%The next Theorem and two Corollaries  are from Li's paper \cite[pp. 253-254]{Li3}. 
%%
%\begin{theorem}\label{lith1} Let $(M, g)$ be a be a complete Riemannian manifold 
%with a Riemannian metric $g$.
%Let $\varphi\in C^2(M)$, $$L=\Delta_M-\nabla{\varphi}\cdot \nabla$$ and
% $$d\mu(x)=e^{-\varphi(x)}\sqrt{det(g(x))}\,dx.$$
% Suppose $$Ric(L)=Ric(M)+\nabla^2\varphi\geq 0.$$ Then 
%the $L^p(d\mu)$-norm of the operator $R_{\varphi}=\nabla\left(-L\right)^{-1/2}$ 
%satisfies  $$\|R_{\varphi}\|_p\leq 2(p^*-1).$$ 
%
%\end{theorem}
%
%\begin{corollary}\label{licor1} Let M be a complete Riemannian 
%manifold with non-negative Ricci curvature. Then for any $1<p<\infty$ the $L^p$-norm of the Riesz transform 
%$R=\nabla(-\Delta_M)^{1/2}$
%satisfies $$\|R\|_p\leq 2(p^*-1).$$ 
%\end{corollary}

As in the case of the sphere in $\bR^n$ treated by Arcozzi in \cite{Arc1} to obtain the Meyer's (\cite{Mey1}) Riesz transforms inequality for the Ornstein-Uhlenbeck operator
on the classical Wiener space, the fact that the inequalities in Li's  results have constants independent of the dimension gives an extension of P. Meyer's theorem to the abstract Wiener space defined by Gross in \cite{Gro1}.  
%see \cite[p.~264]{Li3}, Corollary 1.6. 

We have already mentioned that as $p\to 1$ or $p\to\infty$, asymptotically,  the constant $2(p^*-1)$ is best possible in the classical Meyer's inequality; see \cite{Lar1}. More precisely, it is proved that $\|R\|_p$ grows, 
within constant factors,  like $1/(p-1)$, as $p\to 1$, and like $p$, as $p\to\infty$. 
%The paper \cite{Li3} contains an interesting conjecture concerning the sharp behavior of the constants in Theorem \ref{lith1}.  
For more on this subject and references to some of the literature on Riesz transforms on manifolds,  see  \cite{Li1, Li2, Li3}.

\subsection{Multipliers of Laplace transform-type; Poisson semigroup} We now consider the case when the matrix $A=a(y){I}$, where ${I}$ is the  $(n+1)\times(n+1)$ identity matrix and $a(y)$ is a function defined on $(0, \infty)$  with $\|a\|_{\infty}<\infty$. It follows exactly as in (\ref{firstorderriesz}) that the operator $T_A$, which this time we just  denote by $T_a$, is the Fourier multiplier operator 
\begin{equation}\label{laplaceFour1}
\widehat{T_af}(\xi)=\left(16\pi^2|\xi|^2\int_{0}^{\infty} y a(y)\,e^{-4\pi y|\xi|} dy\right) \widehat{f}(\xi). 
\end{equation}
As in (\ref{rieszpoisson}), we can conveniently re-write this as 
\begin{equation}\label{laplaceFour2}
T_af(x)=\int_0^{\infty} ya(y) \partial_0^2 P_{2y} f(x) dy.
\end{equation}
These operators have been studied by many authors including by those of \cite{Mey0}, \cite{Ste0} and \cite{Var1}.   From Theorem \ref{TAproj}, we have the following theorem first proved in  \cite[p. 123]{Ban1.1} but with better constants. 

\begin{theorem}\label{LaplaceTran} Let $\{f_j\}_{j=1}^\infty$ be a sequence of functions in $C_0^{\infty}(\bR^n)$ and 
$\{a_j (y)\}_{i=0}^{\infty}$ a sequence of functions such that $\|a_j\|_{\infty}\leq M<\infty$, for all $j$. 
Then, 
\begin{equation}\label{proj1-1}
\Bigg\|\left(\sum_{j=1}^\infty|T_{a_j} f_j (x)|^2\right)^{1/2}\Bigg\|_p \leq
(p^*-1)M
\Bigg\|\left(\sum_{j=1}^\infty |f_j|^2\right)^{1/2}\Bigg\|_p, 
\end{equation}
for $1 < p < \infty$. Furthermore, set $$a(y)=\left(\sum_{j=1}^{\infty}|a_j(y)|^2\right)^{1/2}$$ and suppose that $\|a\|_{\infty}<\infty$. 
The for any  $f\in C_0^{\infty}(\bR^n)$, 
\begin{equation}\label{proj3-1}
\Bigg\|\left(\sum_{i=1}^\infty|T_{a_j}f(x)|^2\right)^{1/2}\Bigg\|_p\leq
(p^*-1) \|a\|_{\infty} \|f\|_p, \,\,\, 1 < p < \infty. 
\end{equation}
\end{theorem}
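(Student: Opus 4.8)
The plan is to realize each operator $T_{a_j}$ as a projection of a martingale transform of the type treated in Corollary \ref{transforms1}, and then invoke Theorem \ref{TAproj} directly. Indeed, by hypothesis $a_j(y)$ is a bounded function on $(0,\infty)$, so the $(n+1)\times(n+1)$ matrix-valued function $A_j(x,y)=a_j(y)\,{I}$ is a predictable (in fact space-independent) matrix process with operator norm $\|A_j(x,y)\|=|a_j(y)|\leq M$, hence $\|A_j\|\leq M$ for every $j$. Thus $T_{a_j}f_j=T_{A_j}f_j$ is exactly the operator defined in \S\ref{T_A}, and one computes as in \eqref{firstorderriesz} that it has the Fourier multiplier form \eqref{laplaceFour1}; in particular $T_{a_j}$ maps $C_0^\infty(\bR^n)$ to $L^p$. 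So the only thing to verify is that the hypotheses of Theorem \ref{TAproj} are met.

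First I would check inequality \eqref{proj1-1}. Apply the first assertion of Theorem \ref{TAproj} with $\calA=\{A_j\}_{j=1}^\infty$, where $A_j=a_j(y)\,{I}$. Since $\|A_j\|\leq M$ for all $j$, the hypothesis $\|A_i\|\leq M$ is satisfied; one must also check $\|\calA\|<\infty$, but
$$
|\calA(x,y)|^2=\sup_{|v|\leq 1}\sum_{j=1}^\infty |a_j(y)v|^2=\sum_{j=1}^\infty |a_j(y)|^2=a(y)^2,
$$
so $\|\calA\|=\|a\|_\infty$, which is finite by the second hypothesis of the theorem (and in any case only needed for \eqref{proj3-1}; for \eqref{proj1-1} one may truncate the sum to finitely many terms and pass to the limit by monotone convergence, so even finiteness of $\|\calA\|$ is not required there). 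Then \eqref{proj1} of Theorem \ref{TAproj} gives exactly \eqref{proj1-1} with constant $(p^*-1)M$.

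Next I would check \eqref{proj3-1}. Apply the second assertion of Theorem \ref{TAproj} with the single function $f$ and the sequence $\calA=\{A_j\}_{j=1}^\infty$. By the computation above, $\|\calA\|=\|a\|_\infty<\infty$, so \eqref{proj2} yields
$$
\Bigg\|\Big(\sum_{j=1}^\infty |T_{a_j}f(x)|^2\Big)^{1/2}\Bigg\|_p\leq (p^*-1)\|\calA\|\,\|f\|_p=(p^*-1)\|a\|_\infty\|f\|_p,
$$
which is \eqref{proj3-1}.

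I do not anticipate a genuine obstacle here: the statement is essentially a specialization of Theorem \ref{TAproj} to diagonal matrices $A=a(y){I}$, the only real content being the two elementary identifications $\|A_j\|=\|a_j\|_\infty$ and $\|\calA\|=\|a\|_\infty$, together with the observation (already made in the text surrounding \eqref{laplaceFour1}) that for this choice of $A$ the projected transform $T_A$ coincides with the Fourier multiplier $T_a$. The one point that deserves a word of care is the passage through the background-radiation representation: one should confirm that $T_{a_j}$, defined a priori by the conditional expectation $E[A_j*f\,|\,B_0=(x,0)]$, is well defined and agrees with \eqref{laplaceFour2} on $C_0^\infty(\bR^n)$ — but this is exactly the Littlewood--Paley computation carried out for the Riesz transforms in \eqref{firstorderriesz} and recorded in Theorem \ref{prop1}, applied with $g$ arbitrary in $C_0^\infty(\bR^n)$, and it requires no new idea.
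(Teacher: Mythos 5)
Your proof is correct and follows exactly the route the paper intends: the paper states that Theorem \ref{LaplaceTran} follows "from Theorem \ref{TAproj}", and your identification of $A_j = a_j(y)\,I$ with the elementary computations $\|A_j\| = \|a_j\|_\infty$ and $\|\calA\| = \|a\|_\infty$, together with the multiplier identification of $T_A$ as $T_a$, is precisely that specialization. Your side remark that \eqref{proj1-1} can be obtained without assuming $\|a\|_\infty<\infty$, by truncating and invoking the first part of Corollary \ref{transforms1} directly, is a correct and worthwhile clarification of what the hypotheses actually buy.
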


This leads to the following corollary for the so called ``imaginary powers" of the Laplacian. 

\begin{corollary} 
Suppose 
$$
a(y)=\frac{(2y)^{-2i\gamma}}{\Gamma\left(2(1-i\gamma)\right)}. 
$$
Then 
$$
T_af(x)=(-\Delta)^{i\gamma}f(x)
$$
and  
\begin{equation}\label{Hyt1}
\|(-\Delta)^{i\gamma}f(x)\|_p\leq \frac{(p^*-1)}{|\Gamma\left(2-2i\gamma\right)|}\|f\|_p,\,\,\,\,\,\, 1<p<\infty. 
\end{equation}
\end{corollary}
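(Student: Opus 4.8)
The plan is to compute the Fourier multiplier of $T_a$ in closed form and then read off the bound from the $L^p$-inequality for operators of Laplace transform type. First I would insert the given function $a(y)=(2y)^{-2i\gamma}/\Gamma(2-2i\gamma)$ into the multiplier formula (\ref{laplaceFour1}). Writing $(2y)^{-2i\gamma}=2^{-2i\gamma}y^{-2i\gamma}$, this gives
\[
\widehat{T_af}(\xi)=\left(\frac{16\pi^2|\xi|^2\,2^{-2i\gamma}}{\Gamma(2-2i\gamma)}\int_0^\infty y^{1-2i\gamma}e^{-4\pi y|\xi|}\,dy\right)\widehat{f}(\xi),
\]
where the manipulation is justified because $\widehat f$ is Schwartz and, for fixed $\xi\neq 0$, the integrand $y\,a(y)e^{-4\pi y|\xi|}$ is absolutely integrable (note $|a(y)|$ does not depend on $y$).

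The main computational step is the substitution $u=4\pi y|\xi|$ in the inner integral, which yields $(4\pi|\xi|)^{-(2-2i\gamma)}\int_0^\infty u^{1-2i\gamma}e^{-u}\,du=(4\pi|\xi|)^{-(2-2i\gamma)}\,\Gamma(2-2i\gamma)$; this is legitimate since $\mathrm{Re}(2-2i\gamma)=2>0$, so the Gamma integral converges. The factors $\Gamma(2-2i\gamma)$ cancel, and after collecting the powers of $2$, $\pi$ and $|\xi|$ one finds that the multiplier equals $(2\pi|\xi|)^{2i\gamma}=(4\pi^2|\xi|^2)^{i\gamma}$. Under the normalization (\ref{Fo})--(\ref{Foin}) the symbol of $-\Delta$ is $4\pi^2|\xi|^2$, so this is exactly the symbol of $(-\Delta)^{i\gamma}$; since the symbol has modulus $1$ both operators are bounded on $L^2$, and hence $T_af=(-\Delta)^{i\gamma}f$ for $f\in C_0^\infty(\bR^n)$.

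For the norm estimate I would take the matrix $A(x,y)=a(y)I$, whose operator norm is $\|A(x,y)\|=|a(y)|$. Since $(2y)^{-2i\gamma}=e^{-2i\gamma\log(2y)}$ has modulus $1$ for real $\gamma$ and $y>0$, we get $|a(y)|=1/|\Gamma(2-2i\gamma)|$ for every $y$, so $\|A\|=\|a\|_\infty=1/|\Gamma(2-2i\gamma)|$. Then the bound $\|T_Af\|_p\le(p^*-1)\|A\|\,\|f\|_p$ --- the single-term case of Theorem \ref{LaplaceTran}, equivalently (\ref{proj3-1}) --- immediately gives (\ref{Hyt1}).

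I do not anticipate a genuine obstacle: the only points requiring attention are the convergence/interchange justification in the first step and the bookkeeping of the constants $2^{-2i\gamma}$ and $(4\pi)^{2i\gamma}$ in the second, both routine. It is worth noting explicitly that $\gamma$ is assumed real --- this is what makes $|a(y)|$ constant in $y$ and keeps $|\Gamma(2-2i\gamma)|$ finite and nonzero, so that the right-hand side of (\ref{Hyt1}) is a meaningful finite constant.
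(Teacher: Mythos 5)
Your computation is correct and follows the same route the paper implicitly takes: substitute the given $a(y)$ into the multiplier formula \eqref{laplaceFour1}, do the Gamma integral to identify the symbol $(4\pi^2|\xi|^2)^{i\gamma}$ of $(-\Delta)^{i\gamma}$ under the paper's Fourier normalization, and then apply the single-term bound $\|T_af\|_p\le(p^*-1)\|a\|_\infty\|f\|_p$ from Theorem \ref{LaplaceTran} with $\|a\|_\infty=1/|\Gamma(2-2i\gamma)|$. No gaps.
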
 

For other interesting and more recent work related to these multipliers, see Hyt\"onen \cite{Hyt1}, \cite{Hyt2}, and the many references contained therein to related applications of martingale inequalities in UMD spaces. 
In particular, the inequality (\ref{Hyt1}) is proved in \cite{Hyt2} for Hilbert space valued functions as a corollary of more general results for UMD Banach spaces; (see Corollary 5.1, p.~354).  However, note that our constant here is better (twice better) than the constant given there.  This improvement comes from the fact that we used the full identity matrix $I$ while in \cite{Hyt2} the matrix $\tilde{I}=(a_{jk})$ which has $a_{11}=1$ and 
$a_{jl}= 0$, for all other $j,k$, is used.  With $\tilde{I}$ one obtains only half of $(-\Delta)^{i\gamma}$ and hence the function $a(y)$ above has to be multiplied by 2.  
\begin{remark} Theorem \ref{LaplaceTran} has a version for Laplace transform-type multipliers in terms of the projections of space-time Brownian martingale transforms discussed in the next section. In particular, the imaginary powers of the Laplacian $(-\Delta)^{i\gamma}$ can be obtained in an even more direct way from those projections and the constant in (\ref{Hyt1}) can be improved; see (\ref{laplaceheat}) and (\ref{Hyt2}) below. 
\end{remark}

\begin{remark} Other interesting Fourier multipliers arise if we take matrices of the form $a(y)A$, where $A$ has constant coefficients and $a$ is a bounded function on $(0, \infty)$.   For example, with $A=A_j$, $j=1, 2,\dots, n$, as in 
(\ref{Riesmat}), which gives the first-order Riesz transforms, we obtain the multipliers

\begin{equation}\label{RiesLaplace1}
\widehat{T_jf}(\xi)=\left(16\pi^2i\xi_j|\xi|\int_{0}^{\infty} y a(y)\,e^{-4\pi y|\xi|} dy\right) \widehat{f}(\xi),
\end{equation}
$j=1, 2, \dots, n$,  
which  can then be written as 
\begin{equation}\label{RiesLaplace2}
T_jf(x)=2\int_0^{\infty} ya(y) \partial_j\partial_0P_{2y} f(x) dy.
\end{equation}
For these operators, assuming that $a$ is real valued,  we have even the better estimate from Theorem \ref{orth-real}, 

\begin{equation}\label{RiesLaplace3}
\|T_jf\|_p\leq  \cot\left(\frac{\pi}{2p^{\ast}}\right)\|a\|_{\infty}\|f\|_p,
\end{equation}\label{RiesLaplace3}
for any $1<p<\infty$.   We also note here that from (\ref{laplaceFour1}) and (\ref{RiesLaplace1}), $T_j(f)(x)=T_{a}R_jf(x)$, where $R_j$ is the $jth$ Riesz transform and $T_a$ is the operator in (\ref{laplaceFour2}). 
\end{remark}

\subsection{Second order Riesz transforms}
In this section we present the space-time martingale approach introduced in the joint paper with M\'endez \cite{BanMen1} to study the norms of second order Riesz transforms. This  is a modification of the Gundy-Varopoulos background radiation process approach.  The novelty in \cite{BanMen1} is the use
of space-time Brownian motion and ``heat" martingales to obtain better
estimates for singular integrals of second-order. It is interesting to
observe that this construction is technically much simpler than the
Gundy-Varopoulos \cite{GunVor1} construction. That the author of this
paper missed this simple heat martingale construction for so many years
after his work with Gang in \cite{BanWan3} is one of those facts of life that he must just accept and learn to live with!
 
The second-order Riesz transforms in $\bR^n$ are defined by iteration of the first-order Riesz transforms. Thus, proceeding via the Fourier transform we have  
$$\widehat{R_j^2f}(\xi)=\frac{-\xi_j^2}{|\xi|^2}\widehat{f}(\xi)\,\,\,\,\,\, \text{and}\,\,\,\,\,\widehat{R_jR_kf}(\xi)=\frac{-\xi_j\xi_k}{|\xi|^2}\widehat{f}(\xi).$$  These operators, just like the first-order ones, can be written in terms of the background radiation process used in \S3.2 above. In fact, it follows exactly  as in the proof of ({\ref{firstorderriesz}) that 
 if we define the matrix $$A_{jk}=(a_{\ell m}^{jk}),\
j,k=1,2,\cdots, n,\ j\not= k,$$ by 
$$
{a}_{\ell m}^{jk}=\begin{cases} -1&\text{$\ell=k+1,\ m=j+1$}\\
-1&\text{$\ell=j+1,\ m=k+1$}\\
0&\text{otherwise},
\end{cases}
$$
we have $T_{A_{jk}}f=R_j R_k f$.
With the matrices $\tilde{A}_{jj}=(\tilde a_{\ell m}^j),\ j=1,2,\cdots,n$, defined by
$$
\tilde{a}_{\ell m}^j=\begin{cases} -1,&\text{$\ell=j+1,\ m=j+1$}\\
\ 1,&\text{$\ell=m,\ \ell \not= j+1,\ m\not= j+1$},
\end{cases}
$$
we have $T_{A_{jj}}f=R_j^2 f$.
Since $\|A_{jk}\|=1$ and $\|A_{jj}\|=1$, we obtain the following estimates for $R_jR_k$ from (\ref {proj1}).  
This was proved in \cite{BanWan3}. 
\begin{theorem}\label{second1} Let  $f\in C_0^{\infty}(\bR^n)$. 
For all $1 < p < \infty$,
$$
\|R_j R_k f\|_p \leq (p^*-1)\|f\|_p,\ j\not= k,
$$
for all $j\not= k$,
and 
$$
\|R_j^2 f\|_p \leq (p^*-1)\|f\|_p,
$$
for all $j=1, 2,\dots, n$. 
\end{theorem}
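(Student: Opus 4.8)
The plan is to realize each second-order transform as one of the projected martingale transforms $T_A$ of \S\ref{T_A} for a \emph{constant} $(n+1)\times(n+1)$ matrix $A$ of operator norm exactly $1$, and then to read the bound off directly from the $(p^{*}-1)$-inequality of Theorem \ref{TAproj}. Concretely, with the matrices $A_{jk}$ (symmetric, carrying the single pair of entries $-1$ in positions $(k{+}1,\,j{+}1)$ and $(j{+}1,\,k{+}1)$) and $\tilde A_{jj}$ (diagonal, with entries $\pm1$) introduced just above, the first task is to establish the representations $T_{A_{jk}}f=R_jR_kf$ for $j\ne k$ and $T_{\tilde A_{jj}}f=R_j^{2}f$, valid for all $f\in C_0^{\infty}(\bR^{n})$.

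This verification is the only step where a genuine computation is needed, and it is a line-by-line repetition of the argument for the first-order transforms already carried out in (\ref{firstorderriesz}). I would pair $T_A f$ against an arbitrary $g\in C_0^{\infty}(\bR^{n})$, use the It\^o representation $g(B_0)=\int_{-\infty}^{0}\nabla U_g(X_s,Y_s)\cdot dB_s$ together with the isometry property of stochastic integrals to obtain $\int_{\bR^{n}}T_Af(x)\,g(x)\,dx = E\big(\int_{-\infty}^{0}[A\nabla U_f(X_s,Y_s)]\cdot\nabla U_g(X_s,Y_s)\,ds\big)$, convert this by the occupation-time identity (\ref{occu1}) (the Green's function of the background radiation process being $2y$) into $2\int_0^{\infty}\!\int_{\bR^{n}}y\,[A\nabla U_f(x,y)]\cdot\nabla U_g(x,y)\,dx\,dy$, and finally invoke Plancherel's theorem together with $\widehat{\partial_iU_f}(\xi,y)=-2\pi i\xi_ie^{-2\pi y|\xi|}\widehat f(\xi)$, $\widehat{\partial_0U_f}(\xi,y)=-2\pi|\xi|e^{-2\pi y|\xi|}\widehat f(\xi)$ and $\int_0^{\infty}y\,e^{-4\pi y|\xi|}\,dy=(16\pi^{2}|\xi|^{2})^{-1}$. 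For $A=A_{jk}$ the integrand is $-\big(\partial_jU_f\,\partial_kU_g+\partial_kU_f\,\partial_jU_g\big)$ and the whole expression collapses to the symbol $-\xi_j\xi_k/|\xi|^{2}$ of $R_jR_k$; for the diagonal matrix $\tilde A_{jj}$ the analogous bookkeeping (using $\sum_{i=1}^{n}\xi_i^{2}=|\xi|^{2}$) collapses to the symbol $-\xi_j^{2}/|\xi|^{2}$ of $R_j^{2}$.

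The remaining ingredient is elementary: $\|A_{jk}\|=1$ since $A_{jk}v$ has only the two nonzero coordinates $-v_{j+1}$ and $-v_{k+1}$, so $|A_{jk}v|^{2}=v_{j+1}^{2}+v_{k+1}^{2}\le|v|^{2}$ with equality when $v$ is supported on those two coordinates; and $\|\tilde A_{jj}\|=1$ since $\tilde A_{jj}$ is a $\pm1$ diagonal matrix, hence orthogonal. With the representations above and these two norm identities in hand, the theorem follows at once by applying inequality (\ref{proj1}) of Theorem \ref{TAproj} to the single-term sequence $\calA=\{A_{jk}\}$ (resp. $\calA=\{\tilde A_{jj}\}$) with $M=1$: this yields $\|R_jR_kf\|_p=\|T_{A_{jk}}f\|_p\le(p^{*}-1)\|f\|_p$ and $\|R_j^{2}f\|_p=\|T_{\tilde A_{jj}}f\|_p\le(p^{*}-1)\|f\|_p$ for every $1<p<\infty$.

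I do not anticipate a serious obstacle. The only point requiring care is the analytic rigor behind the It\^o-formula, Fubini, and Plancherel manipulations in the non-probability-measure setting of the Gundy--Varopoulos background radiation process; this has already been made rigorous for $T_{A_j}$ in \S\ref{T_A} and in \cite{GunVor1,Ban1,Ban1.1,BanWan3}, and it transfers verbatim here because $A_{jk}$ and $\tilde A_{jj}$ are bounded (indeed constant) matrices. In one sentence, the substance of the statement is that the background radiation representation exhibits $R_jR_k$ and $R_j^{2}$ as martingale transforms by constant matrices of operator norm $1$, after which Burkholder's sharp bound --- in the form of Theorem \ref{BW_theorem}, Corollary \ref{transforms1}, and the $L^{p}$-contractivity of the conditional expectation packaged into Theorem \ref{TAproj} --- supplies the constant $(p^{*}-1)$.
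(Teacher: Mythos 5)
Your strategy is exactly the one the paper uses: exhibit each second-order Riesz transform as a projected martingale transform $T_A$ for a constant $(n+1)\times(n+1)$ matrix $A$ of operator norm $1$, then cite the $(p^*-1)$-bound of Theorem \ref{TAproj}. The matrices $A_{jk}$ for $j\ne k$ and the Plancherel bookkeeping you describe are precisely what the paper appeals to (``it follows exactly as in the proof of (\ref{firstorderriesz})''), and the norm identities are elementary as you say.

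There is, however, a concrete arithmetical gap in the diagonal case, which you inherit from the matrix written in the paper. If one uses the matrix $\tilde A_{jj}=\mathrm{diag}(1,\dots,1,-1,1,\dots,1)$ with $-1$ in position $j+1$ and $+1$ elsewhere (in particular $+1$ in position $1$, the $\partial_y$-slot), the Plancherel computation does \emph{not} collapse to $-\xi_j^2/|\xi|^2$. With $\widehat{\partial_0 U_f}\overline{\widehat{\partial_0 U_g}}=4\pi^2|\xi|^2e^{-4\pi y|\xi|}\hat f\overline{\hat g}$ and $\widehat{\partial_i U_f}\overline{\widehat{\partial_i U_g}}=4\pi^2\xi_i^2e^{-4\pi y|\xi|}\hat f\overline{\hat g}$, the integrand carries the factor
\[
|\xi|^2+\sum_{i\ne j}\xi_i^2-\xi_j^2 \;=\; 2|\xi|^2-2\xi_j^2,
\]
and after multiplying by $2\int_0^\infty y\,e^{-4\pi y|\xi|}\,dy=(8\pi^2|\xi|^2)^{-1}$ one obtains the multiplier $1-\xi_j^2/|\xi|^2$, i.e.\ $T_{\tilde A_{jj}}f=f+R_j^2f$, not $R_j^2f$. (Equivalently, in the notation of the paper's kernel discussion, the stated $\tilde A_{jj}$ equals $K_1 - 2E_{j+1,j+1} + 2E_{11}$, and the extra $2E_{11}$ contributes the identity.) The fix is to flip the $(1,1)$ entry to $-1$ as well, i.e.\ take $\mathrm{diag}(-1,1,\dots,1,-1,1,\dots,1)$ with $-1$ in positions $1$ and $j+1$; this is still a $\pm1$ diagonal matrix of norm $1$, it now gives the symbol $-\xi_j^2/|\xi|^2$ exactly, and your appeal to Theorem \ref{TAproj} with $M=1$ then yields $\|R_j^2 f\|_p\le(p^*-1)\|f\|_p$ as claimed. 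The $A_{jk}$ case, $j\ne k$, is unaffected and your verification there is correct.
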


\subsection{The  Beurling-Ahlfors operator and the Iwaniec Conjecture}\label{IwBa-conj} 
The Beurling-Ahlfors operator is the singular integral operator (Fourier multiplier) 
on the complex plane $\bC$ (or $\bR^2$) defined for $f\in C_0^{\infty}(\bC)$ by 
\begin{equation}\label{BAdefinition}
Bf(z) = -\frac{1}{\pi}\textrm{p.v.}\int_{\bC}\frac{f(w)}{(z-w)^2}dm(w). 
\end{equation}
In terms of the Fourier multiplier, 
$\widehat{Bf}(\xi)=\frac{\overline{\xi}^2}{|\xi|^2}\hat f(\xi)$.  This means that 
we can write it in terms of second order Riesz transform as 
\begin{equation}\label{BAriesz} 
B=R_2^2-R_1^2+2iR_1R_2.
\end{equation}

This operator is of fundamental importance in several areas of analysis  including  PDE's and 
  the geometry of quasiconformal mappings \cite{ Ast1, AstIwaMar1, DonSul1, Iwa1, Iwa2, Iwa3, IwaMar0, IwaMar1,  PetVol1}.  As a Calder\'on-Zygmund singular integral, it is bounded on $L^p(\bC)$, for $1<p<\infty$. The computation of its operator norm $\|B\|_{p}$ has been an open problem for almost thirty years now. 
In  \cite{Leh1}, Lehto showed that $\|B\|_{p} \geq  p^*-1$. Inspired in part by the celebrated Gehring-Reich
 conjecture \cite{GerRei} on the area distortion of quasiconformal mappings in the plane 
 (proved by K. Astala  \cite{Ast1}), T. Iwaniec  \cite{Iwa1}  made the following 
 
  \begin{conjecture}\label{Iwaniec} For all $f\in C_0^{\infty}(\bC)$ and $1<p<\infty$, it holds that  
  \begin{equation}\label{Iwaniec-Equation}
  \|Bf\|_{p}\leq (p^*-1)\|f\|_p.
  \end{equation}
 \end{conjecture}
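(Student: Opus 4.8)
\medskip
\noindent\emph{Proposed strategy.} It should be said at once that Conjecture \ref{Iwaniec} is the well-known Iwaniec conjecture and that it remains open; what follows is the line of attack that has produced the best bounds to date, together with an indication of the precise point at which it stalls. The starting point is the probabilistic representation of $B$ in the spirit of \S\ref{T_A} and of the space-time (heat martingale) construction of \cite{BanMen1}. Using \eqref{BAriesz}, $B$ is a fixed linear combination of the second order Riesz transforms $R_jR_k$, each of which is realized as a $T_A$ with an explicit constant matrix; collecting these, one writes, for $f\in C_0^{\infty}(\bC)$,
\[
Bf(x)=E\!\left[(\mathcal A * f)\,\big|\,B_0=(x,0)\right],
\]
where $\mathcal A*f$ is the transform of the (heat or Poisson) martingale $X$ generated by the extension of $f$.

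The crucial structural observation — coming from the fact that the multiplier $\overline\xi^{\,2}/|\xi|^2=(\xi_2^2-\xi_1^2-2i\xi_1\xi_2)/|\xi|^2$ is the square of a purely complex quantity — is that the transformed $\bC$-valued martingale $Y$ is conformal in the sense of Definition \ref{confmar}, and one can compute the constant $c$ for which $Y\ll c\,X$. I would then invoke the sharp conformal inequality: by Corollary \ref{cor1} (equivalently Theorem \ref{thm3} with $m=2$), $\|Y\|_p\le\sqrt{p(p-1)/2}\,\|X\|_p$ for $2\le p<\infty$, and by \eqref{JV} the companion estimate holds for $1<p\le 2$. Combining this with the $L^p$-contractivity of conditional expectation and the Littlewood--Paley identity of the type \eqref{firstorderriesz} (which returns $\|X\|_p$ to $\|f\|_p$), and tracking the subordination constant $c$, yields an inequality of the form $\|Bf\|_p\le\sqrt{2(p^2-p)}\,\|f\|_p$, with the dual estimate for $p<2$. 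This is essentially the argument of \cite{BanJan1}, and it already improves on the bound $2(p^*-1)$ that the crude differential-subordination estimate of Corollary \ref{transforms1} would give.

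The hard part — in fact the entire content of the conjecture — is to replace $\sqrt{2(p^2-p)}$ (or $2(p^*-1)$, or the sharpened constants of \cite{BorJanVol1}, \cite{BorJanVol2}) by the conjectured $p^*-1$. Two losses are built into the scheme above. First, $Bf$ is only a conditional expectation of $\mathcal A*f$, so passing from $\|\mathcal A*f\|_p$ to $\|Bf\|_p$ is lossy: the martingale extremizers are not reproduced by the projection, and conversely the genuine extremizers for $B$ on $L^p$ (the Lehto-type functions behaving like $|z|^{-2/p}$) are invisible to the martingale inequality. Second, even the sharp conformal inequality \eqref{banjan} does not use the additional rigidity of the Beurling--Ahlfors martingale, namely the exact covariation structure dictated by its heat-kernel representation. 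To obtain $p^*-1$ one would have to run Burkholder's programme directly on $B$: produce a single function $U$ on the relevant state space — the pair consisting of the extension of $f$ and the accumulated $\bC$-valued transform, together with the time variable in the space-time picture — that dominates $V(x,y)=|y|^p-(p^*-1)^p|x|^p$, vanishes at the origin, and is a supersolution for the precise family of second-order operators (including the $y$-Laplacian of space-time Brownian motion) attached to this particular martingale. The existence of such a ``true Bellman function'' for $B$ is exactly what is unknown, and it is the step I cannot carry out; every partial result so far amounts to substituting for this optimal $U$ a sub-optimal majorant such as Burkholder's function \eqref{function_U} or the conformal majorants of \cite{BanJan1} and \cite{BorJanVol1}.
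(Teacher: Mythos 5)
You correctly recognize that this statement is Iwaniec's open conjecture, which the paper discusses but does not (and cannot) prove, and your survey of the known partial results and of where the martingale strategy stalls matches the paper's presentation closely: the $4(p^*-1)$ and $2(p^*-1)$ bounds from crude differential subordination via Corollary~\ref{transforms1}, the improvement to $\sqrt{2p(p-1)}$ for $p\geq 2$ by exploiting the conformality of the transformed martingale and Corollary~\ref{cor1}, the further Riesz--Thorin interpolation with the trivial $L^2$ estimate giving the record $1.575(p^*-1)$ in Theorem~\ref{BAbest}, and the two losses you isolate, namely the lossiness of the projection by conditional expectation and the absence of a ``true Bellman function'' for $B$, the latter being precisely the open question of whether Burkholder's $U$ is quasiconvex (Conjecture~\ref{BA-Bur-U} in \S5). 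One small but substantive correction: the formula you write, $Bf(x)=E[(\mathcal{A}*f)\mid B_0=(x,0)]$, is the Poisson (background-radiation) representation, where the relevant $3\times 3$ matrix $\mathbb{B}$ has norm $4$ and Theorem~\ref{background-smallest-B} shows no representing matrix there does better; the conformal-martingale argument of \cite{BanJan1} that you then invoke is carried out in the space-time (heat) representation with $\mathcal{S}_{\calB}$ and $Z_T=(x,0)$, where the $2\times 2$ matrix $\calB$ has norm $2$ (Remark~\ref{smallest-heat}), and that factor of two in the subordination constant is essential to landing on $\sqrt{2p(p-1)}$ rather than something weaker. The two constructions run in parallel but are not interchangeable at the level of constants.
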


 From Theorem \ref{second1} and formula (\ref{BAriesz}), we immediately get the following theorem proved in \cite{BanWan3}. 
 
 \begin{theorem} For all $f\in C_0^{\infty}(\bC)$ and $1<p<\infty$, we have 
\begin{equation}\label{BA4} 
\|Bf\|_{p}\leq 4(p^*-1)\|f\|_p.
\end{equation}
\end{theorem}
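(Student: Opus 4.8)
The plan is to derive (\ref{BA4}) as an immediate consequence of the expansion (\ref{BAriesz}) of the Beurling--Ahlfors operator into second order Riesz transforms, combined with the sharp bounds for those transforms recorded in Theorem \ref{second1}. Starting from
\[
B = R_2^2 - R_1^2 + 2iR_1R_2,
\]
I would first estimate $|Bf(z)|$ pointwise by the triangle inequality for complex numbers, using $|i|=1$, to get
\[
|Bf(z)| \le |R_2^2 f(z)| + |R_1^2 f(z)| + 2|R_1 R_2 f(z)|,
\]
and then take $L^p(\bC)$ norms and apply Minkowski's inequality, obtaining
\[
\|Bf\|_p \le \|R_2^2 f\|_p + \|R_1^2 f\|_p + 2\|R_1 R_2 f\|_p.
\]
Since the first order Riesz transforms send real-valued functions to real-valued functions, it suffices to run this for real-valued $f$, so each term on the right is the $L^p$-norm of a real-valued function to which Theorem \ref{second1} applies verbatim.

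The second and final step is to substitute the estimates of Theorem \ref{second1}, namely $\|R_j^2 f\|_p \le (p^*-1)\|f\|_p$ for $j=1,2$ and $\|R_1 R_2 f\|_p \le (p^*-1)\|f\|_p$, valid for all $1<p<\infty$. This yields
\[
\|Bf\|_p \le (p^*-1)\|f\|_p + (p^*-1)\|f\|_p + 2(p^*-1)\|f\|_p = 4(p^*-1)\|f\|_p,
\]
which is exactly (\ref{BA4}).

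All of the analytic substance is hidden inside Theorem \ref{second1}, whose proof proceeds through the Gundy--Varopoulos background-radiation representation of $R_jR_k$ as a conditional expectation of a martingale transform, followed by the sharp martingale inequality of Theorem \ref{BW_theorem}; the argument above is only a formal repackaging. Consequently there is no genuine obstacle here beyond careful bookkeeping: handling the complex-valued output of $B$ with the triangle inequality and keeping track of the coefficients $1$, $1$, and $2$, whose sum produces the constant $4$. It bears emphasizing that $4(p^*-1)$ is far from the conjectured value $p^*-1$ of Conjecture \ref{Iwaniec}; that gap is precisely what motivates the finer constructions developed later in the paper --- conformal and heat martingales, and the sharp inequalities for $\bR^2$-valued martingales of Theorem \ref{thm3} and Corollary \ref{cor1} --- which shave this constant down. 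The bound (\ref{BA4}) should thus be regarded as the crudest first estimate obtainable from the martingale method.
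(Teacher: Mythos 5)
Your argument coincides with the paper's, which presents (\ref{BA4}) as an immediate consequence of the decomposition (\ref{BAriesz}) together with Theorem \ref{second1}, so there is nothing new here and the bookkeeping is correct. One small flag: the aside that ``it suffices to run this for real-valued $f$'' is not a valid reduction for $B$ itself (since $B$ does not map real functions to real functions, the complex bound does not trivially follow from the real one), but it is also unnecessary --- Theorem \ref{second1} already holds for complex-valued $f$ because $R_j^2$ and $R_jR_k$ do map real to real and the underlying martingale inequality is Hilbert-space-valued, so your pointwise triangle inequality, Minkowski, and the three estimates apply verbatim to any $f\in C_0^{\infty}(\bC)$.
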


Let us observe that the matrix that gives the full operator $B$ as a $T_A$ operator ($T_{\mathbb{B}}$ in this case) is given by 
\begin{equation}
\mathbb{B}=\left[\begin{array}{ccc} 0 & 0 & 0 \\ 0 & 2 & -2i \\ 0 & -2i & -2 \end{array}\right],
\end{equation}
When acting on real vectors (vectors whose components are all real) this matrix has norm $2\sqrt{2}$ but when acting on complex vectors has norm $4$. This observation and a  more direct application of the inequality \eqref{proj1} give the following result also proved in \cite{BanWan3}. 

\begin{theorem} For all $f\in C_0^{\infty}(\bC)$ with $f:C\to \bR$\, and $1<p<\infty$, we have 
\begin{equation}\label{BA4-1}
\|Bf\|_{p}\leq 2\sqrt{2}(p^*-1)\|f\|_p.
\end{equation}
\end{theorem}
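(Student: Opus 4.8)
The plan is to realize $B$ as one of the projected martingale--transform operators $T_A$ of \S\ref{T_A} and then invoke the sharp martingale inequality, the one subtle point being \emph{which} operator norm of the associated matrix is the operative one. From the representation \eqref{BAriesz}, $B=R_2^2-R_1^2+2iR_1R_2$, together with the fact that $A\mapsto T_A f$ is linear for fixed $f$ and the identifications $T_{A_{jk}}f=R_jR_kf$ ($j\neq k$) and $T_{\tilde A_{jj}}f=R_j^2f$ recorded above, one checks directly that $B=T_{\mathbb{B}}$ with
\[
\mathbb{B}=\tilde A_{22}-\tilde A_{11}+2iA_{12}=\left[\begin{array}{ccc} 0 & 0 & 0 \\ 0 & 2 & -2i \\ 0 & -2i & -2 \end{array}\right],
\]
precisely the matrix displayed just before the statement. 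Since $f$ is real-valued, $U_f=P_yf$ is real-valued, so the full gradient ${\nabla}U_f(X_s,Y_s)$ appearing in $\mathbb{B}*f=\int_{-\infty}^0[\mathbb{B}(X_s,Y_s){\nabla}U_f(X_s,Y_s)]\cdot dB_s$ takes values in $\bR^3$, never in $\bC^3$.

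The key observation is to compute $\sup\{|\mathbb{B}v|:v\in\bR^3,\ |v|\le 1\}$. For $v=(v_0,v_1,v_2)\in\bR^3$ one has $\mathbb{B}v=(0,\ 2v_1-2iv_2,\ -2iv_1-2v_2)$, hence $|\mathbb{B}v|^2=|2v_1-2iv_2|^2+|2iv_1+2v_2|^2=8(v_1^2+v_2^2)\le 8|v|^2$, with equality when $v_0=0$. Thus $\mathbb{B}$ has norm $2\sqrt{2}$ when acting on real vectors --- as opposed to the norm $4$ it has on $\bC^3$, attained at $v=(0,1,i)$ --- so that $|\mathbb{B}(X_s,Y_s){\nabla}U_f(X_s,Y_s)|\le 2\sqrt{2}\,|{\nabla}U_f(X_s,Y_s)|$ pointwise along the background-radiation paths.

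Now I would run the mechanism behind Theorem \ref{TAproj}. Let $N_t=\int_{-\infty}^t{\nabla}U_f(X_s,Y_s)\cdot dB_s$ be the companion martingale; by It\^o's formula (using that $U_f$ vanishes at infinity) $N_0=f(B_0)$, and by \eqref{unif1} applied to $|f|^p$ one gets $\|N\|_p^p=E|f(B_0)|^p=\int_{\bR^n}|f|^p\,dx=\|f\|_p^p$. The previous paragraph gives $d\langle\mathbb{B}*f\rangle_t\le 8\,|{\nabla}U_f(X_t,Y_t)|^2\,dt=8\,d\langle N\rangle_t$, i.e. the $\bR^2$-valued ($\cong\bC$-valued) martingale $\mathbb{B}*f$ is differentially subordinate to $2\sqrt{2}\,N$. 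Theorem \ref{BW_theorem} (equivalently Corollary \ref{transforms1}, or a direct appeal to \eqref{proj1}) then yields $\|\mathbb{B}*f\|_p\le 2\sqrt{2}(p^*-1)\|N\|_p=2\sqrt{2}(p^*-1)\|f\|_p$. Finally, since the distribution of $B_0$ is Lebesgue measure and conditional expectation is an $L^p$-contraction, $\|Bf\|_{L^p(\bR^2)}=\|T_{\mathbb{B}}f\|_p=\big\|E[\mathbb{B}*f\mid B_0=\cdot\,]\big\|_p\le\|\mathbb{B}*f\|_p$, and \eqref{BA4-1} follows.

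I do not expect a genuine obstacle here: the one point that must not be overlooked --- and the sole reason the constant improves from $4(p^*-1)$ in \eqref{BA4} to $2\sqrt{2}(p^*-1)$ --- is recognizing that, because $f$ is real, the martingale transform only ever applies $\mathbb{B}$ to real vectors, so the relevant constant is the norm of $\mathbb{B}$ on $\bR^3$ and not on $\bC^3$. As in \cite{BanWan3}, one should also check that the formal steps (It\^o's formula on the background-radiation process, finiteness of the occupation-time integral \eqref{occu1}, and passage through the conditional expectation) are justified by the standard approximation arguments; these are identical to those used for \eqref{BA4} and introduce nothing new.
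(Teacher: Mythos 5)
Your proof is correct and follows exactly the argument the paper has in mind: the matrix $\mathbb{B}$ has operator norm $2\sqrt{2}$ on $\bR^3$ (as opposed to $4$ on $\bC^3$), and since $f$ is real-valued the gradient $\nabla U_f$ is real so only the real norm enters; one then applies \eqref{proj1} together with the $L^p$-contractivity of conditional expectation. Your fleshing-out of the differential-subordination step and the identity $\|N\|_p=\|f\|_p$ matches the mechanism used throughout \S3 and introduces nothing beyond what the paper sketches in one sentence.
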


The following question immediately arises from \eqref{BA4} and \eqref{BA4-1}.

\begin{question}\label{question-background-smallest-B}
Is it possible to find a matrix of lower norm that also represents the Beurling-Ahlfors operator, thereby improving the bound in (\ref{BA4})? 
\end{question}

To answer this question we set $R_0=Id$ and let $\M^{(n+1)\times (n+1)}(\bC)$ denote the space of $(n+1)\times (n+1)$ matrices with complex entries and let 
$$
\Phi:\M^{(n+1)\times (n+1)}(\bC)\to \left\{\sum_{i,j}^{n} a_{ij}R_iR_j,  a_{ij}\in  \bC\right\}
$$
be the mapping $A\to T_A$.  This is a surjective linear mapping,
and so a basis for the kernel can be computed. 
In fact, when $n=2$, we let 
$$
K_1=\left[\begin{array}{ccc} -1 & 0 & 0 \\ 0 & 1 & 0 \\ 0 & 0 & 1 \end{array}\right], \quad\quad  K_2=\left[\begin{array}{ccc} 0 & 0 & 0 \\ 0 & 0 & 1 \\ 0 & -1 & 0 \end{array}\right] 
$$
and 
$$
K_3=\left[\begin{array}{ccc} 0 & 1 & 0 \\ 1 & 0 & 0 \\ 0 & 0 & 0 \end{array}\right], \quad\quad K_4=\left[\begin{array}{ccc} 0 & 0 & 1 \\ 0 & 0 & 0 \\ 1 & 0 & 0 \end{array}\right]. 
$$
It is proved in \cite{BanLin1} that $$ker(\Phi)=span\left\{K_1, K_2, K_3, K_4\right\}.$$  From this we can prove the following theorem  which completely answers the above question.  We refer the reader to \cite[pp.~237 \& 252]{BanLin1} for complete proofs. 

\begin{theorem} \label{background-smallest-B}
\begin{equation}
\inf_{\alpha_1, \alpha_2, \alpha_3, \alpha_4\in \bC}\|\mathbb{B}+\alpha_1K_1+\alpha_2K_2+\alpha_3K_3+\alpha_4K_4\|=\|\mathbb{B}\|=4.
\end{equation}
\end{theorem}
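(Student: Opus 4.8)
The plan is to reduce everything to one explicit computation with the given $3\times 3$ matrices, using a single fixed pair of unit test vectors. The inequality
\[
\inf_{\alpha_1,\alpha_2,\alpha_3,\alpha_4\in\bC}\bigl\|\mathbb{B}+\alpha_1K_1+\alpha_2K_2+\alpha_3K_3+\alpha_4K_4\bigr\|\le\|\mathbb{B}\|
\]
is immediate (take $\alpha_1=\alpha_2=\alpha_3=\alpha_4=0$), and $\|\mathbb{B}\|=4$ follows by observing that the only nonzero block of $\mathbb{B}^{*}\mathbb{B}$ is $\left(\begin{smallmatrix}8&-8i\\ 8i&8\end{smallmatrix}\right)$, whose eigenvalues are $0$ and $16$. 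So the whole statement comes down to the lower bound $\bigl\|\mathbb{B}+\sum_{j}\alpha_jK_j\bigr\|\ge 4$ for every choice of the $\alpha_j\in\bC$.

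First I would produce the extremal vector of $\mathbb{B}$: solving $\mathbb{B}^{*}\mathbb{B}v=16v$ gives, up to scaling, $v=\tfrac{1}{\sqrt2}(0,-i,1)$, and then $\mathbb{B}v=\tfrac{4}{\sqrt2}(0,-i,-1)=4u$ with $u=\tfrac{1}{\sqrt2}(0,-i,-1)$, a unit vector. The heart of the proof is the short verification that $u^{*}K_jv=0$ for $j=1,2,3,4$: indeed $K_1v=v$, so $u^{*}K_1v=u^{*}v=0$; $K_2v=\tfrac{1}{\sqrt2}(0,1,i)$, which pairs to $0$ against $u^{*}=\tfrac{1}{\sqrt2}(0,i,-1)$ by a one-term cancellation; and $K_3v=\tfrac{1}{\sqrt2}(-i,0,0)$, $K_4v=\tfrac{1}{\sqrt2}(1,0,0)$ both lie in $\bC e_1$, which is orthogonal to $u$. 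Consequently, for every $\alpha$,
\[
u^{*}\Bigl(\mathbb{B}+\sum_{j=1}^{4}\alpha_jK_j\Bigr)v=u^{*}\mathbb{B}v=4u^{*}u=4 .
\]
Since $\|u\|=\|v\|=1$, the elementary bound $\|M\|\ge|u^{*}Mv|$ gives $\bigl\|\mathbb{B}+\sum_{j}\alpha_jK_j\bigr\|\ge 4$ for all $\alpha$; combined with the trivial upper bound and with $\|\mathbb{B}\|=4$, this yields the asserted equality, the infimum being attained at $\alpha=0$.

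The only real obstacle is choosing the right test vectors, but there is essentially no freedom: a functional of the form $A\mapsto u^{*}Av$ with $\|u\|=\|v\|=1$ can detect $\mathbb{B}$ at its norm only if $v$ is a top right singular vector of $\mathbb{B}$ and $u=\mathbb{B}v/\|\mathbb{B}v\|$, so those are forced, and the content of the argument is merely that this particular functional happens to annihilate $\mathrm{span}\{K_1,K_2,K_3,K_4\}=\ker\Phi$. An equivalent conceptual phrasing, which one could substitute for the bare computation, is that the rank-one matrix $uv^{*}$ is Hilbert--Schmidt orthogonal to $\ker\Phi$, so the scalar $u^{*}Av$ depends on $A\in\Phi^{-1}(B)$ only through $T_A=B$; this makes transparent why no matrix representing the Beurling--Ahlfors operator via $\Phi$ can have operator norm below $|u^{*}\mathbb{B}v|=4$. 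For complete details we follow \cite[pp.~237 \& 252]{BanLin1}.
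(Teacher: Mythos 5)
Your proof is correct, and since the paper delegates the proof to \cite[pp.~237 \& 252]{BanLin1} rather than reproducing it, I can only check your argument on its own terms: it holds up. The one-line identity $\|\mathbb{B}\|=4$ via the $2\times 2$ block of $\mathbb{B}^{*}\mathbb{B}$ with eigenvalues $0$ and $16$ is right, and the extremal singular pair $v=\tfrac{1}{\sqrt2}(0,-i,1)$, $u=\tfrac{1}{\sqrt2}(0,-i,-1)$ is computed correctly ($\mathbb{B}v=4u$). I verified $u^{*}K_jv=0$ for $j=1,2,3,4$: $K_1v=v$ and $u^{*}v=0$; $K_2v=\tfrac1{\sqrt2}(0,1,i)$ pairs to $\tfrac12(i-i)=0$ against $u$; and $K_3v$, $K_4v$ both lie in $\bC e_1\perp u$. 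Hence the rank-one functional $A\mapsto u^{*}Av$ annihilates $\ker\Phi=\operatorname{span}\{K_1,\dots,K_4\}$ while evaluating to $4$ on $\mathbb{B}$, and $|u^{*}Mv|\le\|M\|$ closes the lower bound. The Hilbert--Schmidt-orthogonality reformulation you give at the end is a clean conceptual gloss; as you note, the choice of $(u,v)$ is forced by requiring the functional to saturate $\|\mathbb{B}\|$, so the only genuine content is the verification that $uv^{*}\perp\ker\Phi$, which you carry out explicitly. No gaps.
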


Thus using the background radiation process of  Gundy-Varopoulos, the bound $4(p^*-1)$ cannot be improved simply by picking a matrix of smaller norm that also represents  $B$. Nevertheless, this bound, proved in \cite{BanWan3}, was the first estimate on the $L^p$-norm of $B$ 
with an explicit constant involving the $(p^*-1)$ conjectured bound and the result ignited considerable interest on the conjecture and on the probabilistic techniques used to derive the bound.   

\subsection{The Nazarov-Volberg estimate} In \cite{NazVol2}, Nazarov and Volberg improved the bound in \eqref{BA4} to 

\begin{theorem} For all $f\in C_0^{\infty}(\bC)$ and $1<p<\infty$, we have
\begin{equation}
\|Bf\|_{p}\leq 2(p^*-1)\|f\|_p.
\end{equation}
\end{theorem}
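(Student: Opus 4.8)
The plan is to abandon the Poisson extension and the Gundy--Varopoulos background radiation — which by Theorem~\ref{background-smallest-B} cannot yield anything below $4(p^*-1)$ — and to use instead the \emph{heat} extension together with space-time Brownian motion, in the spirit of the space-time approach to second-order Riesz transforms of \cite{BanMen1}. For $f,g\in C_0^\infty(\bC)$ let $u(x,t)=e^{t\Delta}f(x)$ and $v(x,t)=e^{t\Delta}g(x)$ be their caloric extensions to $\bC\times(0,\infty)$. The first step is to record the caloric Littlewood--Paley identity
\begin{equation*}
\langle Bf,g\rangle_{L^2(\bC)} = c\int_0^\infty\!\!\int_{\bC} \big[\mathbb{A}\,\nabla_x u(x,t)\big]\cdot\overline{\nabla_x v(x,t)}\;dm(x)\,dt ,
\end{equation*}
where $\nabla_x$ is the two-dimensional spatial gradient, $c$ an absolute constant, and $\mathbb{A}$ the $2\times2$ matrix realizing $B=R_2^2-R_1^2+2iR_1R_2$ through the heat representations $R_jR_kf=\partial_j\partial_k(-\Delta)^{-1}f=\partial_j\partial_k\!\int_0^\infty e^{t\Delta}f\,dt$; this is the caloric analogue of Theorem~\ref{prop1}, proved by a heat reproducing formula and integration by parts in $x$. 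Equivalently, via the occupation-time formula for the backward space-time Brownian motion of \cite{BanMen1}, it reads $\langle Bf,g\rangle=E\big[N\,\overline{M^g}\big]$ at the terminal time, where $M_s=u(X_s,T-s)$ is a martingale with $d\langle M\rangle_s=|\nabla_x u|^2\,ds$, $M^g_s=v(X_s,T-s)$ likewise, and $N$ is the martingale transform of $M$ by $\mathbb{A}$, so that $d\langle N\rangle_s=|\mathbb{A}\nabla_x u|^2\,ds$.

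Next, one notes that a direct appeal to Burkholder's inequality (Theorem~\ref{BW_theorem}) only gives the weaker estimate $\|Bf\|_p\le\|\mathbb{A}\|(p^*-1)\|f\|_p$, and here $\|\mathbb{A}\|=\sqrt5$ on $\bR^2$ (and $3$ on $\bC^2$): the pointwise differential subordination $d\langle N\rangle_s\le\|\mathbb{A}\|^2\,d\langle M\rangle_s$ is all that survives. The improvement to $2(p^*-1)$ comes from exploiting that $u$ and $v$ are \emph{caloric}: the identities $\partial_t u=\tfrac12\Delta u$ and $\partial_t v=\tfrac12\Delta v$ tie the time derivatives to the traces of the spatial Hessians, and after integrating the pure time-derivative contributions by parts in $t$ one gains an extra nonnegative quadratic term $Q$ in the Hessians of $u$ and $v$ that is invisible at the level of pointwise subordination. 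Following Nazarov--Volberg, the way to cash this in is a Bellman-function argument carried out directly on the bilinear form: one seeks a function $\Psi$ — morally Burkholder's function $U$ of (\ref{function_U}) with the parameter tuned to produce $(p^*-1)$ — such that, with $\phi(x,t)=\Psi$ evaluated along $(u,v)$ and the Beurling accumulator, the caloric concavity inequality $\partial_t\phi\le\tfrac12\Delta\phi-\tfrac12\,(\text{Beurling integrand})$ holds pointwise, the constant $2$ being exactly what the extra term $Q$ can absorb. Integrating this inequality over $\bC\times(0,\infty)$ collapses the left side to boundary terms at $t=0$ and $t=\infty$, controlled respectively by $\|f\|_p$ and $\|g\|_q$, and yields $|\langle Bf,g\rangle|\le 2(p^*-1)\|f\|_p\|g\|_q$; duality with the conjugate exponent $q$ then gives the theorem for all $1<p<\infty$.

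The main obstacle is precisely the descent from $\sqrt5$ (or $3$, or the $2\sqrt2$ of the Poisson picture) down to $2$: one must verify that the caloric constraints genuinely supply the required amount of extra concavity, i.e.\ that after substituting $\Delta u=2\partial_t u$, $\Delta v=2\partial_t v$ and integrating by parts, the surviving Beurling bilinear integrand is dominated by $2\,|\nabla_x u|\,|\nabla_x v|$ rather than $2\sqrt2\,|\nabla_x u|\,|\nabla_x v|$ — equivalently, checking the sharp size and generalized-concavity properties of the Bellman function $\Psi$ against the heat-Hessian structure. A secondary, genuinely technical difficulty is that caloric extensions are not compactly supported, so one needs truncations in space and time, a stopping-time form of It\^o's formula as in Lemma~1.1 and Proposition~1.2 of \cite{BanWan3}, and a verification that the limiting boundary integrals reproduce $\langle Bf,g\rangle$ at $t=0$ and vanish at $t=\infty$.
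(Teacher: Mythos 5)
Your starting point — replace the Poisson extension with the heat extension and use space-time Brownian motion as in \cite{BanMen1} — is exactly the paper's route, and your caloric Littlewood--Paley identity is the paper's Theorem~\ref{prop2} (and its consequence~(\ref{fourierA})). But you then derail on a concrete computational error and, to compensate for it, propose a much harder and ultimately unfinished argument.

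The slip is the norm of the multiplying matrix. From (\ref{fourierA}) the multiplier of $\mathcal{S}_A$ is $A\xi\cdot\xi/|\xi|^2$ with $\xi\in\bR^2$. For a \emph{symmetric} $2\times 2$ matrix $A$ one has $A\xi\cdot\xi=A_{11}\xi_1^2+2A_{12}\xi_1\xi_2+A_{22}\xi_2^2$, so to produce $\bar\xi^2=\xi_1^2-2i\,\xi_1\xi_2-\xi_2^2$ you must take
\begin{equation*}
\calB=\begin{pmatrix}1&-i\\-i&-1\end{pmatrix},
\end{equation*}
i.e.\ the off-diagonal entry is $-i$, not $\pm 2i$. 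A direct check gives $\calB^*\calB=\begin{pmatrix}2&-2i\\2i&2\end{pmatrix}$ with eigenvalues $0$ and $4$, so $\|\calB\|_{\bC^2\to\bC^2}=2$ (and $\|\calB\|_{\bR^2\to\bR^2}=\sqrt2$). Your figures $\sqrt5$ and $3$ arise precisely from taking the off-diagonal entries twice as large, forgetting the factor $2$ from the symmetric cross term in $A\xi\cdot\xi$. With the correct matrix, the ``direct appeal to Burkholder'' that you dismiss \emph{already} yields the theorem: by (\ref{Sbound}), $\|Bf\|_p=\|\mathcal{S}_{\calB}f\|_p\leq\|\calB\|(p^*-1)\|f\|_p=2(p^*-1)\|f\|_p$. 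Equivalently, as the paper remarks, $A_{11}$ and $A_{12}$ each have norm $1$, so $\|R_2^2-R_1^2\|_p\le p^*-1$ and $\|2R_1R_2\|_p\le p^*-1$, and the triangle inequality gives the same $2(p^*-1)$. No extra ``caloric concavity'' gain over the pointwise subordination is required for this bound.

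Even setting the arithmetic aside, the second half of your proposal is not a proof. You correctly identify that descending below the pointwise-subordination bound would require exploiting the caloric constraints $\partial_t u=\tfrac12\Delta u$ and a refined Bellman function, and you yourself flag the ``main obstacle'' — verifying that the heat-Hessian structure supplies enough extra concavity — as unverified. That is the entire content that would have to be supplied; as written it is a plan, not an argument. (For the record, such a caloric Bellman gain \emph{is} what improves on the pointwise bound in subsequent work, e.g.\ the $\sqrt{2p(p-1)}$ bound of Theorem~\ref{banjan} uses the conformality of $\calB*X$ rather than raw subordination, and Theorem~\ref{background-smallest-B} and Remark~\ref{smallest-heat} show that no choice of representing matrix alone can beat $2$ in the heat picture. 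But none of that is needed for the $2(p^*-1)$ bound at hand.)

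You should also double-check one clause in your Littlewood--Paley step: the correct integrand, matching~(\ref{prop2eq}), is $[\calB\,\nabla_x u]\cdot\nabla_x v$, a bilinear (not sesquilinear) pairing with respect to the gradients; the normalization constant $c$ depends on whether you use the generator $\tfrac12\Delta$ or $\Delta$ for the heat semigroup, which the paper flags as a recurring factor-of-two discrepancy with the Nazarov--Volberg papers.
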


 The proof involved a nice application of techniques from Bellman functions.  As it turns out, however, the existence of the  Bellman function in this case  also depends on Burkholder's inequalities in \cite{Bur39} for Haar martingales.  Before we give the key inequality of Nazarov and Volberg, we give a definition of the second-order Riesz transforms in terms of the heat semigroup. If we set 
\begin{equation}\label{heat}
h_t(x)=\frac{1}{(2\pi t)^{n/2}}e^{-\frac{|x|^2}{2t}}
\end{equation}
whose Fourier transform is given by
$
\widehat{h}_t(\xi)=e^{-2\pi^2 t|\xi|^2},
$
we see that 
\begin{eqnarray}\label{semigroupriesz}
\widehat{R_j^2f}(\xi)=\frac{-\xi_j^2}{|\xi|^2}\widehat{f}(\xi)&=&{\frac{1}{2}}\int_0^{\infty}(-4\pi^2 
\xi_j^2)e^{-2\pi^2 t|\xi|^2}\hat{f}(\xi)\,dt\\
&=&{\frac{1}{2}}\int_0^{\infty}e^{-2\pi^2 t|\xi|^2}(-4\pi^2 
\xi_j^2)\hat{f}(\xi)\,dt\nonumber\\
&=& {\frac{1}{2}}\int_0^{\infty}\widehat{\partial_j^2H_t f}(\xi)\,dt\nonumber\\
&=&{\frac{1}{2}}\int_0^{\infty}\widehat{H_t(\partial_j^2f)}(\xi) \, dt\nonumber, 
\end{eqnarray}
where for any function $f\in C_0^{\infty}(\bR^n)$,  $H_tf(x)$ is the heat semigroup with generator 
$-\frac{1}{2}\Delta$ acting on the function $f$. That is, the convolution 
of $h_t$ and  $f$, 
$$
H_t(f)(x)
=\int_{\bR^{n}}h_{t}(x-y)f(y)\, dy. 
$$
\normalsize
Thus 

\begin{eqnarray}\label{rieszheat1}
R_j^2(f)(x)={\frac{1}{2}}\int_0^{\infty}{\partial_j^2H_t f}(x) \, dt
&=&{\frac{1}{2}}\int_0^{\infty}{H_t(\partial_j^2f)}(\xi) \, dt,\\\nonumber\\
&=&- {\partial_j^2}(-\Delta)^{-1}f(x)\nonumber \\\nonumber\\
&=&-(-\Delta)^{-1} {\partial_j^2f}(x)\nonumber .
\end{eqnarray}
Similarly, 
\begin{eqnarray*}
R_j R_k(f)(x)=\frac{1}{2}\int_0^{\infty} {\partial_{j k}^2H_t f}(x) \, dt
&=&\frac{1}{2}\int_0^{\infty}H_t\left(\partial_{j k}^2 f\right)(x) \, dt\\\\
&=&-\partial_{j k}^2(-\Delta)^{-1}f(x)\\\\
&=&-(-\Delta)^{-1}\partial_{j k}^2f(x).
\end{eqnarray*}
If we restrict ourselves to $\bR^2=\bC$, we can verify that 
\begin{eqnarray}\label{BAlaplacian}
B(f)(z)=\frac{1}{2}\int_0^{\infty} \partial^2 H_tf (z) \, dt=
-\left(\partial^{2} (-\Delta)^{-1}f(z)\right)
\end{eqnarray}
where here and for the rest of the paper we use the notation 
\begin{equation}\label{partials-1}
\partial f= {\partial f\over
 \partial x_1}-i\ {\partial f\over \partial x_2}, \,\,\,\,\,\,\,\,\,  \overline\partial f= {\partial f\over \partial x_1}+i\ {\partial f\over \partial x_2}
\end{equation}
for the Cauchy--Riemann operators $\partial$ and $\overline\partial$. (Note that our notation for $\partial$ and $\overline\partial$. differs from the ``standard notation" by a factor of a half.)
By Plancherel's theorem, for all $f, g\in C_0^{\infty}(\bR^n)$, 
\begin{eqnarray}\label{littlewood1}
&&\int_{0}^{\infty}\int_{\bR^n}\partial_j H_t f(x)\partial_j H_t({g})(x) dx dt\\
&=&\int_{0}^{\infty}\int_{\bR^n}\widehat{\partial_j
 H_t f}(\xi)\overline{\widehat{\partial_j H_t({g})}}(\xi) d\xi dt\nonumber\\
 &=&\int_{0}^{\infty}\int_{\bR^n}(4\pi^2 \xi_j^2)e^{-2\pi^2 t|\xi|^2}e^{-2\pi^2 t|\xi|^2}\hat{f}(\xi)\, \overline{\widehat{g}}(\xi)\,d\xi dt\nonumber\\
&=&\int_{0}^{\infty}\int_{\bR^n}(4\pi^2\xi_j^2)e^{-4\pi^2 t|\xi|^2}\hat{f}(\xi)\, \overline{\widehat{g}}(\xi)\,d\xi dt\nonumber\\
&=&-\int_{\bR^{n}} \widehat{R_{j}^{2}f}(\xi) \, \overline{\widehat{g}}(\xi) \,d\xi
=-\int_{\bR^{n}} R_{j}^{2}f(x) \, {g}(x) \,dx\nonumber.
\end{eqnarray}

\begin{remark}
It is interesting to note here that this Littlewood-Paley identity can also be proved by first integrating  by parts and then using the self-adjointness of $H_t$, the semigroup property  $H_t$ on $L^2$, and the representation of $R_j^2$ given in (\ref{rieszheat1}).  A similar identity can be written down for $R_jR_k$.  This then leads to more general expressions for second order Riesz transforms on manifolds and semigroups where one does not have the Fourier transform so easily available.  For more on this, we refer the reader to the forthcoming paper \cite{BanBau1} where in addition to second order Riesz transforms on manifolds, results are given also for second order Riesz transforms of Schr\"odinger operators. 
\end{remark}

The following is the key Littlewood-Paley inequality of Nazarov and Volberg \cite{NazVol2};  see also Dragi\v cevi\'c and Volberg \cite{DraVol1, DraVol4, DraVol2, DraVol3}. We state it here as in \cite{NazVol2} for $\bR^2$ but a similar inequality holds in $\bR^n$.

\begin{theorem}\label{NazVoldual}
For all $f, g\in C_{0}^{\infty}(\bR^2)$, 
$2\leq p<\infty$, 
\begin{eqnarray}\label{littlewood2}
\int_0^{\infty} \int_{\bR^2}\Big[\Big|\partial_1H_tf(x)
\Big|\Big|\partial_1H_t(g)(x)\Big| &+& \Big|\partial_2H_tf(x)
\Big|\Big|\partial_2H_tg(x)\Big|\Big]dx \, dt\nonumber\\\nonumber\\
&\leq & (p^{*}-1) \|f \|_{p} \|g\|_{q}.
\end{eqnarray}
\end{theorem}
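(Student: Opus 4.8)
The plan is to prove \eqref{littlewood2} by the Bellman-function method in its ``heat-flow'' form, the continuous-time counterpart of the Haar-martingale argument behind Burkholder's inequality \eqref{bur4}. Everything reduces to producing one function $B\colon\bR\times\bR\to[0,\infty)$ enjoying the two properties
\begin{equation}\label{prop:size}
0\ \le\ B(u,v)\ \le\ (p^{*}-1)\left(\frac{|u|^{p}}{p}+\frac{|v|^{q}}{q}\right),\qquad u,v\in\bR,
\end{equation}
and, for all $u,v,\tau,\sigma\in\bR$,
\begin{equation}\label{prop:hess}
B_{uu}(u,v)\,\tau^{2}+2\,B_{uv}(u,v)\,\tau\sigma+B_{vv}(u,v)\,\sigma^{2}\ \ge\ 2\,|\tau|\,|\sigma|.
\end{equation}
Once such a $B$ is available, the theorem follows from a one-parameter flow computation followed by a scaling optimization. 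The step I would isolate first is the construction of $B$: its existence with the \emph{sharp} constant $p^{*}-1$ in \eqref{prop:size} is exactly where Burkholder's work is indispensable, since $B$ is built from --- and its defining properties are reformulations of those of --- Burkholder's function $U$ of \eqref{u}. Heuristically, under the rotation $x=u+v$, $y=u-v$, the concavity of $t\mapsto U(x+th,y+tk)$ along differentially subordinate directions $|k|\le|h|$ turns into a one-sided convexity of $-U$ in the $(u,v)$-plane; a suitable combination of the information carried by the two rotations $x=u\pm v$ together with the pointwise majorization $U\ge V$, $V(x,y)=|y|^{p}-(p^{*}-1)^{p}|x|^{p}$, yields a function satisfying \eqref{prop:size}--\eqref{prop:hess}. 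Carrying this out with the optimal constant --- rather than with merely \emph{some} constant --- is the main obstacle; it is in effect a theorem of Burkholder type, and I would either quote the explicit Bellman function of Nazarov and Volberg \cite{NazVol2} or reconstruct it from $U$, noting that $B$ may fail to be $C^{2}$ on a lower-dimensional set, a nuisance removed by the standard mollification $B\ast\varphi_{\varepsilon}$ at the price of an $\varepsilon$ in the constants.

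Granting \eqref{prop:size}--\eqref{prop:hess}, here is the flow argument. Fix $f,g\in C_{0}^{\infty}(\bR^{2})$ and put $F(x,t)=H_{t}f(x)$, $G(x,t)=H_{t}g(x)$; by \eqref{heat} these solve $\partial_{t}F=\tfrac12\Delta F$, $\partial_{t}G=\tfrac12\Delta G$, are Schwartz in $x$ for each $t>0$, and extend smoothly to $t=0$. (I take $f,g$ real-valued for clarity; the complex-valued case is identical, using the $\bR^{2}$-valued --- more generally Hilbert-space-valued --- version of $B$, available because Burkholder's $U$ in \eqref{u} is already defined over an arbitrary Hilbert space.) Set
\[
\Phi(t)=\int_{\bR^{2}}B\bigl(F(x,t),G(x,t)\bigr)\,dx,
\]
finite by \eqref{prop:size}. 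Differentiating under the integral sign (legitimate for $t>0$ by the rapid decay of $F,G$ and all their $x$-derivatives, uniform on compact $t$-intervals) and integrating by parts in $x$ (no boundary term) gives
\begin{align*}
\Phi'(t)&=\frac12\int_{\bR^{2}}\bigl(B_{u}\,\Delta F+B_{v}\,\Delta G\bigr)\,dx\\
&=-\frac12\int_{\bR^{2}}\sum_{j=1}^{2}\bigl(B_{uu}(\partial_{j}F)^{2}+2\,B_{uv}\,\partial_{j}F\,\partial_{j}G+B_{vv}(\partial_{j}G)^{2}\bigr)\,dx,
\end{align*}
all derivatives of $B$ being evaluated at $(F(x,t),G(x,t))$. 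Applying \eqref{prop:hess} with $\tau=\partial_{j}F(x,t)$ and $\sigma=\partial_{j}G(x,t)$ in each summand of the last display, I obtain
\begin{equation}\label{Phiprime}
-\Phi'(t)\ \ge\ \int_{\bR^{2}}\bigl(|\partial_{1}H_{t}f|\,|\partial_{1}H_{t}g|+|\partial_{2}H_{t}f|\,|\partial_{2}H_{t}g|\bigr)\,dx\ \ge\ 0.
\end{equation}
Thus $\Phi$ is nonnegative and nonincreasing, and $\Phi(t)\to 0$ as $t\to\infty$ because $\|H_{t}f\|_{p}\le\|H_{t}f\|_{\infty}^{1-1/p}\|H_{t}f\|_{1}^{1/p}\lesssim t^{-(1-1/p)}\to 0$, and similarly for $g$. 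Integrating \eqref{Phiprime} over $t\in(0,\infty)$ --- splitting as $\int_{\varepsilon}^{T}$ and letting $\varepsilon\downarrow0$, $T\to\infty$ if one wants to be scrupulous about the endpoints --- and using $\Phi(0)=\int_{\bR^{2}}B(f,g)$ together with \eqref{prop:size}, I arrive at
\begin{equation}\label{almostdone}
\int_{0}^{\infty}\!\!\int_{\bR^{2}}\bigl(|\partial_{1}H_{t}f|\,|\partial_{1}H_{t}g|+|\partial_{2}H_{t}f|\,|\partial_{2}H_{t}g|\bigr)\,dx\,dt\ \le\ (p^{*}-1)\left(\frac{\|f\|_{p}^{p}}{p}+\frac{\|g\|_{q}^{q}}{q}\right).
\end{equation}

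Finally I would homogenize. The left-hand side of \eqref{almostdone} is bilinear in $(f,g)$, hence unchanged under $f\mapsto\lambda f$, $g\mapsto\lambda^{-1}g$ for every $\lambda>0$, whereas the right-hand side becomes $(p^{*}-1)\bigl(\tfrac{\lambda^{p}}{p}\|f\|_{p}^{p}+\tfrac{\lambda^{-q}}{q}\|g\|_{q}^{q}\bigr)$. Choosing $\lambda$ with $\lambda^{p}\|f\|_{p}^{p}=\lambda^{-q}\|g\|_{q}^{q}$ and using $\tfrac1p+\tfrac1q=1$ (equivalently $p+q=pq$, so that the exponent $pq/(p+q)$ equals $1$), the common value of the two terms is $\|f\|_{p}\|g\|_{q}$, so the right-hand side collapses to $(p^{*}-1)\|f\|_{p}\|g\|_{q}$; this is exactly \eqref{littlewood2}. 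With this the scheme is complete. The only genuinely hard ingredient is the Bellman function $B$ of \eqref{prop:size}--\eqref{prop:hess}; the heat-equation computation, the integration by parts, the decay of $H_{t}f$, and the scaling step are all routine for data in $C_{0}^{\infty}(\bR^{2})$.
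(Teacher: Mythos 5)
Your overall strategy --- a two-variable Bellman function $B(u,v)$, a heat-flow monotonicity argument for $\Phi(t)=\int B(H_tf,H_tg)\,dx$, and a final $f\mapsto\lambda f$, $g\mapsto\lambda^{-1}g$ homogenization --- is a genuine alternative to what the paper does. The paper does not prove Theorem \ref{NazVoldual} by a Bellman-function flow: it first cites Nazarov--Volberg's proof (4-variable Bellman function plus Green's theorem) and then, in \S3.7--\S3.8, derives the stronger Dragi\v cevi\'c--Volberg inequality of Corollary \ref{nazvoldualtybymart} from the space-time martingale representation of the operators $\mathcal{S}_A$, Burkholder--Wang differential subordination (Theorem \ref{BW_theorem} via Corollary \ref{transforms1}), the duality identity of Theorem \ref{prop2}, and the choice $A=\nabla_xV_f\otimes\nabla_xV_g/(|\nabla_xV_f||\nabla_xV_g|)$; Theorem \ref{NazVoldual} then follows by Cauchy--Schwarz. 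Your flow computation, your justification that $\Phi(t)\to0$, and the scaling optimization at the end are all correct and cleanly written --- if the function $B$ with properties \eqref{prop:size}--\eqref{prop:hess} exists with the sharp constant $p^*-1$, your proof is complete and gives a third route to \eqref{littlewood2}.

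The genuine gap is the existence of $B$, which you neither construct nor can simply quote. The Nazarov--Volberg Bellman function you propose to cite is a \emph{four}-variable function $B(X,Y,\xi,\eta)$ on $D_p=\{\|\xi\|^p<X,\ \|\eta\|^q<Y\}$, evaluated at $(H_t|f|^p,H_t|g|^q,H_tf,H_tg)$; the extra variables are present precisely because the concavity condition they need is not achievable on a function of $(\xi,\eta)$ alone, and eliminating $(X,Y)$ and flipping from concavity with an \emph{upper} bound $B\le c(X+Y)$ to convexity with the \emph{upper} bound \eqref{prop:size} is a nontrivial transformation, not a restriction. Your heuristic ``rotate $x=u\pm v$'' reconstruction from Burkholder's $U$ only delivers concavity of $t\mapsto U(x+th,y+tk)$ under $|k|\le|h|$, which after the rotation becomes negativity of the Hessian form of $W(u,v)=U(u+v,u-v)$ only on the cone $\alpha\beta\ge0$; this is a one-sided statement and is not the two-sided lower bound $D^2B\ge\pm\left(\begin{smallmatrix}0&1\\1&0\end{smallmatrix}\right)$ that \eqref{prop:hess} encodes. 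Also note that the Hessian condition forces $B_{uu}B_{vv}\ge(|B_{uv}|+1)^2\ge1$ everywhere, while the size bound forces $B_{uu}(0,0)=0$ when $p>2$, so $B$ is necessarily singular at the origin; the mollification $B*\varphi_\varepsilon$ you invoke then has $B*\varphi_\varepsilon(0,0)>0$, which makes $\Phi(0)=\int_{\bR^2}B*\varphi_\varepsilon(f,g)\,dx$ diverge (the integrand no longer vanishes outside $\mathrm{supp}\,f\cup\mathrm{supp}\,g$). One can repair this by subtracting the constant $B*\varphi_\varepsilon(0,0)$ and tracking the perturbation of the size bound, but this requires an argument, not a remark. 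Until the existence of $B$ with the sharp constant is established --- which is, in effect, the entire content of the theorem --- the proposal is a correct reduction, not a proof.
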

We remind the reader that when comparing the formulas in this paper to those of Nazarov and Volberg, and Dragi\v cevi\'c and Volberg, one needs to keep in mind that in this paper we use the  heat kernel for $\frac{1}{2}\Delta$, while they use the heat kernel for $\Delta$.  This changes the formulas by a factor of $2$.

Nazarov and Volberg prove (\ref{littlewood2}) by applying Green's theorem to the function 
$$
b(x, t)=B\left(|H_t{f}(x)|^{p}, |H_t{g}(x)|^{p}, H_t{f}(x), H_t{g}(x)\right),
$$
where $B(X, Y, \xi, \eta)$ is a ``Bellman'' function for the domain 
$$D_{p}=\{(X, Y, \xi, \eta)\in \bR\times 
\bR\times\bR^{2}\times\bR^{2}: \|\xi\|^{p}<X, \|\eta\|^{p}<Y\}.$$

The construction (existence) of this Bellman function ultimately depends on 
Burkholder's inequality for the Haar system, hence their approach, while avoiding the stochastic integrals used in \cite{BanWan3} and bringing new insights and ideas into the problem, is not independent of martingales.  

Duality, (\ref{littlewood1}) and (\ref{littlewood2}) led Nazarov and Volberg \cite{NazVol2} to the following 
\begin{theorem}
\begin{equation}\label{Rieszonehalf}
\|(R^2_1-R_{2}^{2})f\|_p\leq (p^*-1)\|f\|_p, \,\,\,\,\, \|2R_1R_2f\|_p\leq (p^*-1)\|f\|_p, 
\end{equation}
for any $1<p<\infty$.  
\end{theorem}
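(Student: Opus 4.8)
The plan is to prove the slightly more symmetric statement that $\|(R_u^2-R_v^2)f\|_p\le(p^*-1)\|f\|_p$ for \emph{every} orthonormal pair $u,v\in\bR^2$, and then to read off the two asserted inequalities by choosing $(u,v)=(e_1,e_2)$ and $(u,v)=\tfrac1{\sqrt2}(e_1+e_2,\,e_1-e_2)$. The ingredients are the Littlewood--Paley identity (\ref{littlewood1}), the Nazarov--Volberg estimate of Theorem \ref{NazVoldual}, and a duality argument to pass from $p\ge2$ to $1<p<2$.

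First I would record a directional form of (\ref{littlewood1}). For a unit vector $u=(u_1,u_2)$ write $R_u=u_1R_1+u_2R_2$ and $\partial_u=u_1\partial_1+u_2\partial_2$, so that $\widehat{\partial_uH_tf}(\xi)=-2\pi i(u\cdot\xi)e^{-2\pi^2t|\xi|^2}\hat f(\xi)$. Repeating the Plancherel computation in (\ref{littlewood1}) verbatim, with $\int_0^\infty e^{-4\pi^2t|\xi|^2}\,dt=(4\pi^2|\xi|^2)^{-1}$, gives for $f,g\in C_0^\infty(\bR^2)$
\[
\int_0^\infty\!\!\int_{\bR^2}\partial_uH_tf(x)\,\partial_uH_tg(x)\,dx\,dt=-\int_{\bR^2}R_u^2f(x)\,g(x)\,dx .
\]
Subtracting this identity for $u$ and for $v$ and estimating termwise,
\[
\Bigl|\int_{\bR^2}\bigl(R_u^2-R_v^2\bigr)f\,g\,dx\Bigr|\le\int_0^\infty\!\!\int_{\bR^2}\bigl(|\partial_uH_tf|\,|\partial_uH_tg|+|\partial_vH_tf|\,|\partial_vH_tg|\bigr)\,dx\,dt .
\]

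Next I would exploit rotation invariance. Let $O$ be the orthogonal map of $\bR^2$ with $Oe_1=u$, $Oe_2=v$. Since the heat kernel and Lebesgue measure are $O$-invariant one has $H_t(f\circ O)=(H_tf)\circ O$ and $\partial_1\bigl(H_t(f\circ O)\bigr)(y)=(\partial_uH_tf)(Oy)$, so the substitution $x=Oy$ identifies the double integral on the right of the last display with the left-hand side of (\ref{littlewood2}) for the functions $f\circ O$ and $g\circ O$, which have the same $L^p$- and $L^q$-norms as $f$ and $g$. Hence Theorem \ref{NazVoldual} gives, for $2\le p<\infty$ and $q=p/(p-1)$,
\[
\Bigl|\int_{\bR^2}\bigl(R_u^2-R_v^2\bigr)f\,g\,dx\Bigr|\le(p^*-1)\,\|f\|_p\,\|g\|_q ,
\]
and taking the supremum over $g\in C_0^\infty(\bR^2)$ with $\|g\|_q\le1$ yields $\|(R_u^2-R_v^2)f\|_p\le(p^*-1)\|f\|_p$ for $2\le p<\infty$. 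For $1<p<2$ I would dualize: the Fourier multiplier $\bigl((v\cdot\xi)^2-(u\cdot\xi)^2\bigr)/|\xi|^2$ of $R_u^2-R_v^2$ is real and even, so the operator is self-adjoint on $L^2$; hence its $L^p\!\to\!L^p$ and $L^q\!\to\!L^q$ operator norms coincide, and since $p^*=q^*$ the bound holds for all $1<p<\infty$.

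Finally, specializing: $(u,v)=(e_1,e_2)$ gives $\|(R_1^2-R_2^2)f\|_p\le(p^*-1)\|f\|_p$; and $(u,v)=\tfrac1{\sqrt2}(e_1+e_2,\,e_1-e_2)$ has multiplier $\bigl((v\cdot\xi)^2-(u\cdot\xi)^2\bigr)/|\xi|^2=-2\xi_1\xi_2/|\xi|^2$, i.e.\ $R_u^2-R_v^2=2R_1R_2$, so $\|2R_1R_2f\|_p\le(p^*-1)\|f\|_p$. The genuinely hard ingredient is Theorem \ref{NazVoldual} itself, whose Bellman-function proof ultimately rests on Burkholder's Haar-martingale inequality; once it is granted, the only points that require care are the Plancherel bookkeeping in the directional identity and the observation that the seemingly exceptional operator $2R_1R_2$ is just $R_u^2-R_v^2$ in $45^\circ$-rotated coordinates --- precisely the device that lets one avoid estimating the mixed product $\partial_1H_tf\,\partial_2H_tg$ against the right-hand side of (\ref{littlewood2}) by hand.
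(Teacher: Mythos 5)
Your proof is correct and follows the route the paper itself indicates (Plancherel identity \eqref{littlewood1}, the Nazarov--Volberg estimate \eqref{littlewood2}, and duality via self-adjointness); the paper simply cites \cite{NazVol2} without spelling this out. The one ingredient you supply that the paper leaves implicit is how to recover $2R_1R_2$ from \eqref{littlewood2}, which only controls the ``diagonal'' products $|\partial_jH_tf|\,|\partial_jH_tg|$ and not the cross terms: your rotation-by-$45^\circ$ observation that $2R_1R_2=R_u^2-R_v^2$ in rotated coordinates, combined with the rotation invariance of the heat semigroup and of Lebesgue measure, is exactly the right device and is carried out correctly. (One small remark: the duality step is fine, but it is also dispensable, since the left side of \eqref{littlewood2} is symmetric in $f$ and $g$ and $p^*=q^*$, so \eqref{littlewood2} already yields the bound for $1<p<2$ by swapping $f$ and $g$.)
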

From this Nazarov and Volberg obtained the following improvement of (\ref{BA4}).
\begin{corollary}
\begin{equation}\label{BA2} 
\|B\|_{p}\leq 2(p^*-1), \,\,\,\,\, 1<p<\infty.
\end{equation}
\end{corollary}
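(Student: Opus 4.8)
The plan is to obtain the bound as an immediate consequence of the representation of $B$ by second order Riesz transforms in (\ref{BAriesz}) together with the sharp estimates (\ref{Rieszonehalf}). Recall from (\ref{BAriesz}) that
\[
B = R_2^2 - R_1^2 + 2i\,R_1R_2 = -(R_1^2 - R_2^2) + i\,(2R_1R_2),
\]
so that for $f\in C_0^\infty(\bC)$ we have, pointwise on $\bC$,
\[
|Bf(z)| \le |(R_1^2 - R_2^2)f(z)| + |2R_1R_2 f(z)|.
\]

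First I would take $L^p$-norms on both sides of this pointwise inequality and apply Minkowski's inequality to get $\|Bf\|_p \le \|(R_1^2 - R_2^2)f\|_p + \|2R_1R_2 f\|_p$. Then I would invoke (\ref{Rieszonehalf}): each of the two summands on the right is at most $(p^*-1)\|f\|_p$, whence $\|Bf\|_p \le 2(p^*-1)\|f\|_p$ for all $1<p<\infty$. Finally, since this holds for every $f\in C_0^\infty(\bC)$, a standard density argument extends the inequality to all of $L^p(\bC)$. When $f$ is real valued one may replace the triangle-inequality step by the exact pointwise identity $|Bf|^2 = |(R_2^2-R_1^2)f|^2 + |2R_1R_2 f|^2$ followed by $\sqrt{a^2+b^2}\le |a|+|b|$; the passage to complex (or Hilbert-space) valued $f$ causes no loss of constant, the Riesz transforms being operators with real kernels, as noted in \S3.2 (cf.\ \cite{IwaMar1}).

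The main point is that there is no genuine obstacle at the level of this corollary: all of the real work has already been carried out in the preceding Theorem, whose substance is the Nazarov--Volberg Littlewood--Paley inequality (\ref{littlewood2}) together with the identity (\ref{littlewood1}), and (\ref{littlewood2}) in turn rests on the construction of a Bellman function for the domain $D_p$, which ultimately depends on Burkholder's sharp inequality for the Haar system. One could instead bypass (\ref{Rieszonehalf}) and argue directly by duality: pair $Bf$ against $g\in C_0^\infty(\bC)$, rewrite the pairing as a space--time integral over $\bC\times(0,\infty)$ of products of heat-gradients of $f$ and $g$ exactly as in (\ref{littlewood1}), dominate it by the right-hand side of (\ref{littlewood2}), and take the supremum over $\|g\|_q\le 1$; this gives the range $2\le p<\infty$ directly and $1<p\le 2$ by duality. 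In carrying this out the only things requiring care are the bookkeeping of signs and the factor of two stemming from the normalization of $\partial$ and $\overline\partial$ in (\ref{partials-1}) and the use of the heat kernel for $\tfrac12\Delta$ rather than $\Delta$.
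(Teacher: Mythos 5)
Your proof is correct and follows essentially the same route as the paper: the corollary is obtained directly from the Nazarov--Volberg estimate \eqref{Rieszonehalf} via the decomposition $B=R_2^2-R_1^2+2iR_1R_2$ and the triangle inequality in $L^p$. The extra remarks (the pointwise identity $|Bf|^2=|(R_2^2-R_1^2)f|^2+|2R_1R_2f|^2$ for real $f$, and the direct duality argument from \eqref{littlewood2}) are accurate but not needed, since the triangle inequality already handles arbitrary complex-valued $f$ once \eqref{Rieszonehalf} is in hand.
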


\subsection{Space-time Brownian motion and the operators $\mathcal{S}_{A}$}\label{S_A}
The martingale transform techniques of \cite{BanWan3} based on the
background radiation process of Gundy and Varopoulos do not give the
Nazarov-Volberg estimate (\ref{BA2}). The Nazarov-Volberg heat kernel
approach suggests, however, that one should look into the possibility
of deriving a ``probabilistic" Littlewood-Paley approach to the second
order Riesz transforms based on space-time Brownian motion. As it turns
out, such approach is successful and leads to improvements on
(\ref{BA2}) and to many other applications.  This approach, which first appeared in \cite{BanMen1}, is what we now discuss.

We again follow the procedure depicted in the diagram (\ref{diagram}). Fix a large $T>0$ and consider the space-time Brownian motion $Z_t=(B_t, T-t)$, $0<t\leq T$, started at  $(x, T)\in \bR^n \times (0, \infty)$, where $B_t$ is Brownian motion in $\bR^n$ with initial distribution the Lebesgue measure $m$. For the rest of the paper, unless otherwise explicitly stated, $n\geq 2$. Thus the process $Z_t$ starts on the hyperplane $\bR^n\times \{T\}$ with
initial distribution $m \otimes
\delta_T$. $P_x$ and $E_x$ will denote the probability and expectation for processes starting at the point $x$ and  let
     $P^T$ denote the ``probability" measure associated with the
process with initial distribution initial distribution $m \otimes
\delta_T$ and denote  by $E^T$ the
corresponding expectation. For any function $f\in C_0^{\infty}(\bR^n)$, we  consider the heat extension function 
$$V_f(x, t)=H_tf(x)=E_x[f(B_t)].$$
 It then follows that
 $$V_f(Z_t)=V_f(B_t, T-t),\,\,\,\,\,0<t<T$$
  is a martingale (a ``heat" martingale). 
The It\^o formula gives 
\begin{equation}\label{itomart}
V_f(Z_t)-V_f(Z_0)=\int_0^t\nabla_x V_f(Z_s)\cdot dB_s,\hskip.4cm  0<t\leq T,
\end{equation}
where 
$$\nabla_x V_f(\cdot)=\Big(\partial_1
V_f(\cdot),\partial_2 V_f(\cdot)\dots, \partial_d V_f(\cdot)\Big)
$$ 
denotes the gradient of $V_f$ in the $x$-variable.  That is, what is referred to in some places as the horizontal gradient. 
Furthermore, if $A(x, t)$ is a $n\times n$ matrix-valued function defined for $(x, t)\in \bR^{n+1}_{+}$ (with real or complex entries), 
\begin{equation}\label{itotransform}
(A*V_f)_t = \int_0^t [A(Z_s)\nabla_x V_f(Z_s)]\cdot dB_s, \hskip.4cm 0<t\leq T,
\end{equation}
is the martingale transform of  $V_f(Z_t)$.  As before the martingale $(A*V_f)_t$ is 
differentially subordinate to the martingale $\|A\|\,V_f(Z_t)$.
We notice that  for any $f$ and $t>0$,  (recall $h_t$ is as in (\ref{heat})), 
\begin{eqnarray}\label{lebesguedis}
 \int_{\bR^n}E_{x}f(B_t)\, dx&=&\int_{\bR^n}\int_{\bR^n}f(x) h_t(x-y) dxdy\\
 &=&\int_{\bR^n} f(x)\, dx.\nonumber
\end{eqnarray}
This gives that the distribution of the random variable $B_T$ under $P^T$ is the Lebesgue measure and this turns the $L^p$-norm of the random variables $f(B_T)$ into the $L^p(\bR^n)$ norm of the function $f$.   Defining the projection of $A*V_f$ on
$\bR^n$ by 
\begin{eqnarray}\label{heatproj1}
\mathcal{S}_{A}^T f(x) &=& E^T\left[ \,(A*V_{f})_T\,\big|\,
Z_T=(x,0)\, \right]\\
&=&E^T\left[ \,(A*V_{f})_T\,\big|\,
B_T=x\, \right]\nonumber,
\end{eqnarray}
 we obtain the family of operators 
$$\{\mathcal{S}_{A}^T, \hskip.1cm  A\in \mathcal{M}^{n\times n},\hskip.1cm T>0\}$$ 
defined on $C_0^{\infty}(\bR^n)$, where as before $\mathcal{M}^{n\times n}$ is the collection of all $n\times n$ matrix-valued functions on $\bR^{n+1}_{+}$ with real or complex entries and with $\|A\|<\infty$. 

The following Theorem  is proved exactly as Proposition 2.2 in  \cite{BanMen1}.  The reader should compare this with Theorem  \ref{prop1} in \S3.2.
 
\begin{theorem}\label{prop2} For all $f\in C_0^{\infty}(\bR^n)$, $\mathcal{S}_A^{T}f\to \mathcal{S}_Af$ in $L^2$, as $T\to\infty$, and for all $f, g\in C_0^{\infty}(\bR^n)$ and all $n\times n$ matrix-valued functions $A(x, t)$ in $\bR^{n+1}$ with $\|A\|<\infty$, we have
\begin{eqnarray}\label{prop2eq}
\lim_{T\to\infty}\int_{\bR^n} \mathcal{S}_A^Tf(x)\, g(x)\, dx&=&\int_{\bR^n} \mathcal{S}_Af(x)\, g(x)\, dx\nonumber\\
&=&\int_0^{\infty}\int_{\bR^n} \left[A(x, t)\nabla_x  V_f(x, t)\right]\cdot \nabla_x V_g(x, t)\, dx\, dt.
\end{eqnarray}
\end{theorem}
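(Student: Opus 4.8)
The plan is to follow the same route as in the proof of Theorem \ref{prop1}, replacing the background radiation process and the Poisson semigroup by the space-time Brownian motion $Z_t=(B_t,T-t)$ and the heat semigroup. Two facts established before the statement carry the argument. First, by (\ref{lebesguedis}), and by the same computation applied at each fixed time, $B_s$ has Lebesgue measure as its law under $P^T$ for every $0\le s\le T$; hence for $h\in C_0^{\infty}(\bR^n)$ and any integrable $\Xi$ one has $\int_{\bR^n}E^T[\Xi\mid B_T=x]\,h(x)\,dx=E^T[\Xi\, h(B_T)]$. Second, It\^o's formula, exactly as in (\ref{itomart}) but with $g$ in place of $f$ and evaluated at time $T$, gives the representation $g(B_T)=V_g(Z_T)=V_g(Z_0)+\int_0^T\nabla_x V_g(Z_s)\cdot dB_s$, where $V_g(Z_0)=V_g(B_0,T)=H_T g(B_0)$.

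Fix $T>0$ and $f,g\in C_0^{\infty}(\bR^n)$. Combining the first fact with the definition (\ref{heatproj1}) of $\mathcal{S}_A^T$,
\begin{equation*}
\int_{\bR^n}\mathcal{S}_A^Tf(x)\,g(x)\,dx=E^T\big[(A*V_f)_T\,g(B_T)\big].
\end{equation*}
Now substitute the representation of $g(B_T)$. Since $(A*V_f)$ is a martingale with $(A*V_f)_0=0$ and $H_T g(B_0)$ is $\mathcal{F}_0$-measurable, the term coming from $H_T g(B_0)$ contributes $0$; and since $(A*V_f)_T=\int_0^T[A(Z_s)\nabla_x V_f(Z_s)]\cdot dB_s$, the It\^o isometry (the expectation of a product of two It\^o integrals is the expectation of the time integral of the dot product of the integrands) gives
\begin{equation*}
E^T\big[(A*V_f)_T\,g(B_T)\big]=E^T\!\left[\int_0^T\big[A(Z_s)\nabla_x V_f(Z_s)\big]\cdot\nabla_x V_g(Z_s)\,ds\right].
\end{equation*}
Writing $Z_s=(B_s,T-s)$, bringing the expectation inside by Fubini, using that $B_s$ has Lebesgue law under $P^T$, and changing variables $t=T-s$, the right-hand side equals $\int_0^T\int_{\bR^n}[A(x,t)\nabla_x V_f(x,t)]\cdot\nabla_x V_g(x,t)\,dx\,dt$. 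Unlike in the background radiation computation, no Green's function weight (the factor $2y$ in (\ref{occu1})) appears, since the time coordinate of $Z_s$ is deterministic.

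It then remains to let $T\to\infty$ and to produce $\mathcal{S}_A$. The $p=2$ differential subordination of $(A*V_f)$ to $\|A\|\,V_f(Z_\cdot)$, together with the $L^2$-contraction property of conditional expectation and the Littlewood-Paley identity $\int_0^{\infty}\int_{\bR^n}|\nabla_x V_f(x,t)|^2\,dx\,dt=\|f\|_2^2$, yields the uniform bound $\|\mathcal{S}_A^Tf\|_2\le\|A\|\,\|f\|_2$. Testing the difference $\mathcal{S}_A^{T_2}f-\mathcal{S}_A^{T_1}f$ against $g\in C_0^{\infty}(\bR^n)$ in the identity just proved and applying Cauchy-Schwarz gives, for $T_1<T_2$,
\begin{equation*}
\big\|\mathcal{S}_A^{T_2}f-\mathcal{S}_A^{T_1}f\big\|_2\le\|A\|\left(\int_{T_1}^{T_2}\!\int_{\bR^n}|\nabla_x V_f(x,t)|^2\,dx\,dt\right)^{1/2}\longrightarrow 0,
\end{equation*}
so $\mathcal{S}_A^Tf$ is Cauchy in $L^2$; call its limit $\mathcal{S}_Af$. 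Since $\int_0^{\infty}\int_{\bR^n}|\nabla_x V_f|\,|\nabla_x V_g|\,dx\,dt\le\|f\|_2\,\|g\|_2$ by Cauchy-Schwarz, the double integral over $(0,\infty)\times\bR^n$ is absolutely convergent, and letting $T\to\infty$ in the identity of the previous paragraph proves the two remaining equalities.

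The step I expect to be the main obstacle is the rigorous justification of the stochastic calculus under $P^T$, which is only $\sigma$-finite: It\^o's formula, the martingale property of $(A*V_f)$, and the isometry above must be obtained by first conditioning on $B_0=x$ so as to work on a genuine probability space, proving the identities there, and only then integrating in $x$ against Lebesgue measure and invoking Fubini; a further approximation is required to apply It\^o's formula to the heat extension $V_g$ up to time $t=0$ for $g\in C_0^{\infty}(\bR^n)$. This is carried out precisely as in the proof of Proposition 2.2 of \cite{BanMen1}, following the pattern of Lemma 1.1 and Proposition 1.2 of \cite{BanWan3}.
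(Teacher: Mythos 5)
Your proof is correct and follows essentially the same route as the paper's: remove the conditional expectation via the Lebesgue law of $B_T$, use It\^o's formula to write $g(B_T)=V_g(Z_0)+\int_0^T\nabla_x V_g\cdot dB_s$, kill the $V_g(Z_0)$ term by the martingale property, apply the It\^o isometry and Fubini, then pass to the limit in $T$. The one small difference is how you justify the limit: the paper obtains a uniform-in-$T$ bound on the bilinear form via H\"older and (\ref{STbound}) and defers the $L^2$-convergence of $\mathcal{S}_A^T f$ to \cite{BanMen1}, whereas you give a self-contained $L^2$-Cauchy argument by testing the increment $\mathcal{S}_A^{T_2}f-\mathcal{S}_A^{T_1}f$ against $g$ and invoking the Littlewood--Paley identity $\int_0^\infty\int_{\bR^n}|\nabla_x V_f|^2\,dx\,dt=\|f\|_2^2$; that is a clean and direct way to get the stated $L^2$ convergence, and you correctly flag the $\sigma$-finiteness of $P^T$ as the technical point to be handled by conditioning on $B_0=x$ as in \cite{BanMen1} and \cite{BanWan3}.
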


By Corollary \ref{transforms1}, for any $1<p<\infty$ and any $f\in C_0^{\infty}(\bR^n)$,
\begin{eqnarray}\label{STbound}
 \Big\|\mathcal{S}_A^Tf\Big\|_p&\leq& \|A\| (p^*-1)\Big\|\int_0^T\nabla_x V_f(Z_s)\cdot dB_s\Big\|_p\nonumber\\\nonumber\\
 &= &\|A\| (p^*-1)\Big\|V_f(Z_T)-V_f(Z_0)\Big\|_p,\nonumber\\
 &\leq & 2\|A\| (p^*-1)\|f\|_p,
\end{eqnarray}
which is valid for all $T>0$ and the right hand side does not depend on $T$.  
With a more careful argument it is shown in \cite[p.~984]{BanMen1} 
that $$\lim_{T\to\infty}\Big\|V_f(Z_T)-V_f(Z_0)\Big\|_p\leq \|f\|_p$$ which then gives that 
\begin{equation}\label{Sbound}
 \Big\|\mathcal{S}_Af(x)\Big\|_p \leq \|A\| (p^*-1)\|f\|_p,\,\,\,\,\,\,1<p<\infty. 
\end{equation}

In fact, from the arguments in \cite{BanMen1} and  Corollary \ref{transforms1}, we obtain the same result as Theorem \ref{TAproj} which we restate here to summarize.

\begin{theorem}\label{SAproj} Let $\{f_i\}_{i=1}^\infty$ be a sequence of functions in $C_0^{\infty}(\bR^n)$ and 
$\{A_i(x, y)\}_{i=0}^{\infty}$ a sequence of $n\times n$ matrix-valued functions  such that $\|A_i\|\leq M$, for all $i$. Then,  for all $1<p<\infty$,  
\begin{equation}\label{SAproj1}
\Bigg\|\left(\sum_{i=1}^\infty|\mathcal{S}_{A_i} f_i (x)|^2\right)^{1/2}\Bigg\|_p \leq
(p^*-1)M
\Bigg\|\left(\sum_{i=1}^\infty |f_i|^2\right)^{1/2}\Bigg\|_p. 
\end{equation}
Furthermore, suppose $\calA=\{A_i(x,t)\}_{i=1}^\infty$ is a sequence of $n\times n$
matrix-valued functions and $f\in C_0^{\infty}(\bR^d)$.   Then, 
\begin{equation}\label{SAproj2}
\Bigg\|\left(\sum_{i=1}^\infty|\mathcal{S}_{A_i}f(x)|^2\right)^{1/2}\Bigg\|_p\leq
(p^*-1) \|\calA\| \|f\|_p,
\end{equation}
where $\|\calA\|<\infty$ and defined as in Corollary \ref{transforms1}.
\end{theorem}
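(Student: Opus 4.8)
The plan is to run the argument of Theorem \ref{TAproj} essentially verbatim, with the background radiation process replaced by the space-time Brownian motion $Z_t=(B_t,T-t)$ and the harmonic extension $U_f$ replaced by the heat extension $V_f$; the underlying martingale estimate, Corollary \ref{transforms1}, is exactly the same one. Everything reduces to recognizing $\bigl(\mathcal{S}_{A_i}f_i\bigr)_i$ as a conditional expectation of an $\ell^2$-valued martingale transform and then quoting the sharp subordination bound.

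Fix a large $T>0$ and assemble the $\ell^2$-valued martingale $X^{(T)}$ whose $i$-th coordinate is the heat martingale $V_{f_i}(Z_t)-V_{f_i}(Z_0)=\int_0^t\nabla_x V_{f_i}(Z_s)\cdot dB_s$, $0<t\le T$, together with its coordinatewise transform $\calA*X^{(T)}_t=\bigl((A_1*V_{f_1})_t,(A_2*V_{f_2})_t,\dots\bigr)$ defined by \eqref{itotransform}. Since $\|A_i\|\le M$ for every $i$,
\[
d\langle\calA*X^{(T)}\rangle_t=\sum_i\bigl|A_i(Z_t)\nabla_x V_{f_i}(Z_t)\bigr|^2\,dt\le M^2\sum_i\bigl|\nabla_x V_{f_i}(Z_t)\bigr|^2\,dt=M^2\,d\langle X^{(T)}\rangle_t,
\]
so $\calA*X^{(T)}<<M\,X^{(T)}$, and the first assertion of Corollary \ref{transforms1} gives $\|\calA*X^{(T)}\|_p\le (p^*-1)M\,\|X^{(T)}\|_p$ for $1<p<\infty$. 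Because $|\,\cdot\,|_{\ell^2}$ of a martingale is a submartingale, $\|X^{(T)}\|_p=\|X^{(T)}_T\|_p$, and using \eqref{lebesguedis} (so that $B_T$ is Lebesgue-distributed under $P^T$) together with the $\ell^2$-valued triangle inequality, $\|X^{(T)}_T\|_p\le\bigl\|(\sum_i|f_i|^2)^{1/2}\bigr\|_p+\bigl\|(\sum_i|H_Tf_i|^2)^{1/2}\bigr\|_p\le 2\bigl\|(\sum_i|f_i|^2)^{1/2}\bigr\|_p$, uniformly in $T$.

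Next I would project onto $\bR^n$. Conditional expectation is an $L^p$-contraction, and the conditional Jensen inequality applied to the convex function $v\mapsto|v|_{\ell^2}$ gives
\[
\Bigl\|\bigl(\textstyle\sum_i|\mathcal{S}^T_{A_i}f_i|^2\bigr)^{1/2}\Bigr\|_p=\bigl\|\,\bigl|E^T[(\calA*X^{(T)})_T\mid B_T]\bigr|_{\ell^2}\,\bigr\|_p\le\bigl\|E^T\bigl[\,|(\calA*X^{(T)})_T|_{\ell^2}\ \big|\ B_T\bigr]\,\bigr\|_p\le\|\calA*X^{(T)}\|_p.
\]
Combining the last two displays and letting $T\to\infty$, using $\mathcal{S}^T_{A_i}f_i\to\mathcal{S}_{A_i}f_i$ in $L^2$ (Theorem \ref{prop2}, applied coordinatewise) and the refined estimate $\limsup_{T\to\infty}\|V_f(Z_T)-V_f(Z_0)\|_p\le\|f\|_p$ from \cite[p.~984]{BanMen1}, one arrives at \eqref{SAproj1}. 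Inequality \eqref{SAproj2} follows by the identical scheme with all the $f_i$ replaced by the single function $f$, invoking instead the second assertion of Corollary \ref{transforms1}, namely $\calA*X<<\|\calA\|\,X$ and $\|\calA*X\|_p\le (p^*-1)\|\calA\|\,\|X\|_p$.

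The main obstacle is the interchange of the limit $T\to\infty$ with the infinite $\ell^2$-sum: one must upgrade the coordinatewise convergence $\mathcal{S}^T_{A_i}f_i\to\mathcal{S}_{A_i}f_i$ to convergence of the full $\ell^2$-valued objects (for instance by truncating to finitely many coordinates, passing to the limit there, and then removing the truncation with the uniform-in-$T$ bound above), and one must check that the refinement $\limsup_T\|V_f(Z_T)-V_f(Z_0)\|_p\le\|f\|_p$ can be applied to the $\ell^2$-sum rather than coordinate by coordinate (equivalently, that the $\ell^2$-valued heat extension of $(f_i)_i$ tends to $0$ in $L^p(\ell^2)$ as $T\to\infty$). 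The rigorous justification of It\^o's formula, of the stochastic-integral representation, and of these limiting procedures on the non-probability space-time measure $P^T$ is carried out exactly as in \cite{BanMen1}, so no genuinely new difficulty arises there.
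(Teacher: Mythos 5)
Your proof is correct and follows the same route the paper sketches: the paper itself only says that Theorem \ref{SAproj} follows from Corollary \ref{transforms1} and the arguments of \cite{BanMen1}, exactly as Theorem \ref{TAproj} followed from Corollary \ref{transforms1} in the Poisson-extension setting, which is precisely what you carry out (set up the $\ell^2$-valued heat martingale, check differential subordination, apply Corollary \ref{transforms1}, project via conditional Jensen, and pass $T\to\infty$ using the refined bound $\limsup_T\|V_f(Z_T)-V_f(Z_0)\|_p\le\|f\|_p$). If anything you are more explicit than the paper about the two technical points at the end (upgrading coordinatewise convergence to $L^p(\ell^2)$ convergence, and applying the $T\to\infty$ refinement to the $\ell^2$-valued heat extension), which is a welcome addition rather than a deviation.
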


As in \S3.1, Theorem \ref{prop2} and  inequality (\ref{Sbound}) give 
 
\begin{corollary}\label{nazvolheatcor} For all $f, g\in C_0^{\infty}(\bR^n)$ and all $n\times n$ matrix-valued 
functions $A(x, t)$ in $\bR^{n+1}_{+}$ with $\|A\|<\infty$,  we have
\begin{equation*}
\Bigg|\int_0^{\infty}\int_{\bR^n} \left[A(x, t)\nabla_x V_f(x, t)\right]\cdot \nabla_x V_g(x, t)\, dx dt\Bigg|\leq \|A\|\,(p^*-1)\|f\|_p\|g\|_q,
\end{equation*}
for all $1<p<\infty$.
\end{corollary}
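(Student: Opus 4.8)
The plan is to read this bilinear estimate off from the already-established $L^p$ bound (\ref{Sbound}) for the operator $\mathcal{S}_A$, by pairing it against $g$ and applying H\"older's inequality. First I would invoke Theorem \ref{prop2}, which gives the identity
\[
\int_0^{\infty}\int_{\bR^n} \left[A(x, t)\nabla_x V_f(x, t)\right]\cdot \nabla_x V_g(x, t)\, dx\, dt=\int_{\bR^n} \mathcal{S}_A f(x)\, g(x)\, dx .
\]
Since $g\in C_0^{\infty}(\bR^n)\subset L^q(\bR^n)$ with $q=p/(p-1)$, and since (\ref{Sbound}) gives $\mathcal{S}_A f\in L^p(\bR^n)$ with $\|\mathcal{S}_A f\|_p\leq \|A\|(p^*-1)\|f\|_p$, H\"older's inequality then yields
\[
\left|\int_{\bR^n} \mathcal{S}_A f(x)\, g(x)\, dx\right|\leq \|\mathcal{S}_A f\|_p\,\|g\|_q\leq \|A\|\,(p^*-1)\,\|f\|_p\,\|g\|_q ,
\]
which is exactly the asserted inequality.

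The one step that deserves a moment's care is checking that the double integral converges absolutely, so that the rearrangement above is legitimate. I would establish this first by applying Theorem \ref{prop2} and (\ref{Sbound}) to the rank-one choice $A(x,t)=\frac{\nabla_x V_f(x, t)\otimes \nabla_x V_g(x, t)}{|\nabla_x V_f(x, t)|\,|\nabla_x V_g(x, t)|}$ (taken to be $0$ where a denominator vanishes), which has operator norm at most one and, with the tensor convention used in Corollary \ref{nazvoldualtyforharmart}, satisfies $[A(x,t)\nabla_x V_f(x,t)]\cdot\nabla_x V_g(x,t)=|\nabla_x V_f(x,t)|\,|\nabla_x V_g(x,t)|$. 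This yields the space-time estimate
\[
\int_0^{\infty}\int_{\bR^n} |\nabla_x V_f(x, t)|\,|\nabla_x V_g(x, t)|\, dx\, dt\leq (p^*-1)\,\|f\|_p\,\|g\|_q ,
\]
the heat-semigroup analogue of Corollary \ref{nazvoldualtyforharmart}; combined with the pointwise bound $\left|[A(x,t)\nabla_x V_f(x,t)]\cdot \nabla_x V_g(x,t)\right|\leq \|A\|\,|\nabla_x V_f(x,t)|\,|\nabla_x V_g(x,t)|$, it gives absolute convergence of the double integral for every matrix-valued $A$ with $\|A\|<\infty$, so the passage through Theorem \ref{prop2} is fully justified.

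There is no genuine obstacle here: the hard parts --- constructing the heat martingale $V_f(Z_t)$, passing to the limit $T\to\infty$ in Theorem \ref{prop2}, and the differential-subordination estimate underlying (\ref{Sbound}) via Corollary \ref{transforms1} --- have all been carried out above, so the corollary reduces to this short argument combining the duality identity, H\"older's inequality, and the $L^p$ bound for $\mathcal{S}_A$.
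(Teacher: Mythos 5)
Your argument is exactly the paper's: the corollary is obtained by combining the Littlewood--Paley identity of Theorem~\ref{prop2} with the $L^p$ bound \eqref{Sbound} for $\mathcal{S}_A$ via H\"older's inequality, just as the paper indicates (``As in \S3.1, Theorem~\ref{prop2} and inequality~\eqref{Sbound} give''). Your extra remark about establishing absolute convergence first with the rank-one normalization of $A$ is a reasonable tidying step, but it is not a different route --- the uniform bound obtained in the course of proving Theorem~\ref{prop2} (see \eqref{uniformbound}) already handles that issue.
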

 
 The choice of the matrix

$$
A(x, t)=\frac{\nabla_xV_f(x, t) \otimes\nabla_xV_g(x, t)}{|\nabla_xV_f(x, t)||\nabla_xV_g(x, t)|}
$$
gives 

\begin{corollary}\label{nazvoldualtybymart} For all $f, g\in C_0^{\infty}(\bR^n)$, 
\begin{equation}\int_0^{\infty}\int_{\bR^n} \Big|\nabla_xV_f(x, t)\Big|\, \Big|\nabla_xV_g(x, t)\Big|\, dxdt\leq (p^*-1)\|f\|_p\|g\|_q,
\end{equation}
for all $1<p<\infty$. 
\end{corollary}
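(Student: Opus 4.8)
The plan is to obtain this immediately from Corollary~\ref{nazvolheatcor} by feeding in the sharpest admissible choice of $A$: the normalized rank-one matrix field aligning $\nabla_x V_f$ with $\nabla_x V_g$. Assuming first that $f$ and $g$ are real-valued, I would set, at points $(x,t)\in\bR^{n+1}_{+}$ where $\nabla_x V_f(x,t)\neq 0$ and $\nabla_x V_g(x,t)\neq 0$,
$$
A(x,t)=\frac{\nabla_x V_f(x,t)\otimes \nabla_x V_g(x,t)}{|\nabla_x V_f(x,t)|\,|\nabla_x V_g(x,t)|},
$$
where $u\otimes v$ denotes the matrix acting by $w\mapsto (u\cdot w)\,v$, and $A(x,t)=0$ at all other points. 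Since $V_f=H_tf$ and $V_g=H_tg$ are smooth on $\bR^{n+1}_{+}$, this is a bounded measurable $n\times n$ matrix-valued function, and since a rank-one matrix $u\otimes v$ has operator norm $|u|\,|v|$, one has $\|A\|\le 1$, so $A$ is admissible in Corollary~\ref{nazvolheatcor}.

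The only thing to check is the pointwise identity
$$
\bigl[A(x,t)\nabla_x V_f(x,t)\bigr]\cdot \nabla_x V_g(x,t)=\bigl|\nabla_x V_f(x,t)\bigr|\,\bigl|\nabla_x V_g(x,t)\bigr|.
$$
Where both gradients are nonzero this is a one-line computation: $A\nabla_x V_f=\tfrac{|\nabla_x V_f|}{|\nabla_x V_g|}\,\nabla_x V_g$, and taking the inner product with $\nabla_x V_g$ gives the product of the two lengths; where one of the gradients vanishes, both sides are $0$. Inserting this $A$ into Corollary~\ref{nazvolheatcor}, and noting that the integrand on the left is nonnegative so the modulus there may be dropped, we get
$$
\int_0^{\infty}\!\!\int_{\bR^n}\bigl|\nabla_x V_f(x,t)\bigr|\,\bigl|\nabla_x V_g(x,t)\bigr|\,dx\,dt\;\le\;\|A\|\,(p^*-1)\,\|f\|_p\|g\|_q\;\le\;(p^*-1)\,\|f\|_p\|g\|_q
$$
for every $1<p<\infty$, which is the stated bound; the complex-valued case follows by the same argument after replacing $\nabla_x V_f\otimes\nabla_x V_g$ by $\overline{\nabla_x V_f}\otimes\overline{\nabla_x V_g}$ in the definition of $A$.

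There is essentially no obstacle here beyond bookkeeping. The points that deserve a word of care are: that the apparent singularity of $A$ on the set where $\nabla_x V_f$ or $\nabla_x V_g$ vanishes is harmless, since there $A$ is simply declared to be the zero matrix and the integrand of interest vanishes there as well; that the tensor-product convention be fixed so that the displayed identity comes out in the stated form; and that one is using the meaning of $\|A\|$ from this section, namely the supremum over $(x,t)$ of the operator norm of $A(x,t)$. With these remarks in place the corollary is an immediate consequence of Corollary~\ref{nazvolheatcor}.
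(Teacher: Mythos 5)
Your proof is correct and is exactly the paper's argument: the paper obtains the corollary by plugging the same normalized rank-one matrix $A(x,t)=\nabla_xV_f\otimes\nabla_xV_g/(|\nabla_xV_f|\,|\nabla_xV_g|)$ into Corollary~\ref{nazvolheatcor}, merely without writing out the verification. Your care with the tensor convention is well placed: with the $\otimes$ convention the paper fixes in \S5.1 ($h'\otimes k'=h'k'^{*}$, i.e.\ $w\mapsto(k'\cdot w)h'$) the displayed $A$ would send $\nabla_xV_f$ to a multiple of itself and give only $(\nabla_xV_f\cdot\nabla_xV_g)^2/(|\nabla_xV_f||\nabla_xV_g|)$; the intended matrix is the transpose, which is precisely what your convention $u\otimes v:\,w\mapsto(u\cdot w)v$ produces, so your bookkeeping is the correct one.
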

This is the Dragi\v cevi\'c-Volberg inequality proved in \cite{DraVol2} for $\bR^2$ using Bellman functions; see their Theorem 1.4. This inequality implies the Nazarov-Volberg inequality (\ref{littlewood2}). We also refer the reader to  \cite{Hyt3} and \cite{PetSlaBre1} for an even more general version of this inequality for functions $f:\bR^n\to \bR^m$ which also follow from these methods.

We now give the outline of the proof of the equality (\ref{prop2eq}) in Theorem \ref{prop2}; the reader can see \cite{BanMen1} for the full details.    The first equality below is the fact that the distribution of $B_T$ is the Lebesgue measure, the second is the It\^o formula (\ref{itomart}) applied to $V_g$.  

\begin{eqnarray*}
\int_{\bR^n} \mathcal{S}_A^Tf(x)g(x)dx&=
&E^T\left(g(B_T)\int_0^T \Big[A(Z_s)\nabla_xV_f(Z_s)\Big]\cdot dB_s\right)\\
&=& E^T\left(V_g(Z_0)\int_0^T \Big[A(Z_s)\nabla_xV_f(Z_s)\Big]\cdot dB_s\right)\\
&+&E^T\left(\int_0^T \nabla_xV_g(Z_s)\cdot dB_s
\int_0^T \Big[A(Z_s)\nabla_xV_f(Z_s)\Big]\cdot dB_s\right).
\end{eqnarray*}

We now observe that the first quantity in the last equality is actually zero.  Indeed, by the definition of $E^T$, this quantity is simply  
\begin{eqnarray*}
&&\int_{\bR^n} E_x\left(H_T {g}(B_0)\int_0^T \Big[A(Z_s)\nabla_xV_f(Z_s)\Big]\cdot dB_s\right) dx\\
&=& \int_{\bR^n} H_T {g}(x) E_x\left(\int_0^T \Big[A(Z_s)\nabla_xV_f(Z_s)\Big]\cdot dB_s\right) dx\\
&=&0,
\end{eqnarray*}
by the martingale property of the stochastic integral. 
On the other hand, the It\^o isometry gives that 
\begin{eqnarray}\label{uniformbound}
&&\Big|E^T\left(\int_0^T \nabla_xV_g(Z_s)\cdot dB_s
\int_0^T \Big[A(Z_s)\nabla_xV_f(Z_s)\Big]\cdot dB_s\right)\Big|\\
&=&\Big|\int_0^T E^T\left(\left[A(Z_s)\nabla_xV_f(Z_s)\right]\cdot \nabla_xV_g(Z_s)\right)ds\Big|\nonumber\\&=&\Big|\int_0^T \int_{\bR^n} E_x\left[A(B_s, T-s)\nabla_xV_f(B_s, T-s)\right]dx\,ds\Big|\nonumber\\
&=&\Big|\int_0^T \int_{\bR^n} \left[A(x, T-s)\nabla_xV_f(x, T-s)\right]\cdot \nabla_xV_g(x, T-s)\,dx\,ds\Big|\nonumber\\
&=&\Big|\int_0^T \int_{\bR^n} \left[A(x, t)\nabla_xV_f(x, t)\right]\cdot \nabla_xV_g(x, t)\,dx\,dt\Big|.\nonumber
\end{eqnarray}

By H\"older's inequality and (\ref{STbound}), we have 
\begin{eqnarray*}
&&\Big|E^T\left(\int_0^T \nabla_xV_g(Z_s)\cdot dB_s
\int_0^T\Big[ A(Z_s)\nabla_xV_f(Z_s)\Big]\cdot dB_s\right)\Big|\\
&\leq& 4(p^*-1)\|A\| \|f\|_p\|g\|_q,
\end{eqnarray*}
independent of $T$. 
Thus the right hand side of (\ref{uniformbound}}) is uniformly bounded independent of $T$. This proves that 
\begin{eqnarray}\label{dual1}
&&\lim_{T\to\infty}\int_{\bR^n} \mathcal{S}_A^Tf(x)\, g(x)\, dx\nonumber \\
&=&\int_0^{\infty}\int_{\bR^n} \left[A(x, t)\nabla_xV_f(x, t)\right]\cdot \nabla V_g(x, t)\, dx\, dt,
\end{eqnarray}
as asserted by (\ref{prop2eq}). 

For these operators we also have the analogue of Remark \ref{convolutionpoisson} as well as a problem similar to Problem \ref{weak1-poisson}. 
\begin{remark}
As in the case of the operators $T_A$ obtained by projections of background radiation martingale transforms, the operators $\mathcal{S}_A$ can also be written as convolutions operators, this time in terms of the heat semigroup.  More precisely,  for any $n\times n$ matrix $A(x, t)$ with $\|A\|<\infty$ we have 
$$
\mathcal{S}_Af(x)=\int_{\bR^n}K_A(x-\tilde{x})f(\tilde{x})d\tilde{x},
$$
where 
$$
K_A(x)=\int_0^\infty \int_{\bR^n} A(\overline x, t)\nabla_xh_t(\overline{x})\cdot \nabla_x h_t(x-\overline{x})d\overline{x} dy.
$$
\end{remark}

When $A=A(t)$ is independent of $x$, it follows from (\ref{dual1}) and Plancherel's theorem that 
\begin{equation}\label{fourier-A(t)}
\widehat{\mathcal{S}_Af}(\xi)=\left(4\pi^2 \int_{0}^{\infty} [A(t)\xi]\cdot \xi e^{-4\pi^2 t|\xi|^2}\,dt\right) \widehat{f}(\xi).
\end{equation}
Furthermore, if $A$ is constant we have  
\begin{equation}\label{fourierA}
\widehat{\mathcal{S}_Af}(\xi)=\frac{A\xi\cdot\xi}{|\xi|^2}\,\widehat{f}(\xi).
\end{equation}

\subsection{Space-time Brownian motion and the Beurling-Ahlfors operator} 

If  we now consider $\bR^2$ and take the matrix 
\begin{equation}\label{Riesz1sq}
A_{11}=\left[\begin{array}{cc}1 & 0 \\ 0 & -1\end{array}\right],
\end{equation}
it follows from  (\ref{littlewood1}) and (\ref{prop2eq}) (or simply from (\ref{fourierA}) ) that 

$$\mathcal{S}_Af=R_2^2f-R_1^2f.$$
With
\begin{equation}\label{Riesz12sq}
A_{12}=\left[\begin{array}{cc}0 & -1 \\ -1& 0\end{array}\right],
\end{equation}
we have 

$$\mathcal{S}_Af=2R_1R_2f.$$  
Since both of these matrices have norm 1, the Nazarov-Volberg Theorem \ref{Rieszonehalf} and Corollary \ref{BA2} follow from (\ref{Sbound}).  In the same way, if 
$$A=\left[\begin{array}{cc}a & -b \\ -b & -a\end{array}\right],$$
with $a, b\in\bR$\, such that $a^2+b^2\leq 1$, we get the following bound (proved in \cite{BanMen1}) which incorporates both bounds  in (\ref{Rieszonehalf}).

\begin{corollary}  For all $f\in C_0^{\infty}(\bR^n)$ and $1<p<\infty$,
\begin{equation}\label{rieszab}
\|a(R_2^2-R_1^2)f+2bR_1R_2f\|_p\leq (p^*-1)\|f\|_{p} ,
\end{equation}
\end{corollary}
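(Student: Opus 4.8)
The plan is to exhibit $a(R_2^2-R_1^2)f+2bR_1R_2f$ as $\mathcal{S}_A f$ for a single constant matrix $A$ of operator norm at most $1$, and then quote the bound (\ref{Sbound}). First I would take $A$ to be the constant (independent of $(x,t)$) $n\times n$ matrix whose top-left $2\times 2$ block equals $\bigl(\begin{smallmatrix} a & -b \\ -b & -a\end{smallmatrix}\bigr)$ and whose remaining entries all vanish; in the notation of (\ref{Riesz1sq})--(\ref{Riesz12sq}) this is just $A=aA_{11}+bA_{12}$. Since the map $A\mapsto \mathcal{S}_A$ is linear --- most transparently through the Fourier symbol $\widehat{\mathcal{S}_A f}(\xi)=\frac{A\xi\cdot\xi}{|\xi|^2}\widehat f(\xi)$ of (\ref{fourierA}) --- we get $\mathcal{S}_A f=a\,\mathcal{S}_{A_{11}}f+b\,\mathcal{S}_{A_{12}}f=a(R_2^2-R_1^2)f+2bR_1R_2f$, using the identifications $\mathcal{S}_{A_{11}}f=(R_2^2-R_1^2)f$ and $\mathcal{S}_{A_{12}}f=2R_1R_2f$ already recorded above.

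Next I would compute $\|A\|$. The matrix is block diagonal with one block the zero matrix, so $\|A\|$ equals the spectral norm of the symmetric matrix $\bigl(\begin{smallmatrix} a & -b \\ -b & -a\end{smallmatrix}\bigr)$; its characteristic polynomial is $\lambda^2-(a^2+b^2)$, so its eigenvalues are $\pm\sqrt{a^2+b^2}$, whence $\|A\|=\sqrt{a^2+b^2}\le 1$ by the hypothesis $a^2+b^2\le 1$. Applying (\ref{Sbound}) --- which comes from the space-time Brownian martingale representation together with Corollary \ref{transforms1} --- then yields $\|\mathcal{S}_A f\|_p\le \|A\|(p^*-1)\|f\|_p\le (p^*-1)\|f\|_p$ for every $1<p<\infty$, which is precisely (\ref{rieszab}).

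This argument is essentially mechanical once the $\mathcal{S}_A$-calculus is in place, so there is no genuine obstacle. The only arithmetic point is the operator-norm identity $\|A\|=\sqrt{a^2+b^2}$, and the only bookkeeping point is that zero-padding the $2\times 2$ block up to size $n$ leaves the norm unchanged, so the statement as written for $\bR^n$ with $n\ge 2$ follows from the planar computation with no additional effort.
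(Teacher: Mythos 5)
Your argument is exactly the paper's: form the constant matrix $A=aA_{11}+bA_{12}=\bigl(\begin{smallmatrix}a & -b\\ -b & -a\end{smallmatrix}\bigr)$, note $\|A\|=\sqrt{a^2+b^2}\le 1$ since the matrix is symmetric and traceless with determinant $-(a^2+b^2)$, and invoke (\ref{Sbound}) for the projected space-time martingale transform $\mathcal{S}_A$. The only addenda are the explicit eigenvalue computation and the zero-padding remark for $n>2$, both of which the paper treats as implicit, so this is a correct proof along the same lines.
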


The matrix 
\begin{equation}\label{BAmatrix}
\calB=A_{11}+iA_{12}=\left[\begin{array}{cc}1 & -i \\ -i & -1\end{array}\right]
\end{equation}
which gives $\mathcal{S}_{\calB}f=Bf$ has norm $\|\calB\|=2$ when acting on vectors with complex entires.  That is,
$\|\calB\|=\sup \{\|\calB z\|_{\bC^2}: z\in \bC^2, \|z\|_{\bC^2} \leq 1\}=2$.  On the other hand, 
$\|\calB\|=\sup \{\|\calB x\|_{\bR^2}: x\in \bR^2, \|x\|_{\bR^2} \leq 1\}=\sqrt{2}$.  This then gives that 
\begin{equation}\|Bf\|_p\leq \sqrt{2}(p^*-1)\|f\|_p,
\end{equation}
  for smooth functions of compact support with $f:\bC\to\bR$.  However, unlike the Riesz transforms which take real valued functions to real valued functions, the Beurling-Ahlfors operator takes real valued functions to complex valued functions. Hence, we cannot convert this estimate to cover any function $f:\bC\to\bC$ with the usual Hilbert space techniques which apply to the Riesz transforms.

With $B=R_2^2-R_1^2+2iR_1R_2$, we denote by $\Re(Bf)$ the real part of the complex-valued function $Bf$. Note that for us this is not the same as what is called in some papers ``the real part of $B$" which in our notation is the operator $R_2^2-R_1^2$.  We also introduce the conjugation operator $\tau(f)=\bar{f}$ defined for any function $f:\bC\to\bC$.  One checks easily that $\tau B\tau=\overline B$, where $\overline B=R_2^2-R_1^2-2iR_1R_2$.  We summarize various estimates from  \cite{BanMen1} and improvements from those in  \cite{BanLin1} which can be obtained using the space-time Brownian motion in the next two theorems.  (The inequalities in Theorem \ref{BA-essen} and the connections with Ess\'en's inequality were first discussed in \cite[pp 228--229]{BanLin1}.)  We leave the computation of the norms of the relevant matrices to the interested reader.

\begin{theorem} Let $f\in C_0^{\infty}(\bR^n)$ and $1<p<\infty$.  
Suppose $\{v_j\}_{j=1}^{n}$ is a sequence of scalars such that $|v_j|\leq 1$ for all $j$.  Then \\
\begin{enumerate}
\item [(i)] $
\big\|\sum_{j=1}^d v_jR^2_jf\big\|_p\leq (p^*-1)\|f\|_p,$\\
\end{enumerate}
\noindent Furthermore, if $n$ is even, say $n=2m$, then\\
\begin{enumerate}
\item [(ii)] $
\big\|\sum_{j=1}^{m}v_{2j}R_{2j-1}R_{2j}f\big\|_p\leq \frac{1}{2}(p^*-1)\|f\|_p. 
$
\end{enumerate}
\end{theorem}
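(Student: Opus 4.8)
The plan is to realize each of the two operators as an operator $\mathcal{S}_A$ for a well-chosen \emph{constant} matrix $A$, to compute the operator norm $\|A\|$, and then to invoke the bound (\ref{Sbound}). Throughout I would use the Fourier description (\ref{fourierA}) of $\mathcal{S}_A$ for constant $A$, namely $\widehat{\mathcal{S}_A f}(\xi)=(A\xi\cdot\xi)|\xi|^{-2}\widehat f(\xi)$, together with $\widehat{R_j^2f}(\xi)=-\xi_j^2|\xi|^{-2}\widehat f(\xi)$ and $\widehat{R_jR_kf}(\xi)=-\xi_j\xi_k|\xi|^{-2}\widehat f(\xi)$.

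For part (i), take the constant diagonal matrix $A=\mathrm{diag}(-v_1,\dots,-v_n)$. Then $A\xi\cdot\xi=-\sum_{j=1}^n v_j\xi_j^2$, so (\ref{fourierA}) identifies $\mathcal{S}_A f$ with $\sum_{j=1}^n v_jR_j^2 f$ on $C_0^\infty(\bR^n)$. Since $A$ is diagonal, its operator norm — on $\bR^n$, or on $\bC^n$ if the $v_j$ are complex — is $\|A\|=\max_{1\le j\le n}|v_j|\le 1$, and applying (\ref{Sbound}) then gives $\big\|\sum_{j=1}^n v_jR_j^2 f\big\|_p=\|\mathcal{S}_A f\|_p\le\|A\|(p^*-1)\|f\|_p\le(p^*-1)\|f\|_p$.

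For part (ii), with $n=2m$, I would let $A$ be the block-diagonal matrix whose $j$-th $2\times 2$ block, occupying the coordinates $2j-1,2j$, equals $\tfrac{v_{2j}}{2}\bigl[\begin{smallmatrix}0&-1\\-1&0\end{smallmatrix}\bigr]$ (all remaining entries zero); recall from (\ref{Riesz12sq}) that the block $\bigl[\begin{smallmatrix}0&-1\\-1&0\end{smallmatrix}\bigr]$ produces the operator $2R_1R_2$. A one-line computation gives $A\xi\cdot\xi=-\sum_{j=1}^m v_{2j}\xi_{2j-1}\xi_{2j}$, so by (\ref{fourierA}) we have $\mathcal{S}_A f=\sum_{j=1}^m v_{2j}R_{2j-1}R_{2j}f$. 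The matrix $\bigl[\begin{smallmatrix}0&-1\\-1&0\end{smallmatrix}\bigr]$ has operator norm $1$, and the operator norm of a block-diagonal matrix is the largest of the norms of its blocks, so $\|A\|=\tfrac12\max_{1\le j\le m}|v_{2j}|\le\tfrac12$; a final application of (\ref{Sbound}) gives (ii).

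I do not expect a genuine obstacle here: once the space-time martingale machinery of \S\ref{S_A} is in place, both statements are essentially bookkeeping corollaries. The points needing care are the sign and normalization conventions linking the Fourier symbols $-\xi_j^2|\xi|^{-2}$ and $-\xi_j\xi_k|\xi|^{-2}$ to the matrices via (\ref{fourierA}) — in particular the factor $2$ concealed in $\mathcal{S}_{A_{12}}=2R_1R_2$, which is exactly what forces the $\tfrac12$ in (ii) — and the elementary fact that a (block-)diagonal matrix has operator norm equal to its largest (block) norm, which is what keeps $\|A\|\le 1$ (resp. $\le\tfrac12$) no matter how many terms are summed. It is also worth noting that the argument is insensitive to whether the $v_j$ are real or complex, since (\ref{Sbound}) holds for matrix-valued coefficients with complex entries.
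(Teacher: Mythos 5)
Your proposal is correct and follows precisely the route the paper intends: represent each sum of second-order Riesz transforms as $\mathcal{S}_A$ for a constant (diagonal or block-diagonal) matrix via (\ref{fourierA}), compute $\|A\|$, and apply (\ref{Sbound}); the paper itself explicitly ``leave[s] the computation of the norms of the relevant matrices to the interested reader.'' Your matrix choices, the identification $A\xi\cdot\xi = -\sum v_j\xi_j^2$ resp.\ $-\sum v_{2j}\xi_{2j-1}\xi_{2j}$, and the factor $\tfrac12$ forced by $\mathcal{S}_{A_{12}}=2R_1R_2$ are all exactly right.
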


\begin{theorem}\label{BA-essen}
\noindent Let $f\in C_0^{\infty}(\bR^2)$ and $1<p<\infty$. Then \\
\begin{enumerate}
\item[(i)] $\big\|\sqrt{|Bf|^2+|{\tau B}\tau f|^2+|\sqrt{2}f|^2}\big\|_p\leq \sqrt{6}(p^*-1)\|f\|_p$,\\ 
\item[(ii)] $\big\|\sqrt{|Bf|^2+|{\tau B}\tau f|^2}\big\|_p\leq 2(p^*-1)\|f\|_p,$ \\
\item[(iii)] $\big\|\Re(Bf)\big\|_p\leq \sqrt{2} (p^*-1)\|f\|_p$.
\end{enumerate}
\end{theorem}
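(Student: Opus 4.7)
The plan is to derive all three inequalities from Theorem \ref{SAproj} by identifying each quantity with a suitable vector of operators $\mathcal{S}_A$. Recall from \S\ref{IwBa-conj} that $Bf=\mathcal{S}_{\mathcal B}f$ with $\mathcal B=\left(\begin{smallmatrix}1 & -i\\-i & -1\end{smallmatrix}\right)$. Since $R_1,R_2$ have real kernels they commute with complex conjugation, so $\tau B\tau f=\bar Bf$, and an immediate check of the Fourier symbol $\bar{\mathcal B}\xi\cdot\xi/|\xi|^2=(\xi_1^2-\xi_2^2+2i\xi_1\xi_2)/|\xi|^2$ shows $\tau B\tau f=\mathcal{S}_{\bar{\mathcal B}}f$. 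Moreover, $\mathcal{S}_{\sqrt 2\,I}f=\sqrt 2\,f$ because its symbol is identically $\sqrt 2$.

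For parts (i) and (ii), I would apply the single-function bound \eqref{SAproj2} to the sequences $\calA=\{\mathcal B,\bar{\mathcal B}\}$ and $\calA'=\{\mathcal B,\bar{\mathcal B},\sqrt 2\,I\}$, respectively. The key pointwise identity is
\begin{equation*}
|\mathcal Bv|^2+|\bar{\mathcal B}v|^2=4|v|^2,\qquad v\in\bC^2,
\end{equation*}
verified by expanding $|v_1\mp iv_2|^2=|v_1|^2+|v_2|^2\mp 2\,\Im(v_1\bar v_2)$ and observing that the sign-opposite cross terms arising from $\mathcal B$ and $\bar{\mathcal B}$ cancel. Combined with $|\sqrt 2\,Iv|^2=2|v|^2$, this gives $\|\calA\|=2$ and $\|\calA'\|=\sqrt 6$, and \eqref{SAproj2} delivers (ii) and (i) at once.

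Part (iii) is only $\bR$-linear in $f$, so I would decompose $f=u+iv$ with $u,v$ real. A direct expansion of $B(u+iv)$ gives
\begin{equation*}
\Re(Bf)=(R_2^2-R_1^2)u-2R_1R_2v=\mathcal{S}_{A_{11}}u+\mathcal{S}_{-A_{12}}v,
\end{equation*}
where $A_{11}$ and $A_{12}$ are the matrices of \eqref{Riesz1sq}--\eqref{Riesz12sq}, each of operator norm $1$. Using the elementary pointwise bound $|a+b|\le\sqrt 2\,(a^2+b^2)^{1/2}$ and then applying the sequence-of-functions inequality \eqref{SAproj1} (with common matrix norm $M=1$) to $\{u,v\}$ and $\{A_{11},-A_{12}\}$ yields
\begin{equation*}
\|\Re(Bf)\|_p\le\sqrt 2\,\bigl\|(|\mathcal{S}_{A_{11}}u|^2+|\mathcal{S}_{-A_{12}}v|^2)^{1/2}\bigr\|_p\le\sqrt 2\,(p^*-1)\bigl\|(u^2+v^2)^{1/2}\bigr\|_p=\sqrt 2\,(p^*-1)\|f\|_p.
\end{equation*}

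The analytic content is entirely packaged into Theorem \ref{SAproj}, so there is no genuine obstacle. The only nontrivial step is the cancellation $|\mathcal Bv|^2+|\bar{\mathcal B}v|^2=4|v|^2$, which is precisely what allows (ii) to attain the bound $2(p^*-1)$ rather than the weaker $2\sqrt 2\,(p^*-1)$ that would follow from handling $\|Bf\|_p$ and $\|\bar Bf\|_p$ separately.
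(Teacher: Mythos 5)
Your proof is correct and follows exactly the route the paper leaves to the reader (``we leave the computation of the norms of the relevant matrices to the interested reader''): identify $Bf$, $\tau B\tau f$, $\sqrt 2 f$ as $\mathcal{S}_{\mathcal B}f$, $\mathcal{S}_{\bar{\mathcal B}}f$, $\mathcal{S}_{\sqrt 2 I}f$, compute the sequence norm $\|\calA\|$ via the cancellation $|\mathcal Bv|^2+|\bar{\mathcal B}v|^2=4|v|^2$, and apply Theorem \ref{SAproj}. The handling of (iii) by decomposing $f=u+iv$, applying \eqref{SAproj1} to $\{u,v\}$ with $\{A_{11},-A_{12}\}$, and using $|a+b|\le\sqrt 2(a^2+b^2)^{1/2}$ is also valid and gives the stated constant.
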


From (i), (ii) in the previous Theorem and Minkowski inequality it follows that

\begin{equation}\label{essen-minkowski-1}
\|Bf\|_p^2 +\|B\overline f\|_p^2+2\|f\|_p^2 \leq 6(p^*-1)^2\,\|f\|_p^2, \quad 1<p\leq 2,
\end{equation}
and 
\begin{equation}\label{essen-minkowski-2}
\|Bf\|_p^2 +\|B\overline f\|_p^2 \leq 4(p^*-1)^2\,\|f\|_p^2, \quad 1<p\leq 2, 
\end{equation}
for all $f\in C_0^{\infty}(\bR^2)$.  From this we get the following interesting looking corollary.

\begin{corollary} For all $f\in C_0^{\infty}(\bR^2)$ and $1<p\leq 2$, at least one of the following inequalities holds:
\begin{equation}
\|Bf\|_p \leq \sqrt{2}(p^*-1)\,\|f\|_p, 
\end{equation}
\begin{equation}
\|B\overline f\|_p \leq \sqrt{2}(p^*-1)\,\|f\|_p. 
\end{equation}
\end{corollary}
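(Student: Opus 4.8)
The plan is to derive this corollary as an immediate pigeonhole consequence of the quadratic estimate \eqref{essen-minkowski-2}, which itself has already been established from parts (i) and (ii) of Theorem \ref{BA-essen} via Minkowski's inequality. So the only real content here is an elementary observation about two nonnegative numbers whose sum is controlled.

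First I would recall that for $1<p\le 2$ we have, for every $f\in C_0^{\infty}(\bR^2)$,
\begin{equation*}
\|Bf\|_p^2 + \|B\overline f\|_p^2 \leq 4(p^*-1)^2\,\|f\|_p^2,
\end{equation*}
which is precisely \eqref{essen-minkowski-2}. Next I would argue by contradiction: suppose that for some such $f$ (necessarily with $0<\|f\|_p<\infty$, since otherwise both claimed inequalities hold trivially) \emph{both} inequalities in the statement fail. Then strictly
\begin{equation*}
\|Bf\|_p^2 > 2(p^*-1)^2\,\|f\|_p^2 \qquad\text{and}\qquad \|B\overline f\|_p^2 > 2(p^*-1)^2\,\|f\|_p^2.
\end{equation*}
Adding these two strict inequalities gives $\|Bf\|_p^2 + \|B\overline f\|_p^2 > 4(p^*-1)^2\,\|f\|_p^2$, contradicting \eqref{essen-minkowski-2}. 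Hence at least one of the two displayed bounds must hold, which is the assertion.

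There is no genuine obstacle here: the corollary is a formal consequence of the already-proven $\ell^2$-type bound on the pair $(Bf, B\overline f)$, and the argument is just the observation that if $a^2+b^2\le 4C^2$ then $\min(a,b)\le \sqrt2\,C$. The one point worth a sentence is the degenerate case $\|f\|_p=0$ (or $f\equiv 0$), where both inequalities hold with both sides zero, so the dichotomy is vacuously fine; and the case $\|f\|_p=\infty$ does not arise for $f\in C_0^{\infty}(\bR^2)$. I would also remark, as context, that the interest of the statement is not the proof but the phenomenon it exhibits: although the sharp bound $\|Bf\|_p\le(p^*-1)\|f\|_p$ of Conjecture \ref{Iwaniec} remains open, one can always locate the constant $\sqrt2(p^*-1)$ on at least one of $f$ or $\overline f$, uniformly in $f$.
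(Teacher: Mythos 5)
Your proposal is correct and takes essentially the same approach as the paper: the corollary is stated there as an immediate consequence of \eqref{essen-minkowski-2}, and your pigeonhole contradiction argument simply makes explicit the one-line observation ("if $a^2+b^2\le 4C^2$ then $\min(a,b)\le\sqrt{2}\,C$") that the paper leaves implicit.
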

Note that a similar conclusion can be made with the bound $\sqrt{3(p^*-1)^2-1}$ if we use (\ref{essen-minkowski-1}) instead of (\ref{essen-minkowski-2}). 

Recall that the Ess\'en inequality from (\ref{Essen}) for the Hilbert transforms reads

 \begin{equation}
 \Big\|\sqrt{|Hf|^2+|f|^2}\Big\|_p\leq \csc(\frac{\pi}{2p^{\ast}}) \|f\|_p, \hskip.5cm 1 < p < \infty.
 \end{equation}
 Since the Hilbert transform anti-commutes with the dilation operator $(\delta_{-1} f)(x)= f(-x)$, 
the Ess\'en inequality is trivially equivalent to 
\begin{equation*}
 \Big\|\sqrt{|Hf|^2+|\delta_{-1}H\delta_{-1}f|^2 +|\sqrt{2}f|^2}\Big\|_p\leq \sqrt{2}\csc(\frac{\pi}{2p^{\ast}}) \|f\|_p, \hskip.5cm 1 < p < \infty
 \end{equation*}
and this motivates the inequality (i) in Theorem \ref{BA-essen}.  We note that for $p=2$ the Fourier transform gives the inequality (i) with $\sqrt{6}$ replace by $\sqrt{4}=2$, instead.  Thus, as usual, our inequality is not sharp even at $p=2$.  The obvious conjecture is that the best bound in (i) should be $2(p^*-1)$ and $\sqrt{2}(p^*-1)$ in (ii).  For better bounds related to Theorem \ref{BA-essen}, see \cite{Jan0}.

In \cite{DraVol2}, Dragi\v cevi\'c and Volberg reproved the inequality (\ref{rieszab}) with $a=\cos(\theta)$ and $b=\sin(\theta)$, $\theta\in [0, 2\pi]$ and use it to obtain the following improvements on the bounds for $B$ on both real and complex valued functions.  Their results is as follows. 
\begin{equation}\label{BAtau1}
\|Bf\|_p\leq \sigma(p)\,p\|f\|_p \, \,\,\,\,\, \, f:\bC\to\bR
\end{equation}
and 
\begin{equation}\label{BAtau2}
\|Bf\|_p\leq \sqrt{2}\,\sigma(p)\,p\|f\|_p,\,\,\,\,\, \, f:\bC\to\bC,
\end{equation}
where 
$$
\sigma(p)={\left(\frac{1}{2\pi}\int_0^{2\pi}|\cos(\theta)|^p
d\theta\right)}^{-\frac{1}{p}}, 
$$
and $\sigma(p)\to 1$ as $p\to\infty$. 

 In the \cite{GeiSmiSak1}, Geiss, Montgomery-Smith and  Saksman used arguments similar to those used  by Bourgain in \cite{Bou1} to show that Burkholder's UMD property is  equivalent to the boundedness of the Hilbert transform to prove that the $L^p(\bR^n)$-norms of  $2R_jR_k$,  and $R_j^2-R_k^2$, $j\not= k$, are bounded below by $(p^*-1)$.  Together with the above upper bound estimates this shows that 
 $$\|2R_jR_k\|_{p}=(p^*-1)$$
 and 
 $$\|R_j^2-R_k^2\|_{p}=(p^*-1)$$
 for $j\not =k$. 
(The question of the computation of the norm of these operators was first raised in \cite[p. 260]{BanLin1}; see \S5.2 below for more on this.)   

This somewhat surprising result gives the first examples of  singular integrals whose $L^p$ norms are exactly those of martingale transforms. 
The result also shows that in the plane the real and imaginary components of the Beurling-Ahlfors operator $B$  have norm equal to $(p^*-1).$  The proof in \cite{GeiSmiSak1} adapts to other combinations of $R_j$ and $R_k$ (and much, much, more; see \cite{BanOse2}).   A different method for proving the sharpness of these bounds which avoids the Bourgain method altogether based on so called ``laminates" is presented  in \cite{NicVol1}; see also \cite{VasVol2}.  
 
 In \cite{Cho1}, K.P. Choi uses the Burkholder method to identify the best constant in the martingale transforms where the predictable sequence $v_k$ takes values in $[0, 1]$ instead of $[-1, 1]$, as in the work of Burkholder. While this constant is not as explicit as the $p^*-1$ constant of Burkholder, one does have a lot of information about it. 
 \begin{theorem}\label{Choi} Let $f=\{f_n, n\geq 0\}$ be a real-valued martingale with difference sequence $d=\{d_k, k\geq 0\}$.   Let $v\ast f$ be the martingale transform of $f$ by a predictable sequence $v=\{v_k, k\geq 0\}$ with values in $[0,1]$.  Then 
\begin{equation}\label{choi}
\|v\ast f\|_p\leq c_p\|f\|_p, \,\,\, 1<p<\infty,
\end{equation}
with the best constant $c_p$ satisfying
$$
c_p=\frac{p}{2}+ \frac{1}{2}\log\left(\frac{1+e^{-2}}{2}\right) +\frac{\alpha_2}{p}+\cdots
$$
where $$\alpha_2=\left[\log\left(\frac{1+e^{-2}}{2}\right)\right]^2+\frac{1}{2}\log\left(\frac{1+e^{-2}}{2}\right)-2\left(\frac{e^{-2}}{1+e^{-2}}\right)^{2}. $$
 \end{theorem}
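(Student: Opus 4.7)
The plan is to apply the Burkholder method of \S1.1 and \S2.4, adapted to the constraint $v_k\in[0,1]$ in place of $|v_k|\leq 1$. For a candidate constant $c>0$ set
$$V(x,y)=|y|^p-c^p|x|^p,$$
so that $\|v\ast f\|_p\leq c\|f\|_p$ will follow once one proves $EV(f_n,(v\ast f)_n)\leq 0$ for every admissible pair $(f,v)$. The goal is therefore to construct a majorant $U:\mathbb{R}^2\to\mathbb{R}$ with three properties in direct analogy with (\ref{sub1})--(\ref{sub3}): (i) $V(x,y)\leq U(x,y)$ pointwise; (ii) $U(x,tx)\leq 0$ for all $x\in\mathbb{R}$ and $t\in[0,1]$, which takes care of all admissible initial pairs $(f_0,g_0)=(d_0,v_0 d_0)$; and (iii) $EU(f_n,g_n)\leq EU(f_{n-1},g_{n-1})$ for every allowed transform step.

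The substantive difference from Burkholder's original $[-1,1]$ setting lies in the set of admissible perturbation directions: each increment $(d_{k+1},v_{k+1}d_{k+1})$ is parallel to $(1,v_{k+1})$ with $v_{k+1}\in[0,1]$, rather than to some $(1,s)$ with $s\in[-1,1]$, or more generally to any $(h,k)$ with $|k|\leq|h|$ as in Theorem \ref{sharp3}. Accordingly, the infinitesimal form of (iii), derived just as in (\ref{U_fact}), requires that for every $x,y,h\in\mathbb{R}$ and every $v\in[0,1]$ the function $t\mapsto U(x+th,y+tvh)$ be concave at $t=0$; in the smooth interior this reads
$$U_{xx}(x,y)+2v\,U_{xy}(x,y)+v^{2}\,U_{yy}(x,y)\leq 0,\qquad v\in[0,1].$$
Because the left side is quadratic in $v$ on $[0,1]$, the binding cases occur at $v=0$, forcing $U_{xx}\leq 0$, and at $v=1$, forcing $(\partial_x+\partial_y)^{2}U\leq 0$. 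Geometrically, the extremal scenarios are $v_k\equiv 0$, in which the transform is frozen while $f$ evolves, and $v_k\equiv 1$, in which the transform coincides with $f$, and $U$ must be concave along the two corresponding directions.

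To extract $c_p$ I would exploit the $p$-homogeneity of $V$ through the self similar ansatz $U(x,y)=|x|^{p}\,\Phi(y/x)$, which reduces the two binding concavity conditions to a pair of second order ODEs for $\Phi$ patched across a free boundary where $U$ meets $V$ tangentially and where $\Phi$ makes contact with the obstacle $|s|^{p}-c^{p}$. The smallest $c$ for which such a $\Phi$ exists, additionally satisfying $\Phi(t)\leq 0$ on $[0,1]$, determines $c_p$ implicitly through a transcendental matching equation, which is precisely why no closed form analogue of $p^{\ast}-1$ is available here. The stated asymptotic expansion follows by inserting $c_p=a_0 p+a_1+a_2/p+\cdots$ into this equation: the leading coefficient $a_0=1/2$ is forced by the fact that $[0,1]$ has half the length of $[-1,1]$, while the specific numerical quantities $\tfrac{1}{2}\log((1+e^{-2})/2)$ and $\alpha_2$ emerge from matching exponentially small boundary layer terms of order $e^{-2}$ near the critical ray $v=1$, whose coefficient carries the same factor $e^{-2}/(1+e^{-2})$ that appears explicitly in $\alpha_2$.

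The principal obstacle is the construction of $U$ itself. In the $[-1,1]$ case Burkholder discovered the remarkable closed form (\ref{function_U}); in the $[0,1]$ case no comparably explicit expression is known, so $U$ must be built piecewise across the free boundary and then verified to satisfy the joint concavity condition in every admissible direction $v\in[0,1]$, not only at the extremes $v=0$ and $v=1$. Proving sharpness, namely exhibiting near extremal martingales that saturate the bound, is a companion difficulty and would be carried out, following Burkholder, by reverse engineering a simple Markov chain from the equality cases in the ODEs; full details appear in \cite{Cho1}.
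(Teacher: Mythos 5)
The paper does not actually prove Theorem~\ref{Choi}; it states it and cites Choi's 1992 article \cite{Cho1}, where a lengthy Burkholder-method construction is carried out. Your sketch is therefore being measured against Choi's proof, and its overall architecture---seek a majorant $U\geq V$ with $U\leq 0$ on the admissible initial set, prove a supermartingale property via directional concavity, and identify $c_p$ as the smallest $c$ for which this works---is indeed the right framework and the one Choi uses.

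There are, however, two concrete gaps. First, your reduction of the directional-concavity requirement $U_{xx}+2vU_{xy}+v^2U_{yy}\leq 0$ for all $v\in[0,1]$ to just the endpoint inequalities $U_{xx}\leq 0$ and $(\partial_x+\partial_y)^2U\leq 0$ is \emph{not} automatic. The quadratic $Q(v)=U_{xx}+2vU_{xy}+v^2U_{yy}$ attains its maximum on $[0,1]$ at an endpoint only if $U_{yy}\geq 0$ (so that $Q$ is convex in $v$), or if the vertex $-U_{xy}/U_{yy}$ lies outside $[0,1]$. For Burkholder-type functions the sign of $U_{yy}$ is not constant (even for Burkholder's own $U$, $U_{yy}$ changes sign along $y=(p-3)x$ when $p>3$), so one must verify the full one-parameter family of concavity inequalities, or else establish the sign control region by region across the free boundary; this is precisely one of the technical burdens of Choi's construction, and your sketch cannot simply wave it away.

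Second, the derivation of the asymptotic coefficients is entirely heuristic as written. The phrase ``$a_0=1/2$ is forced because $[0,1]$ has half the length of $[-1,1]$'' is not an argument. The rigorous way to pin down the leading term is the elementary identity $v\ast f=\tfrac12\bigl(f+(2v-1)\ast f\bigr)$ with $2v_k-1\in[-1,1]$: Theorem~\ref{sharp1} then yields $c_p\leq p^*/2$, while writing $(2v-1)\ast f=2(v\ast f)-f$ and the sharpness of $p^*-1$ give $c_p\geq p^*/2-1$. These are exactly the bounds recorded in \eqref{choibound} in the paper and they force $c_p=p/2+O(1)$. Extracting the constant term $\tfrac12\log\bigl((1+e^{-2})/2\bigr)$ and the coefficient $\alpha_2$ genuinely requires Choi's explicit transcendental matching equations; a reader cannot reconstruct them from your ``boundary-layer'' description, and the specific appearance of $e^{-2}$ has a precise origin in Choi's ODE analysis that the sketch does not supply. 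As written, the proposal is a plausible roadmap, not a proof: the construction of $U$, the verification of concavity for \emph{all} $v\in[0,1]$, and the sharpness via near-extremal martingales are all deferred.
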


As in the proof of Burkholder's inequalities, Choi's proof adapts to stochastic integrals and  in particular it follows from his proof and the space-time Brownian motion (heat martingale) representation used in this section for second order Riesz transforms that an upper bound for $\|R_j^2\|_p$ is the constant in Choi's Theorem \ref{Choi}.  However, much more interesting is the fact  that this bound is sharp.  This  and more is proved in the forthcoming paper \cite{BanOse2} of Os\c{e}kowski and the author. In particular, the following theorem  is a special case of the results in \cite{BanOse2}.  

\begin{theorem}
Let $J$ be a nonempty subset of $\{1,2, \dots, d\}$, $J\neq \{1,2, \dots, d\}$.  Then 
\begin{equation}\label{Oscbound1}
\Big\|\sum_{j\in J}R_j^2\Big\|_p=c_p, \hskip.3cm 1<p<\infty. 
\end{equation}
\end{theorem}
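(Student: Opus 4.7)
The plan is to prove $\|\sum_{j\in J}R_j^2\|_p = c_p$ by separately establishing the upper bound via the space-time Brownian motion representation of \S3.9 together with a continuous-time version of Choi's Theorem \ref{Choi}, and a matching lower bound via a Bourgain-style transference argument in the spirit of \cite{GeiSmiSak1}.

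For the upper bound, take the constant diagonal $d\times d$ matrix $A$ with $A_{jj}=-1$ for $j\in J$ and $A_{jj}=0$ for $j\notin J$. By \eqref{fourierA},
$$\widehat{\mathcal{S}_A f}(\xi)=\frac{A\xi\cdot\xi}{|\xi|^2}\widehat{f}(\xi)=-\sum_{j\in J}\frac{\xi_j^2}{|\xi|^2}\widehat{f}(\xi)=\sum_{j\in J}\widehat{R_j^2 f}(\xi),$$
so $\mathcal{S}_A f=\sum_{j\in J}R_j^2 f$. The martingale transform \eqref{itotransform} is then coordinate-wise scalar: the differential $\partial_j V_f(Z_s)\,dB_s^j$ of the base martingale $V_f(Z_t)$ is multiplied by the predictable scalar $-\mathbf{1}_{j\in J}$, which lies in $[-1,0]$. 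By the trivial sign-flip $v\mapsto -v$ applied to the $v_k\in[0,1]$ hypothesis in Theorem \ref{Choi}, Choi's constant $c_p$ also governs predictable multipliers taking values in $[-1,0]$. To transfer $c_p$ from the discrete-time setting of \cite{Cho1} to the stochastic integral $(A*V_f)_T$, I would replicate the It\^o-calculus scheme of \S2.3 with Burkholder's function $U$ in \eqref{function_U} replaced by the analogous Choi function built in \cite{Cho1}: verify the relevant concavity along the $(h,k)$-directions compatible with multipliers in $[0,1]$, apply It\^o's formula to $U(V_f(Z_t),(A*V_f)_t)$, conclude $EU(V_f(Z_T),(A*V_f)_T)\leq 0$, and obtain $\|(A*V_f)_T\|_p\leq c_p\|V_f(Z_T)-V_f(Z_0)\|_p$. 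Letting $T\to\infty$ exactly as in the derivation of \eqref{Sbound} from \cite[p.~984]{BanMen1}, and then projecting by the $L^p$-contractive conditional expectation, gives $\|\sum_{j\in J}R_j^2 f\|_p=\|\mathcal{S}_A f\|_p\leq c_p\|f\|_p$.

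For the matching lower bound I would adapt the Bourgain-type transference of Geiss, Montgomery-Smith and Saksman \cite{GeiSmiSak1}, who showed that $\|2R_jR_k\|_p\geq(p^*-1)$ and $\|R_j^2-R_k^2\|_p\geq(p^*-1)$ by transferring sharp dyadic martingale transforms to periodic multipliers and then rescaling to $\bR^d$. Here: for every $\varepsilon>0$, choose a dyadic martingale transform with multipliers in $\{0,-1\}$ whose $L^p$-norm exceeds $c_p-\varepsilon$; expand through the Haar basis on $[0,1)$; use Bourgain's lemma to transfer to a trigonometric Fourier multiplier on a torus whose symbol takes values in $[-1,0]$ and mimics the extremal pattern of the martingale multipliers; finally push to $\bR^d$ by dilation. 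The hypothesis $J\neq\{1,\dots,d\}$ enters precisely here, since it is exactly what ensures that the symbol $-\sum_{j\in J}\xi_j^2/|\xi|^2$ assumes all values of $[-1,0]$ on the unit sphere of $\bR^d$ (which requires also $d\geq 2$); for $J=\{1,\dots,d\}$ the operator collapses to $-I$, of norm $1\neq c_p$.

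The main obstacle will be the lower bound. Choi's extremal martingales arise from stopping-time constructions for walks with steps multiplied by coefficients in $[0,1]$, and they do not descend to Haar expansions on the torus as cleanly as Burkholder's $(p^*-1)$-extremizers, so genuine care is needed in matching the level-set geometry of $\sum_{j\in J}\xi_j^2/|\xi|^2$ to the extremal Choi configuration while preserving $L^p$-norms through the transference steps. An attractive alternative would be the laminate approach of \cite{NicVol1} and its refinement in \cite{VasVol2}: construct near-extremal laminates on the rank-one cone dictated by $A$ and produce near-extremizers in $C_0^\infty(\bR^d)$ directly, bypassing Fourier transference altogether. Given the rank-one-convexity theme emphasized in \S5 of the paper, this route may in the end be the cleanest way to close the lower bound and recover the precise Choi constant $c_p$.
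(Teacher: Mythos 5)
Your proposal reconstructs exactly the strategy that the paper ascribes to this theorem (the paper itself only sketches it and defers the details to the forthcoming \cite{BanOse2}): the upper bound comes from the space-time Brownian (heat) martingale representation of $\sum_{j\in J}R_j^2$ as $\mathcal S_A$ for a constant diagonal matrix of operator norm $1$, combined with a continuous-time/It\^o version of Choi's Theorem \ref{Choi}, and the sharpness is obtained by adapting the transference argument of Geiss, Montgomery-Smith and Saksman. Your observation that the diagonal structure of $A$ makes the quadratic-variation term of the It\^o expansion decompose coordinate-by-coordinate into scalar multiples with factors in $\{0,1\}$ (after the sign flip $A\mapsto -A$), so that Choi's concavity applies termwise, is precisely the point that makes the upper bound go through; and you also correctly identify both why the hypothesis $J\neq\{1,\dots,d\}$ is needed (otherwise the symbol degenerates to $-1$ and the operator is $-I$) and the genuine difficulty in the lower bound, namely that Choi's extremizers are less amenable to Haar/Fourier transference than Burkholder's and that laminates offer a plausible alternative route.
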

This result once again shows that singular integrals (as ``tame" as they appear to be in comparison to martingales) can achieve the same norms as those for martingales.   The proof  is an adaptation of the techniques of Geiss, Montgomery-Smith and  Saksman. 

As observed by Choi, 
\begin{equation}
c_p\approx \frac{p}{2}+\frac{1}{2}\log\left(\frac{1+e^{-2}}{2}\right),
\end{equation}
with this approximation becoming better for large $p$. 
It also follows trivially from Burkholder's inequalities that  (even without knowing explicitly the best constant $c_p$) 
\begin{equation}\label{choibound}
\max\left(1, \frac{p^*}{2}-1\right)\leq c_p\leq \frac{p^*}{2}
\end{equation}
In the same way,  the fact that the best constant in (\ref{Oscbound1}) has the same bounds follows trivially even without knowing its value or that it is the same as the Choi's constant.  For example, in $\bR^2$, $\|R_1^2-R_2^2\|_p=p^*-1$ and $R_1^2+R_2^2=-I$ give the bound in (\ref{choibound}) for the best constant in  
(\ref{Oscbound1}). 

It follows from the result of Geiss, Montgomery-Smith and  Saksman that for the family of operators $\{\mathcal{S}_{A}, \,\, A\in \mathcal{M}_{n\times n}\}$ the bound 
 \begin{equation}\label{BAend}
 \|\mathcal{S}_{A}\|_{p}\leq \|A\|(p^*-1) 
 \end{equation}
 cannot be improved in general.  Thus, via this general approach it is not possible to improve the bound 2$(p^*-1)$ from martingale inequalities without a more careful study of the structure of the martingale transform $\calB*X$ arising from the matrix in (\ref{BAmatrix}).   This possibility (already observed in \cite[p.~599]{BanWan3}) motivated the results in \cite{BanJan1} and leads to an improvement on the norm bound for $B$.  The idea is to find this additional structure and apply Corollary \ref{cor1}.  
 Again, set $f=f_1+if_2$. We consider the martingale 
$$X_t=\int_0^t \nabla_xV_f(Z_s)\cdot dB_s$$ and its martingale transform 
$$\calB *X_t=\int_0^t \calB\nabla_x V_f(Z_s)\cdot dB_s$$
where $\calB$ is the Beurling-Ahlfors matrix as in (\ref{BAmatrix}). We can easily check that  $\calB*X_t=(Y_t^1, Y_t^2)$ is the $\bR^2$--valued martingale with
$$
Y_t^1=\int_0^t\left( A_{11}
\nabla_xV_{f_1}(B_s) - A_{12}\nabla_x V_{f_2}(B_s)\right)\cdot dZ_s
$$
and 
$$
Y_t^2=\int_0^t\left( A_{11}\nabla_x V_{f_2}(B_s) + A_{12}\nabla_x V_{f_1}(B_s)\right)\cdot dZ_s.
$$
From here, we easily verify that $\calB*X_t$ is an $\bR^2$--valued conformal martingale as in Definition \ref{confmar} in \S2.2.  Indeed, 

\begin{eqnarray*}
\langle Y^1\rangle_t &=& \int_0^t \left|\left( A_{11}
\nabla_x V_{f_1}(B_s) - A_{12}\nabla_x V_{f_2}(B_s)\right)\right|^2ds \\
&=& \int_0^t\big({(\partial_1 V_{f_1}(B_s)-\partial_2 V_{f_2}
(B_s))}^2 +{(-\partial_2 V_{f_1}(B_s)-\partial_1 V_{f_2}
(B_s))}^2\big)ds \\ 
&=& \int_0^t {|\overline{\partial}V_f(B_s)|}^2 ds,
\end{eqnarray*}
where 
\begin{eqnarray*} 
|\overline{\partial} V_f|^2 = |(\partial_1+i\partial_2)V_f|^2
= {|\nabla_x V_f}^2- 2(\partial_1V_{f_1}\partial_2V_{f_2}
-\partial_2V_{f_1}\partial_2V_{f_2}).
\end{eqnarray*}
Next, computing the same for $Y^2$ we find that  
\begin{eqnarray*}
\langle Y^2\rangle_t &=& \int_0^t \left|\left(A_{11}
\nabla_x V_{f_1}(B_s) + A_{12}
\nabla_x V_{f_2}(B_s)\right)\right|^2ds \\
&=& \int_0^t\big({(\partial_2 V_{f_1}(B_s)+\partial_1 V_{f_2}
(B_s))}^2  +{(\partial_1 U_{f_1}(B_s)-\partial_2 U_{f_2}
(B_s))}^2\big)ds \\ 
&=& \int_0^t {|\overline{\partial}V_f|(B_s)|}^2 ds. 
\end{eqnarray*}
Hence $\langle Y^1 \rangle_t=\langle Y^2\rangle_t $, for all $t\geq 0$.  

In the same way, one verifies (see \cite{BanJan1}) that 
 $\langle Y^1,Y^2\rangle_t=0$ for all $t\geq 0$.  Thus, $\calB*X_t$ is a conformal martingale.  
 Since $\calB*X_t<<2X_t$, we can apply Corollary \ref{cor1} to conclude that 
 $$
 \|\calB*X_T\|_p\leq \sqrt{2p(p-1)}\|X_T\|_p, \,\,\,\,\,\, 2\leq p<\infty.
 $$
 
  This gives the following result proved in \cite{BanJan1}.
  
  \begin{theorem}\label{banjan} Suppose $2\leq p<\infty$.  For all $f\in C_0^{\infty}(\bC)$, $f:\bC\to\bC$, 
  \begin{eqnarray}\label{BAsecondbest}
 \|Bf\|_p&=&\lim_{T\to\infty}\|\mathcal{S}_{\calB}f\|_p\nonumber\\\nonumber\\
 &\leq &\sqrt{2p(p-1)}\lim_{T\to\infty}\|V_f(Z_T)-V_f(Z_0)\|_p\\\nonumber \\
 &=&\sqrt{2p(p-1)}\|f\|_p.\nonumber
 \end{eqnarray}
 If instead we restrict to $f:\bC\to\bR$,  we obtain the bound 
$$ \|Bf\|_p\leq \sqrt{p(p-1)}\|f\|_p,$$ for  $2\leq p<\infty$.
 \end{theorem}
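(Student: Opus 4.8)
The plan is to realize $Bf$ as the $L^2$-limit, as $T\to\infty$, of the projections $\mathcal{S}_{\calB}^{T}f$ of the space-time Brownian martingale transforms constructed in \S\ref{S_A}, and then to exploit the crucial structural fact that the martingale transform $\calB*X_t$ produced by the Beurling--Ahlfors matrix $\calB$ of (\ref{BAmatrix}) is not merely differentially subordinate to a multiple of $X_t$ but is in fact a \emph{conformal} martingale; this is exactly what lets us replace the generic factor $(p^*-1)$ coming from (\ref{STbound}) by the smaller conformal-martingale bound $\sqrt{p(p-1)/2}$ of Corollary \ref{cor1}, and it is the source of the improvement over the Nazarov--Volberg constant $2(p^*-1)$. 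Concretely, I would fix $T>0$, form $Z_t=(B_t,T-t)$ and the heat martingale $V_f(Z_t)=H_{T-t}f(B_t)$, and introduce $X_t=\int_0^t\nabla_x V_f(Z_s)\cdot dB_s$ together with its transform $\calB*X_t=\int_0^t\calB\nabla_x V_f(Z_s)\cdot dB_s$. Writing $f=f_1+if_2$ and separating real and imaginary parts expresses $\calB*X_t$ as the $\bR^2$-valued martingale $(Y_t^1,Y_t^2)$ with the integrands displayed in the excerpt, in terms of the two \emph{orthogonal} matrices $A_{11}=\mathrm{diag}(1,-1)$ and $A_{12}$.

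The key step is to compute the bracket processes of $Y^1$ and $Y^2$. Using that $A_{11}$ and $A_{12}$ are isometries of $\bR^2$, together with the Cauchy--Riemann expansion of $|\overline\partial V_f|^2$, one obtains $\langle Y^1\rangle_t=\langle Y^2\rangle_t=\int_0^t|\overline\partial V_f(Z_s)|^2\,ds$ and, by a similar computation, $\langle Y^1,Y^2\rangle_t=0$; hence $\calB*X_t$ is a conformal martingale in the sense of Definition \ref{confmar}. Moreover $|\overline\partial V_f|^2\le 2|\nabla_x V_f|^2$, so $[\calB*X]_t=2\int_0^t|\overline\partial V_f|^2\,ds\le 4\int_0^t|\nabla_x V_f|^2\,ds=[2X]_t$ with nondecreasing difference, i.e. $\calB*X_t<<2X_t$. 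Since $X_t$ is complex (hence $\bR^2$) valued, Corollary \ref{cor1} applies and gives $\|\calB*X_T\|_p\le\sqrt{p(p-1)/2}\cdot 2\|X_T\|_p=\sqrt{2p(p-1)}\,\|X_T\|_p$ for $2\le p<\infty$.

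It remains to descend to $L^p(\bC)$. The conditional expectation defining $\mathcal{S}_{\calB}^{T}f$ is an $L^p$-contraction because the law of $B_T$ under $P^T$ is Lebesgue measure by (\ref{lebesguedis}), and by the Itô representation (\ref{itomart}) one has $X_T=V_f(Z_T)-V_f(Z_0)$; hence $\|\mathcal{S}_{\calB}^{T}f\|_p\le\sqrt{2p(p-1)}\,\|V_f(Z_T)-V_f(Z_0)\|_p$, uniformly in $T$. Letting $T\to\infty$: by Theorem \ref{prop2}, combined with (\ref{fourierA}) which identifies $\mathcal{S}_{\calB}f=Bf$, we have $\mathcal{S}_{\calB}^{T}f\to Bf$ in $L^2$; since $\{\mathcal{S}_{\calB}^{T}f\}$ is bounded in $L^p$ by (\ref{STbound}), a subsequence converges weakly in $L^p$ to $Bf$, and lower semicontinuity of the norm yields $\|Bf\|_p\le\liminf_{T\to\infty}\|\mathcal{S}_{\calB}^{T}f\|_p$. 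Together with $\limsup_{T\to\infty}\|V_f(Z_T)-V_f(Z_0)\|_p\le\|f\|_p$ from \cite{BanMen1} this proves the first inequality. For $f:\bC\to\bR$ the argument is identical, except that $f_2=0$ and the relevant operator norm of $\calB$ is $\sqrt2$ (since $\|A_{11}v\|=\|A_{12}v\|=|v|$ gives $\|\calB v\|_{\bC^2}=\sqrt2\,|v|$ for real $v$), so $\calB*X_t<<\sqrt2\,X_t$ and Corollary \ref{cor1} produces $\sqrt{p(p-1)}$ in place of $\sqrt{2p(p-1)}$.

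The main obstacle, and really the whole point of the argument, is the conformality computation of the second paragraph: verifying $\langle Y^1\rangle=\langle Y^2\rangle$ and $\langle Y^1,Y^2\rangle=0$ is the structural feature that distinguishes $\calB$ from a generic matrix and that makes the sharper Corollary \ref{cor1} applicable. By contrast, the Itô calculus, the $L^p$-contraction property of the projection, and the passage to the limit $T\to\infty$ (including the estimate $\|V_f(Z_T)-V_f(Z_0)\|_p\le\|f\|_p$) are routine given the machinery of \S\ref{S_A} and \cite{BanMen1}.
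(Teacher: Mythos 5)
Your proposal is correct and follows the same route as the paper's own proof: you identify $\calB * X_t$ as a conformal martingale by the same bracket computations ($\langle Y^1\rangle_t=\langle Y^2\rangle_t$, $\langle Y^1,Y^2\rangle_t=0$), verify $\calB*X_t\ll 2X_t$, and apply Corollary \ref{cor1} before projecting back to $L^p(\bC)$ and taking $T\to\infty$. The minor elaborations you supply (the direct check $|\overline\partial V_f|^2\le 2|\nabla_x V_f|^2$ for the subordination, and the weak-$L^p$-limit justification replacing the cited estimate $\lim_T\|V_f(Z_T)-V_f(Z_0)\|_p\le\|f\|_p$ from \cite{BanMen1}) are consistent with and fill in details that the paper delegates to \cite{BanMen1}.
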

As remarked in \cite{BanJan1},  
we note that the bound ${\|B\|}_{p}\leq 
\sqrt{2(p^2-p)}$, $2\leq p<\infty$,  is already asymptotically better than 
($\ref{BAtau2}$). To see this,  divide both terms by $\sqrt{2}(p-1)$ and raise this to the power
$p$ and let $p\rightarrow\infty$. The $\sigma(p)$ term diverges and the estimate from (\ref{BAsecondbest})
converges to $\sqrt{e}$. 

Since $\|B\|_{2\to 2} =1$, we can use   
the Riesz-Thorin interpolation theorem and our estimate 
(\ref{BAsecondbest}) to improve the bound for all $p$. The following 
result is proved in \cite{BanJan1}. 

\begin{theorem}\label{BAbest} For all $f\in C_0^{\infty}(\bC)$, $f:\bC\to\bC$,

\begin{equation}\label{main_result2}
\|Bf\|_p\leq 1.575 \,(p^*-1)\,{\|f\|}_p, \,\,\,\,\,\, 1<p<\infty.
\end{equation}
\end{theorem}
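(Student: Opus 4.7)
The plan is to interpolate between the $L^2$-bound $\|B\|_{2\to 2}=1$, which follows from Plancherel since the multiplier symbol $\overline{\xi}^2/|\xi|^2$ has modulus one, and the estimate of Theorem~\ref{banjan}, $\|B\|_{p_0\to p_0}\leq \sqrt{2p_0(p_0-1)}$, at a carefully chosen exponent $p_0>2$; the resulting bound is then extended to $1<p<2$ by duality.

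For $2\leq p\leq p_0$ the Riesz--Thorin convexity theorem yields
\[
\|B\|_{p\to p}\;\leq\;\bigl(2p_0(p_0-1)\bigr)^{\theta/2},\qquad \theta=\frac{p_0(p-2)}{p(p_0-2)},
\]
where $\theta$ is determined by $\tfrac{1}{p}=\tfrac{1-\theta}{2}+\tfrac{\theta}{p_0}$. For $p\geq p_0$ Theorem~\ref{banjan} gives directly $\|B\|_{p\to p}/(p-1)\leq \sqrt{2p/(p-1)}$, which decreases from $\sqrt{2p_0/(p_0-1)}$ at $p=p_0$ down to $\sqrt{2}$ at infinity. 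Writing $\Psi(p,p_0)$ for the resulting pointwise bound on $\|B\|_{p\to p}/(p-1)$, the parameter $p_0$ is then chosen to minimize $\sup_{p\geq 2}\Psi(p,p_0)$. On the interpolation segment $[2,p_0]$ the function $\Psi(\cdot,p_0)$ is unimodal; balancing its interior maximum against the endpoint value $\sqrt{2p_0/(p_0-1)}$ pins down the optimal $p_0$ (numerically $p_0\approx 5$). A direct computation then verifies that $\|B\|_{p\to p}\leq 1.575\,(p-1)$ for every $p\geq 2$.

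For the range $1<p\leq 2$ I appeal to duality. The adjoint $B^{\ast}$ of $B$ with respect to the complex $L^2$ pairing is the Fourier multiplier with symbol $\xi^2/|\xi|^2$, and conjugating by the real--linear isometry $\tau f=\overline{f}$ one checks that $\tau B\tau=B^{\ast}$, so $\|B^{\ast}\|_{q\to q}=\|B\|_{q\to q}$. Consequently, with $q=p/(p-1)\geq 2$,
\[
\|B\|_{p\to p}=\|B^{\ast}\|_{q\to q}=\|B\|_{q\to q}\leq 1.575\,(q-1)=1.575\,(p^{\ast}-1),
\]
since $p^{\ast}-1=q-1$ in this range. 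The only genuinely delicate step is the one-variable numerical optimization that produces the explicit constant $1.575$; all the conceptual input comes from Theorem~\ref{banjan}, the trivial $L^2$ identity, standard Riesz--Thorin interpolation, and the duality symmetry just described.
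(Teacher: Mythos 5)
Your argument is correct and is essentially the same as the paper's (the paper states that the bound follows by Riesz--Thorin interpolation between $\|B\|_{2\to 2}=1$ and the estimate \eqref{BAsecondbest}, deferring the details to \cite{BanJan1}, which carries out exactly the one-parameter optimization over $p_0$ and the duality reduction you describe). The numerical verification with $p_0\approx 5.2$ and the observation $\tau B\tau=\overline{B}=B^{\ast}$ are both consistent with what is done there.
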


This theorem represents the best known estimate for the norm of the Beurling-Ahlfors operator and the best progress toward the Iwaniec's conjecture as of now.  However, the significance of this result is more than just the fact that we have come numerically closer to the desired conjecture than previous estimates.  Other arguments in the literature up to this point essentially estimate the norm of $B$ by estimating the norm of $R_1^2-R_2^2$ and $R_1R_2$, individually, and  adding them up. This is what is done, for example, to obtain the bound $2(p^*-1)$.  Such approach will not even provide the best constant for $p=2$. The estimate in Theorem \ref{BAbest} for the first time treats the operator $B$ as a single unit and takes into account some (but perhaps not all) the interactions between the martingales representing the second-order Riesz transforms and the operator $B$  itself. 

\begin{remark}
As in the case of the first order Riesz transforms, sharp weak-type $(1,1)$ inequalities are unknown for second-order Riesz transforms and for the Beurling-Ahlfors operator.  Of great  interest (see \cite[Ch. 4]{AstIwaMar1}) is the case of the Beurling-Ahlfors operator.  For this it is shown in \cite{BanJan2} and \cite{Gil1} that a lower bound for the best weak-type $(1,1)$ constant is $\frac{1}{\log2}$  and it is conjectured in \cite{BanJan2} that this is best possible. 
\end{remark}

\begin{remark}  There are also several papers in the literature which study the behavior of the $L^p$-constants for powers (iterations) of the
Beurling-Ahlfors  operator.  For some of this literature we refer the reader to \cite{Dra1} and \cite{DraPetVol1}. 
\end{remark}

\begin{remark}\label{smallest-heat} Finally, we should also remark here that just as in the case of the background radiation process (see Theorem \ref{background-smallest-B}),  it is not possible to pick a $2\times 2$ matrix which represents the operator $B$ and which has a norm smaller than $2$ using the space-time Brownian motion employed in this section.  In fact, it is easy to see that in this case the kernel of the operator $A\to \mathcal{S}_{A}$ is the linear span of the orthogonal matrix 
\begin{equation}
K=\left[\begin{array}{cc}0 & -1 \\ 1& 0\end{array}\right]
\end{equation}
and an easy computation shows that 
\begin{equation}\label{space-time-smallest-B}
\inf_{\alpha \in \bC}\|\calB+\alpha K\|=\|\calB\|=2,
\end{equation}
where $\calB$ is the $2\times 2$ matrix in (\ref{BAmatrix}) that represents the operator $B$. 

\end{remark}

\subsection{Multipliers of Laplace transform-type; heat semigroup}
Returning to (\ref{fourier-A(t)}), if  $A(t)=a(t){I}$, where ${I}$ is the identity $n\times n$ matrix, we again obtain the Laplace-type transform multipliers as in \S3.3
\begin{equation}\label{laplaceheat}
{\mathcal{S}_af}(x)=-\int_0^{\infty} a(t) \,\Delta H_{2t}f(x)dt=-\int_0^{\infty} a(t) \,\frac{\partial}{\partial t}{H_{2t}f(x)}dt
\end{equation}
with Fourier transform 
\begin{equation}\label{fourier-A(t)-1}
\widehat{\mathcal{S}_af}(\xi)=\left(4\pi^2 |\xi|^2\int_{0}^{\infty} a(t) e^{-4\pi^2 t|\xi|^2}\,dt\right) \widehat{f}(\xi)
\end{equation}
and a result similar to Theorem \ref{LaplaceTran} holds but this time with better constants.  More precisely, we have 

\begin{corollary}
Suppose 
 $$a(t)=\frac{t^{-i\gamma}}{\Gamma(1-i\gamma)}.$$
 Then 
 $$
\mathcal{S}_af(x)=(-\Delta)^{i\gamma}f(x) 
 $$ 
 and
 
\begin{equation}\label{Hyt2}
\|(-\Delta)^{i\gamma}f(x)\|_p\leq \frac{(p^*-1)}{|\Gamma\left(1-i\kappa\right)|}\|f\|_p,\,\,\,\,\,\, 1<p<\infty.   
\end{equation}
\end{corollary}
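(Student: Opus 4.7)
The plan is to verify the two claims in turn: first the multiplier identity $\mathcal{S}_a f = (-\Delta)^{i\gamma} f$, and then the norm bound, which will follow immediately from the bound $\|\mathcal{S}_a f\|_p \leq \|a\|_\infty (p^*-1)\|f\|_p$ (the case $A(t) = a(t) I$ of Theorem \ref{SAproj}, equivalently the specialization of (\ref{Sbound}) with $A = a(t) I$ whose operator norm is $\sup_t |a(t)|$).

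For the multiplier identity I would substitute $a(t) = t^{-i\gamma}/\Gamma(1-i\gamma)$ into the Fourier representation (\ref{fourier-A(t)-1}) and compute
\begin{equation*}
4\pi^2|\xi|^2 \int_0^\infty a(t)\, e^{-4\pi^2 t|\xi|^2}\, dt
= \frac{4\pi^2|\xi|^2}{\Gamma(1-i\gamma)} \int_0^\infty t^{-i\gamma}\, e^{-4\pi^2 t|\xi|^2}\, dt.
\end{equation*}
The change of variables $u = 4\pi^2 t |\xi|^2$ (valid for $\xi \neq 0$) converts the integral to $(4\pi^2|\xi|^2)^{i\gamma - 1} \int_0^\infty u^{-i\gamma} e^{-u}\, du = (4\pi^2|\xi|^2)^{i\gamma-1}\, \Gamma(1-i\gamma)$, where the $\Gamma$-identity is the standard analytic continuation of the Gamma integral to the line $\mathrm{Re}(s) = 1$ (which I would justify by noting that the integral converges absolutely there and equals $\Gamma(s)$ by Cauchy's theorem, rotating the contour back to the positive real axis). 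Multiplying through gives the symbol $(4\pi^2|\xi|^2)^{i\gamma}$, which is precisely the Fourier multiplier of $(-\Delta)^{i\gamma}$. Hence $\widehat{\mathcal{S}_a f}(\xi) = (4\pi^2|\xi|^2)^{i\gamma}\widehat{f}(\xi) = \widehat{(-\Delta)^{i\gamma} f}(\xi)$, and the two operators agree on $C_0^\infty(\bR^n)$.

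For the norm bound, observe that $|t^{-i\gamma}| = |e^{-i\gamma \log t}| = 1$ for all $t > 0$, so $|a(t)| = 1/|\Gamma(1-i\gamma)|$ and therefore $\|a(t) I\|_\infty = 1/|\Gamma(1-i\gamma)|$. Applying (\ref{Sbound}) to the matrix $A(t) = a(t) I$ (viewed as a $t$-dependent, $x$-independent $n \times n$ matrix with $\|A\| < \infty$) yields
\begin{equation*}
\|(-\Delta)^{i\gamma} f\|_p = \|\mathcal{S}_a f\|_p \leq \frac{p^*-1}{|\Gamma(1-i\gamma)|}\|f\|_p, \qquad 1 < p < \infty,
\end{equation*}
which is the claimed inequality.

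The main obstacle, such as it is, is the identification of the complex-parameter Gamma integral $\int_0^\infty u^{-i\gamma}e^{-u}\,du = \Gamma(1-i\gamma)$; this is standard but deserves a one-line contour-shift justification to address the oscillatory factor on the real axis. Everything else is a direct application of formula (\ref{fourier-A(t)-1}) and the general $L^p$-bound for $\mathcal{S}_A$ already proved, with the crucial observation being that replacing the Poisson semigroup by the heat semigroup saves a factor of $2$ relative to the corresponding bound (\ref{Hyt1}) for $T_a$, as remarked by the author immediately before this corollary.
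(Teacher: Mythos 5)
Your proposal is correct and follows essentially the same (implicit) route as the paper: substitute into the Fourier representation \eqref{fourier-A(t)-1}, reduce the $t$-integral to the Gamma integral by the change of variables $u = 4\pi^2 t|\xi|^2$, and then apply the general $L^p$-bound \eqref{Sbound} with $A(t) = a(t)I$ and $\|A\| = 1/|\Gamma(1-i\gamma)|$. One small remark: the appeal to "analytic continuation" or a contour rotation for the identity $\int_0^\infty u^{-i\gamma}e^{-u}\,du = \Gamma(1-i\gamma)$ is unnecessary, since $s = 1-i\gamma$ lies on the line $\mathrm{Re}\,s = 1$, which is already inside the half-plane $\mathrm{Re}\,s > 0$ where the defining Gamma integral converges absolutely ($|u^{-i\gamma}| = 1$ does not affect integrability); no continuation is being performed.
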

We should now compare this bound  with (\ref{Hyt1}). For this, consider the ratio of the Gamma factor in (\ref{Hyt1}) and the one in (\ref{Hyt2}).  Using the doubling property of the Gamma function we see that 
\begin{equation}\label{gamma2}
\frac{|\Gamma\left(2-2i\gamma \right)|}{|\Gamma\left(1-i\gamma\right)|}=
\frac{2}{\sqrt{\pi}}\Big|\Gamma\left(\frac{3}{2}-i\gamma\right)\Big|. 
\end{equation}
At $\gamma=0$ we see that  this ratio is $1$.  On the other hand, for all $z\in \bC$
 the product formula gives that 
$$
\Gamma(z)=\frac{e^{-\kappa z}}{z}\prod_{n=1}^{\infty}\left(1+\frac{z}{n}\right)^{-1}\, e^{z/n},
$$
where $\kappa\approx 0.57721$ is the Euler-Mascheroni constant.  From this it follows trivially (by examining each factor separately) that the right hand side of (\ref{gamma2}) is  a decreasing function of $\gamma$ on $(0, \infty)$.  Since $\overline{\Gamma({z})}=\Gamma({\overline z})$, we see that the above ratio is always smaller than $1$ for all $\gamma\neq 0$. (The basic identities used above for the Gamma function can all be found in \cite{AbrSte1}.) Thus the bound in (\ref{Hyt2}) is better than the bound in (\ref{Hyt1}).  To the best of our knowledge this bound,  which appeared for the first time in \cite[Eq. (7.3)]{Hyt3}, is the best available in the literature.    This raises the following challenging problem.

\begin{problem}  Find the best constant in the inequality (\ref{Hyt2}).  That is, what is the norm of the operators $(-\Delta)^{i\gamma}$ on $L^p(\bR^n)$, for $1<p<\infty$? 
\end{problem}

With $A(t)=a(t)\tilde{I}$, where $\tilde{I}$ is an $n\times n$ matrix as in the Laplace transform multipliers of \S3.3 (1 in the first entry and 0 else) and $a$ is a bounded function on $\bR^{+}$, we have the operator
\begin{equation*}
{\mathcal{S}_af}(x)=-\int_0^{\infty} a(t) \,\partial_1^2 H_{2t}f(x) dt,
\end{equation*}
with similar versions for $\partial_j$ and even for the Cauchy-Riemann operators $\partial$ and $\overline{\partial}$ in $\bC$. 

%\begin{remark}\label{remarkBALaplace}
With the matrix 
\begin{equation}\label{BAmatrix-1}
A_{a}=\left[\begin{array}{cc}a(t) & -i a(t) \\\\-i a(t) & -a(t)\end{array}\right],
\end{equation}
where $a$ is a bounded function on $(0, \infty)$, we obtain the multipliers on $\bC$
\begin{equation}\label{BA-Laplace-1}
\mathcal{S}_{A_{a}}f(x)=-\int_0^{\infty} a(t) \,\partial^2 H_{2t}f(x) dt,
\end{equation}
where the operator $\partial$ is the Cauchy-Riemann operator as in (\ref{partials-1}).  From this it follows that 

\begin{equation}\label{fourier-A(t)Laplace}
\widehat{\mathcal{S}_{A_a}f}(\xi)=\left(4\pi^2 \overline{\xi}^2\int_{0}^{\infty} a(t) e^{-4\pi^2 t|\xi|^2}\,dt\right) \widehat{f}(\xi).
\end{equation}
Combining with (\ref{fourier-A(t)-1}) we have 
$$
\mathcal{S}_{A_{a}}f(x)=B\circ\mathcal{S}_{A_{a}}f(x).
$$
As in (\ref{RiesLaplace3}), we obtain 
\begin{equation}\label{BA-Laplace-2}
\|B\circ\mathcal{S}_{A_{a}}f\|_p\leq 2(p^*-1)\|a\|_{\infty}\|f\|_p, 
\end{equation}
for all $1<p<\infty$. 
\begin{remark}\label{remarkBALaplace}
Similar versions of (\ref{BA-Laplace-2}) can be obtained for compositions of second order Riesz transforms with Laplace transforms-type multipliers by replacing $\partial^2$ in (\ref{BA-Laplace-1}) with $\partial_j^2$ or $\partial_j\partial_k$.  

Finally, we should also note here that with the function $a$ taking real values the better bound of $\sqrt{2p(p-1)}$ for $2\leq p<\infty$ given in Theorem \ref{banjan} above holds for the operator $B\circ\mathcal{S}_{A_{a}}$. 
\end{remark}

\subsection{Beurling-Ahlfors in $\bR^n$ and the Iwaniec-Martin Conjecture}
From the Fourier transform of $B$ in (\ref{BAlaplacian}), we know that in 
terms of the Laplacian and the Cauchy-Riemann operator $\partial$ we have  $B=-{\partial^2}(-\Delta)^{-1}$.  From this it follows (see \cite{AstIwaMar1}) that the Iwaniec conjecture is equivalent to proving that $\|\partial f\|_p\leq (p^*-1)\|\overline\partial f\|_p$, for all $f\in C_0^{\infty}(\bC)$.  In their study of {\it Quasiconformal 4-manifolds}  \cite{DonSul1},  Donaldson and Sullivan defined a version of $B$ in higher dimensions acting on differential forms in terms of the Laplacian, 
the Hodge operator $\delta$, and its adjoint $\delta^*$. Recall that the  $k$--form 
$$
\omega(x)=\sum_I \omega_I dx_I,\quad dx_I=dx_{i_1} \wedge \ldots \wedge dx_{i_k},
$$
 in $\bR^n,\ k=1,2,\ldots,d,$ is in $L^p (\bR^n, \wedge^k)$ if
$$
\| \omega\|_{L^p (\bR^n,\wedge^k)} 
= \Big\|\left(\sum_I|\omega_I|^2\right)^{1/2}\Big\|_p < \infty.
$$
 We set 
$$
L^{p}(\bR^{n}, \wedge)=\oplus_{k=0}^{n}L^p (\bR^n, \wedge^k).
$$
The Donaldson--Sullivan ``signature" operator is defined by 
$$S\omega=(\delta\delta^*-\delta^*\delta)\circ (-\Delta)^{-1}\omega,$$
where the Laplacian acts on forms by acting on its coefficients.  
This is again a Calder\'on-Zygmund singular integral operator and as such it follows from \cite{Ste2} that   
$$
S: L^p (\bR^n, \wedge)\to L^p (\bR^n, \wedge),
$$
 for all $1<p<\infty$.  The $L^p$ operator norm $\|S\|_{p}$ is directly connected to the regularity of quasiregular mappings,
as well as conditions for a closed set to be removable under such maps. Identification
of the norm, as in the two dimensional case,  would also have implications for the existence of minimizers of
conformally invariant energy functionals and regularity of solutions to the generalized
Beltrami system, see \cite{AstIwaMar1, IwaMar1, IwaMar2}.

In $\bR^2$, acting on one forms, $S$ reduces to $B$, up to a sign. 
In \cite{IwaMar1},  Iwaniec and Martin proved that 
$$(p^*-1)\leq \|S\|_{p}\leq C (n+1)p^2,$$
where  $C$ is a universal constant independent of $n$ and made the far reaching 
\begin{conjecture}\label{IwaMar1}
For all $n\geq 2$, \,\,
$\|S\|_{p}=(p^*-1), \,\,1<p<\infty.$  
 \end{conjecture}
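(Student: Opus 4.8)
The plan is to run the space-time Brownian motion program of \S3 --- in the form developed there for the Beurling--Ahlfors operator --- on differential forms, and then to push it past the naive ``sum of pieces'' estimate by exploiting the conformal structure of the associated martingale transform, exactly in the spirit of Theorem~\ref{BAbest} and Corollary~\ref{cor1}. First I would record that the lower bound $(p^*-1)\le\|S\|_p$ is already in hand: since $S$ restricted to one-forms in $\bR^2$ is $\pm B$ and $\|B\|_p\ge p^*-1$ by Lehto's bound \cite{Leh1}, the general inequality follows by viewing planar data as forms on $\bR^n$ that are constant in the additional variables, which is essentially how Iwaniec and Martin argue in \cite{IwaMar1}; a laminate or Bourgain-type argument in the spirit of \cite{GeiSmiSak1, NicVol1} gives the same conclusion. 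Thus the entire content of the conjecture is the matching upper bound.

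For the upper bound, the first step is to represent $S$ as a projected martingale transform. Extend a $k$-form $\omega$ by the heat semigroup to $\bR^{n+1}_{+}$, compose with space-time Brownian motion $Z_t=(B_t,T-t)$ to obtain a vector-valued heat martingale $V_\omega(Z_t)$, and transform it by a bundle endomorphism $A$ of $\wedge^\bullet$ built from the constant algebraic symbol $\delta\delta^*-\delta^*\delta$ of $S$. The occupation-time identity \eqref{occu1} together with Plancherel's theorem shows, exactly as in \eqref{firstorderriesz} and \eqref{prop2eq}, that the projection $\mathcal{S}_A\omega$ coincides with $S\omega$, so the martingale inequality behind Theorem~\ref{SAproj} and Corollary~\ref{transforms1} yields $\|S\omega\|_p\le\|A\|\,(p^*-1)\,\|\omega\|_p$ for $1<p<\infty$. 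The obstacle already visible here is that $\|A\|>1$ in general --- the same phenomenon as $\|\calB\|=2$ for $B$ --- and, by the analogue of Remark~\ref{smallest-heat}, one cannot remove it just by adding elements of the kernel of $A\mapsto\mathcal{S}_A$. So this step alone cannot produce the sharp constant.

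The decisive step, and the one I expect to carry the argument if it can be carried at all, is to analyze the fine structure of the transformed martingale and to show that it decomposes into a part which is differentially subordinate through a norm-one matrix and a part which is a (higher-dimensional) conformal martingale, so that Corollary~\ref{cor1} and its $1<p\le2$ companion \eqref{JV} apply to the latter; an interpolation against the trivial bound $\|S\|_{2\to2}=1$, as in Theorem~\ref{BAbest}, then sharpens the constant for all $p$. Concretely one would diagonalize $\delta\delta^*-\delta^*\delta$ on $\wedge^k$ over the sphere of directions, pair its eigenspaces into $\pm$ blocks to manufacture conformality --- the eigenvalue pattern of the signature operator is designed precisely so that such a pairing exists, which is exactly why Donaldson and Sullivan introduced it --- and verify the relations $\langle Y^i,Y^j\rangle_t\equiv0$ for $i\ne j$ and $\langle Y^i\rangle_t\equiv\langle Y^j\rangle_t$ by the same computation as in \S\ref{IwBa-conj}.

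The main obstacle is blunt. This conjecture \emph{contains} the Iwaniec conjecture --- take $n=2$ and one-forms --- which has been open since 1982 and for which the best known bound is still the $1.575(p^*-1)$ of Theorem~\ref{BAbest}. So a genuine proof must supply a new idea that treats $S$ as a single unit and captures \emph{all} the interactions among the second-order Riesz transforms appearing in it; the conformal-martingale refinement above captures some of them but, as the $1.575$ constant shows, demonstrably not all. A realistic assessment of the program is therefore that it will reproduce, in the higher-dimensional setting, whatever constant is eventually proved for $B$ in the plane, and that reaching the sharp value $(p^*-1)$ will require either a substantially finer Burkholder-type function adapted to the conformal and orthogonal structure in all dimensions at once, or a non-probabilistic ingredient --- laminates, a Bellman function of several form-valued variables, or new rigidity in the quasiconformal geometry underlying $S$.
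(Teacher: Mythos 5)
This statement is an open conjecture, not a theorem of the paper; the paper offers no proof, records only the Iwaniec--Martin bounds $(p^*-1)\le\|S\|_p\le C(n+1)p^2$ and the dimension-dependent improvements of \cite{BanLin1} and \cite{Hyt3}, and explicitly remarks that its resolution ``seems out of reach.'' Your write-up is therefore not a proof and does not claim to be one; it is an accurate and self-aware assessment of the known strategy and why it falls short. You correctly locate the three ingredients the paper assembles toward the conjecture --- the space-time martingale representation of \S3.8, the conformal-martingale refinement of Theorem~\ref{banjan} and Corollary~\ref{cor1}, and the interpolation of Theorem~\ref{BAbest} --- and you correctly observe the fatal point: the conjecture contains Conjecture~\ref{Iwaniec} (take $n=2$, one-forms), for which the best constant the martingale machinery has produced is $1.575(p^*-1)$, and the kernel-computation obstruction (Theorem~\ref{background-smallest-B}, Remark~\ref{smallest-heat}, Proposition~\ref{gaussian-modulations-smallest}) shows one cannot reduce the norm of the representing matrix below $2$. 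Two small cautions on the details you sketch as ``routine'': the identification $S=\mathcal{S}_A$ at the level of differential forms requires the block-diagonal second-order Riesz transform representation worked out in \cite{BanLin1}, and whether the resulting martingale transform admits the clean conformal pairing you posit in every degree $k$ and dimension $n$ is precisely what is not known --- the computation verified in \S\ref{IwBa-conj} is specific to the $\bR^2$ Beurling--Ahlfors case, and even there it yields only $\sqrt{2p(p-1)}$, not $(p^*-1)$. What you propose is consistent with the paper's own hopes (see Problem~\ref{Banconj} and the closing discussion of \S5), but it is a research program, not an argument, and the paper does not supply the missing ideas either.
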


The operator $S$ has a representation as a sparse, 
block--diagonal matrix of second-order Riesz transforms.  In \cite{BanLin1}, this representation is used to give a 
representation of $S$ in terms of (``harmonic") martingale using the background radiation processes as in \S3.2 above.  From this and Burkholder's martingale inequalities as presented in \S2.1,  the following estimate is obtained in \cite{BanLin1}
$$
\|S\|_{p}\leq \begin{cases} \left({n}+2\right)(p^*-1),
&\text{$2\leq n\leq 14,\text{ and even}$}\\ 
{(n+1)}(p^*-1),&\text{$3\leq n\leq 13,\text{ and odd}$}\\
{\left({4n\over 3}-2\right)(p^*-1)},&\text{otherwise}.
\end{cases}
$$

 Using the space-time Brownian motion representation in \S3.3 and the exact same proofs as in \cite{BanLin1}, one obtains the bound $(\frac{2n}{3}-1) (p^*-1)$, for  $n\geq 15$, and other similar improvements  for $n$ below 15.  
By a much more careful analysis of the arguments in \cite{BanLin1}  and using again the space-time martingale as above, Hyt\"onen  \cite{Hyt3} has improved the bound to $\|S\|_{p}\leq (\frac{n}{2}+1) (p^*-1)$, for all $n\geq 2$. Other improvements of the bounds in \cite{BanLin1} using Bellman function techniques were obtained in \cite{PetSlaBre1}.

The resolution of Conjecture \ref{IwaMar1} seems out of reach at this point to this author. The following less ambition problem seems more plausible,  but even this has eluded us so far. 

\begin{problem}\label{Banconj}
Prove that for all $n\geq 2$, $\|S\|_{p}\leq C(p^*-1)$, $1<p<\infty$, for some constant $C$ independent of the dimension $n$. 
\end{problem}
 
 Finally, we mention the very recent applications of the space-time martingale inequalities by X.-D. Li \cite{Li2} (see also \cite{Li1, Li3}) to establish  the weak $L^p$-Hodge decomposition theorem and to prove the $L^p$-boundedness of the Beurling-Ahlfors operator on complete non-compact Riemannian manifolds with the so called non-negative Weitzenb\"ock curvature  operator.  In these papers, Li shows that the formulas we discussed above for Riesz transforms in $\bR^n$ when properly formulated continue to holds on manifolds under very general conditions.  In this context Theorem 3.4 in \cite{Li2} is particularly interesting as it shows that the martingale representation for the Beurling-Ahlfors operator in \cite{BanMen1} (which is a corollary of Theorem \ref{prop2} above) continues to hold on manifolds. The proof of such a  representation on manifolds, while much more technical, retains many of the features of the proof for the classical Beurling-Ahlfors operator in $\bR^2$ as given in \cite{BanMen1}.  These applications are yet another example of the power of Burkholder's ideas and their range of applications in areas of mathematics that on the surface seem far removed from the martingale transforms of Theorem \ref{bur66}.

\section{L\'evy processes and Fourier multipliers}
 When we view ``heat" martingales as those arising by composing the heat extension of the function with space-time Brownian motion and ``harmonic" martingales as those arising by composing the Poisson extension of the function with killed Brownian motion, the natural question arises:  Is it possible to use  other symmetric stable processes of order 
$0<\alpha<2$ to investigate the quantity $\|B\|_{p}$?  This question was raised in \cite[p.~989]{BanMen1}. Even more, is there a 
theory  similar to that of Brownian motion that would lead to martingale transforms and Fourier multipliers arising from more general 
L\'evy processes and their semigroups?  Some answers to these questions are provided in  \cite{BanBieBog1, BanBog1} where a general class of multipliers is obtained by transformations  of the Gaussian and jump parts of L\'evy processes.  We call these L\'evy multipliers.

\subsection{L\'evy multipliers} Consider a measure  $\nu\geq 0$ 
on $\bR^n$ with $ \nu(\{0\})=0$ and
 \begin{equation}\label{levymu1}
 \int_{\bR^n} \frac{|x|^2}{1+|x|^2} \,d\nu(x)<\infty.
\end{equation}
A measure with these properties is called  a  L\'evy measure.  For any finite Borel measure  $\mu\geq
0$ on the unit sphere
$\uS\subset \bR^n$ and functions $\varphi:\bR^n\to \bC$, $\psi:\uS\to \bC$ with $\|\phi\|_\infty\leq 1$ and $\|\psi\|_\infty\leq 1$, 
we consider the (multiplier) function 

\begin{equation} 
\M\left(\xi\right)=
\frac{
      \int_{\bR^n} \Big(1 - \cos (\xi \!\cdot\! x) \Big)\varphi\left(x\right)d\nu(x)+ 
     \frac{1}{2} \int_{\uS} |\xi\!\cdot\! \theta|^2 \psi\left( \theta \right) d\mu(\theta)
     }
     {
      \int_{\bR^n} \Big(1 - \cos (\xi \!\cdot\! x) \Big)d\nu(x)+ 
      \frac{1}{2}\int_{\uS} | \xi\!\cdot\! \theta|^2 d\mu (\theta)
     }.
\end{equation}
To emphasize the connections to the L\'evy-Khintchine formula given below, note that  
\begin{eqnarray*} 
\M\left(\xi\right)&=&
\frac{
      \int_{\bR^n} \Big(\cos (\xi \!\cdot\! x)
     -1 \Big)\varphi\left(z\right)d\nu(x) - 
\frac{1}{2}{A}\xi\cdot \xi
     }
     {
      \int_{\bR^n} \Big(\cos (\xi \!\cdot\! x)-1\Big)d\nu(x) - 
\frac{1}{2}{B}\xi\cdot \xi
     },
\end{eqnarray*}
where
\begin{eqnarray*}
 {A} = \left[ \int_{\uS} \psi\left(\theta\right)
  \theta_{i}\theta_{j}\, d\mu(\theta)
 \right]_{i,j=1 \ldots d} \quad \mbox{and} \quad
 {B} = \left[ \int_{\uS}  \theta_{i}\theta_{j}\, d\mu(\theta) \right]_{i,j=1 \ldots d}
\end{eqnarray*}
and where both ${A}$ and ${B}$ are $n\times n$ symmetric matrices and ${B}$ is
non-negative definite.   We observe that $\|\M\|_{\infty}\leq 1$. We call $\M$ a  {\it L\'evy multiplier}.
For reasons that will become clear later, we call $\varphi$ a {\it L\'evy jumps transformation} 
function and $\psi$ a  {\it L\'evy Gaussian transformation} function.
\begin{theorem}\label{LevyLp-bound} The Fourier multiplier   
$\widehat{\mathcal{S}_{\M}}f(\xi)=\M(\xi)\hat{f}(\xi)$ on $L^2(\bR^n)$ extends to an operator on $L^p(\bR^n)$, $1<p<\infty$, with
\begin{equation}
\|\mathcal{S}_{\M} f\|_p\leq (p^*-1)\|f\|_p.
\end{equation}
\end{theorem}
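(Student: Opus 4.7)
The plan is to represent $\mathcal{S}_{\M} f$ as the conditional expectation of a martingale transform built from a symmetric L\'evy process, and then to invoke Theorem \ref{Wan1} to obtain the sharp constant $(p^*-1)$. First I would let $X_t$ be the symmetric L\'evy process on $\bR^n$ whose characteristic exponent is
\begin{equation*}
\rho(\xi) = -\int_{\bR^n}\bigl(1 - \cos(\xi \cdot x)\bigr)\,d\nu(x) - \frac{1}{2}\int_{\uS} |\xi \cdot \theta|^2 \,d\mu(\theta),
\end{equation*}
which by the L\'evy-Khintchine formula is precisely minus the denominator of $\M(\xi)$. For $f \in C_0^{\infty}(\bR^n)$ and a fixed $T > 0$, I would set $U_f(x,t) = E[f(x + X_{T-t})]$, so that $M_t := U_f(x_0 + X_t,t)$ is a martingale on $[0,T]$ with terminal value $f(x_0+X_T)$; the analogue of the heat extension from \S3.9 is now governed by the L\'evy generator instead of $\tfrac{1}{2}\Delta$.

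By the L\'evy-It\^o decomposition, $X_t$ splits into a continuous Gaussian part $\sigma W_t$ (with covariance matrix determined by $\mu$) and a compensated Poisson integral $\int x\,\tilde{N}(ds,dx)$ with intensity $\nu$. Applying the It\^o formula for semi-martingales,
\begin{align*}
M_t = M_0 &+ \int_0^t \sigma^{\ast}\nabla_x U_f(x_0+X_{s-},s)\cdot dW_s \\
&+ \int_0^t\!\!\int_{\bR^n} \bigl[U_f(x_0+X_{s-}+x,s) - U_f(x_0+X_{s-},s)\bigr]\,\tilde{N}(ds\,dx).
\end{align*}
I would then form a new martingale $N_t$ by inserting $\psi(\theta)$ into the Brownian integrand (after decomposing the Gaussian noise along the directions $\theta$ supporting $\mu$) and $\varphi(x)$ into the jump integrand. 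Because $\|\varphi\|_\infty \leq 1$ and $\|\psi\|_\infty \leq 1$, a direct inspection of the continuous and purely discontinuous pieces of the quadratic variation shows that $[M]_t - [N]_t$ is nondecreasing and nonnegative, so $N << M$ in the sense of the continuous-time differential subordination of Theorem \ref{Wan1}.

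Now define
\begin{equation*}
\mathcal{S}_{\M}^T f(x) := E\bigl[\,N_T \,\bigm|\, x_0 + X_T = x\bigr].
\end{equation*}
To identify the Fourier symbol I would pair $\mathcal{S}_{\M}^T f$ with a test function $g$, write the pairing as $E[g(x_0+X_T)N_T]$, and use the It\^o and compensated Poisson isometries together with the fact that the Fourier transform of $P_t f$ is $e^{t\rho(\xi)}\widehat f(\xi)$. After integrating in $s\in (0,T)$ and applying Plancherel, the denominator $-\rho(\xi)$ emerges from the time integral while the numerator pieces emerge from the $\psi$- and $\varphi$-transformed integrands, so that $\widehat{\mathcal{S}_{\M}^T f}(\xi) \to \M(\xi)\widehat{f}(\xi)$ as $T \to \infty$. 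Since conditional expectation is an $L^p$-contraction, Theorem \ref{Wan1} applied to $(M,N)$ gives $\|\mathcal{S}_{\M}^T f\|_p \leq (p^*-1)\|M_T\|_p \leq (p^*-1)\|f\|_p$, and passing to the limit finishes the argument.

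The main obstacle will be the rigorous handling of the jump part: the pointwise bound $|\varphi| \leq 1$ has to be translated cleanly into the pathwise comparison $[M]_t - [N]_t \geq 0$ with the correct use of the compensator, and the Fourier identification must remain valid when $\nu$ has infinite mass near the origin. The standard device is to truncate $\nu$ to the annulus $\{\varepsilon < |x| < 1/\varepsilon\}$ (where everything is a finite superposition of compound Poisson processes), carry out the above argument to get $L^p$-bounds that are uniform in $\varepsilon$ and $T$, and then pass to the limit using the pointwise convergence of the corresponding multipliers together with their uniform bound by $1$.
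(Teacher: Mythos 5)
Your proposal is essentially the same as the paper's: represent $\mathcal{S}_{\M}$ as the conditional expectation of a martingale transform of the space-time martingale $V_f(X_t,T-t)$ built from the L\'evy process, establish differential subordination from $\|\varphi\|_\infty\le 1$ and $\|\psi\|_\infty\le 1$, invoke Theorem \ref{Wan1}, use the $L^p$-contractivity of conditional expectation, identify the symbol by pairing with a test $g$ and Plancherel, and pass to the limit by truncation of $\nu$. The one organizational difference is that you treat the Gaussian and jump pieces simultaneously via L\'evy--It\^o, whereas the paper outlines the pure-Gaussian and symmetric compound-Poisson cases separately (equations (\ref{fourier-gaussian-A(t)})--(\ref{fourierGaussian}) and Proposition \ref{Lp-levybound}, respectively) and states that the mixed case reduces to the latter by symmetrization and approximation; both routes appear in the cited references. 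Your direct route is cleaner conceptually, but be aware that ``decomposing the Gaussian noise along the directions $\theta$ supporting $\mu$'' needs a little care when $\mu$ is not atomic: either approximate $\mu$ by finitely supported measures, or avoid the directional decomposition and insert a constant matrix $A_\psi$ into the Brownian integrand, checking differential subordination via $|A_\psi\xi\cdot\xi|\leq B\xi\cdot\xi$, as in the paper's treatment. You should also note explicitly that the multiplier $\M(\xi)$ depends only on the symmetrization of $\nu$ (because of the $1-\cos$ factor), so replacing $\nu$ by its symmetrization costs nothing; this is what justifies working with a symmetric L\'evy symbol throughout.
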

%\vskip.2cm

As it turns out, this class of multipliers also includes the second-order Riesz transforms $R_1R_2$ and hence, again, by the result of Geiss, Montgomery-Smith and Saksman \cite{GeiSmiSak1}, the constant $(p^*-1)$ cannot be improved in general.  

Theorem \ref{LevyLp-bound} is proved in \cite{BanBog1} for symmetric $\nu$ and with $\mu=0$ and in \cite{BanBieBog1} for general $\nu$
and $\mu$.  By a symmetrization and approximation argument, the general case reduces to the special case of $\nu$ symmetric and $\mu=0$.  While we will not give the details here, we present some ideas on how these Fourier multipliers arise from 
L\'evy processes.

L\'evy processes provide a rich class of stochastic processes which generalize several of the basic processes in probability, including Brownian motion, Poisson and compound Poisson processes, stable processes and other processes subordinated to  Brownian motion. They have been widely used in many areas of pure and applied mathematics, including stochastic control, financial mathematics, potential analysis, geometry and PDE's. We refer the reader to \cite{ConTan1}, \cite{OksSul1} and to the survey article \cite{App1} where some of these connections and applications are discussed.  Here we are interested in projections (conditional expectations) of martingale transforms arising by modifying the symbol $\rho$.  These operators lead  to Theorem \ref{LevyLp-bound}.

Recall that a L\'evy process  $\{X_t\}$ in $\bR^n$ is  a stochastic process with independent and stationary 
increments which is stochastically continuous. That is, for all $0<s< t<\infty$, Borel sets $\Theta\subset \bR^n$,  
$P_{0}\{\,X_t-X_s\in \Theta\,\}=P_{0}\{\,X_{t-s}\in \Theta\,\}$, and 
for any given sequence  of  ordered times $0<t_1<t_2<\cdots <t_m<\infty,$ the random variables  $X_{t_1}-X_0,\,\,  X_{t_2}-X_{t_1}, \dots, X_{t_m}-X_{t_{m-1}}$
are independent. Furthermore,   for all $\varepsilon>0$, 
$\lim_{t\to s}P_{0}\left\{\,|X_t-X_s |>\varepsilon\,\right\}=0.$
The celebrated  L\'evy-Khintchine formula
\cite{Sat1} 
guarantees the existence of a triple 
$\left( b, B, \nu \right)$ 
such that the characteristic function of the process is given by    
$E_0\left[\, e^{i \xi \cdot X_t}\,\right]  = e^{t
\rho(\xi)},$ 
where 
\begin{equation}\label{levykin}
\rho(\xi)= i b\cdot \xi  - \frac{1}{2} B\xi\cdot \xi +
\int_{\bR^n}\left[\,  e^{i\,\xi \cdot x}- 1 - i (\xi\cdot y) \,\bI_{B(0, 1)} (x) \,\right]\,
d\nu(x).
\end{equation}
Here,  $b \in \bR^n$,   $B$ is a non-negative $n \times n$ symmetric matrix, 
$\bI_{B(0,1)}$ is the indicator function of the unit ball $B(0, 1)\subset\bR^n$ and $\nu$ 
is a measure  on $\bR^n$ satisfying (\ref{levymu1}).

The triplet $\left( b , B, \nu \right)$, referred to here as a {\it L\'evy triplet},  is called the characteristics of  the process and  the measure $\nu$ is called the {\it  L\'evy measure} of the process. Conversely, given $\left( b , B, \nu \right)$ with such properties  there is  L\'evy process corresponding to it.  We will refer to $\rho(\xi)$ as the {\it L\'evy symbol}. The L\'evy triplet $\left( 0 , I, 0\right)$, where I is the $n\times n$ identity matrix gives the standard {\it Brownian motion} in $\bR^n$ and $\left( 0 , B, 0 \right)$ gives more general {\it Gaussian processes} with covariance $b_{jk}\min(s, t)$, where $B=(a_{jk})$.   Brownian motion plus drift  $X_t=bt+B_t$ 
arises from $\left( b , I, 0 \right)$. 

The  Poisson process $\pi_{\lambda}(t)$ of intensity $\lambda$ arises from $(0, 0, \lambda \delta_1)$, where $\delta_1$ is the Dirac delta at 1. If we let  $Y_1, Y_2, \dots$ be i.i.d. with distribution 
$\nu$ and independent of $\pi_{\lambda}(t)$, we get the {\it compound Poisson process} 
\begin{equation}\label{poisson1}
X_t=Y_1+Y_2+\cdots +Y_{ \pi_t(\lambda)}=S_{\pi_{\lambda}(t)}.
\end{equation}
By independence, 
\begin{eqnarray*}
E[e^{i\xi\cdot X_t}]=\sum_{m=0}^{\infty} P\{
\pi_{\lambda}(t)=m\}\, E[e^{i\xi\cdot S_m}]&=& \sum_{m=0}^{\infty}\frac{e^{-\lambda t}(\lambda t)^m}{m!} \left(\widehat{\nu}(\xi)\right)^m\\\\
&=&e^{\left(\lambda t\left(\widehat{\nu}(\xi)-1\right)\right))}.
\end{eqnarray*}
Hence, $X_t$ is a L\'evy process with L\'evy symbol 
\begin{equation}\label{compound}
 \rho(\xi)=\lambda\int_{\bR^n}(e^{i x\cdot\xi}-1) d\nu(x).
\end{equation}

A class of L\'evy processes which has been widely studied is rotationally invariant (symmetric) stable processes.  These are self-similar processes with L\'evy symbols
$
\rho(\xi)=-|\xi|^\alpha$, $0<\alpha\leq 2,
$
and L\'evy measures 
$$d\nu^{\alpha}
=\frac{c_{\alpha, n}}{|x|^{n+\alpha}}dx$$
 for $0<\alpha<2$ and $\nu^{\alpha}=0$ for $ \alpha=2.$
Here, ${c_{\alpha, n}}$ is a normalizing constant depending only on $\alpha$ and $n$. For  $\alpha=1$, this gives the  Cauchy  
process and  $\alpha\to 2$, gives Brownian motion.

We assume that $f\in C_0^{\infty}(\bR^n)$. 
The semigroup of the L\'evy process $\{X_t\}$ with L\'evy symbol $\rho$ acting on $f$ is given by 
\begin{eqnarray}\label{semigroup}
P_tf(x)=E_0[f(X(t)+x)] &=&E_0 \left(\int_{\bR^n}e^{-2\pi i(X_t+x)\cdot\xi}\,\widehat{f}(\xi)d\xi\right)\nonumber\\
&=&\int_{\bR^n}e^{t\rho(-2\pi\xi)}\,e^{-2\pi ix\cdot\xi}\widehat{f}(\xi)d\xi.
\end{eqnarray}
Differentiating  with respect to $t$ gives that the infinitesimal generator of the semigroup for L\'evy process is the Fourier integral operator 
\begin{equation}
\mathcal{A}f=\int_{\bR^n} \rho(-2\pi\xi) e^{-2\pi ix\cdot\xi}\,\widehat{f}(\xi)d\xi.
\end{equation}
 With $\rho(\xi)=-|\xi|^{\alpha}$  we obtain the fractional 
 Laplacian  $\mathcal{A}=-(-\Delta)^{\alpha/2}$. 

To more clearly illustrate the origins of Theorem \ref{LevyLp-bound} and to avoid several technical points,  let us assume that  the semigroup is self-adjoint on $L^2(\bR^n)$. As is well known (see \cite{App}) this happens  if and only if the L\'evy process $X_t$ is symmetric.  That is, if and only if $P\{X_t\in \Theta\}=P\{X_t\in -\Theta\}$ for all Borel sets $\Theta\in \bR^n$.  This leads to  $T_t$ being self-adjoint if and only if 
\begin{equation}\label{self-adjoint}
\rho(\xi)=-{\frac{1}{2}}\xi\cdot B\xi +\int_{\bR^n}
\Big(\cos(z\cdot \xi)-1\Big)d\nu(z),
\end{equation}
where $B$ a symmetric matrix and is $\nu$ a symmetric ($\nu(\Theta)=\nu(-\Theta)$) L\'evy measure.  
It is then clear that for symmetric L\'evy measures $\nu$, the 
Fourier multipliers $\M(\xi)$ in Theorem \ref{LevyLp-bound} are obtained from  
{\it ``transformations"} (or {\it ``modulations"}) of the above L\'evy symbol normalized by the symbol itself.   To make the connection to martingales as clearly as possible, we illustrate two different instances of these multipliers. The case when $\nu=0$ (the purely gaussian case under the assumption that B is strictly positive definite) and the case when $B=0$ (the purely compound Poisson case). For the first, we observe that from (\ref{semigroup}) we have 
$$
\widehat{P_tf}(\xi)=e^{t\rho(-2\pi \xi)}\widehat{f}(\xi) =e^{-2\pi^2t B\xi\cdot\xi}\widehat{f}(\xi).
$$
If as in (\ref{fourier-A(t)}) we consider the operator this time with Fourier multiplier defined by 
\begin{equation}\label{fourier-gaussian-A(t)}
\widehat{\mathcal{S}_Af}(\xi)=\left(4\pi^2 \int_{0}^{\infty} [A(t)\xi]\cdot \xi e^{-4\pi^2 t\B\xi\cdot\xi}\,dt\right) \widehat{f}(\xi)
\end{equation}
we see that it arise from the projections of the martingale transform this time defined by 

\begin{equation}\label{proj-gaussin}
\mathcal{S}_{A}^T f(x) = E^T\left[ \int_0^T A(T-s)\nabla_x P_{T-s}f(X_s)\cdot dB_s\,\big|\,
X_T=x\, \right],
\end{equation}
where this time $X_t$ is the diffusion (Gaussian process) associated with the operator 
$$
\mathcal{A}=\frac{1}{2}\sum_{j, k=1}^{n} b_{jk}\frac{\partial^2}{\partial x_j\partial x_k}. 
$$
If the matrix $A$ is constant as in (\ref{fourierA}),  this gives the multipliers 

\begin{equation}\label{fourierGaussian}
\widehat{\mathcal{S}_Af}(\xi)=\frac{A\xi\cdot\xi}{B\xi\cdot\xi}\,\widehat{f}(\xi)
\end{equation}
and under the assumption that $|A\xi\cdot\xi| \leq |B\xi\cdot\xi|$ for all $\xi\in \bR^n$, we get that these operators again have 
$$
\|\mathcal{S}_Af\|\leq (p^*-1)\|f\|_p, \quad 1<p<\infty. 
$$
Thus for the Gaussian case there is (essentially) no change from the Brownian motion case except for the replacement of the identity matrix $I$ by the symmetric matrix $B$.    

We now consider the case when 
\begin{equation}\label{poissonsymbol}
\rho(\xi)=\int_{\bR^n}
\Big(\cos(z\cdot \xi)-1\Big)d\nu(z),
\end{equation}
where $\nu$ is symmetric and finite.   The process $X_t$ is then a symmetric compound Poisson process as in (\ref{poisson1}). With $T>0$ and $Z_t=(X_t, T-t)$, $0<t<T$,  we set $V_f(Z_t)=V_f(X_t, T-t)$ where  $V_f(x, t)=P_tf(x)$.  Then $V_f(Z_t)=V_f(X_t, T-t)$, $0<t<T$ is a martingale (see \cite{BanBog1}, Lemmas 1-4)  and it follows from the general It\^o formula (\cite{DelMey1, Pro1}) that 
\begin{eqnarray*}
V_f(Z_t)- V_f(Z_0)&=&\sum_{0<s\leq t}[V_f(X_{s}, T-s)-V_f(X_{{s}^{-}}, T-s)]\\
&-& \int_0^t \int_{\bR^n} [V_f(X_{s^{-}}+z, T-s)- V_f(X_{s^{-}}, T-s)] d\nu(z)ds.
\end{eqnarray*}
Let $\varphi:\bR^d \to \bC$ be such that $\|\varphi\|_{\infty}\leq 1$.  Consider the new martingale
\begin{eqnarray*}
\varphi*V_f(Z_t)&=&\sum_{0<s\leq t}[V_f(X_{s}, T-s)-V_f(X_{{s}^{-}}, T-s)]\varphi(\Delta X_s)\\
&+& \int_0^t \int_{\bR^n} [V_f(X_{s^{-}}+z, T-s)- V_f(X_{s^{-}}, T-s)] \varphi(z)d\nu(z)ds
\end{eqnarray*}
on $0<t<T$.  
Then 
$$
[\varphi*V_f(Z)]_t=\sum_{0<s\leq t}|V_f(X_{s}, T-s)-V_f(X_{{s}^{-}}, T-s)|^2|\varphi(\Delta X_s)|^2
$$
and it follows that $\varphi*V_f(Z_t)<<V_f(Z_t)$. 
From Theorem  \ref{Wan1} it follows that the  ``martingale transform" $\varphi* V_f(Z_t)$ is in $L^p$ for all $1<p<\infty$ and that 
\begin{equation}\label{contraction-p}
\|\varphi* V_f(Z_t)\|_p\leq (p^*-1)\|f\|_p.
\end{equation}   As in the case of space-time Brownian motion we define the family of operators in 
$\bR^n$ by 
$$\mathcal{S}_{\varphi}^T f(x) = E^T\big [ \,\varphi\star V_{f}\,\big|\,
Z_T=(x,0)\,\big ].$$    
As in the case of the Brownian motion, the boundedness of the martingale transform $V_f(Z_t) \to \varphi*V_f(Z_t)$, (inequality (\ref{contraction-p})), the contraction in $L^p$ of the conditional expectation gives that 

\begin{equation}
\|\mathcal{S}_{\varphi}f\|_p\leq (p^*-1)\|\varphi\|_{\infty}\|\,\|f\|_p.
\end{equation}

The following proposition identifies the Fourier multiplier for the operator $\mathcal{S}_{\varphi}^T$ and hence we obtain the special case of Theorem \ref{LevyLp-bound} proved in  \cite{BanBog1}.  

\begin{proposition}\label{Lp-levybound}
The operators $\mathcal{S}_{\varphi}^T$ are Fourier multipliers on $L^2(\bR^n)$ and 
$$\widehat{S^{T}_{\varphi}\,f}(\xi)=\M_T(\xi)\hat{f}(\xi),$$
where   
 \begin{equation}\label{multipler1}
\M_T(\xi)=\left(e^{2T\rho(\xi)}-1\right)\frac{1}{\rho(\xi)}\int_{\bR^n}\left(1-\cos(z\cdot\xi)\right)\varphi(z)d\nu(z).
\end{equation} 
As $T\to \infty$, the operators $\mathcal{S}_{\varphi}^T$ generate Fourier multipliers $\mathcal{S}_{\varphi}$, with 
$\widehat{S_{\varphi}\,f}(\xi)=\M(\xi)\hat{f}(\xi)$ and  
$$\M(\xi)=\frac{1}{\rho(\xi)}\int_{\bR^n}\left(\cos(z\cdot\xi)-1\right)\varphi(z)d\nu(z).$$
\end{proposition}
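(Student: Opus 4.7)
The plan is to compute $\widehat{\mathcal{S}_\varphi^T f}(\xi)$ by testing against an arbitrary $g\in C_0^\infty(\bR^n)$, mirroring the derivations of (\ref{firstorderriesz}) and (\ref{dual1}) but now in the pure-jump setting. First I would use that the distribution of $X_T$ under $E^T$ is Lebesgue measure (as in (\ref{lebesguedis}), valid for any translation-invariant L\'evy semigroup) to write
\begin{equation*}
\int_{\bR^n}\mathcal{S}_\varphi^T f(x)\, g(x)\,dx \;=\; E^T\bigl[\,g(X_T)\,(\varphi\ast V_f)(Z_T)\bigr].
\end{equation*}
Then I would decompose $g(X_T)=V_g(X_0,T)+\bigl(V_g(Z_T)-V_g(Z_0)\bigr)$, using that $V_g(Z_t)=V_g(X_t,T-t)$ is a martingale for the same reason $V_f(Z_t)$ is. The first term contributes zero: conditioning on $\mathcal{F}_0$ gives $E_x[(\varphi\ast V_f)(Z_T)]=0$ because $\varphi\ast V_f$ is a martingale vanishing at $0$.

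Next I would compute the cross term $E^T[(V_g(Z_T)-V_g(Z_0))(\varphi\ast V_f)(Z_T)]$ via the It\^o isometry for compound Poisson martingales, i.e.\ the predictable covariation of two compensated jump martingales is integrated against the L\'evy kernel. Since the martingale part of $V_g(Z_t)$ has jumps $V_g(X_s,T-s)-V_g(X_{s^-},T-s)$ and the martingale transform $(\varphi\ast V_f)(Z_t)$ has the same jumps weighted by $\varphi(\Delta X_s)$, this covariation identity produces
\begin{equation*}
\int_0^T\!\!\int_{\bR^n}\!\!\int_{\bR^n}\!\bigl[V_g(x{+}z,T{-}s)-V_g(x,T{-}s)\bigr]\bigl[V_f(x{+}z,T{-}s)-V_f(x,T{-}s)\bigr]\varphi(z)\,d\nu(z)\,dx\,ds,
\end{equation*}
where I have used that $X_{s^-}$ has Lebesgue law under $E^T$ for each $s$ to integrate out the starting point.

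For each fixed $s,z$, I would apply Plancherel in $x$ using $\widehat{V_h(\cdot,t)}(\xi)=e^{t\rho(-2\pi\xi)}\hat h(\xi)$ (from (\ref{semigroup})) and the translation identity $\widehat{h(\cdot+z)-h(\cdot)}(\xi)=(e^{-2\pi iz\cdot\xi}-1)\hat h(\xi)$; the real-valuedness of $\rho$ under the assumed symmetry of $\nu$ (see (\ref{self-adjoint})) then collapses the product of exponentials to $e^{2(T-s)\rho(-2\pi\xi)}$ and the jump factors to $|e^{-2\pi iz\cdot\xi}-1|^2=2(1-\cos(2\pi z\cdot\xi))$. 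Fubini allows me to carry out the $s$-integral explicitly ($\int_0^T e^{2(T-s)\rho}\,ds=(e^{2T\rho}-1)/(2\rho)$) and the $z$-integral produces $2\int(1-\cos(2\pi z\cdot\xi))\varphi(z)\,d\nu(z)$. Comparing the resulting expression with $\int\widehat{\mathcal{S}_\varphi^T f}(\xi)\,\overline{\hat g(\xi)}\,d\xi$ reads off $\M_T$ as claimed (up to the paper's convention of writing $\rho(\xi)$ for $\rho(-2\pi\xi)$ in the statement). Finally, for the $T\to\infty$ limit I would invoke dominated convergence using that $\rho\leq 0$ and the $L^2$ bound $\|\mathcal{S}_\varphi^T f\|_2\leq (p^*{-}1)\|f\|_2$ uniformly in $T$ from (\ref{contraction-p}); then $e^{2T\rho}-1\to -1$ a.e.\ and one obtains the multiplier $\M(\xi)$.

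The main obstacle will be the rigorous application of the It\^o isometry/covariation formula for the two jump martingales, since $\varphi\ast V_f$ is defined pathwise as a transform of jumps rather than as a standard stochastic integral; one must appeal to the general semimartingale theory (as in \cite{DelMey1, Pro1}) to identify its compensator and to justify interchanges of expectation, integration in $(s,z,x)$, and Fourier transform. Symmetrization or approximation of $\nu$ by finite measures, together with the standing assumption $\|\varphi\|_\infty\leq 1$, would be used to keep all integrals absolutely convergent for $f,g\in C_0^\infty(\bR^n)$.
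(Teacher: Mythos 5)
Your proposal is correct and follows essentially the same route as the paper: the paper also tests $\mathcal{S}_\varphi^T f$ against $g$, passes via the martingale pairing $E[\varphi*V_f(Z_T)\,\overline{V_g(Z_T)}]$ and the compensated-jump covariation formula to a space-time Littlewood--Paley integral over the jump differences of $V_f$ and $V_g$, applies Plancherel (collapsing $|e^{iz\cdot\xi}-1|^2=2(1-\cos(z\cdot\xi))$ and the exponential $e^{2t\rho}$), and integrates explicitly in $t$. The only material difference is that you sketch the It\^o-isometry/covariation step explicitly, whereas the paper records it as part of the Lemma without proof and references \cite{DelMey1, Pro1} for the background.
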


We outline the proof here for the convenience of the reader.  For (\ref{multipler1}) we use the following Littlewood-Paley type identity.  

\begin{lemma} Integrating againts a function $g\in C_0^{\infty}(\bR^n)$ gives 
\begin{eqnarray*}
&&\int_{\bR^n} S^{T}_{\varphi} f(x)\overline{g(x)} dx=E[{\varphi*V_f(Z_T)}\overline{V_g(Z_T)}]\\
&=&
\int_0^T\int_{\bR^n}\int_{\bR^n}\int_{\bR^n}\Big\{\overline{V_g(x+y+z, T-s)-V_g(x+y, T-s)}\Big\}\times \\
&&\Big\{V_f(x+y+z, T-s)-V_f(x+y, T-s)\Big\}\varphi(z)d\nu(z) p_T(dy)\,\, dx\, ds
\end{eqnarray*}
\end{lemma}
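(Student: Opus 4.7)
The plan is to establish the two equalities in the lemma separately, mirroring the Brownian-motion argument that yielded Theorem \ref{prop2}. For the first equality,
\[
\int_{\bR^n} S^T_\varphi f(x)\,\overline{g(x)}\,dx = E\bigl[\varphi\ast V_f(Z_T)\,\overline{V_g(Z_T)}\bigr],
\]
I would invoke the definition $S^T_\varphi f(x) = E^T[\varphi\ast V_f(Z_T)\mid Z_T=(x,0)]$, the tower property, and the fact that under $P^T$ the variable $X_T$ has Lebesgue distribution on $\bR^n$ (the L\'evy analogue of (\ref{lebesguedis})). The left side then becomes $E^T[\varphi\ast V_f(Z_T)\,\overline{g(X_T)}]$, and since $V_g(x,0)=g(x)$ we have $g(X_T)=V_g(Z_T)$, giving the first equality.

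For the second equality, I would exploit martingale orthogonality. Set $M_t := \varphi\ast V_f(Z_t)$ and $N_t := V_g(Z_t)$; both are purely discontinuous martingales on $[0,T]$ and $M_0=0$. Hence
\[
E[M_T\overline{N_T}] = E[M_T\overline{N_T}] - E[M_0\overline{N_0}] = E\bigl[[M,\overline{N}]_T\bigr].
\]
From the explicit jump formulas, $\Delta M_s = [V_f(X_s,T-s)-V_f(X_{s^-},T-s)]\varphi(\Delta X_s)$ and $\Delta N_s = V_g(X_s,T-s)-V_g(X_{s^-},T-s)$, so $[M,\overline{N}]_T$ is a sum over the jumps of $X$. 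Applying the predictable compensation formula for the Poisson random measure of $X$, whose intensity is $d\nu(z)\,ds$, converts the expected sum into
\[
E\!\left[\int_0^T\!\!\int_{\bR^n}\!\bigl[V_f(X_{s^-}\!+z,T\!-s)-V_f(X_{s^-},T\!-s)\bigr]\,\overline{\bigl[V_g(X_{s^-}\!+z,T\!-s)-V_g(X_{s^-},T\!-s)\bigr]}\,\varphi(z)\,d\nu(z)\,ds\right].
\]

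Finally, to reach the stated four-fold integral I would condition on the starting position. Under $P^T$, the variable $X_0$ has Lebesgue distribution on $\bR^n$ and the increment $X_{s^-}-X_0$ is independent of $X_0$ with a law determined by the L\'evy transition probabilities; writing $X_0=x$, $y=X_{s^-}-X_0$ (so $X_{s^-}=x+y$), and interchanging the order of integration by Fubini produces precisely the displayed expression. The main obstacle is the careful justification of the compensation identity together with Fubini: for compound Poisson $X$ with finite $\nu$, the jumps accumulate only countably many times on $[0,T]$ and the integrand $\varphi(z)[V_f(X_{s^-}+z,\cdot)-V_f(X_{s^-},\cdot)]\,\overline{[V_g(X_{s^-}+z,\cdot)-V_g(X_{s^-},\cdot)]}$ is bounded uniformly in $(\omega,s,z)$, since $\|\varphi\|_\infty\le 1$ and $V_f,V_g$ are bounded (indeed $f,g\in C_0^\infty(\bR^n)$), so the It\^o isometry for pure-jump martingales and the subsequent Fubini interchange both apply without further hypothesis.
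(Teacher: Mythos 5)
Your approach---using $E[M_T\overline N_T]=E\bigl[[M,\overline N]_T\bigr]$ for pure-jump $L^2$-martingales with $M_0=0$, followed by the L\'evy compensation formula for the jump measure of $X$---is the correct and natural argument; it is the jump analogue of the It\^o isometry computation the paper carries out for the Brownian Littlewood--Paley identity in Theorem \ref{prop2}. Worth noting: the paper never actually proves this lemma. The \emph{proof} environment that follows it already takes the four-fold integral as its starting point and computes the multiplier from it, i.e., it is the proof of Proposition \ref{Lp-levybound}, not of the lemma. So you are filling a gap the paper leaves implicit.

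Two details need more care. First, the compensation formula identifies the law of $X_{s^-}-X_0$ as $p_s(dy)$, not the $p_T(dy)$ written in the lemma; your claim of obtaining ``precisely the displayed expression'' requires one extra observation, namely that after integrating against Lebesgue measure $dx$ the $y$-dependence disappears by translation invariance, $\int_{\bR^n}\int_{\bR^n}F(x+y)\,dx\,\mu(dy)=\int_{\bR^n}F(x)\,dx$ for any probability measure $\mu$, so the $y$-law may be replaced by $p_T$ (or anything else). Second, uniform boundedness of the integrand does not justify the final Fubini over $x\in\bR^n$, since $\bR^n$ has infinite Lebesgue measure; that interchange needs $L^1$-integrability in $x$, which one gets from the $L^1$-contractivity $\|P_tf\|_1\le\|f\|_1$, $\|P_tg\|_1\le\|g\|_1$ (recall $f,g\in C_0^\infty$), not from the $L^\infty$ bound. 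With these two remarks your argument is complete.
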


\begin{proof} 
We recall again that $$\widehat{V_f}(\xi, t)= \widehat{P_tf}(\xi)=e^{t\rho(\xi)}\hat{f}(\xi)$$ 
We then, by Fubini's theorem, perform the integration with respect to  $\left\{dx\,p_T(dy)\right\}$ first and change variables $t=T-s$  to obtain 
\begin{eqnarray*}
&&\int_0^T\int_{\bR^n}\int_{\bR^n}\Big\{\overline{V_g(x+z, T-s)-V_g(x, T-s)}\Big\}\times\\
&& \Big\{V_f(x+z, T-s)-u_f(x, T-s)\Big\}\,\, dx\,\, \varphi(z)d\nu(z)\, ds\\
&=&\int_0^T\int_{\bR^n}\int_{\bR^n}\Big\{\overline{V_g(x+z, t)-V_g(x, t)}\Big\}
 \Big\{V_f(x+z, t)-V_f(x, t)\Big\} dx\varphi(z)d\nu(z)dt\\
 &&\int_0^T\int_{\bR^n}\int_{\bR^n}\Big\{\overline{e^{iz\cdot\xi}\hat{g}(\xi)-\hat{g}(\xi)} \Big\}e^{t\rho(\xi)}\Big\{e^{iz\cdot\xi}\hat{f}(\xi)-\hat{f}(\xi)\Big\}e^{t\rho(\xi)}d\xi\varphi(z)d\nu(z) dt\\
 &&\int_0^T\int_{\bR^n}\int_{\bR^n}|e^{iz\cdot\xi}-1|^2 \hat{f}(\xi)\overline{\hat{g}(\xi)}e^{2t\rho(\xi)}d\xi\varphi(z)d\nu(z) dt
\end{eqnarray*}
\begin{eqnarray*}
&=&\int_{\bR^n}\int_{\bR^n}2\Big(1- \cos(z\cdot\xi)\Big)\left(e^{2T\rho(\xi)}-1\right)\frac{1}{2\rho(\xi)}\hat{f}(\xi)\overline{\hat{g}(\xi)}d\xi\varphi(z)d\nu(z)\\
&=&\int_{\bR^n}\left(e^{2T\rho(\xi)}-1\right)\frac{1}{\rho(\xi)}\left(\int_{\bR^n}  \Big(1- \cos(z\cdot\xi)\Big)\varphi(z)d\nu(z)    \right) \hat{f}(\xi)\,\overline{\hat{g}(\xi)}\, d\xi,\\
\end{eqnarray*}
which identifies the multiplier. 
\end{proof}

The proof of Theorem \ref{LevyLp-bound} follows from this proposition by an approximation argument to extend to general symmetric L\'evy measures and by a symmetrization arguments for measures and functions.  We refer the reader to   \cite{BanBieBog1} and \cite{BanBog1} for full details. 

\subsection{Examples} 
\begin{example}\label{gaussian-modulation}
If $n=2$, $\nu=0$ and $\mu$ is point  mass at $1,i,e^{-i\pi/4},e^{i\pi/4}$
and $\psi(1) = 1$, $\psi(e^{-i\pi/4}) = i$, $\psi(i) = -1$ and $\psi(e^{i\pi/4}) = -i$, we get 
\begin{eqnarray*}
 \int_{\uS} \abs{\xi\cdot\theta}^{2}\psi(\theta)d\mu(\theta) = \xi_{1}^2 - \xi_2^2 +
 i(\xi_1\frac{1}{\sqrt{2}} - \xi_2\frac{1}{\sqrt{2}})^2 - i(\xi_1\frac{1}{\sqrt{2}} + \xi_2\frac{1}{\sqrt{2}})^2  = \overline{\xi}^2,
\end{eqnarray*}
  and 
\begin{eqnarray*}
 \int_{\uS} \abs{\xi\cdot\theta}^{2}d\mu(\theta) = \xi_{1}^2 + \xi_{2}^2 +
 (\xi_1\frac{1}{\sqrt{2}} - \xi_2\frac{1}{\sqrt{2}})^2 + (\xi_1\frac{1}{\sqrt{2}} + \xi_2\frac{1}{\sqrt{2}})^2= 2\abs{\xi}^2. 
\end{eqnarray*} 
Thus 
\begin{equation*}
\M(\xi) = \frac{\overline{\xi^2}}{2\abs{\xi}^2}
\end{equation*}
and hence
\begin{equation}\label{gaussian}
\mathcal{S}_{\M}f=\frac{1}{2}Bf,
\end{equation}
which leads to the  estimate $\|B\|_{p}\leq 2(p^*-1)$ as in (\ref{BA2}).

If instead $\mu$ is point  mass at $1,i$ and $\psi(1) = 1$,  $\psi(i) = -1$, we have
$$
\M(\xi) = \frac{{\xi_1^2-}\xi_2^2}{\abs{\xi}^2},
$$
and $T_{\M}f=R_2^2f-R_1^2f$. If $\mu$ is point  mass at $e^{-i\pi/4},e^{i\pi/4}$  and $\psi(e^{-i\pi/4}) = i$ $\psi(e^{i\pi/4}) = -i$, we obtain $T_{\M}f=2R_1R_2$. Hence, by the result of Geiss, Montgomery-Smith and Saksman \cite{GeiSmiSak1},  the constant in Theorem \ref{LevyLp-bound} cannot be improved in general. 
\end{example}

As in the matrix representation for the second we can ask the question

\begin{question}  Exactly as in Theorem (\ref{background-smallest-B}) and equation (\ref{space-time-smallest-B}), one may ask: Is it possible to pick a better $\psi$ and $\mu$ such that the $\frac{1}{2}$ in (\ref{gaussian}) can be replaced with $1$? \end{question} 

Unfortunately, the answer to this question is again ``no" as shown by the following proposition from \cite{BanBieBog1}.  
This result should be compared with those in Theorem \ref{background-smallest-B} and Remark \ref{smallest-heat}.

\begin{proposition}\label{gaussian-modulations-smallest} 
 If $\mu$ is a finite measure on the circle $\uS^1$ in $\R^2$ and $\psi:\uS^1\to \bC$ with $\|\psi\|_{\infty}\leq 1$ and 
\begin{equation} 
 \frac{\int_{\uS^1}\abs{\xi\cdot\theta}^{2}\psi(\theta)d\mu(\theta)}{\int_{\uS^1}\abs{\xi\cdot\theta}^{2}d\mu(\theta)}
        = \frac{\overline{\xi^2}}{c\abs{\xi}^2} \,, \quad \xi \in \bR^2 \setminus \{0\}, 
\end{equation}
 then $|c| \geq 2$.
\end{proposition}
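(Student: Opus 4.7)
Plan. I would reduce the proposition to a short Fourier bookkeeping on $\mathbb{S}^1$. First, identify $\theta\in\mathbb{S}^1\subset\mathbb{R}^2$ with $e^{i\alpha}\in\mathbb{C}$ and complexify $\xi=\xi_1+i\xi_2$. A direct use of the double-angle formulas yields
\[
 |\xi\cdot\theta|^2 \;=\; \tfrac{1}{2}|\xi|^2 + \tfrac{1}{4}\bigl(\xi^2\,\overline{\theta^2}+\overline{\xi^2}\,\theta^2\bigr),
\]
which expresses the quadratic form as a pairing between degree-$2$ spherical harmonics in $\theta$ and in $\xi$. Introducing the moments $m_0=\mu(\mathbb{S}^1)$, $m_2=\int\theta^2\,d\mu$ and the three complex moments $n_0=\int\psi\,d\mu$, $n_2=\int\theta^2\psi\,d\mu$, $n_{-2}=\int\overline{\theta^2}\psi\,d\mu$, both numerator and denominator of the assumed ratio become explicit linear forms in $|\xi|^2$, $\xi^2$, $\overline{\xi^2}$. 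Because $\psi$ is complex-valued, $n_2$ and $n_{-2}$ are not complex conjugates and must be tracked separately; this is the only mild subtlety.

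Next, writing $\xi=re^{i\phi}$ and clearing the common factor $r^4/4$, the hypothesis collapses to the trigonometric-polynomial identity
\[
 c\bigl(2n_0+e^{2i\phi}n_{-2}+e^{-2i\phi}n_2\bigr)\;=\;\overline{m_2}+2m_0\,e^{-2i\phi}+m_2\,e^{-4i\phi}
\]
for every $\phi\in\mathbb{R}$. By linear independence of $\{e^{ik\phi}\}_{k\in\mathbb{Z}}$, I would equate coefficients at $k\in\{-4,-2,0,2\}$ to obtain the four scalar relations $m_2=0$, $c\,n_2=2m_0$, $c\,n_0=0$, and $c\,n_{-2}=0$. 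The hypothesis is vacuous when $m_0=0$, so I may assume $m_0>0$; this forces $c\neq 0$, and the decisive relation becomes $c\,n_2=2m_0$.

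Finally, $|c|\geq 2$ drops out from the trivial estimate
\[
 |n_2|\;=\;\Bigl|\int_{\mathbb{S}^1}\theta^2\psi(\theta)\,d\mu(\theta)\Bigr|\;\leq\;\|\psi\|_\infty\,\mu(\mathbb{S}^1)\;\leq\;m_0,
\]
since $|\theta^2|=1$ on $\mathbb{S}^1$. Combined with $|c|\,|n_2|=2m_0$ this yields $|c|\geq 2$. There is no serious obstacle here; the one conceptual point is that the \emph{apparently continuous} constraint over $\xi\in\mathbb{R}^2\setminus\{0\}$ collapses under Fourier decomposition in the angular variable of $\xi$ to four algebraic equations on five moments, after which the bound follows from a single sup-norm inequality.
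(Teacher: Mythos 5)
Your proof is correct. The paper itself does not include a proof of this proposition---it is imported from the cited reference \cite{BanBieBog1}---so there is no in-text argument to compare against, but your reduction via Fourier decomposition in the angular variable of $\xi$ is precisely the natural route and is surely what that reference does: the identity $|\xi\cdot\theta|^2=\tfrac12|\xi|^2+\tfrac14(\overline{\xi}^2\theta^2+\xi^2\overline{\theta}^2)$ expresses both sides through the moments $m_0,m_2$ and $n_0,n_2,n_{-2}$, and after clearing the common factor $r^4/4$ with $\xi=re^{i\phi}$ the hypothesis becomes an identity between trigonometric polynomials of degree four in $\phi$.

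One small logical point worth stating explicitly in the write-up: the raw coefficient identity at frequency $k=0$ is $2cn_0=\overline{m_2}$, not $cn_0=0$; you obtain $cn_0=0$ only after invoking the $k=-4$ relation $m_2=0$. So the four relations you list are a partially simplified version of the actual Fourier coefficient equations, and the order of inference should be made clear. This is harmless for the conclusion, since the only relation used in the final step is the genuine coefficient identity $cn_2=2m_0$ at $k=-2$, and combining it with $|n_2|\leq\|\psi\|_\infty\,\mu(\mathbb{S}^1)\leq m_0$ gives $|c|\geq 2$ exactly as you argue.
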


In the above example (\ref{gaussian-modulation}) the modulations are perform on the Gaussian part of the L\'evy processes. If, again with $n=2$, we perform the modulations on the jump part, this leads to another martingale representation for the Beurling-Ahlfors operator. 
\begin{example}  Assume $\mu=0$ and for   $0<\alpha<2$, consider the stable measure in $\bR^2$ (in polar coordinates)
$$d\nu^{\alpha}(r, \theta) = r^{-1-\alpha}dr\,d\theta$$ and a bounded function $\varphi:\uS\to \bC$ extended to all of $\bC$ by $\varphi(z)=\varphi\left(z/|z|\right)$. Using polar coordinates it follows  that   
\begin{eqnarray*}
 \int_{\R^2} (1-\cos(\xi\cdot z))\varphi(z) d\nu^{\alpha}(z) 
& =& \int_{\uS}\int_0^\infty (1-\cos(r\theta\cdot \xi))\varphi(r\theta) r^{-1-\alpha}\,dr\,d\theta \\
 &=& \int_{\uS} \abs{\xi\cdot\theta}^{\alpha}\varphi(\theta) \int_0^{\infty} \frac{1-\cos(s)}{s^{1+\alpha}} ds d\theta\\ 
 &=& c_{\alpha}\int_{\uS} \abs{\xi\cdot\theta}^{\alpha}\varphi(\theta)d\theta.
\end{eqnarray*}
Thus 
\begin{equation}\label{jump}
\M(\xi) =\frac{\int_{\uS} \abs{\xi\cdot\theta}^{\alpha}\varphi(\theta)\,d\theta}{\int_{\uS} \abs{\xi\cdot\theta}^{\alpha}\,d\theta}.
\end{equation}
Setting $\theta=(\cos(t),\sin(t))$ and taking 
$\varphi(\cos(t),\sin(t)) = e^{-i2t}$, a computation (see \cite{BanBieBog1}) yields 
\begin{equation*}
 \M(\xi) =\frac{\alpha}{\alpha+2}\frac{\overline{\xi}^2}{\abs{\xi}^2},
 \end{equation*}
and hence
 \begin{equation}\label{jump}
 \mathcal{S}_{\M}f=\frac{\alpha}{\alpha+2}Bf.
 \end{equation}
Again, we can recover from this the bound $2(p^*-1)$ by letting $\alpha\to 2$. 
\end{example}

\begin{example}
 Finally, if we take $\mu=0$, $\varphi=0$, 
$$\lambda=\delta_{(1,0,\ldots,0)}+\delta_{(-1,0,\ldots,0)}+
\cdots+\delta_{(0,0,\ldots,1)}+\delta_{(0,0,\ldots,-1)}\,.$$
and consider the L\'evy measure (in polar coordinates) $$\nu^{\alpha}(dr, d\theta)=r^{-1-\alpha}dr\,
d\lambda(\theta), \hskip.5cm 0<\alpha<2,$$ 
(the symmetric $\alpha$-stable L\'evy
process with independent coordinates) we have
\begin{eqnarray*}
 \int_{\uS} |\xi\!\cdot\!  \theta|^\alpha d\lambda(\theta)=
C_{\alpha} |\xi_1|^\alpha+\cdots+|\xi_d|^\alpha 
\end{eqnarray*}
Let $\psi(z_1,\ldots,z_d)=1$ if $z_k=0$ for $k\neq j$ and 
$z_j\neq 0$, 
and let $\psi=0$ otherwise. (That is,  observe only the jumps of the jth coordinate
process.) 
The multiplier is

\begin{equation*}
  \M(\xi)=\frac{|\xi_j|^\alpha}{|\xi_1|^\alpha+\ldots+|\xi_d|^\alpha}\,,\quad
  \xi=(\xi_1,\ldots,\xi_n)\in \bR^d\,.
\end{equation*}
These are the Marcinkiewicz-type multipliers as in \cite[p. 110]{Ste2}
\end{example}
\begin{remark}
From formulas  (\ref{gaussian}) and  (\ref{jump}) we see that the Beurling-Ahlfors operator can be obtained by either  {\it L\'evy jump transformations} or {\it L\'evy Gaussian transformations} and that from either of these the bound $2(p^*-1)$ follows.  The obvious question arising from our second example is: Can one obtain a similar representation where one could let $\alpha\to\infty$?  The obvious answer is that because of the probabilistic restriction of $0<\alpha\leq 2$, this does not seem possible in any direct way.  However, it may be that using additive L\'evy processes one could free oneself of this restriction and obtain further improvements.  At this point this is only speculation (perhaps wishful thinking)  on this author's part. 
\end{remark}

The calculations leading to (\ref{jump}) suggest the following conjecture which would imply the desired sharp bound for the Beurling-Ahlfors operator. 

 \begin{conjecture} Let $\varphi:\uS\to \bC $, $\|\varphi\|_{L^{\infty}}\leq 1$.  For any $0<r<\infty$, and any $n\geq 2$, set 
 \begin{equation}
 \M(\xi) = \frac{\int_{\uS} \abs{\xi\cdot\theta}^{r}\varphi(\theta)\,d\sigma(\theta)}{\int_{\uS} \abs{\xi\cdot\theta}^{r}\,d\sigma(\theta)}, 
 \end{equation}
where $\sigma$ denotes the surface measure on the unite sphere $\uS$ of $\bR^n$.   Then 
 \begin{equation}
\|\mathcal{S}_{\M}\|_p\leq p^*-1. 
\end{equation}
 \end{conjecture}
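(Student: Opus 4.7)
The plan is to apply Theorem \ref{LevyLp-bound} directly when $0 < r \leq 2$ and to flag the obstacles that arise for $r > 2$. First I would settle $r = 2$: take $\nu = 0$ and let $\mu$ be the uniform surface measure $\sigma$ on $\uS$ with Gaussian transformation function $\psi = \varphi$. The ratio appearing in the definition of the L\'evy multiplier in Theorem \ref{LevyLp-bound} then reduces to precisely $\M(\xi)$, which delivers $\|\mathcal{S}_\M\|_p \leq (p^*-1)$. Next, for $0 < r < 2$, take $\mu = 0$ and use the $r$-stable L\'evy measure $d\nu(z) = c_{r,n}|z|^{-n-r}\,dz$ with jump transformation function $\tilde\varphi(z) := \varphi(z/|z|)$. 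A polar-coordinate computation, generalising the one leading to equation (\ref{jump}) from $n = 2$ to arbitrary $n \geq 2$, gives
\[
\int_{\bR^n}\bigl(1-\cos(\xi\cdot z)\bigr)\tilde\varphi(z)\,d\nu(z) \; = \; C_{r,n}\int_\uS |\xi\cdot\theta|^r \varphi(\theta)\,d\sigma(\theta),
\]
together with the same identity for $\tilde\varphi \equiv 1$ in the denominator. The L\'evy multiplier supplied by Theorem \ref{LevyLp-bound} is therefore exactly $\M(\xi)$, and these two cases settle the conjecture in the range $0 < r \leq 2$.

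The genuinely hard part is $r > 2$. The kernel $|z|^{-n-r}$ fails the L\'evy integrability condition $\int |z|^2/(1+|z|^2)\,d\nu < \infty$ at the origin, so Theorem \ref{LevyLp-bound} is not available directly. One natural attempt is to represent $|\xi\cdot\theta|^r$ as a superposition of admissible symbols, for instance through a Bessel-subordination type identity
\[
|\xi\cdot\theta|^r \; = \; \int_0^\infty \bigl(1-\kappa_t(\xi\cdot\theta)\bigr)\,d\rho(t),
\]
where $\kappa_t$ is the characteristic function of a symmetric probability measure on $\bR$ (the Gaussian choice $\kappa_t(y) = e^{-ty^2}$ works with $d\rho(t) = c_r t^{-1-r/2}\,dt$ precisely for $0 < r < 2$). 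For $r > 2$ the measure $\rho$ must be signed or complex, and the pointwise contractivity underlying the differential subordination used in Theorems \ref{BW_theorem} and \ref{Wan1} is destroyed. The more promising strategy, hinted at in the closing remark of \S4, is to construct an additive (multi-parameter) L\'evy process whose enriched quadratic variation structure produces symbols of arbitrarily high order while keeping the transformed martingale differentially subordinate to $\|\varphi\|_\infty$ times the original one. The hard part is producing such a transformation whose projection delivers the sharp universal constant $(p^*-1)$; ordinary $L^p$-boundedness with some constant is furnished already by Calder\'on-Zygmund theory.

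The main obstacle is easy to diagnose: the conjecture is at least as hard as Conjecture \ref{Iwaniec} of Iwaniec. Indeed, for $n = 2$, take $\varphi(\theta) = \bar\theta^2$ and parameterise $\theta = (\cos u,\sin u)$, $\xi = |\xi|e^{i\phi}$. After changing variables $u \mapsto u - \phi$ and using $\cos(2u) = 2\cos^2 u - 1$, one finds
\[
\M(\xi) \; = \; \frac{\bar\xi^2}{|\xi|^2}\cdot\frac{\int_0^{2\pi}|\cos u|^r(2\cos^2 u - 1)\,du}{\int_0^{2\pi}|\cos u|^r\,du} \; = \; \frac{r}{r+2}\cdot\frac{\bar\xi^2}{|\xi|^2},
\]
where the second equality follows from the Beta-function identity $J_{r+2}/J_r = (r+1)/(r+2)$ for $J_r := \int_0^{2\pi}|\cos u|^r\,du$. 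Thus the conjectural bound for any $r > 0$ gives $\|B\|_p \leq \tfrac{r+2}{r}(p^*-1)$, and letting $r \to \infty$ would prove Iwaniec's conjecture outright. Consequently any complete proof of the conjecture in the range $r > 2$ entails a resolution of Iwaniec, and a fundamentally new differential-subordination construction, beyond the L\'evy framework of \S4, appears to be required.
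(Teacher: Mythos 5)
This statement is posed in the paper as an open conjecture, not a theorem: the paper offers no proof, only the motivating observation that the conjecture ``would imply the desired sharp bound for the Beurling-Ahlfors operator.'' You correctly recognize this and your analysis of the tractable regime is sound. The reduction of the case $r=2$ to Theorem~\ref{LevyLp-bound} via $\nu=0$, $\mu=\sigma$, $\psi=\varphi$ is exactly right, as is the polar-coordinate computation showing that for $0<r<2$ the $r$-stable L\'evy measure $c_{r,n}|z|^{-n-r}\,dz$ with $\tilde\varphi(z)=\varphi(z/|z|)$ reproduces $\M(\xi)$; both are straightforward $n$-dimensional versions of Examples 4.2.1 and 4.2.2 in the paper. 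You are also right that the L\'evy integrability condition $\int |z|^2(1+|z|^2)^{-1}\,d\nu<\infty$ fails at the origin precisely when $r\geq 2$, so there is no L\'evy process behind the symbol in that range and the martingale construction of \S4 simply does not apply.

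Your concluding observation is the important one, and it matches the paper's intent exactly: with $n=2$ and $\varphi(\theta)=\bar\theta^2$ the multiplier collapses to $\frac{r}{r+2}\cdot\frac{\bar\xi^2}{|\xi|^2}$ (your Beta-function identity $J_{r+2}/J_r=(r+1)/(r+2)$ is correct), so the conjecture for all $r$ would give $\|B\|_p\leq \frac{r+2}{r}(p^*-1)$ and, taking $r\to\infty$, Iwaniec's Conjecture~\ref{Iwaniec}. Since that is open, so is the present conjecture for $r>2$, and you are right that a genuinely new subordination structure beyond the L\'evy framework would be needed. There is no gap in your reasoning --- you have simply and correctly located the boundary between what Theorem~\ref{LevyLp-bound} delivers and what remains conjectural.
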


 \section{Burkholder, Iwaniec and Morrey}
In this section we discuss how the biconcavity properties of Burkholder's function $U$ lead to connections with a problem  of considerable interest in the calculus of variations commonly referred to as Morrey's conjecture and to a conjecture which implies the Iwaniec conjecture. 

\subsection{Rank-one convexity and quasiconvexity}  As we have already mentioned, it is well known (\cite{AstIwaMar1}) that proving $\|Bf\|_p\leq (p^*-1)\|f\|_p$, $1<p<\infty$, is equivalent to proving that 
 \begin{equation}\label{BA-2}
 \|\partial f\|_p\leq (p^*-1)\|\overline{\partial}f\|_p, \,\,\,\, 1<p<\infty, 
 \end{equation}
  for all $f\in C_0^{\infty}(\bC)$. 
 
 Viewed in terms of the function $V$ in (\ref{v}), (\ref{BA-2}) is the same as proving that
 \begin{equation}\label{BA-V} 
 \int_{\bC}V(\overline\partial f, \partial f)dm(z)\leq 0,\,\,\,\,  f\in C_0^{\infty}(\bC). 
 \end{equation}
 Since by (\ref{sub1}) the Burkholder function $U$ of (\ref{u}) satisfies  $V(z, w)\leq U(z, w)$ for all $w, z\in \bC$, it is natural to make the following 
 \begin{conjecture}\label{BA-Bur-U}  For all $f\in C_0^{\infty}(\bC)$, 
 \begin{equation}\label{BA-U} 
 \int_{\bC}U(\overline\partial f, \partial f)dm(z)\leq 0.  
 \end{equation}
 \end{conjecture}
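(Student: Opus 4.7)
The strategy is to recast \eqref{BA-U} as a quasiconvexity statement in the sense of Morrey. Writing $f=u+iv$ as a map $\bR^2\to\bR^2$ and setting $A=Df\in\bR^{2\times 2}$, a direct computation gives
\begin{equation*}
\overline\partial f = (A_{11}-A_{22})+i(A_{12}+A_{21}), \qquad \partial f = (A_{11}+A_{22})+i(A_{21}-A_{12}).
\end{equation*}
Hence the integrand in \eqref{BA-U} equals $G(Df)$ for the explicit function $G:\bR^{2\times 2}\to\bR$ given by $G(A):=U(\overline\partial f,\partial f)$. Since $U(0,0)=0$, Conjecture \ref{BA-Bur-U} is precisely the quasiconvexity of $-G$ at the zero matrix:
\begin{equation*}
\int_{\bC}(-G)(D\phi)\,dm(z)\;\geq\;(-G)(0)\cdot|\mathrm{supp}(\phi)|=0,\qquad \phi\in C_0^\infty(\bC;\bR^2).
\end{equation*}

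First I would verify that $-G$ is rank-one convex. A rank-one perturbation $A\mapsto A+t\,a\otimes b$ induces the complex increments $h=t(a_1+ia_2)(b_1+ib_2)$ and $k=t(a_1+ia_2)(b_1-ib_2)$ in $\overline\partial f$ and $\partial f$ respectively, and one checks that $|h|=|k|=|t||a||b|$. Burkholder's central property of $U$ from \cite{Bur45}, that $t\mapsto U(x+th,y+tk)$ is concave whenever $|k|\leq|h|$, is therefore applicable on the equality locus $|h|=|k|$ and yields concavity of $t\mapsto G(A+t\,a\otimes b)$ in every rank-one direction. Thus $-G$ is rank-one convex, which is exactly the property noted in \S5 that forges the link between $U$ and Morrey's conjecture.

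The decisive step is to promote rank-one convexity of $-G$ to quasiconvexity at the origin. For general integrands on $\bR^{2\times 2}$, the implication rank-one convex $\Rightarrow$ quasiconvex is the celebrated open conjecture of Morrey; Sver\'ak has disproved it for $\bR^{n\times m}$ with target dimension $n\geq 3$. Any proof must therefore exploit the specific structure of $U$. Three promising routes seem worth pursuing: (a) construct a polyconvex auxiliary function $W$ of $A$, likely involving the null Lagrangian $\det A$ via the identity $|\partial f|^2-|\overline\partial f|^2=4\det A$, satisfying $V\leq W\leq U$ and $W(0)=0$, so that quasiconvexity follows from Jensen's inequality; (b) use the conformal equality $|h|=|k|$ along rank-one directions, which is stronger than the subordination $|k|\leq|h|$, to tailor a slicing or lamination argument bypassing the general rank-one-to-quasiconvex dichotomy; or (c) extract a quasiconvex majorant from the conformal-martingale representation underlying Theorem \ref{banjan}, replacing the lamination step by a probabilistic averaging along conformal martingales.

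The main obstacle is unmistakably Step 2: proving quasiconvexity without invoking a general rank-one-to-quasiconvex theorem is, at present, equivalent to solving a nontrivial two-dimensional case of Morrey's conjecture, and all three routes above are currently obstructed by, or tangled with, that problem. Consequently Conjecture \ref{BA-Bur-U} deserves to be regarded not merely as a sharp martingale inequality for the Beurling--Ahlfors operator, but as a test case sitting at the very heart of Morrey's conjecture in the plane.
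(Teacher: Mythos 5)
The statement you were asked to prove is labeled a \emph{conjecture} in the paper, and the paper offers no proof of it; \S 5.1 only explains what the conjecture is equivalent to and why it is hard. Your write-up reproduces that discussion faithfully and correctly. In particular: you correctly translate \eqref{BA-U} into the statement that $\Psi_U=-U\circ\Gamma$ is quasiconvex at the zero matrix; you correctly identify the complex increments $h,k$ induced by a rank-one perturbation $A\mapsto A+t\,a\otimes b$ and verify that $|h|=|k|=|t|\,|a|\,|b|$, so that Burkholder's biconcavity of $U$ along $t\mapsto U(z+th,w+tk)$ for $|k|\le|h|$ yields rank-one convexity of $\Psi_U$; and you correctly locate the missing step as the passage from rank-one convexity to quasiconvexity, which is precisely Morrey's 1952 problem in the $2\times 2$ case. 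Since you flag that step explicitly as unresolved, there is no hidden gap in your reasoning --- what you have written is an honest reduction of the conjecture to an open problem, not a proof, and that is exactly the status it has in the paper.

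Two small supplements worth noting from \S 5. First, $U$ is not the minimal majorant of $V$ with the biconcavity property; that is $\tilde U$ of \eqref{minimalU}, and on Lehto's near-extremals $f_\theta$ one has $\int_{\bC} U(\overline\partial f_\theta,\partial f_\theta)\,dm=0$ while $\int_{\bC}\tilde U(\overline\partial f_\theta,\partial f_\theta)\,dm<0$ (see \eqref{Leh-tildeU}--\eqref{Leh-U}), so $U$ is exactly critical on these deformations; this is independent evidence bearing on your proposed route (a). Second, Astala, Iwaniec, Prause and Saksman \cite{AstIwaPraSak} have verified quasiconvexity of $U$ when tested against certain deformations of the identity, which gives partial support for your route (b). You should also record the remark from the paper that the remaining logical possibility --- \eqref{BA-V} holding while \eqref{BA-U} fails --- is open as well and would, if true, be the most striking outcome of all.
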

 This conjecture arose naturally in \cite{BanWan3} which uses, for the first time,  the function $U$ in connections with the norm of $B$. The conjecture is written down in \cite{BanLin1} as {\it Question 1} and we refer the reader to that paper where several other related questions and problems are stated. 
 The conjectured inequality (\ref{BA-U}) and the convexity properties (listed below) satisfied by the function $U$ lead to other unexpected connections and applications of Burkholder's ideas.   
 
 Denote  by $\M^{n\times m}$ the set of all $n\times m$ matrices with real entries.  
 The function $\Psi:\M^{n\times m}\to \bR$  is said to be  {rank-one convex} if for each
$A, B \in \M^{n\times m}$ with rank $B = 1$, the function
\begin{equation}\label{rank-one}
h(t)=\Psi(A+tB), \, \, \,\, t\in \bR
\end{equation}
is convex.  The function is said to be {quasiconvex} if  it is locally integrable and for each $A\in \M^{n\times n}$, 
bounded domain $\Omega\subset \bR^n$ and each compactly supported Lipschitz function $f : \Omega\to \bR^m$, we have 
  \begin{equation}\label{quasiconvex}
  \Psi(A)\leq \frac{1}{|\Omega|}\int_{\Omega} \Psi\left(A+Df(x)\right)\,dx, 
  \end{equation}
where $Df$ is the Jacobian matrix of $f=(f_1,\ldots,f_m)$. That is, 
\[
Df=\begin{pmatrix}
\partial_1 f_1&\ldots&\partial_n f_1\\
\vdots&\ddots&\vdots\\
\partial_1 f_m&\ldots&\partial_n f_m
\end{pmatrix}.
\]

These properties arise in many problems in the calculus of variations, 
especially in efforts to extend the so called ``direct method" techniques from convex energy functionals to non-convex functionals.  They were introduced by C.B. Morrey (see \cite{Mor2}) and further developed by J. Ball \cite{Bal1}.  For more (much more) on the relationship
 between these properties and their consequences in the direct method of the calculus of variations, we refer the reader to Dacoronga \cite{Dac1}.  If $n = 1$ or $m = 1$, then $\Psi$ is quasiconvex or rank-one convex if and only if it is convex. If $m\geq 2$ 
and $n\geq 2$, then convexity $\Rightarrow$ quasiconvexity $\Rightarrow$ rank-one convexity. (See \cite{Dac1} where the notion of polyconvexity  which lies ``between" convexity and quasiconvexity is also discussed.)  
 In 1952, Morrey \cite{Mor1} conjectured that rank-one convexity does not imply quasiconvexity when
both $m$ and $n$ are at least 2. In 1992,  \v{S}ver\'ak \cite{Sve2} proved that this is indeed the case if $m\geq 3$  and $n\geq 2$. The cases $m=2$ and $n\geq 2$ remain open.  One of the difficulties with these notions of convexity  
 is that it is in general very difficult to construct nontrivial, interesting examples of such functions.

 Enter Burkholder's function $U$. It is proved in \cite{Bur45} that for all $z,\ w,\ h,\ k\in \bC$ with  $|k|\leq |h|$, the function  
$t\to U (z+th,\ w+tk)$  is concave in $\bR$ , or equivalently that $t\to -U (z+th,\ w+tk)$ is convex in $\bR$ . The concavity property of $t\to U (z+th,\ w+tk)$ is crucial in the proof of the properties in (\ref{sub1})--(\ref{sub3}). Properly interpreted, this convexity property of $U$ is equivalent to rank-one convexity. 

Let us explain this in more detail. First recall that if $n=m$ and  $h',k'\in\bR^n$, $h'\otimes k'$ denotes the $n\times n$ matrix $h'k'^*$.  That is,
if $h'=(h_1', h_2', \dots, h_n')$ and $k'=(k_1', k_2', \dots, k_n')$ their tensor product is the matrix
$$
h'\otimes k'=\left[\begin{array}{ccccc}h_1'k_1' & h_1'k_2'  &\ldots & h_1'k_n' \\h_2'k_1' & h_2'k_2'
 &\ldots & h_2'k_n' \\\vdots & \vdots & \ddots&\vdots\\h_n'k_1' & h_n'k_2'&\dots & h_n'k_n'\end{array}\right].
$$

By  \cite[p.~100]{Dac1}, the rank-one convexity of the function $\Psi:\M^{n\times n}\to \bR$   is equivalent to the function
\begin{equation}
t\mapsto \Psi(A+th'\otimes k')
\end{equation}
being convex in $t$ for every $A\in \M^{n\times n}$ and for every $h',k'\in\bR^n$.  Restricting now to $n=2$, we define 
 the function 
$
\Gamma\colon \M^{2\times 2}\to \bC\times \bC$
by 
$$
\Gamma\left(A\right)=(z, w),
$$ where 
$$
A=\left(\begin{array}{cc}a & b \\c & d\end{array}\right),
$$
$z=(a+d)+i(c-b)$ and  $w= (a-d)+i(c+b)$.  We then set 
$\Psi_{U}= -U\circ \Gamma.$  This gives
\begin{eqnarray*}
\Psi_{U}\left(A\right)&=& -\alpha_p \{[(a+d)^2+(c-b)^2]^{1/2}-(p-1)[(a-d)^2\\
&+& (c+b)^2]^{1/2}\}\{[(a+d)^2+(c-b)^2]^{1/2}\\
&+& [(a-d)^2+(c+b)^2]^{1/2}\}^{p-1}.
\end{eqnarray*}
Then for any two vectors $h'=(h_1', h_2')$ and $k'=(k_1', k_2')$ in $\bR^2$ we have 
$$
h'\otimes k'=\left(\begin{array}{cc}h_1'k_1' & h_1'k_2' \\h_2'k_1' & h_2'k_2'\end{array}\right)
$$
and 
\[
\Psi_{U}(A+th'\otimes k')=-U (z+th, w+tk),
\]
where $h=(h_1, h_2)\in \bR^2$ and $k=(k_1, k_2)\in \bR^2$ are such that 
\[
\begin{cases}h_1=h'_1 k'_1 +h'_2 k'_2\\
h_2=h'_1 k'_2-h'_2 k'_1\\
k_1=h'_1 k'_1-h'_2 k'_2\\
k_2=h'_2 k'_1+h'_1 k'_2.\end{cases}
\]
Observing that $|h|=|k|$, we see that the rank-one convexity of $\Psi_{U}$ follows from Burkholder's convexity property of $t\to -U (z+th,\ w+tk)$, for $z,\ w,\ h,\ k\in \bC$ with  $|k|\leq |h|$. The above argument follows \cite{BanLin1}. For further clarity and insight into this argument, see \cite{Iwa3}.

Now, if $f=f_1+if_2 \in C_{0}^{\infty}(\bC)$, then 
\begin{equation}
 Df=\begin{pmatrix}
     \partial_1f_1 &  \partial_2f_1  \\
     \partial_1f_2 & \partial_2f_2
\end{pmatrix}
\end{equation}
and 
\begin{equation}
\Psi_{U} \left(Df\right)
=-U\left({\overline \partial f},\ {\partial f}  \right).
\end{equation}
Thus quasiconvexity of $\Psi_{U}$ (at $0\in \bR^{2\times 2}$) is equivalent to 
\begin{equation}\label{quasi} 
 0\leq -\int_{\text{supp f}}U(\overline\partial f, \partial f)dm(z),
 \end{equation}
which is equivalent to (\ref{BA-U}).  Thus the  following remarkable question encompassing both problems arises. 
\begin{question} Is the Burkholder  function $U$ also quasiconvex in the sense that $\Psi_U$ is quasiconvex?
\end{question}
\begin{remark} This is a ``win-win" question and its resolution would be of great interest.  In the positive it would imply Iwaniec's 1982 conjecture, and in the negative it would solve Morrey's 1952 problem  for the important case $n=m=2$.   
In either case, there is a great theorem here but we just simply do not know which one holds.  Of course, while we believe it is unlikely, it could also be true that \eqref{BA-V} holds while \eqref{BA-U} does not.  This would even be better. 
\end{remark}

\begin{remark}
In their very recent paper \cite{AstIwaPraSak}, K. Astala, T. Iwaniec, I. Prause and E. Saksman prove that Burkholder's function is quasiconvex (quasiconcave in their notation) when tested
on certain deformations of the identity.  This result  already has many interesting consequences.  We refer the reader to their paper for precise statements and details of results. 
\end{remark}

The article by A. Baernstein and S. Montgomery-Smith \cite{BaeSmt1} presents various connections between the function $U$ and another function $L$ used by Burkholder to prove sharp weak-type inequalities for martingales and harmonic functions under the assumption of differential subordination, \cite[p. 20]{Bur46}.  This function $L$ was subsequently, and independently, rediscovered by  \v{S}ver\'ak in \cite{Sve3}.  For more on these connections, we refer the reader to  \cite[pp.~518-523]{AstIwaMar1}, \cite{Iwa3}, \cite{Sve1, Sve2, Sve3} and \cite{VasVol2}.  

We observe here that as shown by Burkholder the function $U$ is not the smallest majorant of $V$ satisfying the important property that for 
all $z,w,h,k\in\bC$ with $|k|\leq |h|$, the mapping
$t\mapsto U(z+th, w+hk)
$
is concave on $\bR$ which 
 is a key property for the proof of his inequalities.
Indeed, as was already pointed out in (\ref{smallestbicon}),  the smallest majorant of $V$ with this property \cite[p. 81]{Bur45}  is 
\begin{equation}\label{minimalU}
\tilde U(z,w)=\begin{cases} V(z,w), & \text{if $|w|\leq (p^*-1)|z|$}\\
U(z,w),&\text{if $|w| > (p^*-1)|z|$}\end{cases}
\end{equation}
for $2\leq p<\infty$ and with $U$ and $V$ interchanged for $1<p\leq 2$.

It is interesting to compare the conjectured inequality (\ref{BA-U}) for $U$ and $\tilde U$ by calculating with the  ``extremals" used by Lehto \cite{Leh1} to show that $\|B\|_{p}\geq (p^*-1)$.  Suppose $2<p<\infty$, $0<\theta<1$ and consider the functions
\[
f_\theta(z)=\begin{cases} z\ |z|^{-2\theta/p},&\text{for $|z| < 1$}\\
\overline z^{-1},&\text{for $|z|\geq 1.$}\end{cases}
\]
Then a computation gives 
\[{
\|\partial f_\theta\|_{p}\over \|\overline{\partial}f_\theta\|_{p} }=\left(
{(p-1)(p-\theta)^p\over (p-1)\theta^p+(1-\theta)p^p}\right)^{1/p}.
\]
Since this holds as $\theta\uparrow 1$, one finds that $\|B\|_{p}\geq p-1$, for $2<p<\infty$. Using the same family of functions, it is  shown in  \cite{BanLin1} that for all $0<\theta<1$,
\begin{equation}\label{Leh-tildeU}
\int_{\bC}U\left( {\overline{\partial}f_\theta},
{{\partial}f_\theta}\right)dm(z)=0
\end{equation}
while 
\begin{equation}\label{Leh-U}
\int_{\bC}\tilde U\left({\overline{\partial}f_\theta},
{\partial f_\theta}\right)dm(z)=\pi[p(1-1/p)^{p-1}-(p-1)^{p-1}]<0. 
\end{equation}

\subsection{Riesz transforms and the Burkholder function $U$, revisited} To the best of our knowledge the problem of determining the norm of $R_jR_k$ was first raise in \cite[p. 260]{BanLin1} and a question (Question 3) similar to Conjecture \ref{BA-Bur-U} is in fact raised there.   While we now know that $2\|R_jR_k\|_p=(p^*-1)$, this question remains of interest.  However,  given what we currently know, the question needs to be updated and reformulated. The proof given in \cite{BanMen1} that $2\|R_jR_k\|_p\leq (p^*-1)$ and $\|R_j^2-R_k^2\|_p\leq (p^*-1), j\not= k$, can be stated in terms of the functions $V$ and $U$ from \eqref{v} and \eqref{u} as follows.

\begin{theorem} For $f\in C_0^{\infty}(\bR^n)$, consider the martingale 
\begin{equation}
X_t=\int_0^t\nabla_x V_f(Z_s)\cdot dB_s,\hskip.4cm  0<t\leq T,  
\end{equation}
where $V_f(x, t)$ is the heat extension $f$ to $\bR^{n+1}_{+}$ (as in \eqref{itomart}).  Then for 
$j\not =k$, the following inequalities hold:
\begin{equation}\label{U-inqR_jR_k}
\int_{\bR^n} V(f, 2R_jR_k f)\, dx \leq  \lim_{T\to\infty}E^T\,[U(X_T, A_{jk}*X_T)]\, \leq 0
\end{equation}
and 
\begin{equation}\label{U-inq(R_j^2-R_k^^2)}
\int_{\bR^n} V(f, (R_j^2-R_k^2) f)\, dx \leq \lim_{T\to\infty}E^T\,[U(X_T, \tilde{A}_{jk}*X_T)]\, \leq 0,
\end{equation}
where  $A_{jk}*X_T$ and $\tilde{A}_{jk}*X_T$ are the martingale transforms of $X_T$ corresponding to the operators $2R_jR_k$ and $R_j^2-R_k^2$, respectively, as constructed in \S3.4. 
\end{theorem}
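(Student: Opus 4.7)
The plan is to adapt the Burkholder--It\^o strategy outlined in \S2.3 for Theorem \ref{BW_theorem} to the heat--martingale setting of \S3.7. The two inequalities are structurally identical; I describe the argument for the pair $(A_{jk}, 2R_jR_k)$ and note that the same argument applies verbatim to $(\tilde{A}_{jk}, R_j^2 - R_k^2)$, since both matrices have operator norm one and both projections are identified through the Fourier computation \eqref{fourierA}. The argument splits into a right inequality (a Burkholder-type supermartingale bound for $U$) and a left inequality (conditional Jensen combined with the projection identity of Theorem \ref{prop2}).

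For the right inequality, apply It\^o's formula to $U(X_t, (A_{jk}*X)_t)$ on $[0,T]$. Both processes vanish at $t=0$, so $U(X_0, (A_{jk}*X)_0) = U(0,0) = 0$. Because $\|A_{jk}\| = 1$, we have $d\langle A_{jk}*X\rangle_t \leq d\langle X\rangle_t$, i.e., $A_{jk}*X$ is differentially subordinate to $X$. The biconcavity identity \eqref{U_fact}, after discarding the manifestly nonnegative terms $B$ and $C$, yields that the It\^o drift $I_t$ is pathwise nonpositive, exactly as in the argument following \eqref{ito}--\eqref{I_t}. The remaining stochastic integral is a true martingale under $E^T$, with integrability ensured by the uniform bound $\|(A_{jk}*X)_T\|_p \leq (p^*-1)\|X_T\|_p$ from Corollary \ref{transforms1} together with \eqref{STbound}. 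Consequently $E^T[U(X_T, (A_{jk}*X)_T)] \leq 0$ for every $T$, and passing to the limit gives the right inequality.

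For the left inequality, combine $V \leq U$ (property \eqref{sub2}) with the expansion of $V$:
\[
E^T[V(X_T, (A_{jk}*X)_T)] = E^T[|(A_{jk}*X)_T|^p] - (p^*-1)^p\, E^T[|X_T|^p].
\]
Using $X_T = V_f(Z_T) - V_f(Z_0) = f(B_T) - H_T f(B_0)$, together with $\|H_T f\|_p \to 0$ as $T\to\infty$ for $f \in C_0^\infty(\bR^n)$ and $1<p<\infty$, and the fact that the pushforward of $P^T$ by $B_T$ is Lebesgue measure, one obtains $E^T[|X_T|^p] \to \int_{\bR^n} |f|^p\,dx$. Conditional Jensen's inequality applied through the projection \eqref{heatproj1} gives
\[
E^T[|(A_{jk}*X)_T|^p] \geq \int_{\bR^n} |\mathcal{S}_{A_{jk}}^T f(x)|^p\, dx.
\]
By Theorem \ref{prop2} and the Fourier identity $\mathcal{S}_{A_{jk}} f = 2R_jR_k f$ produced by the matrix $A_{jk}$, we have $\mathcal{S}_{A_{jk}}^T f \to 2R_jR_k f$ in $L^p$. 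Passing to the liminf yields
\[
\liminf_{T\to\infty} E^T[V(X_T, (A_{jk}*X)_T)] \geq \int_{\bR^n} V(f, 2R_jR_k f)\, dx,
\]
which, combined with $V \leq U$ and the right inequality, establishes the full chain asserted in the theorem (interpreting the limit as the common value of liminf and limsup whenever the latter is needed).

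The main technical obstacle is the rigorous justification of It\^o's formula for Burkholder's $U$, which is not globally $C^2$: it has singularities where either argument vanishes and, in the alternative form $\tilde U$ of \eqref{smallestbicon}, along the surface $|y| = (p^*-1)|x|$. This is handled by the mollification and localization scheme developed in Lemma 1.1 and Proposition 1.2 of \cite{BanWan3}, which also addresses the subtleties of working with the infinite initial measure $P^T$. A secondary point is upgrading the $L^2$-convergence of $\mathcal{S}_{A_{jk}}^T f$ afforded by Theorem \ref{prop2} to $L^p$-convergence; this follows from the uniform bound $\|\mathcal{S}_{A_{jk}}^T f\|_p \leq (p^*-1)\|f\|_p$ of \eqref{STbound} via a weak compactness and uniqueness argument.
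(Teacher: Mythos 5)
Your proof follows exactly the two-step structure the paper itself uses: the left inequality by removing the conditional expectation via Jensen (i.e., $\int |\mathcal{S}_{A_{jk}}^T f|^p\,dx \le E^T[|(A_{jk}*X)_T|^p]$, the distribution of $B_T$ being Lebesgue) together with $E^T[|X_T|^p] \to \|f\|_p^p$, and the right inequality via $V\le U$ and the supermartingale property of $U(X_t,(A_{jk}*X)_t)$ coming from differential subordination ($\|A_{jk}\|=\|\tilde A_{jk}\|=1$) and the biconcavity identity. Your treatment of the limits, the appeal to the smoothing of $U$ from \cite{BanWan3}, and the upgrade of the $L^2$ convergence of $\mathcal{S}^T_{A_{jk}}f$ to $L^p$ are more explicit than the paper's terse outline, but the underlying argument is the same.
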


The proof of this theorem consist, basically,  of two steps:  
\begin{itemize}
\item[(i)] We first remove the conditional expectation (the trivial step) on the projection operators $\mathcal{S}_{A}$ to obtain 
\begin{equation}\label{V-inqR_jR_k}
\int_{\bR^n} V(f, 2R_jR_k f)\, dx \leq  \lim_{T\to\infty}E^T\,[V(X_T, A_{jk}*X_T)]
\end{equation}
and 
\begin{equation}\label{V-inq(R_j^2-R_k^^2)}
\int_{\bR^n} V(f, (R_j^2-R_k^2) f)\, dx \leq  \lim_{T\to\infty}E^T\,[V(X_T, \tilde{A}_{jk}*X_T)]. 
\end{equation}
\item[(ii)]  We then use the Burkholder machinery to prove that  
\begin{equation}\label{VU-inqR_jR_k}
E^T\,[V(X_T, A_{jk}*X_T)] \leq E^T\,[U(X_T, A_{jk}*X_T)]\leq 0
\end{equation}
and 
\begin{equation}\label{VU-inq(R_j^2-R_k^^2)}
E^T\,[V(X_T, \tilde{A}_{jk}*X_T)] \leq E^T\,[U(X_T, \tilde{A}_{jk}*X_T)]\leq 0.  
\end{equation}
\end{itemize} 

In fact, under the assumption that $\|A\|\leq 1$, this is the basic strategy used in \S\ref{T_A} and \S\ref{S_A} for the operators $T_A$ and $\mathcal{S}_{A}$, not just for Riesz transforms. 

One ``deficiency" as of now in the investigations of the upper bounds for the $L^p$--norms of the Beurling-Ahlfors operator and the second order Riesz transforms is that there seems to be only one successful approach, the martingale approach.  This approach has ``done its job" for the Riesz transforms but has (so far) falling short for the Beurling-Ahlfors operator. We believe it would be of great interest to find a non-martingale approach to these problems. It is in this context that we raise the following question (as in \cite[p. 260]{BanLin1}) which even though it will not give any new information on the constants for $2R_jR_k$ and $R_j^2-R_k^2$ than what we already have, it may shed new light on Conjectures \ref{Iwaniec} and  \ref{BA-Bur-U} and related problems.  

\begin{question}\label{Ban-Lin-type}  Fix $f\in C_0^{\infty}(\bR^n)$, $n\geq 2$, $j\not =k$.  Is it the case that 
\begin{equation}\label{U-inqR_jR_k-NoPro} 
\int_{\bR^n} U(f,\, 2R_jR_kf)\,dx\leq 0 
\end{equation}
and 
\begin{equation}\label{U-inq(R_j^2-R_k^2)-NoPro}
\int_{\bR^n} U(f,\, (R_j^2-R_k^2)f)\,dx\leq 0,
\end{equation}
and that these inequalities can be proved without martingales? 
\end{question} 

Since $R_jR_k=\frac{\partial^2 }{\partial x_j\partial x_k}\Delta^{-1}$ with a similar definition for $R_j^2-R_k^2$, the inequalities \eqref{U-inqR_jR_k-NoPro} and 
\eqref{U-inq(R_j^2-R_k^2)-NoPro} can be stated as 

\begin{equation}\label{U-inqR_jR_k-NoPro-1} 
\int_{\bR^n} U(\Delta f,\, 2\frac{\partial^2 f}{\partial x_j\partial x_k} )\,dx\leq 0 
\end{equation}
and 
\begin{equation}\label{U-inq(R_j^2-R_k^2)-NoPro-1}
\int_{\bR^n} U(\Delta f,\, (\frac{\partial^2f}{\partial^2 x_j}-\frac{\partial^2f}{\partial^2 x_k}) )\,dx \leq 0. 
\end{equation}

It may be that the recent methods of Volberg and his collaborators for finding Burkholder functions 
via Monge-Amp\`ere equation \cite{VasVol0} and the  ``laminates" 
method for proving that the bound $(p^*-1)$ for the 
operator norm of $R_jR_k$ and $R_j^2-R_k^2$ (and their perturbations) \cite{NicVol1} is sharp, can shed some light on 
this question. 

If the function $U$ is replaced 
by the function $\tilde U$ in \eqref{minimalU} (which is convex in $w$ for fixed $z$) the inequalities \eqref{U-inqR_jR_k-NoPro} and \eqref{U-inq(R_j^2-R_k^2)-NoPro} can  be reduced to the martingale case.  Bu this again produces no new techniques.  This was pointed out to us by P. Janakiraman in a private communication.

\subsection{Quasiconformal mappings and the Burkholder function $U$} 
We give a brief account of some recent developments in which quasiconformal mappings (also nonlinear hyperelasticity) and  Burkholder's theory on sharp martingale inequalities share common problems of compelling interest. (We refer the reader to \cite{Iwa3} and \cite{AstIwaMar1} for details.) By definition, a weakly differentiable mapping $f :\Omega \rightarrow \bR^n$ in a domain  $\Omega\subset \bR^n$ (also referred to as hyperelastic deformation) is said to be $K$-quasiregular, $1\leq K < \infty ,$ if its Jacobian matrix  $D\!f(x) \in \M^{n\times n}$ (deformation gradient) satisfies the distortion inequality
\begin{equation}\label{distor}
 |D\!f(x)|^n  \leqslant K\, \det D\!f(x),\;\quad \textrm{where}\;\;\;|D\!f(x)|  = \max_{|v| =1} \; |D\!f(x)v|. 
\end{equation}
 
 The $L^p$-integrability of the derivatives of $K$-quasiregular mappings relies on a general inequality which is opposite to the distortion inequality in an average sense. More precisely, 
\begin{equation} \label{1}
 \int_{\mathbb R^n} \left \{\,|D\!F(x)|^n \;-\; K\, \det D\!F(x) \,\right \}\cdot |DF(x)|^{p-n}\; \textrm d x \;\geqslant  \; 0, 
\end{equation}
for all mappings $F \in \W^{1,p}(\bR^n, \bR^n)$ with the Sobolev exponents $p$ in a certain interval $ \alpha(n,K)  < p < \beta(n,K)$, where $ \alpha(n,K)  < n <\beta(n,K)$. 
Iwaniec (\cite{Iwa3}) conjectured that the largest such interval is
\begin{equation}\label{2}
\alpha (n,K)  = \frac{nK}{K+1} < p < \frac{nK}{K-1} = \beta(n,K).
\end{equation}
Iwaniec (see again \cite[pp. 518-523]{AstIwaMar1} and \cite{Iwa3}) then observed that in dimension $n=2\,$ the integrand in  (\ref{1}) is none other than the Burkholder's function $U$ (modulo constant factor), thus rank-one convex for all exponents $p$ in (\ref{2}). Inspired by Burkholder's results he proved, in every dimension $n\geqslant 2$, that (\ref{2}) defines precisely the range of the exponents $p$  for which the integrand in (\ref{1}) is rank-one convex; see \cite{Iwa3}. Now, it may very well be that Iwaniec's $n$-dimensional analogue of Burkholder's integral is also quasiconvex and, conjecturally, that (\ref{1}) holds for all $p$ in the range (\ref{2}). This would give a completion of the  $L^p$-theory of quasiregular mappings in space. 

While it is not clear at this point that martingale techniques will
produce the conjectured sharp bound $(p^*-1)$ for $\|B\|_{p}$ which motivated many of the applications presented in this paper, it seems likely that
the resolution of Iwaniec's conjecture will somehow involve the
Burkholder function $U$ and/or his ideas originally developed for his martingale inequalities. We again refer the reader  to \cite{AstIwaPraSak} which sheds some new light on this speculation.

In higher dimensions it is plausible
that Burkholder's vision and his sharp martingale inequalities will
contribute to the further development of the  $L^p$-theory of quasiregular
mappings with far reaching applications to geometric function theory in
$\bR^n$ and, in particular, mathematical models of nonlinear
hyperelasticity.  What is certainly clear is that as of now all approaches (stochastic integrals and
Bellman functions) which have produced concrete bounds close to the
conjectured bound for $\|B\|_{p}$ and for other operators, either completely rest upon or have been heavily influenced by the fundamental ideas of Burkholder originally conceived to prove sharp martingale inequalities. These  ideas have led to deep and
surprising connections in areas of analysis, geometry and PDE's which on the surface seem far removed from martingale theory. We hope this article will further elucidate some of this connections and that it will  serve as a starting point for further research explorations in this area of mathematics. Finally,  we hope that the many problems and conjectures listed throughout the paper can be resolved, by these or other techniques, in the near future.

\section{UCLA/Caltech 1984, Urbana 1986}
I first met Don Burkholder and heard him speak about his work on sharp martingale inequalities during a visit he made to UCLA in early 1984.  I was then a graduate student writing a thesis (published in \cite{Ban1}) under the direction of Rick Durrett on the $L^p$-boundedness of the Riesz transforms based on the Gundy-Varopoulos representation of these operators \cite{GunVor1}. This approach led to the more general ``$T_A$" operators discussed in \S3 whose $L^p$-boundedness reduces to the 
$L^p$-boundedness of martingale transforms for stochastic integrals. The bounds for the stochastic integrals  came from the  Burkholder-Gundy square function inequalities \cite{BurGun1}.   While I certainly did not follow Burkholder's lecture in its entirety, it was clear that in obtaining the $(p^*-1)$-constant Burkholder had bypassed the square function inequalities used in his 1966 paper \cite{Bur11}.  With some trepidation, given that I had never met Burkholder before and that about the only thing I knew about him was that he was a towering figure in probability and analysis, I decided to approach him with some questions. In particular, I wanted  to ask him about the possibility of bypassing the square function inequalities to prove the boundedness of the stochastic integrals needed in the applications to the Riesz transforms.  With the usual kindness and welcoming that characterizes all  his interactions with everyone he meets (from world famous mathematicians to unknown graduate students like myself at that time), he listened with interest and was very encouraging.  

In September of 1984 I moved from UCLA to Caltech as a postdoc where I learned the T. Iwaniec $(p^*-1)$ conjecture ({\it Conjecture} \ref{Iwaniec} below) from the late Tom Wolff.  I immediately went back to the operators $T_A$ studied in my thesis and realized that one could in fact represent the Beurling-Ahlfors operator as one of these $T_A's$ and thus prove its $L^p$ boundedness from the martingale inequalities.  Even more, from this and the available estimates for the Burkholder-Gundy square function inequalities due to Davis \cite{Dav2}, one could already give a rather explicit bound for the $L^p$-norm of the Beurling-Ahlfors operator. After many conversations with Tom about whether Iwaniec's $(p^*-1)$ had anything to do with Burkholder's $(p^*-1)$,  my interest on bypassing the Burkholder-Gundy square function inequalities to study the $L^p$ boundedness of the operators $T_A$ intensified.  While some sharp stochastic integral inequalities followed from \cite{Bur39} by reducing to the discrete-time  martingale case, those needed for the application to the Beurling-Ahlfors transform did not follow, at least not in any direct way.  In addition, the techniques in \cite{Bur39} were so difficult and so new at that time that it was not at all clear how to adapt them to prove the needed martingale inequalities for the applications to the Beurling-Ahlfors operator. (Nearly thirty years later the situation has changed and we now have a much better understanding of Burkholder's techniques thanks to the work of many of those cited on this paper.)

In 1986, I received an NSF postdoctoral fellowship to go to the University of Illinois at Urbana with Don as my {``Sponsoring Senior Scientist,"} to use the language of the NSF.  The timing could not have been better.  In addition to the already exciting mathematical environment fostered by the many distinguished local probabilists and analysts, 1986-87 was a ``Special Year in Modern Analysis" at Urbana with many lectures and mini-courses taught by long-term and short-term outside visitors. There were also other postdoctoral visitors and many graduate students with whom I interacted. All this and the fact that I had no teaching duties created a superb mathematical environment for me.  During my first semester there I learned from Don of his discovery of an explicit expression for the function $U$ in (\ref{u}).   With this function at hand and the It\^o formula, one could then proceed to explore the sought-after stochastic integral inequalities and their applications to the Beurling-Ahlfors operator, finally  bypassing the Burkholder-Gundy square function inequalities used up to that point for these types of applications.  This is what was done in the paper \cite{BanWan3} written jointly with G. Wang, a student of Don whom I met during my 1986-87 year at Urbana. 

While not related to the topic of this paper, I take this opportunity to gratefully acknowledge the many mathematical conversations I had during my year in Urbana, and in subsequent years, with our departed colleagues and friends, Frank Knight and Walter Philipp. It was from Walter that year that I learned about the power of the invariance principle \cite{PhiSto1} when applied to weakly dependent ``non-probabilistic" objects, like lacunary trigonometric series.  This circle of ideas and basic philosophy of looking for (and finding) probabilistic behavior such as central limit theorem and  laws of the iterated logarithm  in non-probabiltic objects, were invaluable in some of the work presented in \cite{BanMoo1}. Frank and Walter were both special individuals and I greatly treasure the memories of my interactions with them and their hospitality and kindness during my year in Urbana.

\section{Acknowledgments}

Much of the work presented in this paper which has the author's name associated with it has been carried out over several years  in collaborations with several people. They include Gang Wang, Arthur Lindeman, Pedro M\'endez-Hern\'andez, Prabhu Janakiraman and Krzysztof Bogdan. I am grateful to these colleagues and friends for our collaborations. Over the years I have benefited greatly from discussions about problems related to the topics of this paper with Al Baernstein, Oliver Dragi\v cevi\'c, Tadeusz Iwaniec, Richard Gundy, Joan Verdera, Alexander Volberg, and the late Tom Wolff.  I thank them for sharing their mathematical ideas, interest and excitement for the problems discussed here with me over the years.    Special thanks are due to Prabhu Janakiraman for his careful reading of this paper and for his many comments and suggestions which considerably improved the presentation.  I thank Tuomas Hyt\"onen for suggesting some of the references to UMD Banach space valued singular integrals operators.  I am grateful to Fabrice Baudoin and  Adam Os\c{e}kowski for various general comments and corrections and for the many conversations we have had in recent months and weeks on topics related to those treated here. I thank the referee for the numerous useful comments. 

I specially want to thank Dick Gundy for his support and encouragement at the time (mid 80's) when not as many people were interested on some of the problems discussed here nor on the general conditional expectation operators $T_A$ where my interest for this area of analysis began. 

By far my biggest thanks are reserved for Don Burkholder. Without his
groundbreaking work on martingale inequalities much of what is
presented in this paper would not exist. In addition, Don's kind 
support and encouragement throughout my mathematical career have been
invaluable.  Don has not only provided mathematical inspiration but also friendship and wise counsel whenever needed. For all this, I am deeply indebted to him.

\end{document}